 \tikzset{help lines/.style={step=#1cm,very thin, color=gray},
help lines/.default=.5} 
\tikzset{thick grid/.style={step=#1cm,thick, color=gray},
thick grid/.default=1} 
\newtheorem{thm}{Theorem}[section]
\newtheorem{lem}[thm]{Lemma}
\newtheorem{cor}[thm]{Corollary}
\newtheorem{prop}[thm]{Proposition}
\theoremstyle{definition}
\newtheorem{defn}[thm]{Definition}
\newtheorem{eg}[thm]{Example}
\theoremstyle{remark}
\newtheorem{rem}[thm]{Remark}
 \newtheorem{formula}[thm]{Formula}
\numberwithin{equation}{section}
\newcommand{\mat}[1]{\ensuremath{
\left[\begin{matrix}#1
\end{matrix}\right]
}}
\newcommand{\vs}[1]{\vskip .#1 cm} 
 \newcommand{\onto}{\twoheadrightarrow}
\newcommand{\xxrarrow}[1]{\ensuremath{
\mathop{\longrightarrow}\limits^{#1}}}
 \newcommand{\ot}{\leftarrow}
\DeclareMathOperator{\im}{im}
\DeclareMathOperator{\Hom}{Hom}%
\DeclareMathOperator{\undim}{\underline{dim}}
\newcommand{\field}[1]{\mathbb{#1}}
\newcommand{\ZZ}{\ensuremath{{\field{Z}}}}
\newcommand{\CC}{\ensuremath{{\field{C}}}}
\newcommand{\RR}{\ensuremath{{\field{R}}}}
\newcommand{\commentout}[1]{}
\newcommand{\cF}{\ensuremath{{\mathcal{F}}}}
\newcommand{\cG}{\ensuremath{{\mathcal{G}}}}
\newcommand{\cP}{\ensuremath{{\mathcal{P}}}}
\newcommand{\cX}{\ensuremath{{\mathcal{X}}}}
\newcommand{\cY}{\ensuremath{{\mathcal{Y}}}}
\newcommand\vare{\varepsilon}
\title{Generalized Grassmann invariant - redrawn}
\author{Kiyoshi Igusa}
\address{Department of Mathematics, Brandeis University, Waltham, MA 02454}\email{igusa@brandeis.edu}
\thanks{Supported by the Simons Foundation}
\subjclass[2020]{
19J10; 16G20}
\keywords{Pictures, Steinberg group, algebraic K-theory, pseudoisotopy, picture groups, torsion classes, maximal green sequences, ghost modules.}
\begin{document}

\begin{abstract} 
{\color{blue} This is my old unpublished paper called ``The generalized Grassmann invariant''. It shows how ``pictures'' also known as ``Peiffer diagrams'' represent elements of $H_3G$ for any group $G$ and shows that $K_3(\ZZ [G])$ is isomorphic to a group of deformation classes of pictures for the Steinberg group of $\ZZ [G]$. A picture representing an element of order $16$ in $K_3(\ZZ)\cong \ZZ_{48}$ is also constructed. In this updated version of the paper, we modify only the pictures and leave the text more or less unchanged. 

We also added an Appendix to explain the new pictures using representations of quivers and root systems of type $A_n$. Often, some roots are missing in the Morse pictures. We give two ideas to replace these roots. One uses ``ghost handle slides'' to obtain a standard picture. The second idea uses the (real) Cartan subalgebra $H$ to obtain a ``relative'' picture for a torsion class and adds ``ghost modules'' which are directly related to the generalized Grassmann invariant. 

Additions and changes are in blue except the pictures are black with colored ghosts.}
\end{abstract}

\maketitle

\tableofcontents

\section*{Introduction}

{The purpose of this paper is two-fold. First, we give an elementary proof of the existence of an ``exotic'' element of $K_3(\ZZ)$. Second, we give a statement and partial proof of the final correct version of the Hatcher-Wagoner result on $\pi_1\cP(M,\partial M)$ for $\dim M\ge 5$. {\color{blue}We also added an Appendix to explain why the pictures have been redrawn.}

The extra element of $K_3(\ZZ)$ is given explicitly as a 3-cycle in $St(\ZZ)$ which is derived in a natural way from a nonbounding 3-chain in $W(\pm1)$. This 3-cycle determines a homology class which is detected by an essentially geometric invariant $\chi:K_3(\ZZ)\to \ZZ_2$ which we call the \emph{Grassmann invariant}. Since this invariant is zero on the image of $H_3(W(\pm1))$, we have a nontrivial element of $K_3(\ZZ)$.}

{
The Grassmann invariant can be generalized to a homomorphism
\[
    \chi:K_3(\ZZ[\pi])\to K_1(\ZZ [\pi],\ZZ_2[\pi])\cong H_0(\pi;\ZZ_2[\pi]).
\]
The definition of $\chi$ comes from very intrinsic geometric considerations, but unfortunately, the algebraic analogue is rather clumsy. Since the kernel of $\chi$ contains the image of $\Omega_3^{fr}(B\pi)\cong \pi_3^(B\pi\cup pt)$ we can define a map on ``Whitehead groups'' $\chi_{Wh}:Wh_3(\pi)\to Whi_1^+(\pi;\ZZ_2)$ where $Wh_3(\pi)=K_3(\ZZ[\pi])/\Omega_3^{fr}(B\pi)+K_3(\ZZ)$ and 
\[
    Wh_1^+(\pi;\ZZ_2)=K_1^+(\ZZ[\pi],\ZZ_2[\pi])/K_1^+(\ZZ,\ZZ_2)
\]
where $K_1^+(\ZZ,\ZZ_2)=\ZZ_2=\chi(K_3(\ZZ))$. The elements of $Wh_1^+(\pi;\ZZ_2)$ in the image of $\chi_{Wh}$ are those that die in pseudoisotopy. Thus, if $M$ is a compact {\color{blue}smooth} manifold of dimension $\ge5$ and $\pi_1M=\pi,\pi_2M=0$ we have an exact sequence
\[
    	Wh_3(\pi)\to Wh_1^+(\pi;\ZZ_2)\to \pi_0\cP(M,\partial M)\to Wh_2(\pi)\to 0.
\]
}

{\section{The space of pictures for $H_3G$}
Suppose that $G$ is a group with a presentation $G=\left<\cX|\cY\right>$. Then we can construct a free $G$-resolution of $\ZZ$ whose first terms are
\[
\xymatrix{
0 & 
	\ZZ \ar[l] & 
	\ZZ[G] \ar_\varepsilon[l]& 
	\ZZ[G]\left<\cX\right> \ar_{\partial_1}[l]& 
	\ZZ[G]\left<\cY\right>. \ar_{\partial_2}[l]
}
\]
The groups and maps are defined by
\[
\begin{array}{ccl}
 \ZZ[G]\left<\cX\right> &  = & \text{the free $G$-module generated by symbols $[x]$ where $x\in\cX$}  \\
 \partial_1[x]&=&x-1  \\
\partial_2[x_1\cdots x_n]  &  = &  [x_1]+x_1[x_2]+\cdots+x_1x_2\cdots x_{n-1}[x_n] \\
\,[x^{-1}] & = & \text{$-x^{-1}[x]$ if $x\in\cX$}\\
 \varepsilon(g) &  = & 1\text{ for all }g\in G\subset\ZZ[G].  \\
\end{array}
\]

The exactness of the above sequence is well-known and can be derived from the fact that it forms part of the augmented $G$-equivariant chain complex for $\widetilde{BG}$, the universal covering space of $BG$, where $BG$ is constructed in the obvious way with one 0-cell, a 1-cell for every element of $\cX$ and a 2-cell for every element of $\cY$. One must of course add more cells of dimension $\ge3$ but this can be done arbitrarily.

By considering long exact sequences we get $H_3G\cong H_2(G;\ker\vare)\cong H_1(G;\ker\partial_1)$ which fits into the exact sequence
\[
0\to H_3G\to H_0(G;\ker\partial_2)\to H_0(G;\ZZ[G]\left<\cY\right>)\to H_0(G;\ZZ[G]\left<\cX\right>)\to H_0(G;\ker\partial_1)\to 0.
\]
Thus, $H_3G$ is essentially contained in $\ker\partial_2$. We shall show that every element of $\ker\partial_2$ can be represented by a planar graph with certain labels on the edges and vertices. $\ker\partial_1$ can be identified with $R/R'$ where $R$ is the kernel of the obvious map $F\to G$ where $F$ is the free group generated by $\cX$.

We will assume that $\cY$ is a \emph{reduced} set of relations for $G$. That is, $\cY$ and $\cY^{-1}$ are disjoint in $F$. Since $1$ is the only element of $F$ which is its own inverse, every set of relations contains a reduced set of relations.
}

{
\begin{defn}\label{def: 1.1}
Let $P(G)$ be the set of finite planar graphs together with the following additional data.
\begin{enumerate}
\item[a)] Every edge should be oriented and labeled with an element of $\cX$.
\item[b)] At every vertex we get an element of $F$ up to cyclic permutation by reading the labels of the incident edges in a counter-clockwise direction around the vertex, the label should be inverted if the corresponding edge is oriented outward. This word should be an element of $\cY$ or an inverse of an element of $\cY$ up to cyclic permutation of the letters.
\item[c)] In the case where different elements of $\cY$ are cyclic permutations of each other or when an element of $\cY$ is a nontrivial cyclic permutation of itself (e.g. $x_1x_2x_1x_2$) a \emph{base point direction} must be indicated at the vertex to indicate the starting point of the word.
\item[d)] Two graphs are equivalent if there is an orientation preserving self-homeomorphism of the plane which takes one graph to the other and preserves all the data above.
\end{enumerate}
\end{defn}
}

{
\begin{eg}\label{eg: 1.2}
Let $G=\ZZ_2=\left<x|x^2\right>$. The following graph with labels is an element of $P(G)$.
\begin{center}
\begin{tikzpicture}
\draw[thick] (0,0) circle[radius=12mm];
\draw[thick] (0,1.1)--(0,1.3)node[above]{$\ast$};
\draw[thick] (0,-1.3)--(0,-1.1) node[above]{$\ast$};
\draw[thick] (1.1,0)--(1.2,0.1)--(1.3,0) node[right]{$x$};
\draw[thick] (-1.1,0)--(-1.2,0.1)--(-1.3,0) node[left]{$x$};
\end{tikzpicture}
\end{center}
The asterisks indicate the base point directions at the two vertices. The relation at the top vertex is $x^2$ and at the bottom it is $x^{-2}$.
\end{eg}
}

{
$P(G)$ is a commutative monoid where addition is given by disjoint union and the empty graph is the identity. By modding out an equivalence relation we shall make $P(G)$ into a $G$-module with is isomorphic to $\ker\partial_2$.

\begin{defn}\label{def: 1.3}
Let $\overline P(G)$ be the quotient of $P(G)$ by the following relations which we call \emph{deformations}.
\begin{enumerate}
\item[a)] If a graph contains a circular edge with no vertices on it and nothing inside the circle, then this edge can be eliminated.
\item[b)] If two edges (or two portions of one edge) have the same label and opposite orientation they can be connected by a concordance if there is nothing between them.
\begin{center}
\begin{tikzpicture}
\begin{scope}[xshift=-4cm]
\draw[thick,dashed] (0,1.2)--(-1,1.2);
\draw[thick,dashed] (0,-1.2)--(-1,-1.2);
\clip (0,-1.3)rectangle(2,1.3);
\draw[thick] (0,0) circle[radius=12mm];
\draw[thick] (1.1,0)--(1.2,0.1)--(1.3,0) node[right]{$x$};
\draw[thick] (-1.1,0)--(-1.2,0.1)--(-1.3,0) node[left]{$x$};
\end{scope}

\begin{scope}
\draw[thick,dashed] (0,1.2)--(1,1.2);
\draw[thick,dashed] (0,-1.2)--(1,-1.2);
\clip (0,-1.3)rectangle(-2,1.3);
\draw[thick] (0,0) circle[radius=12mm];
\draw[thick] (-1.1,0)--(-1.2,-0.1)--(-1.3,0) node[left]{$x$};
\end{scope}
\draw (2,0)node{$\Rightarrow$};
\begin{scope}[xshift=4cm]
\draw[thick,dashed] (0,1.2)--(-1,1.2) (3,1.2)--(4,1.2);
\draw[thick,dashed] (0,-1.2)--(-1,-1.2) (3,-1.2)--(4,-1.2);
\draw[thick] (0,1.2) .. controls (.5,1.2) and (1,.6) .. (1.5,.6);
\draw[thick] (3,1.2) .. controls (2.5,1.2) and (2,.6) .. (1.5,.6);
\draw[thick] (0,-1.2) .. controls (.5,-1.2) and (1,-.6) .. (1.5,-.6);
\draw[thick] (3,-1.2) .. controls (2.5,-1.2) and (2,-.6) .. (1.5,-.6);
\draw[thick] (1.4,-.5)--(1.5,-.6)--(1.4,-.7);
\draw (1.5,-.65) node[below]{$x$};
\draw[thick] (1.6,.5)--(1.5,.6)--(1.6,.7);
\draw (1.5,.65) node[above]{$x$};
\end{scope}
\end{tikzpicture}
\end{center}
\item[c)] Two vertices can be canceled if the associated relations are inverses of each other and if there is a path disjoint from the graph connecting the two base point directions.
\begin{center}
\begin{tikzpicture}
\begin{scope}
	\draw[thick] (0,0) --(2,1) node[right]{$x_1$};
	\draw[thick,->] (0,0)--(1.5,.75);
	\draw[thick,->] (0,0)--(1.5,-.75);
	\draw[thick] (0,0) --(2,.3)node[right]{$x_2$};
	\draw[thick,->] (2,0.3)--(.66,.1);
	\draw[thick] (0,0) --(2,-1)node[right]{$x_n$};
	\draw[dashed] (2.25,.05)--(2.25,-.7);
	\draw(0,0) node[left]{$\ast$};
	\draw (0,-1)node[below]{$y^{-1}=x_n^{-1}\cdots x_2x_1^{-1}$};
\end{scope}
\begin{scope}[xshift=-3cm]
	\draw[thick] (0,0) --(-2,1) node[left]{$x_1$};
	\draw[thick,->] (-2,1)--(-1.5,.75);
	\draw[thick,->] (-2,-1)--(-1.5,-.75);
	\draw[thick] (0,0) --(-2,.3)node[left]{$x_2$};
	\draw[thick,->] (0,0)--(-1,.15);
	\draw[thick] (0,0) --(-2,-1)node[left]{$x_n$};
	\draw[dashed] (-2.25,.05)--(-2.25,-.7);
	\draw(0,0) node[right]{$\ast$};
	\draw (-.2,1)node[above]{$y=x_1x_2^{-1}\cdots x_n$};
\end{scope}
\begin{scope}[xshift=-1.6cm,yshift=-2.5cm]
\draw[thick] (0,0)--(0,.5);
\draw[thick] (.1,0)--(.1,.5);
\draw[thick] (-.1,.1)--(.05,-.1)--(.2,.1);
\end{scope}
\begin{scope}[xshift=-1.5cm,yshift=-4.5cm]
\draw[thick] (2,0)--(-2,0) node[left]{$x_n$};
\draw[thick,->] (-2,0)--(0,0);
\draw[thick,->] (2,1)--(0,1);
\draw[thick] (2,1)--(-2,1)node[left]{$x_2$};
\draw[thick] (2,1.4)--(-2,1.4)node[left]{$x_1$};
\draw[thick,->] (-2,1.4)--(0,1.4);
\draw (-2.4,.6) node{$\vdots$};
\draw (0,.5) node{$\cdots$};
\end{scope}
\end{tikzpicture}
\end{center}
\end{enumerate}
\end{defn}
}

{
$\overline P(G)$ is an abelian group. The negative of a graph is given by its mirror image with the same labels but opposite orientations on the edges. $F$ acts on $\overline P(G)$ on the left. The action of $x\in\cX$ on a graph is given by enclosing the graph by a circular edge oriented clockwise and labeled with $x$. The same procedure with counterclockwise orientation is the action of $x^{-1}$. The action of $xx^{-1}$ can easily be seen to be trivial by deformations (b),(a).

If $y\in\cY\subset F$ and $P\in \overline P(G)$ then the following deformation shows that $yP=P$.
\begin{center}
\begin{tikzpicture}
\begin{scope}
	\draw[thick] (0,0) circle[radius=1cm];
	\draw (0,0) node{$P$};
	\draw[thick] (.9,.1)--(1,0)--(1.1,.1) node[right]{$y$};
	\draw (-2,0) node{$yP=$};
\end{scope}
\begin{scope}[xshift=4cm]
	\draw[thick] (0,0) circle[radius=1cm];
	\draw (0,0) node{$P$};
	\draw[thick] (.9,.1)--(1,0)--(1.1,.1) node[right]{$y$};
	\draw (-2,0) node{$=  $};
	\draw (-1.8,0) node[above]{\tiny$(c)^{-1}$};
	\draw[fill,white](0,-1) circle[radius=2mm];
	\draw (.15,-1) node{$\ast$};
	\draw (-.15,-1) node{$\ast$};
\end{scope}
\begin{scope}[xshift=8cm]
	\draw[thick] (0,0) circle[radius=1cm];
	\draw (0,-1.4) node{$P$};
	\draw[thick] (.9,.1)--(1,0)--(1.1,.1) node[right]{$y$};
	\draw (-2,0) node{$=  $};
	\draw[fill,white](0,-1) circle[radius=2mm];
	\draw (.15,-1) node{$\ast$};
	\draw (-.15,-1) node{$\ast$};
\end{scope}
\begin{scope}[yshift=-2.5cm]
	\draw[thick] (0,0) circle[radius=.5cm];
	\draw (0,-1) node{$P$};
	\draw[thick] (.4,.1)--(.5,0)--(.6,.1) node[right]{$y$};
	\draw (-1.5,0) node{$=  $};
	\draw (-1.5,0) node[above]{\tiny$(c)$};
\end{scope}
\begin{scope}[yshift=-2.5cm,xshift=3.5cm]
	\draw (0,0) node{$P$};
	\draw (-1.5,0) node{$=  $};
	\draw (-1.5,0) node[above]{\tiny$(a)$};
\end{scope}
\end{tikzpicture}
\end{center}
The second deformation is an isotopy which pushes $P$ through the gap between the two base point directions for $y,y^{-1}$. $\overline P(G)$ is thus a left $G$-module.
}

{
\begin{thm}\label{Peiffer theorem}\label{thm: 1.4}
If $\cY$ is a reduced set of relations for $G$ then $\overline P(G)\cong \ker\partial_2$ as $G$-modules.
\end{thm}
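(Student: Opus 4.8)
The plan is to exhibit an explicit isomorphism $\varphi\colon\overline P(G)\to\ker\partial_2$ and an inverse to it, both read off from the combinatorics of the underlying planar graph.

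\emph{Construction of $\varphi$.} Given a picture $P$, view its graph as embedded in $\RR^2$ and label each region of $\RR^2\setminus P$ by an element of $G$: assign $1$ to the unbounded region and decree that crossing an edge labelled $x\in\cX$ multiplies the label by $x^{\pm1}$, the sign fixed by the edge orientation. This rule is consistent because the monodromy around any loop in $\RR^2\setminus P$ is a product of monodromies around single vertices, and the monodromy around a vertex $v$ is exactly the word $w_v\in\cY^{\pm1}$ read there, which equals $1$ in $G=\langle\cX\mid\cY\rangle$. For each vertex $v$ let $g_v\in G$ be the label of the region into which the base-point direction at $v$ points, and for $y\in\cY$ put $\iota(y)=[y]$ and $\iota(y^{-1})=-y^{-1}[y]\in\ZZ[G]\langle\cY\rangle$; since $\cY$ is reduced, every $w_v$ is one of these two. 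Set $\varphi(P)=\sum_v g_v\,\iota(w_v)$. A bookkeeping computation organised edge-by-edge shows $\partial_2\varphi(P)=0$: an edge $e$ with label $x_e$ contributes to $\partial_2(g_v\iota(w_v))$, at each of its two endpoints, a term $\pm h_e[x_e]$ with the same coefficient $h_e\in G$ (the label of the region abutting a fixed side of $e$, which is constant along $e$ because $e$ has no interior vertices) and with opposite signs, because one endpoint reads $e$ outward and the other inward. Hence $\varphi(P)\in\ker\partial_2$.

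\emph{$\varphi$ descends and is $G$-linear.} As $\varphi$ is additive for disjoint union it suffices to check invariance under the three deformations. For (a) the deleted circle carries no vertices. For (b) a concordance neither creates nor destroys vertices and leaves unchanged the distinguished region of every surviving vertex. For (c) the path demanded between the two base-point directions forces the two cancelled vertices, carrying $y$ and $y^{-1}$, to have the same distinguished region, of some label $g$, so $\varphi$ changes by $g\iota(y)+g\iota(y^{-1})=g(1-y^{-1})[y]=0$ since $y=1$ in $G$, and no other vertex is touched. Thus $\varphi$ factors through $\overline P(G)$; enclosing $P$ in an $x$-labelled circle left-multiplies every region label by $x$, so $\varphi(xP)=x\varphi(P)$.

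\emph{Bijectivity.} For surjectivity, write $\xi\in\ker\partial_2$ as $\sum_j\varepsilon_jg_j[y_j]$ and assemble a picture from ``stars'': for each $j$ a single vertex carrying $y_j$ (with orientation governed by $\varepsilon_j$) whose legs are half-edges tagged by the $\cX$-letter, orientation, and region of the corresponding letter of $y_j$. The identity $\partial_2\xi=0$ says precisely that for each pair (label $x$, group element $h$) the out-oriented and in-oriented legs with this tag occur in equal numbers; joining them in matched pairs by edges yields a picture $P$ with $\varphi(P)=\xi$, once the joining arcs have been drawn disjointly embedded in the plane. For injectivity, if $\varphi(P)=0$ then $\sum_v\pm g_v[y_v]=0$ in the free module $\ZZ[G]\langle\cY\rangle$, so the vertices of $P$ partition into pairs carrying inverse relations and sharing a distinguished-region label; using (b) to push intervening edges aside and (a) to delete the vacuous circles thereby produced, one creates for each pair an embedded path joining their base-point directions and missing the rest of $P$, after which (c) removes the pair. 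Iterating leaves the empty picture, so $P=0$ in $\overline P(G)$. Equivalently, the star construction defines a map $\psi$ with $\varphi\psi=\mathrm{id}$, and this reduction argument is the statement that every picture lies, up to deformation, in the image of $\psi$.

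\emph{Where the work is.} The derivation-type identities and the three invariance checks are formal. The content of the theorem is the geometry in the previous paragraph: for surjectivity one must verify that the abstract pairing of legs forced by $\partial_2\xi=0$ can always be realised by non-crossing arcs after a suitable ordering of the stars and choice of pairing; for injectivity one must promote an algebraic cancellation of vertices to an actual finite sequence of deformations, i.e. a normal-form statement combing any picture with $\varphi(P)=0$ into cancelling pairs. Carrying out these two realisation arguments while keeping track of edge orientations and base-point directions is the crux, and I expect the injectivity side to be the harder of the two.
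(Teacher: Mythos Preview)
Your map $\varphi$ is precisely the paper's isomorphism post-composed with the abelianization $Q(G)\to Q(G)/Q(G)'\cong\ZZ[G]\langle\cY\rangle$, so the target is right and the invariance checks are fine. The difference is that the paper does not try to prove bijectivity directly at the $\ZZ[G]$-level; it introduces the Peiffer group $Q(G)$ generated by pairs $(f,y)$ with $f\in F$, shows $\overline P(G)\cong\ker(Q(G)\to R)$, and then uses the algebraic facts that this kernel is central and $R$ is free to conclude that $\ker\varphi\hookrightarrow Q(G)/Q(G)'\cong\ZZ[G]\langle\cY\rangle$ with image exactly $\ker(\ZZ[G]\langle\cY\rangle\to R/R')=\ker\partial_2$.

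Where your direct argument breaks is exactly where you suspect. For injectivity: two vertices whose distinguished regions carry the same $G$-label need not lie in the \emph{same} region of $\RR^2\setminus P$; the obstruction to connecting them is a word in $R$, and deformation (c) requires an actual embedded path, not merely agreement in $G$. Deformations (a),(b) alone cannot push an edge past a vertex, so ``using (b) to push intervening edges aside'' does not work as stated. One can insert and later cancel auxiliary vertex pairs to absorb the $R$-word, but organizing this is essentially rediscovering the Peiffer relations. For surjectivity the issue is dual: matching legs by their $(x,h)$ tag with $h\in G$ does not guarantee a \emph{planar} matching. Reading the free ends around a large circle yields a word in $F$ which lies in $R'$ (this is what $\xi\in\ker\partial_2$ buys you via $R/R'\cong\ker\partial_1$), but not $1\in F$, so the arcs need not close up without crossings; again one must add cancelling pairs to trivialize in $F$ first. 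The paper's passage through $Q(G)$ with coefficients in $F$ rather than $G$ is exactly the device that controls this planarity, and the centrality of $\ker(Q(G)\to R)$ is what replaces your unproved combing step.
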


{
\color{blue}
This theorem is essentially due to Peiffer \cite{Peiffer}. What we call ``pictures'' are sometimes called ``Peiffer diagrams''. Equation $(\ast)$ below are the ``Peiffer relations''. See \cite{IT14} for more historical comments.
}\vs2

\noindent\emph{Proof}: We shall show that $\overline P(G)$ is the kernel of the natural map $\ZZ[G]\left<\cY\right>\to R/R'$. Then when we show that $R/R'\cong \ker\partial_1=\im \partial_2$ and that the two maps are consistent then we will know that $\overline P(G)\cong \ker\partial_2$.
}

{
\begin{defn}\label{def: 1.5}
Let $Q(G)$ represent the group generated by pairs $(f,y)$ where $f\in F$ and $y\in\cY$ modulo the relations
\[
(f,y)(f',y')(f,y)^{-1}=(fyf^{-1}f',y').\tag{$\ast$}
\]
Let $\varphi:Q(G)\to R$ be the homomorphism given by $\varphi(f,y)=fyf^{-1}$.
\end{defn}

\begin{lem}\label{lem: 1.6}
$\ker \varphi\cong\overline P(G)$ and it is contained in the center of $Q(G)$.
\end{lem}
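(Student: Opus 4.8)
The plan is to construct the isomorphism $\overline P(G) \cong \ker\varphi$ directly, by building a dictionary between planar graphs and elements of the group $Q(G)$. Given a picture $P \in P(G)$, I would first choose a basepoint $\ast_0$ in the plane outside the graph (or more precisely, in the unbounded region), and then for each vertex $v$ of $P$ choose an embedded path from $\ast_0$ to the basepoint direction at $v$, avoiding all other vertices and meeting edges transversally. Reading the edge labels crossed along this path (inverted according to orientation) produces an element $f_v \in F$, and the vertex $v$ itself carries a relator $y_v \in \cY^{\pm 1}$; the pair $(f_v, y_v)$ is an element of $Q(G)$. The claim is that the product $\prod_v (f_v, y_v)$, taken in an order compatible with the planar structure (say, by a sweep across the plane), is a well-defined element of $Q(G)$, independent of all choices, and that it lies in $\ker\varphi$ because the corresponding element $\prod_v f_v y_v^{\pm 1} f_v^{-1}$ of $R$ is trivial in $R$ — indeed this product is a product of ``spherical'' relators whose total boundary bounds in the plane, so after a sequence of Peiffer-type cancellations it collapses. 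I would verify that each of the three deformations (a), (b), (c) of Definition~\ref{def: 1.3} leaves the class in $Q(G)$ unchanged: (a) removes a null picture and changes nothing, (b) is an isotopy that at worst conjugates some $f_v$ by a letter $xx^{-1}$, and (c) cancels two pairs $(f, y)(f, y^{-1})$, which by the relation $(\ast)$ is trivial once the basepoint directions are joined.

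Conversely, given an element of $\ker\varphi$, write it as a word $\prod_i (f_i, y_i)^{\epsilon_i}$ in the generators of $Q(G)$; I would realize this word geometrically by placing, left to right along a horizontal line, one vertex for each letter $(f_i, y_i)^{\epsilon_i}$ with the relator $y_i^{\epsilon_i}$ read around it, and attaching to each vertex a ``tail'' going up to a common upper region, the tail of the $i$-th vertex labeled by the word $f_i$. The condition that the element lies in $\ker\varphi$, i.e.\ that $\prod f_i y_i^{\epsilon_i} f_i^{-1} = 1$ in $F$, means precisely that all the dangling edge-ends at the top can be paired off and joined by arcs (after free reduction, which is implemented by deformation (b) creating and cancelling opposite-oriented edge pairs). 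This produces a genuine picture, and one checks the Peiffer relation $(\ast)$ is respected: moving a vertex past another vertex in the plane corresponds exactly to conjugating one pair by another, which $(\ast)$ says is the same as modifying the $f$-label by $fyf^{-1}$ — geometrically, the tail of the moved vertex now has to cross the crossed-over vertex, picking up its boundary word $fyf^{-1}$.

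The two constructions are mutually inverse up to the equivalence relations on both sides, which gives the isomorphism of abelian groups $\ker\varphi \cong \overline P(G)$. For the centrality statement: any element of $\ker\varphi$ maps to $1$ in $R$, so in the relation $(\ast)$, for $\kappa \in \ker\varphi$ written as a word and any generator $(f', y')$, conjugating $(f',y')$ by $\kappa$ replaces the relevant $f''$-factor by $(\varphi(\kappa))\, f'' \,(\text{stuff})$; but $\varphi(\kappa) = 1$, so conjugation by $\kappa$ is trivial, meaning $\kappa$ commutes with every generator, hence with all of $Q(G)$. Equivalently, on the picture side, enclosing a picture $P$ by the boundary circle of $\kappa$ — which bounds nothing, since $\varphi(\kappa)=1$ — can be absorbed by deformations (a) and (b), so $\kappa P = P$.

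The main obstacle I anticipate is the \emph{well-definedness} of the forward map $\overline P(G) \to Q(G)$: one must show the product over vertices is independent of (i) the choice of connecting paths from $\ast_0$, (ii) the chosen linear order of multiplication, and (iii) the planar isotopy class representative. Point (i) is handled because two paths differ by crossing loops of edges, and crossing a loop labeled $x$ and back contributes $xx^{-1}$, which is trivial in $F$ hence doesn't change $(f_v, y_v)$; but when the loop is a vertex-bounding cycle one must use relation $(\ast)$, and getting the bookkeeping of \emph{which} vertices lie inside \emph{which} loop exactly right — so that the conjugations in $(\ast)$ match the edge-words picked up — is the delicate combinatorial core. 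Point (ii) reduces to (i) and (iii). This is exactly the content that makes $(\ast)$ the ``Peiffer relations'': the relation $(\ast)$ is forced on us precisely so that reordering vertices in the plane is allowed, and I expect the bulk of the work to be a careful induction on the number of vertices, peeling off an ``outermost'' vertex (one adjacent to the unbounded region) at each step.
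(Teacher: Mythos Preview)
Your proposal is correct and follows essentially the same approach as the paper: your paths from a basepoint $\ast_0$ in the unbounded region to each vertex are exactly the paper's ``lines from the base point direction to $\infty$'', your product in planar sweep order is the paper's product ``in a clockwise direction near $\infty$'', and your inverse construction (vertices with tails, then connect loose ends using triviality of $\prod f_i y_i f_i^{-1}$ in $F$) is identical to the paper's $\psi^{-1}$. The paper handles your anticipated obstacle (well-definedness under change of paths and order) exactly as you suggest, by observing that moving one line across a vertex changes the product by an instance of $(\ast)$, and your centrality argument via $\varphi(\kappa)=1$ is the content of the paper's one-line remark that ``$(\ast)$ centralizes the kernel of $\varphi$''.
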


\begin{proof}
It is clear that $(\ast)$ contralizes the kernel of $\varphi$. Moreover since $R$ is centerless in most cases (i.e. unless $\cX$ has only one element) we have $\ker\varphi=ZQ(G)$. Since $R$ is free, $Q(G)$ is group isomorphic to $R\times \ker\varphi$. Since $\ker\varphi$ is always abelian, the commutator subgroups $Q(G)',R'$ are isomorphic and the isomorphism is induced by $\varphi$.

It is clear from $(\ast)$ that $Q(G)/Q(G)'\cong \ZZ[G]\left<\cY\right>$ and that the induced map $\ZZ[G]\left<\cY\right>\to R/R'$ is the $G$-map which sends $y\in\cY$ to its $R'$ coset $yR'$.

An isomorphism $\psi: \overline P(G)\to \ker\varphi$ is given as follows. Let $P$ be a graph representing an element of $\overline P(G)$. From every vertex of $P$ draw a line from its base point direction to $\infty$. These lines can be chosen to be disjoint from all other vertices and from each other. To each of these lines we will associate a pair $(f,y)$ where $f\in F, y\in \cY\cup \cY^{-1}$. If we use the convention $(f,y^{-1})=(f,y)^{-1}$ we get a generator of $Q(G)$. By multiplying these together we get an element $(f_1,y_1)\cdots(f_n,y_n)\in Q(G)$ if the lines are numbered in a clockwise direction near $\infty$. This will be $\psi(P)$.

The association of the pair $(f,y)$ is given as follows. $y$ is just the relation given by the vertex. $f$ is a product of elements of $\cX\cup \cX^{-1}$ which when read from left to right give the labels from the edges which cross the line as we come from $\infty$. The label is inverted if the edge is oriented from left to right across the line.

To show that $\psi(P)$ is well-defined we must show that it is independent of the choice of the lines and their ordering, and we must show that $\psi(P)$ is invariant under deformation of $P$. An isotopy of the lines does not change $\psi(P)$ because the cyclic ordering of the lines is unchanged and the elements $f_i$ cannot change without going through relations (at the vertices). If the $i$-th line passes through the $i-1$ st vertex, i.e. if it is rechosen to go before the $i-1$ st line, then $f_i$ changes to $f_{i-1}y_{i-1}f_{i-1}^{-1}f_i$ and $\psi(P)$ is changed by the relation $(\ast)$. A sequence of such changes takes any choice to any other choice.

$\psi(P)$ is invariant under cyclic permutation of the generators $(f_i,y_i)$. This is easily seen to be true for any central element of $Q(G)$. To see that $\psi(P)\in\ker\varphi\subset ZQ(G)$ pull each vertex out to $\infty$ along the attached line. The edges will then read $f_1y_1f_1^{-1}\cdots f_ny_nf_n^{-1}$ clockwise near $\infty$. Since the vertices are all gone this element is 1 in $F$. 

$\psi(P)$ is invariant under deformation of $P$. Deformations (a), (b) obviously don't matter. For deformation (c) choose the lines for the two canceling vertices adjacent to each other. Then they will contribute canceling elements $(f,y),(f,y^{-1})$ of $Q(G)$.

If $P$ is the disjoint union of two graphs $P_1,P_2$ we can put the graphs in separate half planes and choose the lines inside the respective half planes. Then we get $\psi(P)=\psi(P_1)\psi(P_2)$.

We shall now define $\psi^{-1}:\ker\varphi\to \overline P(G)$. The graph for $\psi^{-1}((f_1,y_1)\cdots (f_n,y_n)$ should have $n$ vertices. Take any $n$ distinct points in the plane and draw disjoint lines out to $\infty$. Using the lines as base point directions put in the necessary edges with labels and orientations across the $i$-th line. Since $f_1y_1f_1^{-1}\cdots f_ny_nf_n^{-1}=1$ in $F$, the loose ends of the partial edges can be connected together without intersections while respecting the labels and orientations of the edges. The last step involves a choice which is unique up to deformations (a), (b).

The proof that $\psi^{-1}$ is a well-defined homomorphism is analogous to the proof for $\psi$. $\psi^{-1}\psi(P)$ is concordant to $P$, i.e. they are equivalent modulo deformations (a), (b). If $z\in \ker\varphi$, $\psi\psi^{-1}(z)=z$ is obvious from the definition of $\psi^{-1}$.

The $G$-equivariance of $\psi$ is left to the reader.
\end{proof} 
}

{
\begin{lem}\label{lem: 1.7}
$R/R'\cong \ker\partial_1$ as $G$-modules.
\end{lem}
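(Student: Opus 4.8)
The plan is to produce the isomorphism directly from the Fox-derivative description of $\partial_1$ and $\partial_2$, identifying $R/R'$ with a submodule of the free module $\ZZ[G]\langle\cX\rangle$. First I would recall that $R$, the kernel of $F = F(\cX) \to G$, is a free group on which $F$ (hence $G = F/R$) acts by conjugation, so $R/R'$ is a left $\ZZ[G]$-module. The key tool is the map $d\colon R \to \ZZ[G]\langle\cX\rangle$ sending $r \mapsto \sum_{x\in\cX} \overline{(\partial r/\partial x)}\,[x]$, where $\partial r/\partial x \in \ZZ[F]$ is the Fox free derivative and the overline denotes the image in $\ZZ[G]$. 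Standard Fox-calculus identities ($\partial(uv)/\partial x = \partial u/\partial x + u\,\partial v/\partial x$ and $\partial(u^{-1})/\partial x = -u^{-1}\partial u/\partial x$) show that $d$ is a crossed homomorphism on $F$, and since it vanishes on $F'\cap R$-type commutator behavior restricted to $R$ it factors through $R/R'$; moreover for $r \in R$ (so $\overline{r} = 1$ in $G$) the crossed-homomorphism identity degenerates to additivity, making $\bar d\colon R/R' \to \ZZ[G]\langle\cX\rangle$ a genuine $\ZZ[G]$-module homomorphism. One then checks $G$-equivariance: conjugating $r$ by a lift of $g\in G$ multiplies $\partial r/\partial x$ appropriately so that $\bar d(g\cdot rR') = g\cdot\bar d(rR')$.

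Next I would compute the image and kernel of $\bar d$. The image lands in $\ker\partial_1$: applying $\partial_1[x] = x - 1$ and using the fundamental Fox identity $r - 1 = \sum_x (\partial r/\partial x)(x-1)$ in $\ZZ[F]$, passing to $\ZZ[G]$ gives $\partial_1(\bar d(r)) = \overline{r} - 1 = 0$. Conversely, surjectivity onto $\ker\partial_1$ follows because $\ker\partial_1$ is exactly the first homology of the relevant stage of the resolution and one can lift any such element by multiplying generators of $R$; more concretely, $R$ is generated as a normal subgroup of $F$ by $\cY$, and $\bar d$ sends the conjugates $fyf^{-1}$ to $\overline{f}\cdot\bar d(y)$, whose $\ZZ[G]$-span is all of $\ker\partial_1$ by the exactness already established in the excerpt. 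For injectivity I would invoke the classical fact (Blanchfield, or the Crowell exact sequence / Lyndon) that the Fox-derivative map $R/R' \to \ZZ[G]\langle\cX\rangle$ is injective — equivalently, that $R/R'$ injects into the free module because $H_1$ of the universal cover is torsion-free in the appropriate sense; alternatively this injectivity can be read off from the fact that $\bar d$ is split by the chain contraction of $\widetilde{BG}$ in degree one.

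The cleanest route, and the one I would actually write, is to observe that the $G$-equivariant chain complex of $\widetilde{BG}$ described in the excerpt has $C_1 = \ZZ[G]\langle\cX\rangle$, $C_2 = \ZZ[G]\langle\cY\rangle$, and that the Hurewicz-type identification of $\pi_1$ of the $2$-complex with $R/R'$ (via $r \mapsto$ the cellular $1$-chain it traces out) is precisely $\bar d$; this identification is standard for presentation $2$-complexes. Since $\ker\partial_1 = \im\partial_2$ by exactness of that complex in degree one, and since the map $R/R' \to C_1$ has image $\im\partial_2$ and is injective (the $1$-skeleton being aspherical, $\pi_1$ embeds), we get $R/R' \cong \ker\partial_1$ as $G$-modules.

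The main obstacle I anticipate is the injectivity of $\bar d$: $G$-equivariance, well-definedness, and the image computation are all formal consequences of Fox calculus, but showing $\bar d$ has trivial kernel genuinely uses that the presentation $2$-complex has aspherical $1$-skeleton (a wedge of circles) together with the fact that a normal subgroup of a free group is free — one must argue that a nontrivial element of $R/R'$ traces a nontrivial reduced edge-loop and hence a nonzero $1$-chain. I would handle this either by the explicit Fox-calculus argument that $\partial r/\partial x = 0$ for all $x$ forces $r = 1$ in $F$ (a known lemma), or by citing the standard exact sequence $0 \to R/R' \to \ZZ[G]\langle\cX\rangle \xrightarrow{\partial_1} \ZZ[G] \xrightarrow{\varepsilon} \ZZ \to 0$ (the beginning of the $\ZZ[G]$-free resolution coming from a free resolution of the trivial module over the free group), which is exactly the content being asserted.
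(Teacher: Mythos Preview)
Your proposal is correct and follows essentially the same route as the paper. The paper defines exactly your map $\bar d$ (written $\partial'$, using the same $\partial_2$-style formula rather than the Fox-derivative language), checks it is well-defined and $G$-equivariant, and then gives precisely your ``cleanest route'': the $1$-skeleton of $\widetilde{BG}$ is a classifying space for $R$, so $\ker\partial_1 = Z_1(\widetilde{BG}) = H_1(\text{$1$-skeleton}) \cong R/R'$. One small phrasing slip: where you write ``the Hurewicz-type identification of $\pi_1$ of the $2$-complex with $R/R'$'' you mean $\pi_1$ of the $1$-skeleton of the universal cover (which is $R$), not of the $2$-complex (whose $\pi_1$ is $G$ downstairs and trivial upstairs); the rest of that paragraph makes clear you have the right picture.
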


\begin{proof}
Define a homomorphism $\partial:R\to \ZZ[G]\left<\cX\right>$ with the same formula as $\partial_2$, i.e. if $r=x_1\cdots x_n$, $\partial r=[x_1]+x_1[x_2]+\cdots+ x_1\dots x_{n-1}[x_n]$. Then $\partial r_1r_2=\partial r_1+r_1\partial r_2=\partial r_1+\partial r_2$. $\partial$ induces a homomorphism $\partial':R/R'\to \ZZ[G]\left<\cX\right>$ which is easily seen to be $G$-equivariant. One can also see easily that $\partial_2$ is given by the composition $\ZZ[G]\left<\cY\right>\to R/R'\to \ZZ[G]\left<\cX\right>$.

We shall give a ``geometric'' proof that $\partial'$ is an isomophism. $\ker\partial_1$ can be interpreted as the group of $1$-cycles of the universal covering space $\widetilde{BG}$ of $BG$ with the induced equivariant cell structure. Since $BF$ is the 1-skeleton of $BG$, the 1-skeleton of $\widetilde{BG}$ is a classifying space for $R$. Thus $\ker\partial_1\cong H_1R\cong R/R'$. That this isomorphism is given by $\partial'$ is straightforward.
\end{proof}

This concludes the proof of \ref{Peiffer theorem}.
}

{
\begin{cor}\label{cor: 1.8}
The following sequences are exact.
\[
(a)\qquad 0\to H_3G\to \overline P(G)\otimes_G \ZZ\to \ZZ\left<\cY\right>\to R/R'\otimes_G\ZZ\to 0
\]
\[
(b)\qquad 0\to H_2G\to R/R'\otimes_G \ZZ\to \ZZ\left<\cX\right>\to H_1G\to 0\qquad\ \ 
\]
where $\ZZ\left<\cX\right>$ denotes the free abelian group generated by $\cX$.
\end{cor}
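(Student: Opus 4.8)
The plan is to deduce both exact sequences by applying the functor $(-)\otimes_G\ZZ = H_0(G;-)$ to the free $G$-resolution of $\ZZ$ displayed at the start of the section, using the identifications from Theorem 1.4 and Lemmas 1.6, 1.7, together with the standard long exact sequence relating Tor-groups (equivalently, the spectral sequence / dimension-shifting argument already sketched in the text where $H_3G\cong H_2(G;\ker\varepsilon)\cong H_1(G;\ker\partial_1)$). Concretely, set $C_* $ to be the complex $\ZZ[G]\leftarrow \ZZ[G]\langle\cX\rangle\leftarrow\ZZ[G]\langle\cY\rangle$ with the differentials $\partial_1,\partial_2$, and let $K_i=\ker\partial_i$. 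Since each $\ZZ[G]$-module appearing is free, $\Tor^G_n(\ZZ[G]\langle-\rangle,\ZZ)=0$ for $n>0$, so the connecting maps in the long exact sequences for the short exact sequences
$0\to K_1\to \ZZ[G]\langle\cX\rangle\to \ZZ\to 0$ (using $\im\partial_1=\ker\varepsilon$ and $H_1G\cong H_0(G;\ker\varepsilon)$, wait — rather $\ker\varepsilon/\im\partial_1$ contributes) and
$0\to K_2\to \ZZ[G]\langle\cY\rangle\to \im\partial_2\to 0$
will produce exactly the displayed four-term sequences once we splice them correctly.

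For part (b): apply $H_0(G;-)$ to $0\to K_1\to \ZZ[G]\langle\cX\rangle\xrightarrow{\partial_1}\ker\varepsilon\to 0$. Since $\ZZ[G]\langle\cX\rangle$ is free we get $0\to \Tor^G_1(\ker\varepsilon,\ZZ)\to K_1\otimes_G\ZZ\to \ZZ\langle\cX\rangle\to (\ker\varepsilon)\otimes_G\ZZ\to 0$. Now $\Tor^G_1(\ker\varepsilon,\ZZ)\cong H_2G$ (dimension shift through $0\to\ker\varepsilon\to\ZZ[G]\to\ZZ\to 0$) and $(\ker\varepsilon)\otimes_G\ZZ\cong H_1G$ by the same short exact sequence (the image of $\ZZ[G]\otimes_G\ZZ=\ZZ$ hits the augmentation, and the cokernel is $H_1G=\ZZ[G]\otimes_G\ZZ / \ldots$; more precisely $H_0(G;\ker\varepsilon)\to H_0(G;\ZZ[G])\to H_0(G;\ZZ)\to 0$ with the last map an iso forces $H_0(G;\ker\varepsilon)\cong H_1G$). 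Finally $K_1\otimes_G\ZZ=(\ker\partial_1)\otimes_G\ZZ\cong (R/R')\otimes_G\ZZ$ by Lemma 1.7. This yields (b).

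For part (a): apply $H_0(G;-)$ to $0\to K_2\to \ZZ[G]\langle\cY\rangle\xrightarrow{\partial_2}\im\partial_2\to 0$, giving $0\to \Tor^G_1(\im\partial_2,\ZZ)\to K_2\otimes_G\ZZ\to \ZZ\langle\cY\rangle\to (\im\partial_2)\otimes_G\ZZ\to 0$. By Theorem 1.4, $K_2\otimes_G\ZZ\cong \overline P(G)\otimes_G\ZZ$. Since $\im\partial_2=\ker\partial_1=K_1$ and $K_1\otimes_G\ZZ\cong (R/R')\otimes_G\ZZ$ by Lemma 1.7, the rightmost term is $(R/R')\otimes_G\ZZ$. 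It remains to identify $\Tor^G_1(\im\partial_2,\ZZ)$ with $H_3G$: dimension-shift through the two short exact sequences $0\to\ker\varepsilon\to\ZZ[G]\to\ZZ\to0$ and $0\to\im\partial_2=K_1\to\ZZ[G]\langle\cX\rangle\to\ker\varepsilon\to0$, exactly as in the computation $H_3G\cong H_2(G;\ker\varepsilon)\cong H_1(G;\ker\partial_1)$ already recorded in the text, so $\Tor^G_1(K_1,\ZZ)=H_1(G;K_1)\cong H_3G$. This gives (a).

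The only genuine point to check carefully — the step I expect to be the main (minor) obstacle — is the compatibility of all the identifications: that the map $\overline P(G)\otimes_G\ZZ\to\ZZ\langle\cY\rangle$ coming from the inclusion $K_2\hookrightarrow\ZZ[G]\langle\cY\rangle$ agrees, under the isomorphism of Theorem 1.4, with the map induced on pictures by "forgetting the picture and recording its boundary word content in $\ZZ\langle\cY\rangle$", and similarly that $\ZZ\langle\cY\rangle\to(R/R')\otimes_G\ZZ$ is induced by $y\mapsto yR'$ as identified in the proof of Lemma 1.6. Both follow directly by unwinding the definitions of $\psi$ and $\partial'$, since those isomorphisms were constructed precisely to commute with the resolution differentials; one just has to note that tensoring with $\ZZ$ over $G$ is right exact and that $\Tor^G_n$ vanishes on the free modules $\ZZ[G]\langle\cX\rangle$, $\ZZ[G]\langle\cY\rangle$, which is what allows the four-term sequences to be exact on the left as well.
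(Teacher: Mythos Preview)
Your proof is correct and is precisely the argument the paper has in mind: the corollary is stated without a separate proof, but the text just before Theorem~\ref{thm: 1.4} already records the dimension-shifting identifications $H_3G\cong H_2(G;\ker\varepsilon)\cong H_1(G;\ker\partial_1)$ and the resulting long exact sequence in $H_0(G;-)$, which is exactly what you have unwound in detail. Your compatibility check at the end is also the right thing to note, and it goes through for the reasons you give.
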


\begin{eg}\label{eg: 1.9}
$G=\ZZ_2=\left< x|x^2\right>$. The relevant exact sequence is:
\[
\xymatrix{
0 & \ZZ \ar[l]& 
	\ZZ[\ZZ_2] \ar[l]_\vare&
	\ZZ[\ZZ_2] \ar[l]_{x-1}&
	\ZZ[\ZZ_2] \ar[l]_{x+1}&
	\overline P(\ZZ_2) \ar[l]& 0\ar[l]
	} 
\]
Thus $\overline P(\ZZ_2)$ is the cyclic subgroup of $\ZZ[\ZZ_2]$ generated by $x-1$. $\ZZ_2$ acts by negation. The generator of $\overline P(\ZZ_2)$ can be written as an element of $Q(\ZZ_2)$ by $(x,x^2)(1,x^2)^{-1}$ the corresponding graph for which is given in (\ref{eg: 1.2}). This is the only nontrivial element of $\overline P(\ZZ_2)\otimes_{\ZZ_2}\ZZ$ so it represents the generator of $H_3\ZZ_2$.
\end{eg}
}

{
\section{$K_3(\ZZ[\pi])$ and $\overline P(St(\ZZ[\pi])$}

The inclusion $K_3(\ZZ[\pi])\cong H_3St(\ZZ[\pi])\subset H_0(St(\ZZ[\pi]);\overline P(St(\ZZ[\pi])))$ has a natural section which is given by modding out ``second order Steinberg relations.'' Thus every element of $\overline P(St(\ZZ[\pi]))$ represents a well determined element of $K_3(\ZZ[\pi])$. This can also be done for arbitrary rings but the procedure is more complicated because there are 25 percent more Steinberg relations and 80 percent more second order relations.

\begin{defn}\label{def: 2.1}
$St(\ZZ[\pi])=\left<\cX|\cY\right>$ where 
\[
	\cX=\{e_{ij}^u\,|\, i,j \text{ are distinct natural numbers and } u\in\pi\}
\]
$\cY\cup \cY^{-1}$ consists of the following elements of $F$.
\begin{enumerate}
\item $[e_{ij}^u,e_{k\ell}^v]$ where $i\neq\ell$, $j\neq k$, and $u\neq v$ if $i=k$ and $j=\ell$.
\item {\color{blue} $e_{ij}^ue_{jk}^ve_{ij}^{-u}e_{ik}^{-uv}e_{jk}^{-v}$} if $i\neq k$. $e_{ik}^{-uv}$ represents $\left(e_{ik}^{uv}\right)^{-1}$. {\color{blue} (The original relation $[e_{ij}^u,e_{jk}^v]e_{ik}^{-uv}$ is changed to an equivalent ``good commutator relation'' \cite{IT14}.)}
\item  {\color{blue} $e_{jk}^v e_{ik}^{uv}e_{ij}^u e_{jk}^{-v}e_{ij}^{-u}$} if $i\neq k$.
\end{enumerate}
$\cY$ will represent any maximal reduced subset of $\cY\cup \cY^{-1}$.
\end{defn}

We shall not need an explicit choice of $\cY$ provided we make the following conventions. $\ZZ[St(\ZZ[\pi])]\left<\cY\right>$ will denote the $St(\ZZ[\pi])$-module generated by $\cY\cup \cY^{-1}$ modulo the relation $[y]+[y^{-1}]=0$. And similarly for $\ZZ\left<\cY\right>$.

Since $H_iSt(\ZZ[\pi])=0$ for $i=1,2$ (see \cite{M}) (\ref{cor: 1.8}) produces the exact sequence
\[
\xymatrix{
0 \ar[r]& 
	K_3(\ZZ[\pi] \ar[r]&
	\overline P(St(\ZZ[\pi]))\otimes_{St}\ZZ  \ar[r]^(.65){h}&
	\ZZ\left<\cY\right>  \ar[r]^\partial& 
	\ZZ\left<\cX\right>  \ar[r]& 0.
	}
\]
}

{
\begin{lem}\label{lem: 2.2}
The kernel of $\partial: \ZZ\left<\cY\right>\to \ZZ\left<\cX\right>$ is generated by the images under $h$ of the following elements of $\overline P(St(\ZZ[\pi]))$.
\begin{enumerate}
\item[a)] $m\neq i,j$, $\ell\neq j,k$:
{
\begin{center}
\begin{tikzpicture}[scale=1.3]
\begin{scope}
\clip rectangle (-2,-1.3) rectangle (0,1.5);
\draw[thick] (0,0) ellipse [x radius=1.4cm,y radius=1.21cm];
\end{scope}
\draw (0,1.2) node[below]{$\ast$};
\draw (0,-1.2) node[above]{$\ast$};
\draw (.75,-.25) node[left]{$\ast$};
\draw (-.75,-.25) node[right]{$\ast$};
\draw (1.86,-.8) node[left]{$\ast$};
\draw (-1.86,-.8) node[right]{$\ast$};
\draw (-1.35,-.4) node[right]{$\ast$};
		\draw[thick,<-] (-1.22,.6)--(-1.12,.73); 
		\draw[thick,->] (-1.22,-.6)--(-1.12,-.73);
		\draw[thick] (-1.1,.7) node[left]{\small$e_{ik}^{uv}$};
\begin{scope}[xshift=-.7cm]
	\draw[thick] (0,0) circle[radius=1.4cm];
		\draw[thick,<-] (-.985,.99)--(-.88,1.09); 
		\draw (-1,1.1) node[left]{\small$e_{ij}^u$};
		\draw[thick,->] (1.27,.6)--(1.2,.73);
		\draw[thick,->] (-.985,-.99)--(-.88,-1.09);
		\draw[thick,<-] (1.27,-.6)--(1.2,-.73);
\end{scope}
\begin{scope}[xshift=.7cm]
	\draw[thick] (0,0) circle[radius=1.4cm];
	\draw[thick,->] (.99,.99)--(.89,1.09); 
		\draw[thick,<-] (-1.27,.6)--(-1.195,.73);
		\draw (1,1.1) node[right]{\small$e_{jk}^v$};
	\draw[thick,<-] (.99,-.99)--(.89,-1.09);
		\draw[thick,->] (-1.27,-.6)--(-1.195,-.73);
\end{scope}
\begin{scope}[yshift=-1.3cm]
	\draw[thick] (0,0) ellipse[x radius=2cm, y radius=1.3cm];
	\draw[thick,->] (2,0)--(2,.1); 
	\draw (2,0) node[right]{\small$e_{\ell m}^w$};
	\draw[thick,->] (0,1.3)--(-.1,1.3) ;
	\draw[thick,<-] (1.2,1.04)--(1.3,.99);
	\draw[thick,->] (-1.2,1.04)--(-1.3,.99);
	\draw[thick,->] (-1.6,.78)--(-1.7,.68);
\end{scope}
\end{tikzpicture}
\end{center}
The exceptional case $\ell=i,m=k,w=uv$ is explained in \emph{(\ref{def: 2.3}(0))}.
}
\item[b)]\quad
{\begin{center}
\begin{tikzpicture}[scale=1.5] Figure 2.2 (b)
\begin{scope}
\clip rectangle (-2,-1.3) rectangle (0,1.3);
\draw[thick] (0,0) ellipse [x radius=1.4cm,y radius=1.21cm];
\end{scope}
\draw (0,1.2) node[below]{$\ast$};
\draw (0,-1.2) node[above]{$\ast$};
\draw (.75,-.2) node[left]{$\ast$};
\draw (-.75,-.2) node[right]{$\ast$};
\draw (1.86,-.8) node[left]{$\ast$};
\draw (-1.86,-.8) node[right]{$\ast$};
\draw (-1.35,-.4) node[right]{$\ast$};
\draw (-.64,.54) node[right]{$\ast$};
\draw (.65,.46) node[left]{$\ast$};
		\draw[thick,<-] (-1.22,.6)--(-1.12,.73);
				\draw[thick] (-1.15,.7) node[left]{\small$e_{ik}^{uv}$};
\begin{scope}[xshift=-.7cm]
	\draw[thick] (0,0) circle[radius=1.4cm];
		\draw[thick,<-] (-.985,.99)--(-.88,1.09);
		\draw (-1,1.1) node[left]{$e_{ij}^u$};
\end{scope}
\begin{scope}[xshift=.7cm]
	\draw[thick] (0,0) circle[radius=1.4cm];
	\draw[thick,->] (.99,.99)--(.89,1.09);
		\draw (1,1.1) node[right]{$e_{jk}^v$};
\end{scope}
\begin{scope}[yshift=-1.3cm]
	\draw[thick] (0,0) ellipse[x radius=2cm, y radius=1.3cm];
	\draw[thick,->] (2,0)--(2,.1); 
	\draw (2,0) node[right]{$ e_{k\ell}^w$};
\coordinate (A) at (-.7,1.22);
\coordinate (A1) at (-.6,1.5);
\coordinate (B1) at (0,2);
\coordinate (B) at (.57,1.87);
\coordinate (B2) at (.77,1.87);
\coordinate (C) at (1.85,.5);
\coordinate (C1) at (1.7,1.5);
\coordinate (D) at (-1.34,.96);
\coordinate (E) at (1.4,1.405);
\coordinate (E2) at (1.35,1.5);
\coordinate (F1) at (-1.08,1.5);
\coordinate (F2) at (-1,1.55);
\coordinate (F) at (-1,1.595);
\draw[thick,->] (F)--(F1);
\draw[thick,<-] (1.3,1.5)--(E);
\draw[thick] (A)..controls (A1) and (B1)..(B);
\draw[thick] (B)..controls (B2) and (C1)..(C);
\draw[thick](B)..controls (.2,2.3) and (-1.1,2)..(D);
\draw (E2) node[right]{\small$e_{j\ell}^{vw}$};
\draw (F2) node[above]{\tiny$e_{i\ell}^{uvw}$};
\end{scope}
\end{tikzpicture}
\end{center}
}
We are using the convention that a smooth sequence of edges without corners all have the same label and orientation.
\end{enumerate}
\end{lem}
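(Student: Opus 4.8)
The goal is to identify a generating set for $\ker(\partial\colon\ZZ\langle\cY\rangle\to\ZZ\langle\cX\rangle)$ and to realize each generator as $h(P)$ for an explicitly drawn picture $P\in\overline P(St(\ZZ[\pi]))$. My plan is to split the argument into three stages: a purely combinatorial (linear-algebraic) analysis of $\ker\partial$, the construction of the two families of pictures (a) and (b), and the verification that the claimed pictures do map under $h$ to a spanning set of $\ker\partial$.

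\emph{Stage 1: Identify $\ker\partial$ combinatorially.} The map $\partial$ sends a relation $y\in\cY$ to its abelianized image, i.e.\ to $\partial_2[y]$ read in $\ZZ\langle\cX\rangle$. For relation type (1), a commutator $[e_{ij}^u,e_{k\ell}^v]$, we have $\partial=0$ already, so every such relator lies in $\ker\partial$ on its own; these are the ``free'' part. For relation types (2) and (3), the relator $e_{ij}^ue_{jk}^ve_{ij}^{-u}e_{ik}^{-uv}e_{jk}^{-v}$ (resp.\ its type-(3) cousin) abelianizes to $-[e_{ik}^{uv}]+(\text{terms in }[e_{ij}^u],[e_{jk}^v])$; more precisely, each such relator maps to a $\ZZ$-combination of the three generators $[e_{ij}^u]$, $[e_{jk}^v]$, $[e_{ik}^{uv}]$ with coefficient sum zero in each ``slot''. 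The point is that $\partial$ is, up to the commutator relators, the boundary map of a chain complex computing $H_1(St)=0$ and $H_2(St)=0$ (this is exactly Corollary~\ref{cor: 1.8}(b) together with $H_1St=H_2St=0$), so $\ker\partial/\langle\text{commutator relators}\rangle$ is a free abelian group whose rank and generators we can read off. I would organize this by noting that a relation between relations corresponds to a $2$-cell of $BSt$, and the standard $2$-cells come in exactly two shapes: the ones witnessing that two consecutive applications of relation-type (2)/(3) can be reassociated (these give family (a): $e_{ij}^u$, $e_{jk}^v$, $e_{k\ell}^w$ or $e_{\ell m}^w$ producing $e_{ik}^{uv}$, $e_{jk}^{vw}$ or similar), and the ones witnessing compatibility of a longer chain $e_{ij}^u,e_{jk}^v,e_{k\ell}^w$ where all three pairwise composites $e_{ik}^{uv},e_{j\ell}^{vw},e_{i\ell}^{uvw}$ enter (family (b)). Carefully listing these, with the index side-conditions $m\neq i,j$ and $\ell\neq j,k$, gives a spanning set for $\ker\partial$.

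\emph{Stage 2: Build the pictures.} For each combinatorial relation found in Stage~1 I would write down a picture in $\overline P(St(\ZZ[\pi]))$ — i.e.\ a planar graph with oriented $\cX$-labeled edges whose vertices are the relators of Definition~\ref{def: 2.1} — so that $h$ of it is the prescribed $\ZZ$-combination of relators. This is exactly what the displayed figures in (a) and (b) are: a picture (a) has vertices reading off relators of type (2)/(3) at $e_{ij}^u$, $e_{jk}^v$, and the outer relator involving $e_{\ell m}^w$ (or $e_{k\ell}^w$ in (b)), glued along interior edges labeled by the ``composite'' generators $e_{ik}^{uv}$, $e_{j\ell}^{vw}$, $e_{i\ell}^{uvw}$. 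The content here is just that each drawn planar graph really \emph{is} a legal picture: at every vertex the counterclockwise word (with the indicated base-point direction $\ast$) is a relator or its inverse, and the loose ends at infinity cancel — which is the assertion that the graph exists as drawn. I would verify the ``at infinity'' cancellation by the same device used in the proof of Lemma~\ref{lem: 1.6}: pull each vertex out to $\infty$ along its base-point line and check the product of conjugated relators is trivial in $F$, which is forced because the corresponding product of relators is a genuine relation in $St$. One also handles the exceptional case $\ell=i$, $m=k$, $w=uv$ of (a) by pointing to Definition~\ref{def: 2.3}(0) (referenced but presumably supplied later).

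\emph{Stage 3: Conclude.} Having shown each $P$ in families (a), (b) is a valid picture, $h(P)\in\ker\partial$ automatically (by exactness of the sequence after Definition~\ref{def: 2.1}, the image of $h$ is precisely $\ker\partial$). So it only remains to check that the specific elements $h(P)$ for $P$ ranging over (a) and (b), \emph{together}, span all of $\ker\partial$. By Stage~1 this reduces to: (i) the commutator relators of type~(1) are in the span — but each is its own element of $\ker\partial$ and is realized by the trivial/small picture with a single such vertex (or is visibly dependent on the family-(a) relations); and (ii) the ``composite'' relations among relators of types (2)/(3) are exhausted by (a) and (b). Step (ii) is the crux: it amounts to showing the second syzygies of the Steinberg presentation are generated by the two standard families, which is the classical computation behind $K_2$ and $H_2(St)=0$ (à la Milnor, \cite{M}); I would phrase it as: the subcomplex of $2$-cells coming from (a) and (b) already kills all of $\pi_2$ of the $2$-skeleton that maps to $0$ under $\partial$, because any relation among the relators can be reduced, by repeatedly applying moves of type (a) (reassociation) and type (b) (the hexagon/triangle compatibility), to a relation supported on commutators, which is then handled by (i).

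\emph{Main obstacle.} The genuinely hard step is Stage~1 / Step~(ii) of Stage~3: showing that families (a) and (b) \emph{suffice}, i.e.\ that there are no further independent syzygies among the Steinberg relators. This is where one must either invoke the known structure of the Steinberg group's second homology ($H_2(St(R))=0$, hence the presentation of $\ker\partial$ has no exotic generators) or else do a direct, somewhat intricate bookkeeping with the relators — tracking how an arbitrary element of $\ker\partial$ decomposes over the generators $[e_{ij}^u]$ and reducing it via the reassociation and hexagon moves. The picture-drawing in Stage~2 is routine once one trusts the combinatorics, and the ``picture is legal'' checks are mechanical applications of the infinity-cancellation argument from Lemma~\ref{lem: 1.6}.
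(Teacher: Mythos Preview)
Your three-stage outline is the right shape, but the execution misreads what the pictures (a) and (b) actually contribute under $h$, and as a consequence you manufacture a ``main obstacle'' that does not exist.

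The paper's proof is one short paragraph of elementary linear algebra on $\partial:\ZZ\langle\cY\rangle\to\ZZ\langle\cX\rangle$, with no appeal to $H_2(St)=0$, second syzygies, or anything from the $K_2$ theory. One simply notes that $\partial$ kills every type-(1) relator (commutators abelianize to $0$), while each type-(2) relator $e_{ij}^u e_{jk}^v e_{ij}^{-u} e_{ik}^{-uv} e_{jk}^{-v}$ maps to $-[e_{ik}^{uv}]$ and each type-(3) relator to $+[e_{ik}^{uv}]$. The columns of $\partial$ are thus either zero or $\pm$ a standard basis vector of $\ZZ\langle\cX\rangle$, so $\ker\partial$ is visibly generated by (i) the individual type-(1) relators and (ii) sums of one type-(2) and one type-(3) relator with the same target $[e_{ik}^{s}]$. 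No syzygy argument is needed.

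You have also mischaracterized the two families. Applying $h$ to picture (a) yields the \emph{single} type-(1) element $[[e_{\ell m}^w,e_{ik}^{uv}]]$: the two five-valent vertices (one of type (2), one of type (3)) cancel in $\ZZ\langle\cY\rangle$, the four crossings of the $e_{\ell m}^w$-oval with the full circles $e_{ij}^u$, $e_{jk}^v$ cancel in pairs, and only the single crossing with the half-arc $e_{ik}^{uv}$ survives. So (a) is not a ``reassociation of two applications of type (2)/(3)'' as you describe; it is the device that realizes an arbitrary type-(1) commutator (the auxiliary index $j$ and factorization $uv$ can always be chosen). Picture (b) is what produces the generators of the second kind: $h$ of it equals $[e_{ij}^u e_{j\ell}^{vw}e_{ij}^{-u}e_{i\ell}^{-uvw}e_{j\ell}^{-vw}]+[e_{k\ell}^w e_{i\ell}^{uvw}e_{ik}^{uv}e_{k\ell}^{-w}e_{ik}^{-uv}]$ plus further type-(1) terms. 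Given any two factorizations $s=p\cdot q=p'\cdot q'$ of the same target $e_{i\ell}^s$ through middle indices $j,k$, the choice $u=p$, $v=p^{-1}p'$, $w=q'$ in (b) connects them directly, so (b) already spans all pairs of kind (ii) modulo type-(1) terms, which (a) supplies.

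In short: once you compute $\partial$ on each relator type and compute $h$ correctly on the two pictures, generation is immediate and the difficulty you anticipated evaporates.
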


\begin{proof}
Since $\partial[a,b]=0$, {\color{blue} $\partial(acba^{-1}b^{-1})=c$, and $\partial(aba^{-1}c^{-1}b^{-1})=-[c]$}, the kernel of $\partial: \ZZ\left<\cY\right>\to \ZZ\left<\cX\right>$ is generated be the relations \ref{def: 2.1}(1) and pairs of relations one from \ref{def: 2.1}(2) and one from \ref{def: 2.1}(3) with cancelling boundaries. If we apply $h$ to \ref{lem: 2.2}(a) we get $[[e_{\ell m}^w,e_{ik}^{uv}]]$ which is a generator of $\ker\partial$ of the first kind and if we apply $h$ to \ref{lem: 2.2}(b) we get {\color{blue} $[e_{ij}^u e_{j\ell}^{vw} e_{ij}^{-u} e_{i\ell}^{-uvw} e_{j\ell}^{-vw}]+[e_{k\ell}^w e_{i\ell}^{uvw} e_{ik}^{uv} e_{k\ell}^{-w}e_{ik}^{-uv}]\,+$} more generators of the first kind. This is a generator of the second kind.
\end{proof}

Let $H_0$ be the submodule of $\overline P(St(\ZZ[\pi]))$ generated by elements of the form \ref{lem: 2.2}(a), (b). If is clear from (\ref{lem: 2.2}) that the composition $K_3(\ZZ[\pi])\to \overline P\otimes_{St} \ZZ\to \overline P/H_0\otimes_{St}\ZZ$ is surjective. One can show that it is in fact an isomorphism.
}

{
\begin{defn}\label{def: 2.3}
Let $H$ be the submodule of $\overline P(St(\ZZ[\pi]))$ generated by the following \emph{second order Steinberg relations}.
\begin{enumerate}
\item[(0)] \,
%
%
\begin{minipage}{0.4\textwidth}
\begin{center}
\begin{tikzpicture}
\coordinate (A1) at (.56,.4);
\coordinate (A3) at (.6,.35);
\coordinate (A2) at (.64,.5);
\begin{scope}
	\draw[ thick] (0,0)..controls (2,1) and (-1,2)..(0,0);
	\draw[ thick] (0,0)..controls (-2,-1) and (1,-2)..(0,0);
	\draw[ thick,->]  (A1)--(A2);
	\draw (A3) node[right]{$e_{ij}^u$};
\end{scope}
\end{tikzpicture}
\end{center}
\end{minipage}
%
%
%
%
\begin{minipage}{0.4\textwidth}
This is not an element of $\overline P$ since the relation $[e_{ij}^u,e_{ij}^u]$ is not allowed.
\end{minipage}%

This should on the other hand be thought of as a convention which says that, whenever a general formula includes a relation of the form $[x,x]$ in one of its special cases, this vertex should be deleted and the graph modified in the following way.
\begin{center}
\begin{tikzpicture}
\begin{scope}[xshift=-1cm]
\draw[thick,<-] (-1,-1)--(1,1);
\draw[thick,<-] (-1,1)--(1,-1);
\draw[thick] (1,1) node[right]{$x$};
\draw[thick] (1,-1) node[right]{$x$};
\end{scope}

\draw (2,0)node{$\Rightarrow$};
\begin{scope}[xshift=3.5cm]
\draw (1.5,.5) node[above]{$x$};
\draw[thick] (0,1) .. controls (.5,.7) and (1,.4) .. (1.5,.4);
\draw[thick] (3,1) .. controls (2.5,.7) and (2,.4) .. (1.5,.4);
\draw[thick] (1.6,.3)--(1.5,.4)--(1.6,.5);
\draw[thick] (0,-1) .. controls (.5,-.7) and (1,-.4) .. (1.5,-.4);
\draw[thick] (3,-1) .. controls (2.5,-.7) and (2,-.4) .. (1.5,-.4);
\draw (1.5,-.5) node[below]{$x$};
\draw[thick] (1.6,-.3)--(1.5,-.4)--(1.6,-.5);
\end{scope}
\end{tikzpicture}
\end{center}

\item $j\neq k$, etc.

\begin{center}
\begin{tikzpicture}
\begin{scope}[xshift=-.7cm]
	\draw[thick] (0,0) circle[radius=1.4cm];
		\draw[thick,<-] (-.985,.99)--(-.88,1.09); 
		\draw (-1,1.1) node[left]{\tiny$e_{ij}^u$};
\end{scope}
\begin{scope}[xshift=.7cm]
	\draw[thick] (0,0) circle[radius=1.4cm];
	\draw[thick,->] (.99,.99)--(.89,1.09); 
		\draw (1,1.1) node[right]{\tiny$e_{k\ell}^v$};
\end{scope}
\begin{scope}[yshift=-1.3cm]
	\draw[thick] (0,0) circle[radius=1.4cm];
	\draw[thick,->] (1.33,-.43)--(1.37,-.3); 
	\draw (1.37,-.36) node[right]{\tiny$e_{mn}^w$};
\end{scope}
\end{tikzpicture}
\end{center}
\item Same as \ref{lem: 2.2}(a).
\item $i\neq k,\ell$

\begin{center}
\begin{tikzpicture}[scale=1.5] 
%
\begin{scope}
\clip rectangle (-2,-1.3) rectangle (0,1.3);
\draw[thick] (0,0) ellipse [x radius=1.3cm,y radius=1.21cm];
\end{scope}
\begin{scope}[xshift=.87mm]
		\draw[thick,<-] (-1.22,.6)--(-1.12,.73); 
		\draw[thick] (-1.1,.7) node[left]{\tiny$e_{ik}^{uv}$};
\end{scope}
\begin{scope}[xshift=-3mm,yshift=-8mm]
		\draw[thick,<-] (-1.18,.6)--(-1.13,.73); 
		\draw[thick] (-1.1,.7) node[left]{\tiny$e_{i\ell}^{uw}$};
\end{scope}
\begin{scope}[xshift=-3.35mm,yshift=-6.9mm]
\begin{scope}[rotate=297]
\clip rectangle (-2,-1.3) rectangle (0,1.3);
\draw[thick] (0,0) ellipse [x radius=1.3cm,y radius=1.175cm];
\end{scope}
\end{scope}
\begin{scope}[xshift=-.7cm]
	\draw[thick] (0,0) circle[radius=1.4cm];
		\draw[thick,<-] (-.985,.99)--(-.88,1.09); 
		\draw (-1,1.1) node[left]{\small$e_{ij}^u$};
\end{scope}
\begin{scope}[xshift=.7cm]
	\draw[thick] (0,0) circle[radius=1.4cm];
	\draw[thick,->] (.99,.99)--(.89,1.09); 
		\draw (1,1.1) node[right]{\small$e_{jk}^v$};
\end{scope}
\begin{scope}[yshift=-1.35cm]
	\draw[thick] (0,0) circle[radius=1.4cm];
	\draw[thick,->] (1.33,-.43)--(1.37,-.3); 
	\draw (1.37,-.36) node[right]{\small$e_{j\ell}^w$};
\end{scope}
\end{tikzpicture}
\end{center}

\item $k\neq i,\ell$
\begin{center}
\begin{tikzpicture}[scale=1.5] 
%
\begin{scope}
\clip rectangle (-2,-1.3) rectangle (0,1.3);
\draw[thick] (0,0) ellipse [x radius=1.3cm,y radius=1.21cm];
\end{scope}
\begin{scope}[xshift=.87mm]
		\draw[thick,<-] (-1.22,.6)--(-1.12,.73); 
		\draw[thick] (-1.1,.7) node[left]{\tiny$e_{ik}^{uv}$};
\end{scope}
\begin{scope}[xshift=-3mm,yshift=-8mm]
		\draw[thick,->] (0.5,-1.164)--(0.6,-1.164); 
		\draw[thick] (.5,-1.2) node[below]{\tiny$e_{\ell k}^{wv}$};
\end{scope}
\begin{scope}[xshift=3.5mm,yshift=-6.9mm]
\begin{scope}[rotate=62]
\clip rectangle (-2,-1.3) rectangle (0,1.3);
\draw[thick] (0,0) ellipse [x radius=1.3cm,y radius=1.175cm];
\end{scope}
\end{scope}
\begin{scope}[xshift=-.7cm]
	\draw[thick] (0,0) circle[radius=1.4cm];
		\draw[thick,<-] (-.985,.99)--(-.88,1.09); 
		\draw (-1,1.1) node[left]{\small$e_{ij}^u$};
\end{scope}
\begin{scope}[xshift=.7cm]
	\draw[thick] (0,0) circle[radius=1.4cm];
	\draw[thick,->] (.99,.99)--(.89,1.09); 
		\draw (1,1.1) node[right]{\small$e_{jk}^v$};
\end{scope}
\begin{scope}[yshift=-1.35cm]
	\draw[thick] (0,0) circle[radius=1.4cm];
	\draw[thick,->] (1.33,-.43)--(1.37,-.3); 
	\draw (1.37,-.36) node[right]{\small$e_{\ell j}^w$};
\end{scope}
\end{tikzpicture}
\end{center}
\item Same as \ref{lem: 2.2}(b)
\end{enumerate}
\end{defn}
}

{The following theorem and its partial proof can be ignored for the purpose of the remainder of this paper.

\begin{thm}\label{thm: 2.4}
The composition
\[
	K_3(\ZZ[\pi])\to \overline P\otimes_{St}\ZZ \to \overline P/H\otimes_{St}\ZZ
\]
is an isomorphism.
\end{thm}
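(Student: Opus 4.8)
\emph{Proof proposal.} The plan is to recast the statement as an injectivity question, translate it into homotopy theory, and then isolate the single genuinely combinatorial point.

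\textbf{Reduction.} Write $h\colon\overline P\otimes_{St}\ZZ\to\ZZ\langle\cY\rangle$ and $\partial\colon\ZZ\langle\cY\rangle\to\ZZ\langle\cX\rangle$ as in Corollary~\ref{cor: 1.8}(a), so that $K_3(\ZZ[\pi])$ is identified with $\ker h$. Since $-\otimes_{St}\ZZ$ is right exact, $\overline P/H\otimes_{St}\ZZ=(\overline P\otimes_{St}\ZZ)/V$, where $V$ is the image of $H\otimes_{St}\ZZ\to\overline P\otimes_{St}\ZZ$. The relations of Definition~\ref{def: 2.3}(2) and (5) are precisely those of Lemma~\ref{lem: 2.2}(a) and (b), so $H$ contains the submodule $H_0$ they generate; hence by Lemma~\ref{lem: 2.2} the elements $h(v)$, $v\in V$, already span $\ker\partial=\im h$, so $\overline P\otimes_{St}\ZZ=\ker h+V$ and the composite in the theorem is the canonical surjection $\ker h\twoheadrightarrow\ker h/(\ker h\cap V)$. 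It is thus automatically onto, and \emph{the theorem is equivalent to $\ker h\cap V=0$}, i.e.\ to $h$ mapping $V$ isomorphically onto $\ker\partial$.

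\textbf{A space.} Let $Y$ be the $3$-complex obtained from the presentation $2$-complex of $St(\ZZ[\pi])=\langle\cX\mid\cY\rangle$ by attaching one $3$-cell for each instance of the second order Steinberg relations (0)--(5) of Definition~\ref{def: 2.3}, the attaching map being the $2$-sphere in the $2$-skeleton described by the corresponding picture. Then $\pi_1Y=St(\ZZ[\pi])$, and the cellular chain complex of $\widetilde Y$ is the complex of the excerpt prolonged by a free module $\ZZ[St]\langle\cZ\rangle$ on the chosen $3$-cells, with $\partial_3$ sending a generator to (a lift of) the class of its picture; by the design of Definition~\ref{def: 2.3}, $\im\partial_3=H$. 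Using Theorem~\ref{thm: 1.4} ($\overline P\cong\ker\partial_2$) and the Hurewicz theorem, $\pi_2(Y)\cong\overline P/H$ as $\ZZ[St]$-modules. Writing $\bar\partial_3\colon\ZZ\langle\cZ\rangle\to\overline P\otimes_{St}\ZZ$ for the induced map on $St$-coinvariants (its image is $V$), the cellular chain complex of $Y$ itself over $\ZZ$ is $\ZZ\langle\cZ\rangle\xrightarrow{\,h\bar\partial_3\,}\ZZ\langle\cY\rangle\xrightarrow{\,\partial\,}\ZZ\langle\cX\rangle\xrightarrow{\,0\,}\ZZ$, whose homology in degrees $1$ and $2$ vanishes, since $\im\partial=\ZZ\langle\cX\rangle$ (as $H_1(St)=0$) and $\im(h\bar\partial_3)=h(V)=\ker\partial$ by the first paragraph.

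\textbf{Spectral sequence.} In the Serre spectral sequence of $\widetilde Y\to Y\to BSt(\ZZ[\pi])$, with $E^2_{p,q}=H_p(St;H_q(\widetilde Y))$ (twisted coefficients), we have $H_1(\widetilde Y)=0$, $H_2(\widetilde Y)=\overline P/H$, and $H_i(St)=H_i(Y)=0$ for $i=1,2$; so the low-degree terms collapse to an exact sequence
\[
H_3(Y)\longrightarrow H_3(St(\ZZ[\pi]))\xrightarrow{\,d_3\,}(\overline P/H)_{St}\longrightarrow 0 .
\]
A routine comparison with the skeletal filtration of $\widetilde{BSt}$ identifies the transgression $d_3$ with the map of the theorem, namely the edge map $H_3(St)\hookrightarrow H_0(St;\ker\partial_2)=\overline P\otimes_{St}\ZZ$ of Corollary~\ref{cor: 1.8}(a) followed by reduction modulo $H$. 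Hence \emph{the theorem holds if and only if $H_3(Y)\to H_3(St(\ZZ[\pi]))$ is the zero map} --- equivalently, every $3$-cycle of $Y$ bounds once the higher cells completing $Y$ to a $K(St(\ZZ[\pi]),1)$ are attached, equivalently the Hurewicz map $\pi_3(Y)\to H_3(Y)$ is onto. A clean sufficient condition is $H_3(Y)=0$, which by the previous paragraph means that $h\bar\partial_3$ is injective: no nonzero integral combination of second order Steinberg pictures has vanishing $2$-cell content.

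\textbf{The obstacle.} This last step is the crux, and is exactly what the paper only carries out partially: one must either prove $H_3(Y)=0$ by a direct deformation analysis of Peiffer diagrams built from the relations (0)--(5) of Definition~\ref{def: 2.3}, or more generally show that every such $3$-cycle is spherical --- in resolution language, produce enough ``third order'' relations continuing $\partial_3,\partial_2,\partial_1$ toward a free $\ZZ[St(\ZZ[\pi])]$-resolution of $\ZZ$. The relations (0), (1), (3), (4) beyond those of Lemma~\ref{lem: 2.2} are included precisely so that the necessary deformations exist; executing them all is the lengthy but routine verification the statement flags as skippable.
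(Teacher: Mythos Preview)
Your reduction to $\ker h\cap V=0$ is correct and is exactly the paper's starting point. Your spectral-sequence reformulation is also sound: the identification of $d_3$ with the map of the theorem is standard, and the equivalence with ``$\pi_3(Y)\to H_3(Y)$ is onto'' is a pleasant way to say it.

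The gap is in the ``clean sufficient condition'' $H_3(Y)=0$. This condition is \emph{false} for the $Y$ you build, and the paper's own Lemma~\ref{lem: 2.5} shows why. A type~(1) picture (three pairwise crossing circles) has six vertices which pair off into three relation/inverse-relation pairs, so its image under $h$ in $\ZZ\langle\cY\rangle$ is zero; thus every type~(1) $3$-cell lies in $\ker(h\bar\partial_3)=H_3(Y)$ and is certainly nonzero in the free module $\ZZ\langle\cZ\rangle$. So $h\bar\partial_3$ is never injective and $H_3(Y)\neq0$. The deformation in Lemma~\ref{lem: 2.5} does not show that these cycles vanish in $H_3(Y)$; it shows that the corresponding element of $\overline P$ is already zero in $\overline P\otimes_{St}\ZZ$, i.e.\ it is a sum of terms $(g-1)x$. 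In your language this means the attaching $2$-sphere bounds a $3$-chain in $\widetilde{Y^{(2)}}$ after translating pieces by deck transformations, which is exactly what one needs to exhibit the resulting $3$-cycle as \emph{spherical}, not as zero.

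So the paper's approach is your second option, not the first: it verifies $\ker h\cap V=0$ directly by producing, for each generator of $H$ lying in $\ker h$ (types (1), (3), (4), and suitable combinations of (2), (5)), an explicit picture deformation writing it as a sum of $(g-1)x$'s. That is the ``lengthy but routine verification'' you allude to; Lemma~\ref{lem: 2.5} is the model case and the rest is deferred to \cite{I}. Your homotopy-theoretic packaging is a legitimate alternative framing, but it does not shortcut this combinatorial core, and the specific simplification you propose does not hold.
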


\noindent\emph{Proof}:
Since $H\supset H_0$ we know that the composition above is surjective. Thus it is sufficient to show that the image of $H\otimes_{St}\ZZ$ in $\overline P\otimes_{St}\ZZ$ is disjoint from the image of $K_3(\ZZ[\pi])$. This can be reworded as follows. We must show that every element of $H$ which goes to zero in $\ZZ\left<\cY\right>$ is already zero in $\overline P\otimes_{St}\ZZ$.

\begin{lem}\label{lem: 2.5}
An element of $\overline P$ of the form \emph{\ref{def: 2.3}(1)} is zero in $\overline P\otimes_{St}\ZZ$.
\end{lem}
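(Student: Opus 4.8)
The plan is to reduce the three-circle picture of \ref{def: 2.3}(1) to the two-circle picture with two commuting labels, which I first show is already null, and then to carry out that reduction by sliding one of the three circles across the other two, working throughout in $\overline P\otimes_{St}\ZZ$ so that auxiliary enclosing circles cost nothing.

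First I would settle the two-circle case. Given generators $x,y$ with $[x,y]\in\cY\cup\cY^{-1}$ a relation of type \ref{def: 2.1}(1), let $W(x,y)$ be the picture consisting of two overlapping circular edges labelled $x$ and $y$. It has exactly two vertices, at the two points where the circles cross; since the $180^{\circ}$ rotation interchanging the two crossings also interchanges the two labels, reading the incident edges counter-clockwise shows that these vertices carry the mutually inverse relations $[x,y]$ and $[y,x]$. Choosing both base-point directions to point into the unbounded complementary region, those directions are then joined by a path disjoint from the graph and the relations read off are inverse, so deformation (c) cancels the two vertices and deformation (a) removes the two resulting empty circles; hence $W(x,y)=0$ in $\overline P$. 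In particular, by the index conventions ``$j\neq k$, etc.'' of \ref{def: 2.3}(1), the pictures $W(e_{ij}^u,e_{k\ell}^v)$, $W(e_{k\ell}^v,e_{mn}^w)$ and $W(e_{ij}^u,e_{mn}^w)$ all vanish.

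Now let $V$ be the picture of \ref{def: 2.3}(1), with circles $A,B,C$ carrying the labels $a=e_{ij}^u$, $b=e_{k\ell}^v$, $c=e_{mn}^w$. I would deform $V$, in $\overline P\otimes_{St}\ZZ$, into a disjoint union of circle-enclosed copies of the null two-circle pictures of the first step (so that $V\otimes 1=0$), principally by pushing circle $C$ outward across the configuration $A\cup B$ until every crossing of $C$ with $A$ and with $B$ has been removed. The mechanism is that $C$ meets $A$ in two points carrying the inverse relations $[a,c],[c,a]$ and meets $B$ in two points carrying $[b,c],[c,b]$, so that whenever $C$ sweeps past a ``free'' arc of $A$ or of $B$ a pair of such crossings is created or annihilated; because the two relations in each pair are inverse, every such passage is an instance of deformation (c), or of its reverse, once the base-point directions are routed through the appropriate complementary region. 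The one passage that is not routine is when the arc of $C$ must move past the two points where $A$ meets $B$: there the path required by deformation (c) would have to cross the third circle, so the naive pairwise cancellation fails, and instead one uses the triviality of the subpicture $W(a,b)$ established above --- assisted by an auxiliary two-circle picture, null by the first step and built with a fresh index, which $St(\ZZ[\pi])$ always supplies --- to let the arc of $C$ slip through.

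The main obstacle is precisely that last passage: the linking of the three circles around the $A$--$B$ crossing is what defeats the naive ``cancel an inverse pair of vertices'' move, and disentangling it is a Whitney-trick-style manoeuvre performed with pictures. This is also why the statement is a genuine lemma and is naturally phrased in $\overline P\otimes_{St}\ZZ$, where the enclosing circles introduced during the disentangling are harmless.
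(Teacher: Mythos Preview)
Your two-circle step is fine: the picture $W(x,y)$ with a single commutator relation and its inverse really does cancel by deformation (c), since both base-point directions can be routed into the unbounded region. The difficulty is entirely in your second step, and there you have not actually carried out the crucial move. You correctly identify that when the arc of $C$ reaches an $A$--$B$ crossing, the path required by deformation (c) is blocked by the third circle, and then you write that one ``uses the triviality of $W(a,b)$ \dots assisted by an auxiliary two-circle picture, null by the first step and built with a fresh index \dots to let the arc of $C$ slip through.'' This is a description of what you would like to happen, not a proof: you have not said which auxiliary picture you insert, where you insert it, or which sequence of deformations (a)--(c) then lets $C$ pass the vertex. Since this passage is exactly the content of the lemma (the three circles are genuinely linked around that vertex, which is why a naive pairwise cancellation fails), leaving it as a gesture means the argument is incomplete.

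The paper's proof takes a quite different route and avoids this local ``Whitney trick'' altogether. It does \emph{not} try to simplify the three-circle picture directly. Instead it starts from a picture $P$ of type \ref{def: 2.3}(2) (equivalently \ref{lem: 2.2}(a)), chooses a generator $x$ commuting with all four labels of $P$, and computes $xP-P$, which is automatically zero in $\overline P\otimes_{St}\ZZ$. A sequence of explicit picture deformations rewrites $xP-P$ as a sum of four sub-pictures; three of them are disposed of using further relations of the form $gQ-Q=0$ in coinvariants (removing enclosing circles), and what remains is precisely a three-circle picture of type \ref{def: 2.3}(1). Thus the three-circle picture is exhibited as $xP-P$ modulo already-trivial terms, hence is zero. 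The key idea you are missing is to bring in the richer type-(2) picture, whose extra edge $d=e_{ik}^{uv}$ gives enough room for the manipulation; working only with the three commuting labels $a,b,c$ and fresh indices, as you propose, does not obviously provide that leverage.
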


\begin{proof}
Let $P$ be an element of the form \ref{def: 2.3}(2) = \ref{lem: 2.2}(a). Let $X$ be a generator of $St(\ZZ[\pi])$ which commutes with the four generators involved in $P$. We shall perform a deformation on $xP-P$.

\begin{center}
\begin{tikzpicture}
\draw (-3.5,-.3) node{$xP-P=$};
\begin{scope}[scale=.7]
\draw[thick] (0,-.4) circle[radius=3cm];
\draw[thick,->] (3,-.4)--(3,-.41);
\draw (3,-.4) node[right]{\tiny$x$};
\begin{scope}
\clip rectangle (-2,-1.3) rectangle (0,1.3);
\draw[thick] (0,0) ellipse [x radius=1.4cm,y radius=1.21cm];
\end{scope}
		\draw[thick,<-] (-1.22,.6)--(-1.12,.73); 
		\draw[thick] (-1.2,.7) node[left]{\tiny$d$}; 
\begin{scope}[xshift=-.7cm]
	\draw[thick] (0,0) circle[radius=1.4cm];
		\draw[thick,<-] (-.985,.99)--(-.88,1.09); 
		\draw (-1,1.1) node[left]{\tiny$a$}; 
\end{scope}
\begin{scope}[xshift=.7cm]
	\draw[thick] (0,0) circle[radius=1.4cm];
	\draw[thick,->] (.99,.99)--(.89,1.09); 
		\draw (1,1.1) node[right]{\tiny$b$}; 
\end{scope}
\begin{scope}[yshift=-1.3cm]
	\draw[thick] (0,0) ellipse[x radius=2cm, y radius=1.3cm];
	\draw[thick,->] (2,0)--(2,.1); 
	\draw (2,0) node[right]{\tiny$c$}; 
\end{scope}
\end{scope} 
\begin{scope}[xshift=5cm,scale=.7,yshift=-1cm] 
\begin{scope}
\clip rectangle (-2,-1.3) rectangle (0,1.3);
\draw[thick] (0,0) ellipse [x radius=1.4cm,y radius=1.21cm];
\end{scope}
		\draw[thick,<-] (-1.22,.6)--(-1.12,.73); 
		\draw[thick] (-1.2,.7) node[left]{\tiny$d$}; 
\begin{scope}[xshift=-.7cm]
	\draw[thick] (0,0) circle[radius=1.4cm];
		\draw[thick,<-] (-.985,.99)--(-.88,1.09); 
		\draw (-1,1.1) node[left]{\tiny$a$}; 
\end{scope}
\begin{scope}[xshift=.7cm]
	\draw[thick] (0,0) circle[radius=1.4cm];
	\draw[thick,->] (.99,.99)--(.89,1.09); 
		\draw (1,1.1) node[right]{\tiny$b$}; 
\end{scope}
\begin{scope}[yshift=1.5cm]
	\draw[thick] (0,0) circle[radius=1.3cm]; 
	\draw[thick,<-] (0,1.3)--(.1,1.3); 
	\draw (0,1.3) node[above]{\tiny$c$}; 
\end{scope}
\end{scope} 
\end{tikzpicture}
\end{center}

\begin{center}
\begin{tikzpicture}
\draw (-3.5,-.3) node{$=$};
\begin{scope}
%
\draw[thick] (0,1.2) circle[radius=3mm];
\draw[ thick,->] (0,1.5)--(.01,1.5);
\draw (0,1.5) node[above]{\tiny$x$};
\draw[thick] (0,-1.2) circle[radius=3mm];
\draw[ thick,->] (0,-1.5)--(-.01,-1.5);
\draw (0,-1.5) node[below]{\tiny$x$};
\begin{scope}[yshift=-13mm]
\draw[thick] (1.85,.5) circle[radius=3mm];
\draw[ thick,<-] (2.15,.45)--(2.15,.47);
\draw (2.15,.5) node[right]{\tiny$x$};
\draw[thick] (-.7,1.22) circle[radius=3mm];
\draw[thick,<-] (-.4,1.1)--(-.4,1.3);
\draw (-.4,1.1) node[right]{\tiny$x$};
\draw[thick] (.7,1.22) circle[radius=3mm];
\draw[thick,<-] (1,1.2)--(1,1.3);
\draw (1,1.3) node[right]{\tiny$x$};
\draw[thick] (-1.85,.5) circle[radius=3mm];
\draw[thick,<-] (-1.55,.5)--(-1.55,.55);
\draw (-1.55,.5) node[right]{\tiny$x$};

\draw[thick] (-1.34,.96) circle[radius=3mm];
\draw[thick,<-] (-1.04,.9)--(-1.04,.92);
\draw (-1.1,.8) node[right]{\tiny$x$};

\end{scope}
\begin{scope}
\clip rectangle (-2,-1.3) rectangle (0,1.3);
\draw[thick] (0,0) ellipse [x radius=1.4cm,y radius=1.21cm];
\end{scope}
		\draw[thick,<-] (-1.22,.6)--(-1.12,.73); 
		\draw[thick] (-1.2,.7) node[left]{\tiny$d$}; 
\begin{scope}[xshift=-.7cm]
	\draw[thick] (0,0) circle[radius=1.4cm];
		\draw[thick,<-] (-.985,.99)--(-.88,1.09); 
		\draw (-1,1.1) node[left]{\tiny$a$}; 
\end{scope}
\begin{scope}[xshift=.7cm]
	\draw[thick] (0,0) circle[radius=1.4cm];
	\draw[thick,->] (.99,.99)--(.89,1.09); 
		\draw (1,1.1) node[right]{\tiny$b$}; 
\end{scope}
\begin{scope}[yshift=-1.3cm]
	\draw[thick] (0,0) ellipse[x radius=2cm, y radius=1.3cm];
	\draw[thick,->] (2,0)--(2,.1); 
	\draw (2,0) node[right]{\tiny$c$}; 
\end{scope}
\end{scope} 
\begin{scope}[xshift=5cm,scale=.7,yshift=-1cm] 
\begin{scope}
\clip rectangle (-2,-1.3) rectangle (0,1.3);
\draw[thick] (0,0) ellipse [x radius=1.4cm,y radius=1.21cm];
\end{scope}
		\draw[thick,<-] (-1.22,.6)--(-1.12,.73); 
		\draw[thick] (-1.2,.7) node[left]{\tiny$d$}; 
\begin{scope}[xshift=-.7cm]
	\draw[thick] (0,0) circle[radius=1.4cm];
		\draw[thick,<-] (-.985,.99)--(-.88,1.09); 
		\draw (-1,1.1) node[left]{\tiny$a$}; 
\end{scope}
\begin{scope}[xshift=.7cm]
	\draw[thick] (0,0) circle[radius=1.4cm];
	\draw[thick,->] (.99,.99)--(.89,1.09); 
		\draw (1,1.1) node[right]{\tiny$b$}; 
\end{scope}
\begin{scope}[yshift=1.5cm]
	\draw[thick] (0,0) circle[radius=1.3cm]; 
	\draw[thick,<-] (0,1.3)--(.1,1.3); 
	\draw (0,1.3) node[above]{\tiny$c$}; 
\end{scope}
\end{scope} 

\end{tikzpicture}
\end{center}

{
\begin{center}
\begin{tikzpicture}
\draw (-6,-.5) node{$=$};

\begin{scope}[xshift=-3.6cm,yshift=-.8cm]
\draw[thick] (0,.5) circle[radius=7mm];
	\draw[thick,->] (0,1.2)--(-.1,1.2);
	\draw (0,1.2) node[above]{\tiny$a$};
\draw[thick] (0,-.5) circle[radius=7mm];
	\draw[thick,<-] (0,-1.2)--(-.1,-1.2);
	\draw (0,-1.2) node[below]{\tiny$c$};
\end{scope}
\begin{scope}[xshift=-4.1cm,yshift=-.8cm]
\draw[thick] (0,0) circle[radius=3mm]; 
\draw[ thick,->] (0,.3)--(.01,.3);
\draw (0,.3) node[above]{\tiny$x$};
\end{scope}

\begin{scope}[xshift=3.6cm,yshift=-.8cm]
\draw[thick] (0,.5) circle[radius=7mm];
	\draw[thick,->] (0,1.2)--(-.1,1.2);
	\draw (0,1.2) node[above]{\tiny$b$};
\draw[thick] (0,-.5) circle[radius=7mm];
	\draw[thick,<-] (0,-1.2)--(-.1,-1.2);
	\draw (0,-1.2) node[below]{\tiny$c$};
\end{scope}
\begin{scope}[xshift=4.1cm,yshift=-.8cm]
\draw[thick] (0,0) circle[radius=3mm]; 
\draw[ thick,<-] (.3,.00)--(.3,.02);
\draw (.3,0) node[right]{\tiny$x$};
\end{scope}

\begin{scope}
%
\draw[thick] (0,1.2) circle[radius=3mm]; 
\draw[ thick,->] (0,1.5)--(.01,1.5);
\draw (0,1.5) node[above]{\tiny$x$};
\begin{scope}[yshift=-13mm]
\draw[thick] (-.7,1.22) circle[radius=3mm]; 
\draw[thick,<-] (-.4,1.1)--(-.4,1.3);
\draw (-.4,1.1) node[right]{\tiny$x$};
\draw[thick] (.7,1.22) circle[radius=3mm]; 
\draw[thick,<-] (1,1.2)--(1,1.3);
\draw (1,1.3) node[right]{\tiny$x$};

\draw[thick] (-1.34,.96) circle[radius=3mm]; 
\draw[thick,<-] (-1.04,.9)--(-1.04,.92);
\draw (-1.1,.8) node[right]{\tiny$x$};

\end{scope}
\begin{scope}
\clip rectangle (-2,-1.3) rectangle (0,1.3);
\draw[thick] (0,0) ellipse [x radius=1.4cm,y radius=1.21cm];
\end{scope}
		\draw[thick,<-] (-1.22,.6)--(-1.12,.73); 
		\draw[thick] (-1.2,.7) node[left]{\tiny$d$}; 
\begin{scope}[xshift=-.7cm]
	\draw[thick] (0,0) circle[radius=1.4cm];
		\draw[thick,<-] (-.985,.99)--(-.88,1.09); 
		\draw (-1,1.1) node[left]{\tiny$a$}; 
\end{scope}
\begin{scope}[xshift=.7cm]
	\draw[thick] (0,0) circle[radius=1.4cm];
	\draw[thick,->] (.99,.99)--(.89,1.09); 
		\draw (1,1.1) node[right]{\tiny$b$}; 
\end{scope}
\begin{scope}[yshift=-1.3cm]
	\draw[thick] (0,0) ellipse[x radius=2cm, y radius=1.3cm];
	\draw[thick,->] (2,0)--(2,.1); 
	\draw (2,0) node[right]{\tiny$c$}; 
\end{scope}
\end{scope} 
\begin{scope}[xshift=0cm,scale=.7,yshift=-7cm] 
\begin{scope}
\clip rectangle (-2,-1.3) rectangle (0,1.3);
\draw[thick] (0,0) ellipse [x radius=1.4cm,y radius=1.21cm];
\end{scope}
		\draw[thick,<-] (-1.22,.6)--(-1.12,.73); 
		\draw[thick] (-1.2,.7) node[left]{\tiny$d$}; 
\begin{scope}[xshift=-.7cm]
	\draw[thick] (0,0) circle[radius=1.4cm];
		\draw[thick,<-] (-.985,.99)--(-.88,1.09); 
		\draw (-1,1.1) node[left]{\tiny$a$}; 
\end{scope}
\begin{scope}[xshift=.7cm]
	\draw[thick] (0,0) circle[radius=1.4cm];
	\draw[thick,->] (.99,.99)--(.89,1.09); 
		\draw (1,1.1) node[right]{\tiny$b$}; 
\end{scope}
\begin{scope}[yshift=1.5cm]
	\draw[thick] (0,0) circle[radius=1.3cm]; 
	\draw[thick,<-] (0,1.3)--(.1,1.3); 
	\draw (0,1.3) node[above]{\tiny$c$}; 
\end{scope}
\end{scope} 

\begin{scope}[yshift=-2.8cm]
\draw[fill,white] (-3,-1) rectangle (3,1);
\draw[thick] (-.91,-1)..controls (-.9,-.7) and (-1.2, 0)..(-1.86,1.01);
\draw[thick] (.91,-1)..controls (.9,-.7) and (1.2, 0)..(1.86,1.01);
\end{scope}

\begin{scope}[xshift=0cm,yshift=-2.9cm]
\draw[thick] (0,0) circle[radius=3mm]; 
\draw[ thick,->] (0,.3)--(.01,.3);
\draw (0,-.3) node[below]{\tiny$x$};
\end{scope}


\begin{scope}[xshift=0cm,scale=.4,yshift=-6cm] 
\begin{scope}
\clip rectangle (-2,-1.3) rectangle (0,1.3);
\draw[thick] (0,0) ellipse [x radius=1.4cm,y radius=1.21cm];
\end{scope}
		\draw[thick,<-] (-1.22,.6)--(-1.12,.73); 
		\draw[thick] (-1.2,.7) node[left]{\tiny$d$}; 
\begin{scope}[xshift=-.7cm]
	\draw[thick] (0,0) circle[radius=1.4cm];
		\draw[thick,<-] (-.985,.99)--(-.88,1.09); 
		\draw (-1,1.1) node[left]{\tiny$a$}; 
\end{scope}
\begin{scope}[xshift=.7cm]
	\draw[thick] (0,0) circle[radius=1.4cm];
	\draw[thick,->] (.99,.99)--(.89,1.09); 
		\draw (1,1.1) node[right]{\tiny$b$}; 
\end{scope}

\end{scope} 

\end{tikzpicture}
\end{center}
}

{
\begin{center}
\begin{tikzpicture}
\draw (-6,-.5) node{$=$};

\draw (-1.6,1.8) node{\small$(1)$};
\draw (-4.5,.8) node{\small$(2)$};
\draw (4.5,.8) node{\small$(3)$};
\draw (-1.6,-2.2) node{\small$(4)$};
\begin{scope}[xshift=-3.6cm,yshift=-.8cm]
\draw[thick] (0,.5) circle[radius=7mm];
	\draw[thick,->] (0,1.2)--(-.1,1.2);
	\draw (0,1.2) node[above]{\tiny$a$};
\draw[thick] (0,-.5) circle[radius=7mm];
	\draw[thick,<-] (0,-1.2)--(-.1,-1.2);
	\draw (0,-1.2) node[below]{\tiny$c$};
\end{scope}
\begin{scope}[xshift=-4.1cm,yshift=-.8cm]
\draw[thick] (0,0) circle[radius=3mm]; 
\draw[ thick,->] (0,.3)--(.01,.3);
\draw (0,.3) node[above]{\tiny$x$};
\end{scope}

\begin{scope}[xshift=3.6cm,yshift=-.8cm]
\draw[thick] (0,.5) circle[radius=7mm];
	\draw[thick,->] (0,1.2)--(-.1,1.2);
	\draw (0,1.2) node[above]{\tiny$b$};
\draw[thick] (0,-.5) circle[radius=7mm];
	\draw[thick,<-] (0,-1.2)--(-.1,-1.2);
	\draw (0,-1.2) node[below]{\tiny$c$};
\end{scope}
\begin{scope}[xshift=4.1cm,yshift=-.8cm]
\draw[thick] (0,0) circle[radius=3mm]; 
\draw[ thick,<-] (.3,.00)--(.3,.02);
\draw (.3,0) node[right]{\tiny$x$};
\end{scope}

\begin{scope}
%
\draw[thick] (0,1.2) circle[radius=3mm]; 
\draw[ thick,->] (0,1.5)--(.01,1.5);
\draw (0,1.5) node[above]{\tiny$x$};
\begin{scope}[yshift=-13mm]
\draw[thick] (-.7,1.22) circle[radius=3mm]; 
\draw[thick,<-] (-.4,1.1)--(-.4,1.3);
\draw (-.4,1.1) node[right]{\tiny$x$};
\draw[thick] (.7,1.22) circle[radius=3mm]; 
\draw[thick,<-] (1,1.2)--(1,1.3);
\draw (1,1.3) node[right]{\tiny$x$};

\draw[thick] (-1.34,.96) circle[radius=3mm]; 
\draw[thick,<-] (-1.04,.9)--(-1.04,.92);
\draw (-1.1,.8) node[right]{\tiny$x$};

\end{scope}
\begin{scope}
\clip rectangle (-2,-1.3) rectangle (0,1.3);
\draw[thick] (0,0) ellipse [x radius=1.4cm,y radius=1.21cm];
\end{scope}
		\draw[thick,<-] (-1.22,.6)--(-1.12,.73); 
		\draw[thick] (-1.2,.7) node[left]{\tiny$d$}; 
\begin{scope}[xshift=-.7cm]
	\draw[thick] (0,0) circle[radius=1.4cm];
		\draw[thick,<-] (-.985,.99)--(-.88,1.09); 
		\draw (-1,1.1) node[left]{\tiny$a$}; 
\end{scope}
\begin{scope}[xshift=.7cm]
	\draw[thick] (0,0) circle[radius=1.4cm];
	\draw[thick,->] (.99,.99)--(.89,1.09); 
		\draw (1,1.1) node[right]{\tiny$b$}; 
\end{scope}
\begin{scope}[yshift=-.5cm,xshift=-.3cm]
	\draw[thick] (0,0) ellipse[x radius=1.5cm, y radius=.5cm];
	\draw[thick,->] (1.5,0)--(1.5,.1); 
	\draw (1.5,0) node[right]{\tiny$c$}; 
\end{scope}

\end{scope} 

\begin{scope}[yshift=-.7cm] 
\draw[thick] (0,-2.5) circle[radius=1.4cm];
\draw[thick,->] (1.4,-2.5)--(1.4,-2.45);
\draw (1.4,-2.5) node[right]{\tiny$c$};
\begin{scope}[xshift=0cm,yshift=-2.9cm]
\draw[thick] (0,0) circle[radius=3mm]; 
\draw[ thick,->] (0,.3)--(.01,.3);
\draw (0,-.3) node[below]{\tiny$x$};
\end{scope}
\begin{scope}[xshift=0cm,scale=.4,yshift=-6cm] 
\begin{scope}
\clip rectangle (-2,-1.3) rectangle (0,1.3);
\draw[thick] (0,0) ellipse [x radius=1.4cm,y radius=1.21cm];
\end{scope}
\draw[thick,<-] (-1.22,.6)--(-1.12,.73); 
\draw[thick] (-1.2,.7) node[left]{\tiny$d$}; 
\begin{scope}[xshift=-.7cm]
	\draw[thick] (0,0) circle[radius=1.4cm];
		\draw[thick,<-] (-.985,.99)--(-.88,1.09); 
		\draw (-1,1.1) node[left]{\tiny$a$}; 
\end{scope}
\begin{scope}[xshift=.7cm]
	\draw[thick] (0,0) circle[radius=1.4cm];
	\draw[thick,->] (.99,.99)--(.89,1.09); 
		\draw (1,1.1) node[right]{\tiny$b$}; 
\end{scope}
\end{scope} 
\end{scope}
\end{tikzpicture}
\end{center}
}
The circular edge in (4) can be removed because $c^{-1}g-g$ is zero in $\overline P\otimes_{St}\ZZ$. When (4) is then attached at the top of (1) it cancels the small $x$ circle at the top. After multiplying (2) by $b^{-1}$ it can be attached to (1) on the right to cancel the right hand little $x$ circle. {\color{blue}Similarly $a^{-1}d^{-1}(3)$ will cancel the middle little $x$ circle} and we are left with:

{
\begin{center}
\begin{tikzpicture}
\draw (3,-.2) node{$=$};

\begin{scope}[xshift=9cm,yshift=.5cm]
\draw[thick] (-3.6,-.6) circle[radius=1.5cm];
\draw[thick,->] (-5.1,-.6)--(-5.1,-.65);
\draw (-5.1,-.6) node[left]{\tiny$a$};
\begin{scope}[xshift=-3.6cm,yshift=-.8cm]
\draw[thick] (0,.5) circle[radius=7mm];
	\draw[thick,->] (0,1.2)--(-.1,1.2);
	\draw (0,1.2) node[above]{\tiny$d$};
\draw[thick] (0,-.3) circle[radius=5mm];
	\draw[thick,<-] (0,-.8)--(-.1,-.8);
	\draw (0,-.8) node[below]{\tiny$c$};
\end{scope}
\begin{scope}[xshift=-4.1cm,yshift=-.8cm]
\draw[thick] (0,0) circle[radius=3mm]; 
\draw[ thick,->] (0,.3)--(.01,.3);
\draw (0,.3) node[above]{\tiny$x$};
\end{scope}
\end{scope}

\begin{scope}
%
\begin{scope}[yshift=-10mm]
\draw[thick] (-1.34,.96) circle[radius=3mm]; 
\draw[thick,->] (-1.64,1)--(-1.64,1.02);
\draw (-1.6,.8) node[left]{\tiny$x$};
\end{scope}

\begin{scope}
\clip rectangle (-2,-1.3) rectangle (0,1.3);
\draw[thick] (0,0) ellipse [x radius=1.4cm,y radius=1.21cm];
\end{scope}
		\draw[thick,<-] (-1.22,.6)--(-1.12,.73); 
		\draw[thick] (-1.2,.7) node[left]{\tiny$d$}; 
\begin{scope}[xshift=-.7cm]
	\draw[thick] (0,0) circle[radius=1.4cm];
		\draw[thick,<-] (-.985,.99)--(-.88,1.09); 
		\draw (-1,1.1) node[left]{\tiny$a$}; 
\end{scope}
\begin{scope}[xshift=.7cm]
	\draw[thick] (0,0) circle[radius=1.4cm];
	\draw[thick,->] (.99,.99)--(.89,1.09); 
		\draw (1,1.1) node[right]{\tiny$b$}; 
\end{scope}
\begin{scope}[yshift=-.3cm,xshift=-1.2cm]
	\draw[thick] (-.14,-.2) circle[radius=.4cm];
	\draw[thick,->] (.25,-.3)--(.25,-.2); 
	\draw (.25,-.3) node[right]{\tiny$c$}; 
\end{scope}

\end{scope} 
%
\end{tikzpicture}
\end{center}
}
\noindent Eliminating the {\color{blue}circular edge labeled $a$} produces a typical element of the form \ref{def: 2.3}(1).
\end{proof}

The remainder of the proof {\color{blue}of Theorem \ref{thm: 2.4}} is similar. The details can be found in \cite{I} {\color{blue}See also \cite{IT14}}.
}

{
\section{The Generalized Grassmann Invariant}

We shall define a natural homomorphism
\[
	\chi: K_3(\ZZ[\pi])\to K_1(\ZZ[\pi];\ZZ_2[\pi])\cong H_0(\pi;\ZZ_2[\pi])
\]
where $\pi$ acts on $\ZZ_2[\pi]$ by conjugation.

\begin{defn}\label{def: 3.1}
The \emph{intersection pairing} will be the symmetric biadditive map $\ZZ[\pi]\times\ZZ[\pi]\to \ZZ[\pi]$ given by the formula
\[
	\left<\sum n_iu_i,\sum m_iu_i
	\right> = \sum n_im_i u_i.
\]
We shall also use the same notation for the induced pairing $\ZZ_2[\pi]\times\ZZ_2[\pi]\to \ZZ_2[\pi]$ which can be interpreted as set intersection if we think of $\ZZ_2[\pi]$ as the set of finite subsets of $\pi$.
\end{defn}

This pairing clearly satisfies the following condition. If $u,v\in\pi$ and $x,y\in\ZZ[\pi]$ or $\ZZ_2[\pi]$ then $\left< uxv, uyv\right>=u\left<x,y\right>v$.

Let $\chi_Q:Q(St(\ZZ[\pi]))\to H_0(\pi;\ZZ_2[\pi])$ be defined by the following formula
\[
	\chi_Q(f,[e_{ij}^u,e_{ik}^v])= \sum_p r_{pi}\left<us_{jp},vs_{kp}\right>
\]
where $(r_{pq})$ is the matrix representing the image of $f$ in $GL(\ZZ_2[pi])$ and $(s_{qp})$ is its inverse.

$\chi_Q(f,y)=0$ if $y$ is not of the above form.

Since the range is abelian $\chi_Q$ induces an additive homomorphism
\[
	\chi_c: \ZZ[St(\ZZ[\pi])]\left< \cY\right> \to H_0(\pi;\ZZ_2[\pi]).
\]
By restricting this map to $\ker\partial_2$ we get an additive homomorphism
\[
	\chi_{\overline P}: \overline P(St(\ZZ[\pi])) \to H_0(\pi;\ZZ_2[\pi]).
\]
We shall show that $\chi_{\overline P}$ is a homomorphism of $St(\ZZ[\pi])$ modules and that it is zero when restricted to $H_0$.
}

{
\begin{thm}\label{thm: 3.2}
$\chi_{\overline P}(H_0)=0$.
\end{thm}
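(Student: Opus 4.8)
The plan is to compute $\chi_{\overline P}$ explicitly on the two pictures \ref{lem: 2.2}(a) and \ref{lem: 2.2}(b). Since these generate $H_0$ as an $St(\ZZ[\pi])$-module and $\chi_{\overline P}$ is a homomorphism of $St(\ZZ[\pi])$-modules, showing it vanishes on them gives $\chi_{\overline P}(H_0)=0$. To evaluate $\chi_{\overline P}$ on a picture $P$ I would use the isomorphism $\psi$ of Lemma \ref{lem: 1.6}: choose disjoint arcs from each vertex $v$ of $P$ out to $\infty$; the edge labels met along the $v$th arc spell out a group element $f_v\in F$, and if $y_v$ is the relation at $v$ then $\psi(P)=\prod_v(f_v,y_v)^{\varepsilon_v}$ with $\varepsilon_v=\pm1$, so that $\chi_{\overline P}(P)=\sum_v\varepsilon_v\,\chi_Q(f_v,y_v)$. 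By the definition of $\chi_Q$ a vertex contributes $0$ unless its relation is a commutator $[e_{ab}^s,e_{ac}^t]$ of two generators with a common row index $a$; in particular every vertex carrying a relation of type \ref{def: 2.1}(2) or \ref{def: 2.1}(3) (length-five reduced words, not commutators) drops out, as does every commutator vertex whose two generators have distinct rows. So the whole computation reduces to the bookkeeping of the ``common-row'' commutator vertices of (a) and (b), each weighted by the matrix of its arc.

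For picture (a): reading the figure, the vertices carry the relation $[e_{\ell m}^w,e_{ik}^{uv}]$, Steinberg relations of types \ref{def: 2.1}(2)--(3) on the triple $(e_{ij}^u,e_{jk}^v,e_{ik}^{uv})$, and the two commutators $[e_{\ell m}^w,e_{ij}^u]$ and $[e_{\ell m}^w,e_{jk}^v]$ produced by pushing the outer loop $e_{\ell m}^w$ past $e_{ij}^u$ and $e_{jk}^v$ (consistent with $h(\text{(a)})=[e_{\ell m}^w,e_{ik}^{uv}]$ in \ref{lem: 2.2}). The Steinberg relations are not commutators and are killed; and since $\ell\neq j$ by hypothesis, the $[e_{\ell m}^w,e_{jk}^v]$-vertices have distinct rows and are killed too. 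Hence if also $\ell\neq i$, every surviving vertex has distinct rows and $\chi_{\overline P}(\text{(a)})=0$ immediately. If $\ell=i$, the surviving vertices are one of type $[e_{im}^w,e_{ik}^{uv}]$ together with those of type $[e_{im}^w,e_{ij}^u]$; I would read their arcs off the picture (each is a word in $e_{ij}^u,e_{jk}^v$ according to which inner loops it crosses), substitute into $\chi_Q(f,[e_{ab}^s,e_{ac}^t])=\sum_p r_{pa}\langle s\,s_{bp},t\,s_{cp}\rangle$, and use the biadditivity of $\langle-,-\rangle$, its equivariance $\langle uxv,uyv\rangle=u\langle x,y\rangle v$, and the identities $\sum_p r_{ap}s_{pb}=\delta_{ab}$ expressing that $(s_{qp})$ inverts $(r_{pq})$, to see the contributions cancel --- this is in effect the Steinberg relation $e_{ik}^{uv}=[e_{ij}^u,e_{jk}^v]$ transcribed into the abelian group $H_0(\pi;\ZZ_2[\pi])$. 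The one sub-case needing separate attention is $\ell=i$, $m=k$, $w=uv$, where $[e_{im}^w,e_{ik}^{uv}]$ degenerates to a forbidden $[x,x]$; this is exactly the situation governed by \ref{def: 2.3}(0), and there one checks that the local modification prescribed in \ref{def: 2.3}(0) leaves $\chi_{\overline P}$ unchanged.

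Picture (b) is handled in the same way, with more terms. By \ref{lem: 2.2}, $h(\text{(b)})$ is the sum of a type-\ref{def: 2.1}(2) relation, a type-\ref{def: 2.1}(3) relation, and several commutators; the first two vanish under $\chi_Q$, while among the commutator vertices of (b) only those with a shared row index survive. Reading their arcs off the diagram and applying the same formula, again with the equivariance and biadditivity of the intersection pairing, makes the resulting signed arc-weighted sum vanish, any degenerate coincidence of indices being absorbed by \ref{def: 2.3}(0). I expect the real work to be precisely this last bookkeeping for (a) and, more laboriously, for (b): extracting from the planar diagrams the arcs $f_v$ (equivalently the matrices $(r_{pq})$ and their inverses $(s_{qp})$) and the signs $\varepsilon_v$ at the surviving common-row vertices, and then grouping the sums of intersection products so the cancellation is manifest --- morally, re-deriving the Steinberg relations ``one level up''. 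The genuinely delicate points are the degenerate index coincidences flagged after \ref{lem: 2.2}(a) and in \ref{def: 2.3}(0), each of which I would dispatch case by case.
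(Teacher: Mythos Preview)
Your plan is essentially the paper's, and your treatment of case~(a) is correct: when $\ell=i$ there are exactly three relevant vertices (two crossings of the $e_{im}^w$ oval with the $e_{ij}^u$ circle, one with the $e_{ik}^{uv}$ arc), and the middle term equals the sum of the outer two by bilinearity of $\langle\,,\,\rangle$. The inversion identity $\sum_p s_{jp}r_{pi}=\delta_{ji}$ is not needed for this cancellation; it enters only in the degenerate subcase $\ell=i,\ m=k,\ w=uv$, where the paper (part~(c)) observes that the formal value $\chi_Q(f,[e_{ij}^u,e_{ij}^u])=\sum_p r_{pi}\,us_{jp}$ vanishes precisely by that identity, so the general computation extends across the \ref{def: 2.3}(0) convention.

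Two corrections. First, your appeal to $St(\ZZ[\pi])$-equivariance of $\chi_{\overline P}$ is a forward reference to Theorem~\ref{thm: 3.3}, which the paper proves \emph{after} Theorem~\ref{thm: 3.2}. The paper instead computes $\chi_{\overline P}(fP)$ for arbitrary $f\in F$: writing $(r_{pq})=(s_{qp})^{-1}$ for the image of $f$ in $GL(\ZZ_2[\pi])$, the short arc-words read from $P$ only shift a few entries of $s$, and the cancellation is uniform in $f$. This is no harder than your $f=1$ computation and keeps the argument self-contained. Your route is not circular, since the proof of Theorem~\ref{thm: 3.3} is independent, but it inverts the paper's logical order.

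Second, and more important, your expectation that (b) is ``more laborious'' than (a) is backwards: (b) is trivial. The edge labels in (b) are $e_{ij}^u,\,e_{jk}^v,\,e_{ik}^{uv},\,e_{k\ell}^w,\,e_{j\ell}^{vw},\,e_{i\ell}^{uvw}$, and in that picture any two edges sharing a first index either do not cross at all or meet only at a $5$-valent type-(2)/(3) vertex. Hence every $4$-valent commutator vertex of (b) involves generators with distinct first indices, and $\chi_Q$ kills each one separately; there is nothing to cancel. The paper's proof of (b) is one line: ``There are no relevant Steinberg relations.'' No degenerate-index subtleties arise for (b) either.
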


\begin{proof}
The following computations can also be carried out for all the second order Steinberg relations to show $\chi_{\overline P}(H)=0$.
\begin{enumerate}
\item[a)] Suppose that $fP$ is an additive generator of $H_0$ where $f\in F$ and $P$ is a graph of the form \ref{lem: 2.2}(a). If $\ell\neq i$ then $\chi_{\overline P}(fP)=0$ because no relevant relations exist. If $\ell=i$ then there are three relevant relations except in the exceptional case when $m=k$ and $w=uv$. This exceptional case is taken care of by c) below.

At the three relevant relations in the case $\ell=i$ we have:
\begin{enumerate}
\item[1)]	$\chi_Q(fe_{im}^{-w}e_{ij}^{-u},[e_{im}^w, e_{ij}^u])= \sum_p r_{pi}\left<ws_{mp},us_{jp}\right>$.
\item[2)] $\chi_Q(fe_{im}^{-w} e_{jk}^{-v} e_{ij}^{-u},[ e_{ij}^u,e_{im}^w])= \sum_p r_{pi}\left<u(s_{jp}+vs_{kp}), ws_{mp}\right>$.
\item[3)] {\color{blue}
$\chi_Q(fe_{im}^{-w}  e_{ij}^{-u} e_{ik}^{-uv},[ e_{im}^w,e_{ik}^{uv}])= \sum_p r_{pi}\left< ws_{mp}, uvs_{kp}\right>$.
}
\end{enumerate}
where $(s_{\ast\ast})^{-1}=(r_{\ast\ast})$ is the image of $f$ in $GL(\ZZ_2[\pi])$. It can easily be seen that $(2)=(1)+(3)$ so the sum of all three is zero.
\item[b)] $\chi_{\overline P}(fP)=0$ if $P$ is a graph of the form \ref{lem: 2.2}(b). There are no relevant Steinberg relations.
\item[c)] Let us pretend for a moment that we allow $[x,x]$ as a Steinberg relation. Then we have
\[
	\chi_Q(f,[e_{ij}^u,e_{ij}^u])= \sum_p r_{pi}\left< us_{jp},us_{jp}\right>= \sum_p r_{pi} us_{jp}=\sum_p  us_{jp}r_{pi}=0
\]
since $i\neq j$. This means that the general computation of a) extends to the exceptional case.
\end{enumerate}
\end{proof}
}

{
\begin{thm}\label{thm: 3.3}
$\chi_{\overline P}:\overline P\to H_0(\pi;\ZZ_2\pi])$ is a homomorphism of $St(\ZZ[\pi])$-modules where the action on $H_0(\pi;\ZZ_2[\pi])$ is trivial.
\end{thm}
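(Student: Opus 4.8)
The plan is to verify $St(\ZZ[\pi])$-equivariance of $\chi_{\overline P}$ by reducing to the action of a single generator $x = e_{ab}^t$ on a picture $P$, and then tracking how $\chi_Q$ changes on each commutator-relation vertex of the enclosed picture. Recall that the action of $x$ on a picture $P$ is given by enclosing $P$ in a circular edge labeled $x$ (clockwise for $x$, counterclockwise for $x^{-1}$). Under the isomorphism $\psi:\overline P(G)\to\ker\varphi\subset Q(G)$ of Lemma \ref{lem: 1.6}, the element $\psi(P) = \prod_i (f_i,y_i)$ goes to $\prod_i (x f_i, y_i)$ — because every line from a base point of $P$ out to $\infty$ now crosses the new outer edge labeled $x$ exactly once, prepending the letter $x$ (or $x^{-1}$) to each $f_i$. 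Since $\chi_{\overline P}$ is defined by restricting $\chi_c$, which is induced by $\chi_Q$, to $\ker\partial_2 \cong \overline P$, we have
\[
    \chi_{\overline P}(xP) = \sum_i \chi_Q(xf_i, y_i), \qquad \chi_{\overline P}(P) = \sum_i \chi_Q(f_i, y_i),
\]
and the goal is exactly $\chi_{\overline P}(xP) = \chi_{\overline P}(P)$, i.e. the action is trivial on the target.

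**Next I would** compute the effect of left-multiplying $f$ by $x = e_{ab}^t$ in the defining formula for $\chi_Q$ on a commutator vertex. Only vertices of the form $(f,[e_{ij}^u,e_{ik}^v])$ contribute. If $(r_{pq})$ represents the image of $f$ in $GL(\ZZ_2[\pi])$ with inverse $(s_{qp})$, then the image of $xf = e_{ab}^t f$ is $E_{ab}^t \cdot (r_{pq})$, which changes only row $a$: the new row-$a$ entries are $r_{ap} + t\, r_{bp}$. Correspondingly its inverse changes only column $a$: the new entries $s_{qa}$ become $s_{qa} - s_{qb}\, t$ (working mod $2$, signs are irrelevant, but these columns $s_{qp}$ for $p\ne a$ are unchanged, and only column $a$ of the inverse moves). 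Plugging into $\chi_Q(xf,[e_{ij}^u,e_{ik}^v]) = \sum_p r'_{pi}\langle u\,s'_{jp}, v\,s'_{kp}\rangle$: if $i \ne a$ then $r'_{pi} = r_{pi}$ and $s'_{jp}, s'_{kp}$ differ from $s_{jp}, s_{kp}$ only when $p = a$, so one picks up a correction term indexed by $p=a$; if $i = a$, there is an additional correction from $r'_{pa}$. The heart of the argument is that, when these correction terms are summed over all the commutator vertices of a \emph{closed} picture $P$ (one lying in $\ker\partial_2$), they cancel. This is precisely the mechanism already used in the proof of Theorem \ref{thm: 3.2}, where the ``$(2) = (1)+(3)$'' identity shows that the three commutator vertices forced by a Steinberg relation of the form \ref{lem: 2.2}(a) have correction terms that telescope.

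**The key structural input** is that $\chi_c$ already kills $\ker\partial_2 \cap (\text{second-order relations})$ — or more directly, that $\chi_{\overline P}$ is well-defined on $\overline P$ — so rather than grinding through the bookkeeping vertex-by-vertex for an arbitrary picture, I would argue as follows. The map $g \mapsto \chi_Q(xg \cdot g^{-1} \cdot(\text{appropriate conjugates}), -)$ extends to an additive map on $\ZZ[St]\langle\cY\rangle$; one checks that $\chi_c(xP) - \chi_c(P)$, as a function of $P\in \ZZ[St]\langle\cY\rangle$, factors through $\partial_2$, i.e. depends only on $\partial_2 P$. Concretely: the difference $\chi_Q(xf,y) - \chi_Q(f,y)$ is a ``boundary-type'' expression — it can be written in terms of the matrix $(r_{pq})$ of $f$, the entries of $\partial(y)$, and the index $t$ — in such a way that for a single Steinberg relation $y$ the sum over the finitely many commutator vertices appearing in the expansion \emph{of $\partial_2$-preimages of} $y$ vanishes. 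Since $\ker\partial_2$ is generated (as a subgroup, via Lemma \ref{lem: 2.2} and Definition \ref{def: 2.1}) by such closed combinations — the type (1) elements $[[e_{\ell m}^w, e_{ik}^{uv}]]$ and the type \ref{lem: 2.2}(b) elements, both of which are explicit finite pictures — it suffices to verify $\chi_{\overline P}(x\cdot) = \chi_{\overline P}(\cdot)$ on these generators, which is the finite computation of the type done in Theorem \ref{thm: 3.2}(a),(b). Then $St(\ZZ[\pi])$-equivariance with trivial target action follows, and since $H_0(\pi;\ZZ_2[\pi])$ is the target, the statement is proved. I would close by noting that equivariance for the group $St(\ZZ[\pi])$ follows from equivariance for the generating set $\cX$ because both sides are homomorphisms in the $\overline P$ variable.

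**The main obstacle** will be organizing the correction-term cancellation cleanly. The naive approach — expand $\psi(xP)$, substitute $E_{ab}^t\cdot(\text{matrix of }f_i)$ at every one of the unboundedly many vertices, and hope telescoping is visible — is a combinatorial mess because the corrections are nonlocal (a single matrix row change affects every $s_{jp}, s_{kp}$ term with $p=a$). The trick that makes it tractable, which I expect the author uses, is to reduce to the finite generators of $\ker\partial_2$ from Lemma \ref{lem: 2.2}: there the vertices are explicit and few, the matrix $(r_{pq})$ is an elementary matrix or a short product thereof, and the cancellation ``$(2)=(1)+(3)$'' is exactly the bilinearity identity $\langle u(s_{jp}+v s_{kp}), w s_{mp}\rangle = \langle u s_{jp}, w s_{mp}\rangle + \langle uv s_{kp}, w s_{mp}\rangle$ already exploited in Theorem \ref{thm: 3.2}. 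So the real content is recognizing that equivariance on generators suffices, and that the generator computations are morally the same computations as in the previous theorem.
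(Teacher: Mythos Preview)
Your proposal contains a genuine gap at the crucial reduction step. You correctly identify that the heart of the matter is showing $\chi_c(x\cdot)-\chi_c(\cdot)$ factors through $\partial_2$, but your proposed verification strategy fails: the elements of Lemma~\ref{lem: 2.2} are \emph{not} generators of $\ker\partial_2\cong\overline P$. That lemma describes generators of the kernel of the \emph{non-equivariant} map $\partial:\ZZ\langle\cY\rangle\to\ZZ\langle\cX\rangle$, which is the image of $\overline P\otimes_{St}\ZZ$ under $h$. The $St$-submodule of $\overline P$ generated by those pictures is exactly $H_0$, and $\overline P/H_0$ is highly nontrivial (its coinvariants give $K_3(\ZZ[\pi])$). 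So checking equivariance only on those pictures would establish nothing beyond what Theorem~\ref{thm: 3.2} already gives.

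The paper avoids this by never trying to enumerate generators of $\ker\partial_2$. Instead it exhibits the factorization directly: it writes down an explicit (non-equivariant) cochain $\psi:\ZZ[St]\langle\cX\rangle\to H_0(\pi;\ZZ_2[\pi])$, given by a concrete formula in the matrix entries $r_{\ast\ast},s_{\ast\ast}$ and the fixed generator $x=e_{h\ell}^w$, and then verifies $\psi\partial_2=\chi_c(x-1)$ on each generator of $\ZZ[St]\langle\cY\rangle$ (three cases, according to the three types of Steinberg relation). Once $\chi_c(x-1)=\psi\partial_2$, vanishing on $\ker\partial_2$ is automatic. Your computation of how left-multiplication by $e_{ab}^t$ alters $r_{pi}$ and $s_{jp}$ is the correct first move and indeed produces Formula~\ref{eq: 3.4}; what you are missing is the construction of $\psi$ that absorbs that difference. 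The ``telescoping'' you anticipate happens not across vertices of a closed picture, but term-by-term in the expansion of $\psi\partial_2(f[y])$ for a single relation $y$.
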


\begin{proof}
Let $x=e_{h\ell}^w$ be an arbitrary generator of $St(\ZZ[\pi])$. We shall show that $\chi_c(x-1)$ is an additive coboundary, i.e. it factors through $\ZZ[St]\left<\cX\right>$, and thus $\chi_{\overline P}(x-1)=0$.
\[
	\chi_c(x-1)(f[[e_{ij}^u,e_{ik}^v]])= \chi_c(f[[e_{ij}^u,e_{ik}^v]])+\chi_c(e_{h\ell}^w f[[e_{ij}^u,e_{ik}^v]])
\]
\[
	= \sum_p r_{pi} \left< us_{jp}, vs_{kp}\right> +
	\sum_p r_{pi}' \left< us_{jp}', vs_{kp}'\right>
\]
where 
\begin{eqnarray*}
r_{pi}' &=& r_{pi}\quad \text{if } p\neq h\\
r_{hi}' &=& r_{hi}+wr_{\ell i}\\
s_{qp}' &=& s_{qp} \quad \text{if } p\neq \ell\\
s_{q\ell}' &=& s_{q\ell} +s_{qh}w.
\end{eqnarray*}
Thus the terms in the above sum cancel except when $p=h$ or $\ell$ where we get

$(p=h): wr_{\ell i}\left< us_{jh} , vs_{kh} \right>$

$(p=\ell): r_{\ell i}\left< us_{j\ell}+us_{jh}w, , vs_{k\ell}+ vs_{kh} \right>+
r_{\ell i}\left< us_{j\ell} , vs_{k\ell} \right>$

\noindent since $r_{\ell i}\left< us_{jh}w, ws_{kh}w\right> = r_{\ell i}\left< us_{jh}, ws_{kh}\right>w=wr_{\ell i}\left< us_{jh}, ws_{kh}\right>$, where we have left only two of the four cross terms of $p=\ell$ which are
\begin{formula}\label{eq: 3.4}
$
\chi_c(e_{h\ell}^w-1)(f[[e_{ij}^u,e_{ik}^v]])= 
r_{\ell i} \left< us_{i\ell}, vs_{kh}w\right> 
+ r_{\ell i} \left< us_{jh}w,vs_{k\ell}\right>
$.
\end{formula}

Let $\psi:\ZZ[St(\ZZ[\pi])]\left<\cX\right>\to H_0(\pi;\ZZ_2[\pi])$ be defined by the following formula.
\[
	\psi(f[e_{ij}^u])=r_{\ell i} \left< us_{j\ell}, s_{ih}w+us_{jh}w\right>
\]
if $u\in\pi$. From the convention $[x^{-1}]=-x^{-1}[x]$ we get the formula
\[
	\psi(f[e_{ij}^{-u}]) =r_{\ell i} \left< us_{j\ell}, s_{ih} w\right>.
\]

We shall now show that $\psi \partial_2=\chi_c(x-1)$. Note that $\partial_2$ is equivariant but the other homomorphisms are not.
\begin{enumerate}
\item[a)] $\psi\partial_2 f[[e_{ij}^u,e_{ik}^v]]$ is a sum of four terms.
\begin{enumerate}
\item[1)] $\psi(f[e_{ij}^u])= r_{\ell i}\left< us_{j\ell}, s_{ih}w+ us_{jh}w\right>$
\item[2)] $\psi(fe_{ij}^u[e_{ik}^v])= r_{\ell i} \left<  vs_{k\ell}, s_{ih}w+ us_{jh}w + vs_{kh}w \right>$
\item[3)] $\psi(fe_{ij}^ue_{ik}^v[e_{ij}^{-u}])= r_{\ell i} \left<  us_{j\ell}, s_{ih}w+ vs_{kh}w+us_{jh}w\right>$
\item[4)] $\psi(fe_{ik}^v[e_{ik}^{-v}])= r_{\ell i} \left<  vs_{k\ell}, s_{ih}w+ vs_{kh}w \right>$
\end{enumerate}
All the terms cancel except two which are the same as the terms given in \eqref{eq: 3.4}
\item[b)] $\psi\partial_2f[[e_{ij}^u,e_{km}^v]]=0$ if $i\neq k,m$ and $j\neq k$. This follows from the general formula $\psi(fe_{km}^v[e_{ij}^u] )=\psi(f[e_{ij}^u])$ if $i\neq k,m$ and $j\neq k$.
\item[c)] {\color{blue} $\psi\partial_2f[e_{ij}^u e_{jk}^v e_{ij}^{-u} e_{ik}^{-uv} e_{jk}^{-v} ]$} is the sum of the following five terms.
\begin{enumerate}
\item[1)] $\psi(f[e_{ij}^u])= r_{\ell i}\left< us_{j\ell}, s_{ih}w+ us_{jh}w\right>$
\item[2)] $\psi(fe_{ij}^u[e_{jk}^v])= (r_{\ell j}+r_{\ell i}u)\left< vs_{k\ell}, s_{jh}w+ vs_{kh}w\right>$
\item[3)] $\psi(fe_{ij}^ue_{jk}^v[e_{ij}^{-u}])=\psi(fe_{jk}^ve_{ik}^{uv}[e_{ij}^{u}])= r_{\ell i}\left< us_{j\ell}+uvs_{k\ell}, s_{ih}w+ us_{jh}w\right>$
\item[4)] {
\color{blue}$\psi(fe_{ij}^ue_{jk}^ve_{ij}^{-u}[e_{ik}^{-uv}])=\psi(fe_{jk}^v[e_{ik}^{uv}])= r_{\ell i}\left< uvs_{k\ell}, s_{ih}w+ uvs_{kh}w\right>$
}
\item[5)] {
\color{blue}$\psi(fe_{jk}^v[e_{jk}^{-v}])=\psi(f[e_{jk}^{v}])= r_{\ell j}\left< vs_{k\ell}, s_{jh}w+ vs_{kh}w\right>$
}
\end{enumerate}
\end{enumerate}

One can easily see that all the terms cancel.
\end{proof}

\begin{cor}\label{cor: 3.5}
$\chi_{\overline P}$ is a 3-cocycle representing an element of $H^3(St(\ZZ[\pi]); H_0(\pi;\ZZ_2[\pi]))$. 
\end{cor}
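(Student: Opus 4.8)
Throughout write $St$ for $St(\ZZ[\pi])$ and set $M=H_0(\pi;\ZZ_2[\pi])$, viewed as an $St$-module. The plan is to recognize $\chi_{\overline P}$ as an $St$-linear map out of a syzygy module of $\ZZ$ over $\ZZ[St]$ and to read off the cohomology class by elementary dimension shifting. There is essentially no computation left: the two substantive inputs are Theorems \ref{thm: 3.3} and \ref{thm: 3.2}, the first of which does almost all of the work.

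First I would fix a free $\ZZ[St]$-resolution $P_\bullet\to\ZZ$ extending the one coming from the presentation. By the Peiffer theorem (\ref{thm: 1.4}), $\overline P(St)\cong\ker\partial_2$ as $St$-modules, and
\[
\ZZ[St]\langle\cY\rangle\xrightarrow{\partial_2}\ZZ[St]\langle\cX\rangle\xrightarrow{\partial_1}\ZZ[St]\xrightarrow{\varepsilon}\ZZ\to0
\]
is exact. Put $P_0=\ZZ[St]$, $P_1=\ZZ[St]\langle\cX\rangle$, $P_2=\ZZ[St]\langle\cY\rangle$; choose a free $\ZZ[St]$-module $P_3$ together with a surjection $q\colon P_3\twoheadrightarrow\ker\partial_2$; let $\partial_3\colon P_3\to P_2$ be the composite of $q$ with the inclusion $\iota\colon\ker\partial_2\hookrightarrow P_2$; and continue arbitrarily in higher degrees. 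Then $H^\ast(St;M)=H^\ast\bigl(\Hom_{St}(P_\bullet,M)\bigr)$.

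Next, by Theorem \ref{thm: 3.3}, $\chi_{\overline P}\colon\overline P(St)\to M$ is a homomorphism of $St$-modules, so $\widetilde\chi:=\chi_{\overline P}\circ q\colon P_3\to M$ is $St$-equivariant, i.e.\ a genuine $3$-cochain $\widetilde\chi\in\Hom_{St}(P_3,M)$. It is automatically a cocycle: since $\iota$ is injective we have $\ker\partial_3=\ker q$, and since $\im\partial_4=\ker\partial_3=\ker q$ it follows that $q\circ\partial_4=0$, hence $\delta\widetilde\chi=\widetilde\chi\circ\partial_4=\chi_{\overline P}\circ(q\circ\partial_4)=0$. Thus $\widetilde\chi$ determines a class in $H^3(St;M)=H^3(St(\ZZ[\pi]);H_0(\pi;\ZZ_2[\pi]))$, which is by definition the class ``represented by $\chi_{\overline P}$''.

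Finally I would check that this class is independent of the auxiliary choices $P_3,q$ and the higher $P_n$. That is the usual uniqueness of projective resolutions: any two completions of the fixed partial resolution agree in degrees $\le2$ and are chain homotopy equivalent over the identity there, so the corresponding cochains differ by a coboundary. Equivalently, one can present the argument invariantly: splicing the short exact sequences $0\to\ker\partial_2\to P_2\to\im\partial_2\to0$, $0\to\im\partial_2\to P_1\to\ker\varepsilon\to0$ and $0\to\ker\varepsilon\to P_0\to\ZZ\to0$, whose middle terms are free, and composing connecting maps in $\Ext^\ast_{\ZZ[St]}(-,M)$, yields a natural surjection $\Hom_{St}\bigl(\overline P(St),M\bigr)\twoheadrightarrow\Ext^3_{\ZZ[St]}(\ZZ,M)=H^3(St;M)$, under which $\chi_{\overline P}$ maps to the asserted class. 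The only points needing care are bookkeeping ones: that a map out of the \emph{second} syzygy $\ker\partial_2$ produces a class in degree $3$, not degree $2$; and that $H_0(\pi;\ZZ_2[\pi])$ carries the \emph{trivial} $St$-action, which is exactly Theorem \ref{thm: 3.3}. Theorem \ref{thm: 3.2} is not needed for the present corollary; it enters only when one later identifies the pairing of this class with $K_3(\ZZ[\pi])\subset H_3(St)$ as the Grassmann invariant.
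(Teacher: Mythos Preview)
Your argument is correct and is exactly the standard dimension-shifting interpretation the paper has in mind; the paper states the corollary without proof, simply as an immediate consequence of Theorem~\ref{thm: 3.3}, and later (Section~5) refers to ``the equivariant 3-cocycle $\chi_{\overline P(St)}$ \ldots\ defined as the coboundary of an integral (i.e.\ nonequivariant) 2-cochain $\chi_c$'', which is precisely your $\widetilde\chi=\chi_{\overline P}\circ q=\chi_c\circ\partial_3$. Your remark that Theorem~\ref{thm: 3.2} is not needed here is also correct.
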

 This can be interpreted as a Postikov invariant for Waldhausen's space $GL(\ZZ[\Omega B\pi])$ \cite{W}. {\color{blue} (See also \cite{A-infty}.)}
}


{
\section{Naturality of $\chi$}
In this section we shall prove that $\chi$ is a natural transformation of functors on the category of groups and homomorphism.

\begin{lem}\label{lem: 4.1}
$\chi$ is natural for injective homomorphisms.
\end{lem}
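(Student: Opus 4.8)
\emph{The plan.}
Let $\alpha:\pi\to\pi'$ be an injective group homomorphism. We must show that the square
\[
\xymatrix{
K_3(\ZZ[\pi]) \ar[r]^{\chi}\ar[d] & H_0(\pi;\ZZ_2[\pi]) \ar[d]\\
K_3(\ZZ[\pi']) \ar[r]^{\chi} & H_0(\pi';\ZZ_2[\pi'])
}
\]
commutes, where the left vertical map is induced by $\alpha$ on $K_3$ and the right one by the ring homomorphism $\alpha:\ZZ_2[\pi]\to\ZZ_2[\pi']$ (which is equivariant for the conjugation actions, hence descends to coinvariants). Since $\chi$ is assembled from the explicit cocycle $\chi_Q$ on $Q(St(\ZZ[\pi]))$, the plan is to lift this square all the way back to $Q$, where it becomes the single identity $\chi_Q\circ\alpha=\alpha\circ\chi_Q$, and then to reduce that to a term-by-term comparison of two instances of the formula defining $\chi_Q$. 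The only ingredient of the construction that is at all sensitive to $\alpha$ being injective is the intersection pairing, so the proof really consists of routine functoriality bookkeeping plus the observation that $\langle\,,\rangle$ commutes with injective maps of group rings.

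\emph{Functoriality.}
First I would check that $e_{ij}^u\mapsto e_{ij}^{\alpha(u)}$ sends the generators $\cX$ of $St(\ZZ[\pi])$ injectively to generators of $St(\ZZ[\pi'])$ and carries each relation of type (1), (2), (3) of Definition~\ref{def: 2.1} to one of the same type, using $\alpha(uv)=\alpha(u)\alpha(v)$ and the fact that an injection preserves the side condition $u\neq v$. Consequently $\alpha$ induces compatible homomorphisms $F\to F'$ (injective, being induced by an injection of free generators), $Q(St(\ZZ[\pi]))\to Q(St(\ZZ[\pi']))$ (sending the generator $(f,y)$ to $(\alpha f,\alpha y)$ and respecting $(\ast)$), $R\to R'$, and $\overline P(St(\ZZ[\pi]))\to\overline P(St(\ZZ[\pi']))$: relabel every edge label $u$ by $\alpha(u)$; pictures go to pictures and deformations to deformations, and because $\alpha$ is injective on reduced words it introduces no new coincidences among relations, so the base-point conventions of Definition~\ref{def: 1.1} carry over verbatim. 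Passing to coinvariants and to $H_3$ gives the induced maps on $\overline P\otimes_{St}\ZZ$ and on $K_3(\ZZ[\pi])=H_3(St(\ZZ[\pi]))$, and these are compatible with $\varphi$ and with the inclusion $K_3(\ZZ[\pi])\hookrightarrow\overline P\otimes_{St}\ZZ$ because the exact sequence displayed after Definition~\ref{def: 2.1} is natural in the presentation.

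\emph{The computation.}
The composite $St(\ZZ[\pi])\to GL(\ZZ_2[\pi])$, $e_{ij}^u\mapsto I+uE_{ij}$, sits in a commutative square with the entrywise homomorphism $GL(\ZZ_2[\pi])\to GL(\ZZ_2[\pi'])$; hence for $f\in F$ the matrix of $\alpha(f)$ is $(\alpha(r_{pq}))$, and since entrywise ring homomorphisms commute with matrix inversion, its inverse is $(\alpha(s_{qp}))$. The key point is that $\alpha$ maps the group basis $\pi$ of $\ZZ_2[\pi]$ injectively into the group basis $\pi'$ of $\ZZ_2[\pi']$, so the (basis-diagonal) intersection pairing satisfies $\langle\alpha(x),\alpha(y)\rangle=\alpha(\langle x,y\rangle)$. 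Feeding this, together with $\alpha(u)\alpha(s_{jp})=\alpha(us_{jp})$ and $\alpha(r_{pi})\alpha(z)=\alpha(r_{pi}z)$, into the formula for $\chi_Q$ yields
\begin{align*}
\chi_Q\bigl(\alpha(f),[e_{ij}^{\alpha u},e_{ik}^{\alpha v}]\bigr)
&=\sum_p\alpha(r_{pi})\bigl\langle\alpha(u)\alpha(s_{jp}),\alpha(v)\alpha(s_{kp})\bigr\rangle\\
&=\alpha\Bigl(\sum_p r_{pi}\langle us_{jp},vs_{kp}\rangle\Bigr)
=\alpha\bigl(\chi_Q(f,[e_{ij}^u,e_{ik}^v])\bigr),
\end{align*}
while for $y$ not of this commutator shape both sides vanish (since $\alpha y$ is again not of that shape). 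Hence $\chi_Q\circ\alpha=\alpha\circ\chi_Q$ on generators, and therefore --- both maps landing in an abelian group --- on all of $Q$. Restricting to $\ker\varphi\cong\overline P$ gives $\chi_{\overline P}\circ\alpha=\alpha\circ\chi_{\overline P}$, which by Theorem~\ref{thm: 3.3} descends to $\overline P\otimes_{St}\ZZ$ and then restricts along $K_3\hookrightarrow\overline P\otimes_{St}\ZZ$ to the desired commuting square.

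\emph{The main obstacle.}
There is essentially only one: the identity $\langle\alpha(x),\alpha(y)\rangle=\alpha(\langle x,y\rangle)$, which holds precisely because $\alpha$ identifies no two distinct elements of $\pi$. This is exactly why the lemma is confined to injective homomorphisms: for a general $\alpha$, cross-terms such as $us_{jp}$ and $u's_{jp}$ that were distinct in $\pi$ can collapse in $\pi'$, the intersection pairing is no longer preserved, and $\chi$ genuinely fails to be natural. Everything else is bookkeeping, the bulkiest part being the verification that the edge-relabelling map is a well-defined homomorphism $\overline P(St(\ZZ[\pi]))\to\overline P(St(\ZZ[\pi']))$ respecting the data of Definition~\ref{def: 1.1} and the deformations of Definition~\ref{def: 1.3}.
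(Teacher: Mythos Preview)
Your proof is correct and follows exactly the paper's approach: the paper's entire proof is the one-line observation that the intersection pairing is natural for injective homomorphisms, which is precisely what you identify as the key point. You have simply unpacked the functoriality bookkeeping that the paper leaves implicit.
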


\begin{proof}
The intersection pairing is natural for injective homomorphisms.
\end{proof}

Every homomorphism $A\to B$ is the composition of a monomorphism $A\to A\times B$ and a projection $A\times B\to B$. Thus we will restrict our attention to the latter. As in the proof of (\ref{thm: 3.3}) we shall show that the difference between the maps
\[
	\ZZ[St(\ZZ[A\times B])]\left<\cY\right> \xrightarrow{\chi_c} H_0(A\times B;\ZZ_2[A\times B])\to H_0(B;\ZZ_2[B])
\]
and
\[
	\ZZ[St(\ZZ[A\times B])]\left<\cY\right> \to\ZZ[St(\ZZ[ B])]\left<\cY\right> \to  \xrightarrow{\chi_c'} H_0(B;\ZZ_2[B])
\]
is an integral coboundary and thus zero on $\overline P(St(\ZZ[A\times B]))$.

This difference map will be denoted $\Delta$.

If $x\in \ZZ_2[A\times B]$ then the image of $x$ in $\ZZ_2[\pi]$ will be denoted by $\overline x$. If we define a symmetric biadditive pairing
\[
	d: \ZZ_2[A\times B]\times \ZZ_2[A\times B]\to \ZZ_2[ B]
\]
by $d(x,y)=\left<\overline x,\overline y\right>+ \overline{\left<x,y\right>}$ then we have
\begin{formula}\label{eq: 4.2}
$\Delta(f[[e_{ij}^u,e_{ik}^v]])=\sum_p \overline r_{pi} d(us_{jp},vs_{kp})$.
\end{formula}
}

{
We shall consider $\ZZ_2[A]$ as the set of finite subsets of $A$. If $x\in\ZZ_2[A]$, $|x|$ will represent the number of elements of $x$. Let $\eta(x)\in \ZZ_2$ be defined by
\[
	\eta(x)= \begin{cases} 0 & \text{if } |x| \equiv 0,1 (4)\\
    1 &\text{if } |x| \equiv 2,3 (4)
    \end{cases}
\]
Then we have the following formula
\begin{enumerate}
\item $\eta(x+y)=\eta(x)+\eta(y)+|x|\cdot|y|+|x\cap y|$ 

 where $x\cap y$ can also be written as $\left<x,y\right>$. If $x\in \ZZ_2[A\times B]$ we can define $|x|=\sum_{b\in B} |x|_b b$ where $|x_b|=|xb^{-1}\cap A|$. Define $\eta(x)\in\ZZ_2[B]$ by $\eta(x)=\sum_{b\in B} \eta_b(x)b$ where $\eta_b(x)=\eta(xb^{-1}\cap A)$. Then formula (1) generalizes to
\item $\eta(x+y)=\eta(x)+\eta(y)+d(x,y)$.

 Now let $A$ be well ordered. If $x\in \ZZ_2[A]$ let $ O^+(x)=(x_1,x_2,\cdots,x_n)$ be the elements of $x$ in increasing order. Let $O^-(x)$ be the same elements in decreasing order. If $x,y$ are disjoint elements of $\ZZ_2[A]$, i.e. if $\left<x,y\right>=0$, then let $n_1(x,y)=0,1\in \ZZ_2$ depending on whether $O^-(x),O^+(x)$ and $O^+(x+y)$ differ by an even or odd permutation. The function $\eta$ originated in the following equation.
\item $n_1(y,x)=n_1(x,y)+\eta(x+y)$

 If $x,y$ are disjoint elements of $\ZZ_2[A\times B]$, $n_1$ can be generated by $n_2(x,y)=\sum_{b\in B} n_1(xb^{-1}\cap A,yb^{-1}\cap A)b$. Then formula (3) becomes
\item $n_2(y,x)=n_2(x,y)+\eta(x+y)$

If $x,y$ are arbitrary elements of $\ZZ_2[A\times B]$ then they determine three mutually disjoint elements
\begin{eqnarray*}
 x\backslash y &=& x+\left<x,y\right> = \left< x,x+y\right>\\
 y\backslash x &=& y+\left<x,y\right> =\left< y,x+y\right>\\
 x\cap y &=& \left< y,x+y\right>
\end{eqnarray*}
and we can define the following generalization of $n_2$.
\[
	n_3(x,y)=n_2(x\backslash y,y\backslash x) +n_2(x\cap y,x\backslash y)+n_2(x\cap y,y\backslash x).
\]
Since $x+y=(x\backslash y)+(y\backslash x)$ formula (4) gives
\item $n_3(y,x)=n_3(x,y)+\eta(x+y)$

Let $\overline d(x,y)=n_3(x,y)+\eta(x)$, then 2) and 5) give
\item $d(x,y)=\tilde d(x,y)+\tilde d(y,x)$.
\end{enumerate}

\begin{lem}\label{lem: 4.3}
\[
	\tilde d:\ZZ_2[A\times B]\times \ZZ_2[A\times B]\to \ZZ_2[ B]
\]
is biadditive.
\end{lem}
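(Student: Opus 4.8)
\noindent The plan is to peel off the group $B$, reducing to a combinatorial statement about finite subsets of a well-ordered set, and then to identify $\tilde d$ modulo $2$ with a manifestly biadditive ``crossing count'', so that biadditivity becomes visible at a glance.

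\emph{Step 1: reduction to $B=1$.} Every ingredient in the construction of $\tilde d$ is assembled coefficientwise over $b\in B$. Writing $x_b:=xb^{-1}\cap A\in\ZZ_2[A]$, one has $\eta_b(x)=\eta(x_b)$; the slices of $x\backslash y$, $y\backslash x$, $x\cap y$ are $x_b\backslash y_b$, $y_b\backslash x_b$, $x_b\cap y_b$; and $n_2$ is by definition the $b$-weighted sum of the values of $n_1$ on slices. Hence $\tilde d(x,y)=\sum_{b}\tilde d^A(x_b,y_b)\,b$, where $\tilde d^A\colon\ZZ_2[A]\times\ZZ_2[A]\to\ZZ_2$ denotes the corresponding map for the trivial group. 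Since $(x+x')_b=x_b+x'_b$ (symmetric difference), biadditivity of $\tilde d$ over $A\times B$ follows coefficientwise from biadditivity of $\tilde d^A$. So I fix a well-ordering of $A$ and work with finite subsets $x,y\subseteq A$.

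\emph{Step 2: a closed formula for $\tilde d^A$.} First, for disjoint $x,y$ one has $n_1(x,y)\equiv\binom{|x|}{2}+N(x,y)\pmod 2$, where $N(x,y):=\#\{(a,c)\in x\times y: a>c\}$: the sign of the permutation carrying the concatenation $\bigl(O^-(x),O^+(y)\bigr)$ to $O^+(x+y)$ is $(-1)$ to the number of its inversions, and those inversions are exactly the $\binom{|x|}{2}$ pairs internal to the decreasing block $O^-(x)$ together with the $N(x,y)$ ``crossing'' pairs $a\in x$, $c\in y$ with $a>c$. Also $\eta(x)\equiv\binom{|x|}{2}\pmod 2$ (check $|x|\equiv 0,1,2,3\pmod4$). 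Now put $p=x\cap y$, $q=x\backslash y$, $r=y\backslash x$, so $x=p\sqcup q$ and $y=p\sqcup r$. Substituting the formula above into $n_3(x,y)=n_1(q,r)+n_1(p,q)+n_1(p,r)$ and using $\eta(x)=\binom{|p|+|q|}{2}\equiv\binom{|p|}{2}+\binom{|q|}{2}+|p|\,|q|$, the binomial terms cancel in pairs and one is left with $\tilde d^A(x,y)\equiv N(q,r)+N(p,q)+N(p,r)+\binom{|p|}{2}+|p|\,|q|\pmod 2$. Applying the three evident mod-$2$ identities $N(p,q)+|p|\,|q|\equiv N(q,p)$, $\binom{|p|}{2}=N(p,p)$, and additivity of $N$ over disjoint unions in each variable, and recalling $q\sqcup p=x$, $r\sqcup p=y$, this collapses to
\[
\tilde d^A(x,y)\ \equiv\ N(x,y)\pmod 2 .
\]

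\emph{Step 3: conclusion.} Finally $N(x,y)=\sum_{c\in y}\#\{a\in x: a>c\}$ is biadditive mod $2$ — in the first variable because $\#\{a\in x+x': a>c\}\equiv\#\{a\in x:a>c\}+\#\{a\in x':a>c\}\pmod2$ for each $c$, and symmetrically in the second — so $\tilde d^A$, and hence $\tilde d$, is biadditive.

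I expect the only real care to be the mod-$2$ bookkeeping in Step 2 that collapses $n_3+\eta$ to $N$; once each $\binom{\cdot}{2}$ and each product $|p|\,|q|$ is read as a count of ordered pairs, the identities used are immediate, so I do not anticipate a genuine obstacle. As a sanity check, $N(x,y)+N(y,x)=|x|\,|y|-|x\cap y|$ is, mod $2$, exactly the value of $d$ for trivial $B$, consistent with the relation $d(x,y)=\tilde d(x,y)+\tilde d(y,x)$.
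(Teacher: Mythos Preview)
Your proof is correct. Both you and the paper pivot on the same key observation, that $\tilde d_1(x,y)=n_1(x,y)+\eta(x)$ is the mod-$2$ crossing count $N(x,y)=\#\{(a,c)\in x\times y:a>c\}$ for disjoint $x,y$, and both reduce to $B=1$ coefficientwise. The difference is in how one passes from disjoint to arbitrary pairs. The paper stops at ``$\tilde d_2$ is biadditive where defined'' and then asserts that biadditivity of $\tilde d_3$ follows from that of $\tilde d_2$ together with formula~(2) by a ``straightforward computation'' (invoking~(6) to reduce to one variable), without writing it out. You instead carry the crossing-count formula all the way through: by unpacking $n_3$ into its three $n_1$ terms and cancelling the binomial contributions against $\eta(x)$, you obtain the closed formula $\tilde d_3(x,y)\equiv N(x,y)$ for \emph{all} pairs, from which biadditivity is immediate. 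Your route is more explicit and arguably cleaner, since it replaces an unspecified verification by a visible identity; the paper's route, had the computation been written out, would amount to checking additivity of $\tilde d_3$ in one variable using $\tilde d_2$-biadditivity on the disjoint pieces $p,q,r$ and the $\eta$-cocycle relation~(2), which is more bookkeeping than your direct collapse to $N$.
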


\begin{proof}
Let $\tilde d_i(x,y)= n_i(x,y)+\eta(x)$. Then $\tilde d_1(x,y)$ is the parity of the permutation taking $O^+(x)O^+(y)$ to $O^+(x+y)$. For each element of $y$ count the number of elements of $x$ what are larger and add these up for all the elements of $y$ and one gets $\tilde d_1(x,y)$. For this description one easily sees that $\tilde d_1(x,y)$ is biadditive where defined. By the analogous argument at each $b\in B$ one sees that $\tilde d_2$ is biadditive where defined. Using the biadditivity of $\tilde d_2$ and formula (2), the biadditivity of $
\tilde d_3$ is a straighforward computation. Note that because of (6) one need only show additivity in one variable. 
\end{proof}
}

{
In order to force equivariance of $\tilde d_3$ we make the following definition.

$\tilde d_4(x,y,z)=\sum_i \tilde d_3(x_iy,x_iz)$ if $x=\sum_i x_i$ where $x_i\in A\times B$. Then we have the equivariance condition
\begin{enumerate}
\item[7)] $\tilde d_4(xu,y,z)=\tilde d_4(x,uy,uz)$ if $u\in A\times B$.

Equation (6) now becomes
\item[8)] $\overline xd(y,z) =\tilde d_4(x,y,z)+\tilde d_4(x,z,y)$.

Note that $\tilde d_4$ is additive in each variable.
\end{enumerate}

Let $\psi:\ZZ[St(\ZZ[A\times B])]\left<\cX\right> \to H_0(B;\ZZ_2[B])$ be defined by
\[
	\psi(f[e_{ij}^u])= \sum_p \tilde d_4(r_{pi},us_{jp},s_{ip}+us_{jp})
\]
if $u\in A\times B$. We are using the notation $(r_{pq})=(s_{qp})^{-1}=$ image of $f$ in $GL(\ZZ_2[A\times B])$. The convention $[x^{-1}]=-x^{-1}[x]$ necessitates the equation
\[
	\psi(f[e_{ij}^{-u}])= \sum_p \tilde d_4(r_{pi},us_{jp},s_{ip})
\]
}

{
\begin{thm}\label{thm: 4.4}
$\psi \partial_2=\Delta$.
\end{thm}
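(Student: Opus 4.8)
I plan to verify the identity $\psi\partial_2 = \Delta$ by evaluating both sides on the equivariant generators $f[y]$, $f \in F$, $y \in \cY \cup \cY^{-1}$, following exactly the template of the proof of Theorem \ref{thm: 3.3}. Since $\partial_2$ is equivariant while $\psi$ and $\Delta$ are not, it suffices to run through the three families of Steinberg relations in Definition \ref{def: 2.1}. The algebraic inputs are: $\tilde d_4$ is additive in each of its three arguments; the equivariance identity (7), $\tilde d_4(xu,y,z) = \tilde d_4(x,uy,uz)$ for $u\in A\times B$; the polarization identity (8), $\overline x\,d(y,z) = \tilde d_4(x,y,z) + \tilde d_4(x,z,y)$; the biadditivity of $\tilde d$ from Lemma \ref{lem: 4.3}; and Formula \ref{eq: 4.2} describing $\Delta$ on commutator relations. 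One also needs the elementary bookkeeping that right multiplication of $f$ by $e_{ij}^u$ alters only column $j$ of the matrix $(r_{pq})$ and only row $i$ of the inverse matrix $(s_{qp})$, by an additive elementary correction, just as in the proof of Theorem \ref{thm: 3.3}.

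For the main case, a commutator relation $[e_{ij}^u,e_{ik}^v]$ with $i,j,k$ distinct, I would expand $\psi\partial_2 f[[e_{ij}^u,e_{ik}^v]]$ as the sum of the four terms
\[
\psi(f[e_{ij}^u]) + \psi(fe_{ij}^u[e_{ik}^v]) + \psi(fe_{ij}^u e_{ik}^v[e_{ij}^{-u}]) + \psi(fe_{ik}^v[e_{ik}^{-v}]),
\]
using that $e_{ij}^u$ and $e_{ik}^v$ commute in $St(\ZZ[\pi])$, so that the coefficient $fe_{ij}^u e_{ik}^v e_{ij}^{-u}$ simplifies to $fe_{ik}^v$. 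Substituting the two formulas for $\psi$ on positive and negative generators, and expanding every $\tilde d_4$ by additivity in its second and third arguments (the matrix entries of $fe_{ij}^u$ and $fe_{ij}^u e_{ik}^v$ differing from those of $f$ by the elementary corrections), I expect all terms to cancel in pairs except $\sum_p \big(\tilde d_4(r_{pi}, us_{jp}, vs_{kp}) + \tilde d_4(r_{pi}, vs_{kp}, us_{jp})\big)$, which by identity (8) equals $\sum_p \overline r_{pi}\,d(us_{jp},vs_{kp}) = \Delta(f[[e_{ij}^u,e_{ik}^v]])$ by Formula \ref{eq: 4.2}.

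For the remaining cases the right-hand side $\Delta$ vanishes, since $\chi_Q$ — and hence $\chi_c$, $\chi_c'$ and $\Delta$ — is zero both on relations not of commutator type and on commutators $[e_{ij}^u,e_{km}^v]$ with $i\neq k,m$ and $j\neq k$; so I would show $\psi\partial_2$ vanishes there as well. For such a disjoint commutator this follows from $\psi(fe_{km}^v[e_{ij}^u]) = \psi(f[e_{ij}^u])$ together with the analogous identities for the other three terms, because left multiplication by $e_{km}^v$ touches neither column $i$ of $(r_{pq})$ nor the rows of $(s_{qp})$ that enter the formula for $\psi(f[e_{ij}^u])$. For the good commutator relations of types (2) and (3) in Definition \ref{def: 2.1} I would expand $\psi\partial_2$ into its five terms exactly as in part (c) of the proof of Theorem \ref{thm: 3.3} and check termwise cancellation.

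The main obstacle will be the bookkeeping in the main case. Because $\tilde d_4$ is genuinely trilinear rather than bilinear, expanding $\tilde d_4(r_{pi}, us_{jp}, s_{ip} + us_{jp})$ produces ``diagonal'' contributions of the form $\tilde d_4(x,y,y)$ and more cross-terms than occur in the proof of Theorem \ref{thm: 3.3}, and one must invoke the equivariance identity (7) to shift group-ring coefficients between the arguments of $\tilde d_4$ before the cancellations become visible. It is precisely here that the choice of $s_{ip} + us_{jp}$ (rather than $s_{ip}$) as the third argument in the definition of $\psi$ is forced: it plays the same role as the correction term ``$+us_{jh}w$'' in the proof of Theorem \ref{thm: 3.3}, and verifying that this specific choice makes all the $\eta$-type diagonal contributions cancel is the real content of the computation.
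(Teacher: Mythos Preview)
Your proposal is correct and follows essentially the same approach as the paper's own proof: the same three-case split over the Steinberg relations, the same four-term expansion for $[e_{ij}^u,e_{ik}^v]$ collapsing via (8) to $\sum_p \overline r_{pi}\,d(us_{jp},vs_{kp})$, the same invariance observation $\psi(fe_{km}^v[e_{ij}^u])=\psi(f[e_{ij}^u])$ for the disjoint commutators, and the same five-term expansion for the good commutator relation. One minor recalibration: in the main case the cancellations are already visible from additivity alone (working in $\ZZ_2[B]$, each ``diagonal'' and $s_{ip}$-term appears exactly twice), and the equivariance identity (7) is actually needed only in case (c) to absorb the factor $u$ from $r_{pj}+r_{pi}u$ into the last two arguments of $\tilde d_4$.
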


\begin{proof}
We shall verify the equation on each additive generator of $\ZZ[St(\ZZ[A\times B])]\left<\cY\right>$.
\begin{enumerate}
\item[a)] $\Psi\partial_2(f[[e_{ij}^u,e_{ik}^v]])$ is the sum of four terms.
	\begin{enumerate}
	\item[1)] $\Psi(f[e_{ij}^u])= \sum_p \tilde d_4(r_{pi},us_{jp},s_{ip}+us_{jp})$
	\item[2)] $\Psi(fe_{ij}^u[e_{ik}^v])= \sum_p \tilde d_4(r_{pi},vs_{kp},s_{ip}+us_{jp}+vs_{kp})$
	\item[3)] $\Psi(fe_{ij}^ue_{ik}^v[e_{ij}^{-u}])= \sum_p \tilde d_4(r_{pi},us_{jp},s_{ip}+us_{jp}+vs_{kp})$
	\item[4)] $\Psi(fe_{ik}^v[e_{ik}^{-v}])= \sum_p \tilde d_4(r_{pi},vs_{kp},s_{ip}+vs_{kp}+vs_{kp})$
	\end{enumerate}
The sum is easily seen to be equal to
\[
	\sum_p \overline r_{pi} d(us_{jp},vs_{kp})=\Delta(f[[e_{ij}^u,e_{ik}^v]])
\]
\item[b)] $\Psi\partial_2(f[[e_{ij}^u,e_{k\ell}^v]])=0$ if $i\neq k,\ell$ and $j\neq k$ this is the result of the general formula
\[
	\Psi(fe_{ij}^u[e_{k\ell}^v])= \Psi(f[e_{k\ell}^v])
\] if $i\neq k,\ell$ and $j\neq k$. 
\item[c)] {\color{blue} $\Psi\partial_2f[e_{ij}^u e_{jk}^v e_{ij}^{-u} e_{ik}^{-uv} e_{jk}^{-v} ]$} is the sum of the following five terms.
\begin{enumerate}
\item[1)] $\Psi(f[e_{ij}^u])= \sum_p \tilde d_4( r_{p i}, us_{jp}, s_{ip}+ us_{jp})$
\item[2)] $\Psi(fe_{ij}^u[e_{jk}^v])= \sum_p \tilde d_4(r_{p j}+r_{p i}u, vs_{kp}, s_{jp}+ s_{kp})$
\item[3)] $\Psi(fe_{ij}^ue_{jk}^v[e_{ij}^{-u}])= \sum_p \tilde d_4( r_{p i},us_{jp}+uvs_{kp}, s_{ip}+ us_{jp})$
\item[4)] {
\color{blue}$\Psi(fe_{ij}^ue_{jk}^ve_{ij}^{-u}[e_{ik}^{-uv}])=\Psi(fe_{jk}^v[e_{ik}^{uv}])= \sum_p \tilde d_4(r_{p i}, uvs_{kp}, s_{ip}+ uvs_{kp})$
}
\item[5)] {
\color{blue}$\Psi(fe_{jk}^v[e_{jk}^{-v}])=\Psi(f[e_{jk}^{v}])= \sum_p \tilde d_4(r_{p j}, vs_{kp}, s_{jp}+ vs_{kp})$
}
\end{enumerate}
Using (7) and the triadditivity of $\tilde d_4$ this sum is easily seen to be zero.
\end{enumerate}
\end{proof}
}

{
\section{$\chi(\pi_3^s(B\pi\cup pt))=0$.}

We will assume the reader if familiar with \cite[\S 5]{M}.

Let $M(\pi)$ denote the subgroup of $GL(\ZZ[\pi])$ of monomial matrices with entries in $\pi$.

\begin{prop}\label{prop: 5.1}
The commutator subgroup of $M(\pi)$ is perfect and consists of all even monomials with abelianized determinant equal to 0 in $\pm \pi/\pi'$. Thus $M(\pi)'$ admits a universal central extension $T(\pi)\to M(\pi)'$ and there exists a unique homomorphism $T(\pi)\to St(\ZZ[\pi])$ over the inclusion $M(\pi)'\subset E(\ZZ[\pi])= GL(\ZZ[\pi])'$.
\end{prop}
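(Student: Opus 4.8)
The statement has three parts, and I would establish them in the order written.

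First, the description of $M(\pi)'$. A monomial matrix in $M(\pi)$ is specified by a permutation $\sigma\in S_n$ (the "shape") together with a tuple of elements $(u_1,\dots,u_n)\in\pi^n$ (the nonzero entries). Thus $M(\pi)$ surjects onto $S_\infty$ with kernel the "diagonal" subgroup $D(\pi)\cong\bigoplus\pi$, and there is a further homomorphism $M(\pi)\to\pm\pi/\pi'$ (sign of $\sigma$ times the image of $\prod u_i$), well-defined because transpositions contribute the sign and conjugations don't change the abelianized product. A direct commutator computation shows: (i) commutators of diagonal matrices with permutation matrices produce all diagonal matrices whose product of entries is trivial in $\pi/\pi'$ (already in the subgroup $D(\pi)\cap M(\pi)'$); (ii) commutators of two permutation matrices produce the alternating group $A_\infty$; and conversely both invariants (the sign and the abelianized determinant $\in\pm\pi/\pi'$) visibly vanish on any commutator. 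So $M(\pi)'$ is exactly the even monomials with abelianized determinant $0$. Perfectness: since $A_\infty$ is perfect and every even monomial is a product of a diagonal matrix with trivial abelianized determinant and an element of $A_\infty$, one checks each such diagonal matrix is itself a commutator of elements of $M(\pi)'$ — e.g. $\mathrm{diag}(u,u^{-1},1,\dots)$ is a commutator of a $3$-cycle with a suitable diagonal-times-permutation element, all lying in $M(\pi)'$. This is the part requiring the most care, but it is an elementary matrix manipulation.

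Second, existence of the universal central extension $T(\pi)\to M(\pi)'$: this is immediate from the first part by the standard theorem (Milnor, \cite{M}) that a perfect group has a universal central extension, unique up to isomorphism. So nothing new is needed here beyond citing perfectness.

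Third, and this is the main point, the map $T(\pi)\to St(\ZZ[\pi])$ lifting the inclusion $M(\pi)'\hookrightarrow E(\ZZ[\pi])$. The plan is: the inclusion $M(\pi)'\hookrightarrow E(\ZZ[\pi])$ does not obviously lift to $St(\ZZ[\pi])$ because $M(\pi)'$ is not itself simply connected, but $T(\pi)$ is the universal central extension of $M(\pi)'$, hence is superperfect, hence \emph{any} homomorphism from $T(\pi)$ to a group $G$ lifts uniquely along any central extension $\widetilde G\to G$ — in particular along $St(\ZZ[\pi])\to E(\ZZ[\pi])$ (which is central by Milnor's computation that $K_2=H_2(E)=\ker$ is central). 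So we simply take the composite $T(\pi)\to M(\pi)'\hookrightarrow E(\ZZ[\pi])$ and lift it. Uniqueness of the lift also follows from $T(\pi)$ being perfect: two lifts differ by a homomorphism $T(\pi)\to K_2(\ZZ[\pi])$ into an abelian group, which must be trivial.

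\textbf{Main obstacle.} The genuinely non-formal step is the first: pinning down $M(\pi)'$ precisely — in particular verifying that the diagonal matrices with trivial abelianized determinant are \emph{all} commutators (not merely that they generate $M(\pi)'$ together with $A_\infty$), so that $M(\pi)'$ is perfect. Everything after that is an application of the universal-central-extension formalism. I would also want to double-check the edge case of small $n$ (the proposition is implicitly about the stable/infinite monomial group, where $A_\infty$ is perfect; for fixed small $n$ the analogous statement can fail), but in the stable setting this causes no trouble.
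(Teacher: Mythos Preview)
The paper does not prove Proposition~\ref{prop: 5.1}; it is stated without argument, immediately after referring the reader to \cite[\S5]{M}, and the text then moves on to use it. Your outline is correct and is essentially the standard proof one would supply. The only comment worth making concerns the perfectness step you flag as ``the part requiring the most care'': a clean way to finish it is to note that every $d_{ij}(u):=\mathrm{diag}(\dots,u,\dots,u^{-1},\dots)$ arises as a commutator $[\sigma,d]$ with $\sigma$ a $3$-cycle and $d\in M(\pi)'$, and that $[d_{13}(a),d_{12}(b)]=\mathrm{diag}([a,b],1,1,\dots)$; together these show the entire diagonal part of $M(\pi)'$ lies in $[M(\pi)',M(\pi)']$, and $A_\infty$ handles the rest. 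Your treatment of the universal-central-extension formalism in parts two and three is exactly right.
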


By an argument analogous to the one in \cite{G} one sees that $H_3T(\pi)\cong \pi_3BM(\pi)^+$ which is isomorphic to $\pi_3^s(B\pi\cup pt)$ by the generalized Kahn-Priddy theorem. We shall show that the image of $H_3T(\pi)$ in $H_3St(\ZZ[\pi])=K_3(\ZZ[\pi])$ is contained in the kernel of $\chi$.

\begin{defn}\label{def: 5.2}
Let $W(\pm\pi)$ be the group generated by symbols $w_{ij}(u)$ where $i,j$ are distinct natural numbers and $u\in\pi$ modulo the reduced set of relations
\begin{enumerate}
\item $[w_{ij}(u),w_{k\ell}(v)]$ if $i,j,k,\ell$ are distinct and $i<k$.
\item $w_{ij}(u)w_{ik}(v)w_{ij}(u)^{-1}w_{jk}(u^{-1}v)$ \quad $j\neq k$
\item $w_{ij}(u)w_{ki}(v)w_{ij}(u)^{-1} w_{kj}(vu)$\quad $ j\neq k$
\item $w_{ij}(u)w_{jk}(v)w_{ij}(u)^{-1} w_{ik}(uv)^{-1}$\quad $i\neq k$
\item $w_{ij}(u)w_{kj}(v)w_{ij}(u)^{-1} w_{ki}(vu^{-1})^{-1}$\quad $i\neq k$
\end{enumerate}
\end{defn}
Let $\varphi:W(\pm\pi)\to M(\pm\pi)$ be the homomorphism given as follows.

$\varphi(w_{ij}(u))$ is the monomial matrix given by taking the identity matrix, multiplying the $i$-th column by $u$ and the $j$-th column by $-u^{-1}$ and transposing the two columns.

\begin{lem}\label{lem: 5.3}
The kernel of $\varphi$ is contained in the center of $W(\pm\pi)$, i.e. $W(\pm\pi)$ is a central extension of $\im\varphi$.
\end{lem}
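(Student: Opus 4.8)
The plan is to adapt to the monomial/Weyl setting the standard ``large index'' argument that proves $\ker(St(\ZZ[\pi])\to E(\ZZ[\pi]))$ central; in this setting it turns out to be a little cleaner. Since the $w_{pq}(a)$ generate $W(\pm\pi)$, it suffices to show that an arbitrary $c\in\ker\varphi$ commutes with every generator $w_{pq}(a)$. Write $c$ as a word in the generators, let $S$ be the finite set of indices occurring in that word, and fix an index $N\notin S$.

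The key local observation is that conjugation never breaks a generator ``based at $N$'' into a product. Precisely: if $g=w_{ij}(v)^{\pm1}$ with $i,j\in S$ (so $i,j\neq N$) and $l\neq N$, then $g\,w_{lN}(u)\,g^{-1}$ is again a single generator of the form $w_{l'N}(u')^{\pm1}$, with $l'\in\{l,i,j\}$, $u'\in\pi$, and its second index still $N$. Indeed, if $l\notin\{i,j\}$ the four indices $l,N,i,j$ are distinct and relation (1) gives $g\,w_{lN}(u)\,g^{-1}=w_{lN}(u)$; if $l\in\{i,j\}$ one reads the answer off directly from relation (2) (case $l=i$) or relation (4) (case $l=j$) when $g$ is a positive generator, and from those same relations solved for the inverse $w_{ij}(v)^{-1}$ when $g$ is negative. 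Because $N\notin\{i,j\}$, the index $N$ is never the shared index, so it survives. Iterating over the letters of $c$ yields
\[
	c\,w_{lN}(u)\,c^{-1}=w_{\tilde l N}(\tilde u)^{\eta},\qquad \tilde l\neq N,\ \tilde u\in\pi,\ \eta=\pm1 .
\]
Applying $\varphi$ and using $\varphi(c)=1$ gives $\varphi(w_{\tilde l N}(\tilde u)^{\eta})=\varphi(w_{lN}(u))$. The monomial matrix $\varphi(w_{lN}(u))$ has its only off-diagonal nonzero entries in positions $(l,N)$ and $(N,l)$, with the $(l,N)$-entry equal to $u$; comparing with the analogous description of $\varphi(w_{\tilde l N}(\tilde u)^{\eta})$ (the entries acquire a sign when $\eta=-1$) forces $\tilde l=l$, $\tilde u=u$ and $\eta=+1$, i.e.\ $c\,w_{lN}(u)\,c^{-1}=w_{lN}(u)$. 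Thus $c$ commutes with every $w_{lN}(u)$, and the mirror argument, using relations (1), (3), (5), shows $c$ commutes with every $w_{Nl}(u)$ as well.

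To conclude, given an arbitrary generator $w_{pq}(a)$, pick $N\notin S\cup\{p,q\}$; relation (4) gives $w_{pq}(a)=w_{pN}(1)\,w_{Nq}(a)\,w_{pN}(1)^{-1}$, and since $c$ commutes with both $w_{pN}(1)$ and $w_{Nq}(a)$ it commutes with $w_{pq}(a)$. Hence $\ker\varphi\subseteq Z(W(\pm\pi))$, which is the assertion.

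The main obstacle is the bookkeeping in the local step: one must check that relations (2)--(5) and their inverses really do always return a single generator — this is exactly where the linear case is harder, since there the commutator relation $[x_{ij},x_{jN}]=x_{iN}$ introduces an extra factor — and one must track the $\pm1$ exponents carefully through the iteration. A minor point: to exclude $\eta=-1$ in the pinning-down step one uses that $\pi$ and $-\pi$ are disjoint inside $\pm\pi$, which holds under the convention that ``$\pm\pi$'' means $\{\pm1\}\times\pi$.
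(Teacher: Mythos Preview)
Your proof is correct and follows essentially the same ``fresh index'' argument as the paper: pick an index $N$ not appearing in a word for $c$, observe from relations (1)--(5) that conjugation by letters of $c$ sends a generator with $N$ as one index to a single generator of the same shape, and then use $\varphi(c)=1$ together with injectivity of $\varphi$ on generators-and-their-inverses to pin it down. The only cosmetic difference is that the paper decomposes the target via relation (5) as $w_{ki}(u)=w_{ij}(1)w_{kj}(u)w_{ij}(1)^{-1}$, so both factors have $j$ as their \emph{second} index and the ``mirror'' case is not needed; your decomposition via relation (4) uses one factor of each type, which is why you invoke relations (3),(5) as well.
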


\begin{proof}
Let $x\in\ker\varphi$. We shall show that $x$ commutes with $w_{ki}(u)$. Let $j$ be a natural number which does not appear as an index in the expansion of $x$ as a project of generators. Then by relation 5) we have $w_{ki}(u)=w_{ij}(1)w_{kj}(u)w_{ij}(1)^{-1}$. Thus it suffices to show that $xw_{kj}(u)x^{-1}=w_{kj}(u)$. However it is clear from the relations that $xw_{kj}(u)x^{-1}=w_{hj}(v)^{\pm 1}$. Since $\varphi (w_{kj}(u))=\varphi (w_{hj}(v)^{\pm 1})$ we must have $w_{kj}(u)=w_{hj}(v)^{\pm1}$.
\end{proof}

\begin{lem}\label{lem: 5.4}
The image of $\varphi$ consists of all monomials in $M(\pm\pi)$ with abelianized determinant $+1\in \pm\pi/\pi'$.
\end{lem}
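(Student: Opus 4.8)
\emph{Proof plan.} Write $d(M)\in\pm\pi/\pi'$ for the abelianized determinant of a monomial matrix $M$ — the image of $M$ under $M(\pm\pi)\to GL(\ZZ[\pi^{ab}])\xrightarrow{\det}\pm\pi^{ab}$, which for a monomial matrix with underlying permutation $\sigma$ and nonzero entries $c_i$ is $\sgn(\sigma)\cdot\overline{\prod_i c_i}$. This is a homomorphism. Since $\varphi(w_{ij}(u))$ has underlying permutation the transposition $(ij)$ and nonzero off-identity entries $u$ and $-u^{-1}$, we get $d(\varphi(w_{ij}(u)))=(-1)\cdot\overline{(-u^{-1})u}=+1$; hence $\im\varphi\subseteq\ker d$. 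The work is the reverse inclusion, which I would carry out in three steps.

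First, identify the diagonal matrices in $\im\varphi$. A direct computation from the recipe defining $\varphi$ gives $\varphi(w_{ij}(a))\varphi(w_{ij}(1))^{-1}=\operatorname{diag}(a\text{ at }i,\,a^{-1}\text{ at }j)=:g(a,i,j)$ and $\varphi(w_{ij}(1))^{2}=\operatorname{diag}(-1\text{ at }i,\,-1\text{ at }j)$, all other diagonal entries being $1$. Since the $g$'s are diagonal, a one-line commutator computation gives $[\,g(a,m,m'),g(b,m,m'')\,]=\operatorname{diag}([a,b]\text{ at }m)$ for three distinct indices $m,m',m''$, so writing any $w\in\pi'$ as a product of commutators shows $\operatorname{diag}(w\text{ at }m)\in\im\varphi$ for all $w\in\pi'$ and all indices $m$.

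Second, show every diagonal $D$ with $d(D)=+1$ lies in $\im\varphi$. If $D=\operatorname{diag}(\varepsilon_k u_k)$ with $d(D)=+1$, then an even number of the signs $\varepsilon_k$ equal $-1$, so after multiplying $D$ by matrices $\varphi(w_{ij}(1))^2$ (which pair off the signs) we may assume all $\varepsilon_k=1$: say $D=\operatorname{diag}(u_1,\dots,u_n)$ with $w:=u_1\cdots u_n\in\pi'$. The telescoping product $g(u_n,n,n+1)\,g(u_{n-1},n-1,n+1)\cdots g(u_1,1,n+1)=\operatorname{diag}(u_1,\dots,u_n,w^{-1})$ lies in $\im\varphi$, and multiplying by $\operatorname{diag}(w\text{ at }n+1)\in\im\varphi$ yields $D\in\im\varphi$. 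Third, reduce the general case: if $M$ has underlying permutation $\sigma$, choose $P\in\im\varphi$ with the same underlying permutation — possible since the $\varphi(w_{ij}(1))$ realize all transpositions and transpositions generate the finitary symmetric group; then $MP^{-1}$ is a diagonal monomial matrix with $d(MP^{-1})=d(M)d(P)^{-1}=+1$, hence in $\im\varphi$ by the second step, so $M=(MP^{-1})P\in\im\varphi$. This gives $\ker d\subseteq\im\varphi$, and with the first inclusion proves the lemma; the resulting description of $\im\varphi$ is the exact analogue of that of $M(\pi)'$ in Proposition \ref{prop: 5.1}.

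The one place that is not pure bookkeeping is the transition to the second step: the telescoping product by itself pins down $D$ only modulo a diagonal matrix supported in a single slot with value in $\pi'$, and that residue genuinely has to be absorbed, which is what the commutator identity of the first step is for. Equivalently, the point requiring care is that $\im\varphi$ meets the diagonal subgroup in more than the naive ``coordinatewise kernel of $\overline{\det}$'': one must show it contains $\pi'$ sitting in a single coordinate. Everything else is manipulation of $\sgn(\sigma)$, the product of the signs $\varepsilon_k$, and $\overline{\prod_k u_k}$.
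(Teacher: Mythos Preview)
The paper states Lemma~\ref{lem: 5.4} without proof, so there is nothing to compare against; you have correctly supplied the omitted argument. Your approach is the standard one: the forward inclusion follows from computing $d$ on generators, and for the reverse you reduce an arbitrary monomial matrix to a diagonal one by realizing its underlying permutation with a product of $\varphi(w_{ij}(1))$'s, then handle diagonal matrices with $d=+1$ by first pairing off signs using $\varphi(w_{ij}(1))^2$ and then telescoping with the elements $g(a,i,j)=\varphi(w_{ij}(a))\varphi(w_{ij}(1))^{-1}$. The one genuinely nontrivial point, which you identified correctly, is that the telescoping leaves a residue in $\pi'$ supported at a single coordinate, and this is absorbed by the commutator identity $[g(a,m,m'),g(b,m,m'')]=\operatorname{diag}([a,b]\text{ at }m)$ for three distinct indices (the availability of a third index being automatic since we work in the stable $GL$). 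Everything checks out.
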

}

{
\begin{thm}\label{thm: 5.5}
There exists a unique homomorphism $T(\pi)\to W(\pm\pi)$ covering the inclusion
\[
	M(\pi)'\subset M(\pm\pi).
\]
\end{thm}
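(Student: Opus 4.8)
The plan is to realize the required homomorphism as the unique lift of the universal central extension $T(\pi)\to M(\pi)'$ along the restriction of $\varphi$ to $M(\pi)'$.

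First I would check that $M(\pi)'\subseteq\im\varphi$. By Proposition~\ref{prop: 5.1}, $M(\pi)'$ consists of the even monomial matrices whose abelianized determinant is $0$ — that is, $+1$ — in $\pm\pi/\pi'$, and by Lemma~\ref{lem: 5.4} these lie among the elements of $\im\varphi$, which are exactly the monomials in $M(\pm\pi)$ of abelianized determinant $+1$. So $M(\pi)'\subseteq\im\varphi\subseteq M(\pm\pi)$. Writing $C=\ker\varphi$ and $E=\varphi^{-1}(M(\pi)')\subseteq W(\pm\pi)$, the homomorphism $\varphi$ restricts to a surjection $E\to M(\pi)'$ with kernel $C$; by Lemma~\ref{lem: 5.3} the subgroup $C$ is central in $W(\pm\pi)$, hence central in $E$, so $E\to M(\pi)'$ is a central extension.

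Next I would invoke the defining property of $T(\pi)$. By Proposition~\ref{prop: 5.1}, $M(\pi)'$ is perfect and $T(\pi)\to M(\pi)'$ is its universal central extension; therefore there is one and only one homomorphism $T(\pi)\to E$ lying over the identity of $M(\pi)'$. Composing it with the inclusion $E\hookrightarrow W(\pm\pi)$ yields a homomorphism $T(\pi)\to W(\pm\pi)$ whose composite with $\varphi$ is the structure map $T(\pi)\to M(\pi)'$ followed by the inclusion $M(\pi)'\subseteq M(\pm\pi)$ — i.e. it covers that inclusion, as required. For uniqueness, if $g_1,g_2\colon T(\pi)\to W(\pm\pi)$ both cover the inclusion then $\varphi g_1=\varphi g_2$, so $c(t):=g_1(t)g_2(t)^{-1}\in C$ for every $t$; centrality of $C$ gives $c(st)=g_1(s)\,c(t)\,g_2(s)^{-1}=c(s)c(t)$, so $c$ is a homomorphism from $T(\pi)$ to the abelian group $C$. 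Since $T(\pi)$, being the universal central extension of a perfect group, is itself perfect, $c$ is trivial and $g_1=g_2$.

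The only point that requires any work is the inclusion $M(\pi)'\subseteq\im\varphi$ in the first step: one must match the ``abelianized determinant $0$ in $\pm\pi/\pi'$'' characterization of $M(\pi)'$ coming from Proposition~\ref{prop: 5.1} against the ``abelianized determinant $+1$'' characterization of $\im\varphi$ from Lemma~\ref{lem: 5.4}, and keep track of the parity of the underlying permutations. Everything after that is the formal universal property of universal central extensions together with the perfectness of $T(\pi)$; there is no genuine difficulty or delicate estimate involved.
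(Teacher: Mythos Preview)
Your proof is correct and follows exactly the approach the paper intends: the paper's own proof is the single sentence ``This follows from the universality of $T(\pi)$ and the above two lemmas,'' and you have simply unpacked that sentence, using Lemma~\ref{lem: 5.4} together with Proposition~\ref{prop: 5.1} to get $M(\pi)'\subseteq\im\varphi$, Lemma~\ref{lem: 5.3} to see that the pullback is a central extension, and then the universal property and perfectness of $T(\pi)$ for existence and uniqueness. The only remark is that your ``only point that requires any work'' is in fact no work at all: ``abelianized determinant $0$'' and ``abelianized determinant $+1$'' are the additive and multiplicative names for the same trivial element of $\pm\pi/\pi'$, and the evenness condition is already subsumed in that (the sign of the underlying permutation is part of the determinant in $\pm\pi/\pi'$).
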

\begin{proof}
This follows from the universality of $T(\pi)$ and the above two lemmas.
\end{proof}

\begin{lem}\label{lem: 5.6}
There exists a homomorphism $h:W(\pm\pi)\to St(\ZZ[\pi])$\footnote{It is proved in \cite{I} that this map is injective for finitely presented $\pi$ thus justifying the notation.} covering the inclusion $M(\pm\pi)\subset GL(\ZZ[\pi])$.
\end{lem}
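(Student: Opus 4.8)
\emph{Proof sketch.} The plan is to define $h$ explicitly on generators and then check directly that the five defining relations of $W(\pm\pi)$ map to the identity of $St(\ZZ[\pi])$. Inside $St(\ZZ[\pi])$, form the standard Weyl elements
\[
	\widetilde w_{ij}(u) := e_{ij}^{u}\, e_{ji}^{-u^{-1}}\, e_{ij}^{u},
\]
which make sense since $u\in\pi$ is a unit of $\ZZ[\pi]$, and set $h(w_{ij}(u)) := \widetilde w_{ij}(u)$. This choice is essentially forced by the requirement that $h$ cover the inclusion $M(\pm\pi)\subset GL(\ZZ[\pi])$: in $GL(\ZZ[\pi])$ the product $(I+uE_{ij})(I-u^{-1}E_{ji})(I+uE_{ij})$ is exactly the monomial matrix $\varphi(w_{ij}(u))$ (the identity matrix with its $i$-th column scaled by $u$, its $j$-th column scaled by $-u^{-1}$, and the two columns transposed). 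Hence, once $h$ is known to be a well-defined homomorphism, the triangle formed by $h$, $\varphi$, and the inclusion into $GL(\ZZ[\pi])$ commutes automatically, since it commutes on generators.

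The real content is well-definedness, and here I would invoke the classical conjugation calculus for Weyl elements in a Steinberg group (see \cite{M}, and \cite{IT14}, \cite{I} for the version adapted to group rings and to the present generator conventions): conjugation by $\widetilde w_{ij}(u)$ fixes $e_{k\ell}^{t}$ whenever $\{k,\ell\}\cap\{i,j\}=\emptyset$, carries each remaining elementary generator $e_{k\ell}^{t}$ to $e_{\sigma(k)\sigma(\ell)}^{t'}$ with $\sigma=(ij)$ and $t'$ an explicit left or right translate of $t$ by $u^{\pm1}$, and sends $e_{ij}^{t}\mapsto e_{ji}^{-u^{-1}tu^{-1}}$; all of this is a consequence of the Steinberg relations \ref{def: 2.1}(1)--(3). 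Relation \ref{def: 5.2}(1) is then immediate, because $\widetilde w_{ij}(u)$ and $\widetilde w_{k\ell}(v)$ are words in elementary generators with disjoint index pairs and so commute by \ref{def: 2.1}(1). For \ref{def: 5.2}(2)--(5) I would conjugate the three elementary factors of $\widetilde w_{k\ell}(v)$ one at a time by $\widetilde w_{ij}(u)$; the conjugate is again of the shape $e_{ab}^{s}\,e_{ba}^{-s^{-1}}\,e_{ab}^{s}=\widetilde w_{ab}(s)$ for the transposed pair $\{a,b\}$ and a scalar $s\in\pi$, giving the required identity $\widetilde w_{ij}(u)\,\widetilde w_{k\ell}(v)\,\widetilde w_{ij}(u)^{-1}=\widetilde w_{ab}(s)$ with $s$ equal to $u^{-1}v$, $vu$, $uv$, or $vu^{-1}$ according to the case. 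Since a relation and its inverse are equivalent, and \ref{def: 5.2} is a presentation of $W(\pm\pi)$, checking these five words suffices.

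The main obstacle is entirely bookkeeping: one must align the sign conventions baked into the Steinberg relations \ref{def: 2.1} with those of Definition \ref{def: 5.2}, so that conjugating $\widetilde w_{k\ell}(v)$ produces the scalar exactly $u^{-1}v$ (resp. $vu$, $uv$, $vu^{-1}$) and not, say, its negative or an unwanted conjugate of it. The asymmetry between the $+u$ and the $-u^{-1}$ in the definition of $\widetilde w_{ij}(u)$ is precisely what makes the signs cancel, and in a given case it is often convenient first to rewrite $\widetilde w_{ij}(u)$ in the equivalent form $e_{ji}^{-u^{-1}}\,e_{ij}^{u}\,e_{ji}^{-u^{-1}}$ before applying the conjugation formulas. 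Uniqueness of $h$ is neither asserted nor needed here; were it wanted, it would follow at once from the fact that the $w_{ij}(u)$ generate $W(\pm\pi)$.
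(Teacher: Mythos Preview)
Your approach is exactly the paper's: define $h$ on generators by the standard Weyl element $h(w_{ij}(u))=e_{ij}^{u}\,e_{ji}^{-u^{-1}}\,e_{ij}^{u}$ and then defer the verification of the relations to the classical Steinberg calculus in \cite[p.~72]{M} (the paper also points to its \S6, where the partial graphs implicitly encode these identities). Your sketch simply spells out what that citation contains. One small remark: the paper prints the middle factor as $e_{ji}^{u^{-1}}$, but with its convention $e_{ij}^{-a}:=(e_{ij}^{a})^{-1}$ your $e_{ji}^{-u^{-1}}$ is the one whose matrix image is actually the monomial matrix $\varphi(w_{ij}(u))$; this appears to be a typographical slip in the original, and your formula is the intended one.
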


\begin{proof}
Let $h(w_{ij}(u))=e_{ij}^u e_{ji}^{u^{-1}} e_{ij}^u$. For a proof that this is a homomorphism see \cite[p.72]{M}, or see \S 6.
\end{proof}

\begin{thm}\label{thm: 5.7}
The image of $H_3T(\pi)$ in $H_3St(\ZZ[\pi])=K_3(\ZZ[\pi])$ is contained in the image of $h_\ast:H_3W(\pm\pi)\to H_3St(\ZZ[\pi])$.
\end{thm}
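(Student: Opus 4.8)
The plan is to show that the two natural maps $T(\pi)\to St(\ZZ[\pi])$ coincide: the one furnished by Proposition \ref{prop: 5.1}, and the composite $T(\pi)\to W(\pm\pi)\xrightarrow{h}St(\ZZ[\pi])$ built from the homomorphism of Theorem \ref{thm: 5.5} and the homomorphism $h$ of Lemma \ref{lem: 5.6}. Granting this, the induced map $H_3T(\pi)\to H_3St(\ZZ[\pi])$ factors as
\[
	H_3T(\pi)\to H_3W(\pm\pi)\xrightarrow{h_\ast}H_3St(\ZZ[\pi]),
\]
so its image is contained in $\im h_\ast$, which is exactly the assertion.

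First I would record that $T(\pi)$ is perfect: being the universal central extension of the perfect group $M(\pi)'$ (Proposition \ref{prop: 5.1}) it is superperfect, in particular $H_1T(\pi)=0$. Next recall the classical fact that $St(\ZZ[\pi])\to E(\ZZ[\pi])$ is a central extension, with kernel $K_2(\ZZ[\pi])$. Combining these two facts, any homomorphism $T(\pi)\to St(\ZZ[\pi])$ is determined by its composite with $St(\ZZ[\pi])\to E(\ZZ[\pi])$: two lifts of one fixed map $T(\pi)\to E(\ZZ[\pi])$ differ by a homomorphism $T(\pi)\to K_2(\ZZ[\pi])$ from a perfect group to an abelian group, hence by the trivial one. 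This is the same uniqueness principle already invoked inside Proposition \ref{prop: 5.1}.

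It therefore remains to check that both homomorphisms $T(\pi)\to St(\ZZ[\pi])$ cover the \emph{same} map $T(\pi)\to E(\ZZ[\pi])$, namely the composite $T(\pi)\to M(\pi)'\hookrightarrow E(\ZZ[\pi])$. For the direct map this is the defining property stated in Proposition \ref{prop: 5.1}. For the composite: Theorem \ref{thm: 5.5} gives $T(\pi)\to W(\pm\pi)$ covering $M(\pi)'\subset M(\pm\pi)$, and Lemma \ref{lem: 5.6} gives $h\colon W(\pm\pi)\to St(\ZZ[\pi])$ covering $M(\pm\pi)\subset GL(\ZZ[\pi])$; hence the composite $T(\pi)\to St(\ZZ[\pi])\to GL(\ZZ[\pi])$ is the standard inclusion $M(\pi)'\hookrightarrow M(\pm\pi)\hookrightarrow GL(\ZZ[\pi])$ of monomial matrices. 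Since the image of $St(\ZZ[\pi])$ in $GL(\ZZ[\pi])$ is $E(\ZZ[\pi])$, this composite factors through $E(\ZZ[\pi])$ and so equals $M(\pi)'\hookrightarrow E(\ZZ[\pi])\hookrightarrow GL(\ZZ[\pi])$, the same embedding that appears in Proposition \ref{prop: 5.1}. Thus the two maps $T(\pi)\to St(\ZZ[\pi])$ lift one and the same homomorphism $T(\pi)\to E(\ZZ[\pi])$, so by the previous paragraph they agree, and applying $H_3$ finishes the proof.

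The step I expect to need the most care is the last one: one must make sure that the three central extensions in play — $T(\pi)\to M(\pi)'$, $W(\pm\pi)\to M(\pm\pi)$, and $St(\ZZ[\pi])\to E(\ZZ[\pi])$ — together with the inclusions $M(\pi)'\subset M(\pm\pi)$ and $M(\pm\pi)\subset GL(\ZZ[\pi])$ are genuinely compatible, so that ``the same map to $E(\ZZ[\pi])$'' is literally true rather than true only up to an unresolved ambiguity. Once that bookkeeping is in place, everything else is a formal consequence of perfectness of $T(\pi)$ and centrality of $St(\ZZ[\pi])\to E(\ZZ[\pi])$.
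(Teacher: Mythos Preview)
Your proof is correct and is essentially the paper's own argument spelled out in detail: the paper's one-line proof simply invokes the uniqueness clause of Proposition~\ref{prop: 5.1} (``universality'') to conclude that the direct map $T(\pi)\to St(\ZZ[\pi])$ agrees with the composite through $W(\pm\pi)$, whereas you reprove that uniqueness by the standard perfect-group-to-central-kernel argument. Your careful check that both maps cover the same inclusion $M(\pi)'\hookrightarrow E(\ZZ[\pi])$ is exactly the bookkeeping the paper leaves implicit.
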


\begin{proof}
By universality the map $T(\pi)\to St(\ZZ[\pi])$ is equal to the composition $T(\pi)\to W(\pm\pi)\xrightarrow h St(\ZZ[\pi])$.
\end{proof}

We shall consider $\chi$ as a cohomology class and show that $h^\ast(\chi)=0$. To compute $h^\ast(\chi)$ take any equivariant chain map
\[
\xymatrix{
0&\ZZ\ar[d]^1\ar[l]& \ZZ[W]\ar[d]^{h_0}\ar[l] &
\ZZ[W]\left<\cX_W\right>\ar[d]^{h_1}\ar[l] &
\ZZ[W]\left<\cY_W\right>\ar[d]^{h_2}\ar[l] &
\overline P(W)\ar[d]^{h_3}\ar[l] & 0\ar[l]\\
0&\ZZ\ar[l]& \ZZ[St]\ar[l] &
\ZZ[St]\left<\cX_{St}\right>\ar[l] &
\ZZ[St]\left<\cY_{St}\right>\ar[l] &
\overline P(St)\ar[l] & 0\ar[l]
	}
\]
The equivariant 3-cocycle $\chi_{\overline P(St)}$ which represents $\chi$ was defined as the coboundary of an integral (i.e. nonequivariant) 2-cochain
\[
	\chi_c:\ZZ[St]\left<\cY_{St}\right>\to H_0(\pi;\ZZ_2[\pi]).
\]

We shall show that $\chi_ch_2$ is $W(\pm\pi)$-equivariant. This implies $\chi_{\overline P(St)}h_3=\chi_c\partial_3h_3=\chi_ch_2\partial_3$ is an equivariant coboundary and thus represents the trivial cohomology class.

\begin{lem}\label{lem: 5.8}
Let $y\in \cY_{St}$, $f\in St(\ZZ[\pi])$, and $w\in W(\pm\pi)$. Then
\[
	\chi_c(h(w)f[y])=\chi_c(f[y]).
\]
\end{lem}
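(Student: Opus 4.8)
\noindent\emph{Proof proposal.}
The plan is to reduce to the case in which $y$ is a commutator relation of the form $[e_{ij}^u,e_{ik}^v]$, that is, two Steinberg generators sharing their first index, and then to evaluate both sides of the identity directly from the defining formula for $\chi_Q$, exploiting that left multiplication by $h(w)$ amounts to multiplying the associated matrix by a monomial matrix.

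First I would observe that $\chi_c$ is additive and that, for every $g\in St(\ZZ[\pi])$, one has $\chi_Q(g,y)=0$ unless $y$ has the form $[e_{ij}^u,e_{ik}^v]$; hence both $\chi_c(h(w)f[y])$ and $\chi_c(f[y])$ vanish for all other $y$, and it suffices to treat $y=[e_{ij}^u,e_{ik}^v]$. In that case $\chi_c(g[y])=\chi_Q(g,y)=\sum_p r_{pi}\langle us_{jp},vs_{kp}\rangle$, which depends on $g$ only through the matrix $(r_{pq})$ representing the image of $g$ in $GL(\ZZ_2[\pi])$, with $(s_{qp})=(r_{pq})^{-1}$. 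By Lemma~\ref{lem: 5.6} the image of $h(w)$ in $GL(\ZZ[\pi])$ lies in $M(\pm\pi)$, so its reduction modulo $2$ is a monomial matrix $\bar\mu$ over $\ZZ_2[\pi]$, which I would put in the normal form $\bar\mu_{pq}=c_p$ for $q=\rho(p)$ and $\bar\mu_{pq}=0$ otherwise, where $\rho$ is a permutation of the indices fixing all but finitely many and $c_p\in\pi$. Then the image of $h(w)f$ in $GL(\ZZ_2[\pi])$ is $\bar\mu\,(r_{pq})$, whose inverse is $(s_{qp})\,\bar\mu^{-1}$, so the entries that enter the formula become $r'_{pi}=c_p\,r_{\rho(p),i}$, $s'_{jp}=s_{j,\rho(p)}\,c_p^{-1}$, and $s'_{kp}=s_{k,\rho(p)}\,c_p^{-1}$.

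Substituting these and using the biadditivity together with the right-equivariance $\langle xc,yc\rangle=\langle x,y\rangle c$ of the intersection pairing, the $p$-th summand of $\chi_c(h(w)f[y])$ becomes $c_p\,r_{\rho(p),i}\,\langle us_{j,\rho(p)},vs_{k,\rho(p)}\rangle\,c_p^{-1}$, which equals $r_{\rho(p),i}\,\langle us_{j,\rho(p)},vs_{k,\rho(p)}\rangle$ in $H_0(\pi;\ZZ_2[\pi])$ since $\pi$ acts there by conjugation. Re-indexing the sum by $q=\rho(p)$ then gives $\sum_q r_{qi}\,\langle us_{jq},vs_{kq}\rangle=\chi_c(f[y])$, which is the claim.

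I expect the only real work to be bookkeeping: writing the monomial matrix $h(w)$ and its inverse in a convenient normal form, checking that the passage $(r,s)\mapsto(r',s')$ merely permutes and right-rescales the single column and the two rows of the matrices that actually occur in $\chi_Q$, and verifying that the conjugating factors $c_p$ land on opposite sides of the pairing so that they die in $H_0(\pi;\ZZ_2[\pi])$; the case-split on the type of $y$ is immediate from the definition of $\chi_Q$. There is no genuine obstacle here --- the identity is forced once the monomial nature of $h(w)$ is made explicit --- and this lemma, together with Theorems~\ref{thm: 3.2} and~\ref{thm: 3.3}, is exactly what is needed to conclude that $\chi_c h_2$ is $W(\pm\pi)$-equivariant, hence that $h^\ast(\chi)=0$.
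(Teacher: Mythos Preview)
Your proposal is correct and follows essentially the same approach as the paper: reduce to the relation $y=[e_{ij}^u,e_{ik}^v]$, write the image of $h(w)$ as a monomial matrix (the paper uses the factorization $PD$ into a permutation matrix and a diagonal matrix with entries in $\pm\pi$, you write it directly as $\bar\mu_{pq}=c_p\delta_{q,\rho(p)}$), compute the resulting transformation of the entries $r_{pi},s_{jp},s_{kp}$, pull the scalar $c_p$ through the pairing via $\langle xc,yc\rangle=\langle x,y\rangle c$, cancel the resulting conjugation in $H_0(\pi;\ZZ_2[\pi])$, and re-index by the permutation. The only cosmetic difference is that the paper keeps the $\pm$ sign in $d_{\sigma(p)}\in\pm\pi$ until the very end, whereas you pass to $\ZZ_2[\pi]$ at the outset; this changes nothing.
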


\begin{proof}
These are both zero by definition unless $y$ is a Steinberg relation of the form $y=[e_{ij}^u,e_{ik}^v]$. In this case we have
\[
	\chi_c(f[y])=\sum_p r_{pi}\left< us_{jp}, vs_{kp}\right>
\]
\[
	\chi_c(h(w)f[y])=\sum_p r_{pi}'\left< us_{jp}', vs_{kp}'\right>
\]
where $(r_{pq})=(s_{qp})^{-1}=$ image of $f$ in $GL(\ZZ_2[\pi])$ and $(r'_{pq})=(s'_{qp})^{-1}=$ image of $h(w)f$ in $GL(\ZZ_2[\pi])$. Every monomial matrix can be written uniquely as the product of a permutation matrix and a diagonal matrix. Thus $\varphi(w)=PD$. Let the entries of the diagonal matrix $D$ be written $d_p$ and let $P$ be the permutation matrix gotten by permuting the rows of the identity matrix by $\sigma^{-1}$. Then
\[
	(r'_{pq})=PD(r_{pq})\Rightarrow r_{pq}'=d_{\sigma(p)} r_{\sigma(p)q}
\]
\[
	(s'_{qp})=(s_{qp})D^{-1}P^{-1}\Rightarrow s_{qp}'=s_{q\sigma(p)} d_{\sigma(p)}^{-1} 
\]
Thus we have 
\[
\chi_c(h(w)f[y])= \sum_p d_{\sigma(p)} r_{\sigma(p)i}\left<us_{j\sigma(p)}d_{\sigma(p)}^{-1}, 
vs_{k\sigma(p)}d_{\sigma(p)}^{-1}\right>
\]
\[
	=\sum_p r_{\sigma(p) i}\left< us_{j\sigma(p)},vs_{k\sigma(p)}\right>
\]
because $d_{\sigma(p)}\in \pm\pi$. It is clear that the last expression is equal to $\chi_c(f[y])$.
\end{proof}

\begin{thm}\label{thm: 5.9}
$h^\ast(\chi)=0$ and thus the kernel of $\chi$ contains the image of $H_3W(\pm\pi)$.
\end{thm}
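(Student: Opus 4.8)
The plan is to finish the argument sketched just before Lemma~\ref{lem: 5.8}: exhibit $h^\ast(\chi)$ as an equivariant coboundary. Recall that by Corollary~\ref{cor: 3.5} the class $\chi$ lives in $H^3(St(\ZZ[\pi]);H_0(\pi;\ZZ_2[\pi]))$ and is represented, on the free resolution, by the cocycle obtained from $\chi_{\overline P(St)}$ by precomposing with the boundary map onto $\overline P(St)\cong\ker\partial_2$; since $\chi_{\overline P(St)}$ is merely the restriction of the non-equivariant $2$-cochain $\chi_c$ to $\overline P(St)\subset\ZZ[St]\left<\cY_{St}\right>$, this cocycle is $\chi_c\circ\partial_3$. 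Extending both partial resolutions one step and choosing an equivariant chain map $h_\bullet$ between them as in the diagram above, the chain-map identity $\partial_3\circ h_3=h_2\circ\partial_3$ yields
\[
	\chi_{\overline P(St)}\circ h_3=\chi_c\circ\partial_3\circ h_3=\chi_c\circ h_2\circ\partial_3=(\chi_c h_2)\circ\partial_3 ,
\]
so $h^\ast(\chi)$ is represented by the coboundary of the $2$-cochain $\chi_c h_2\colon\ZZ[W(\pm\pi)]\left<\cY_W\right>\to H_0(\pi;\ZZ_2[\pi])$. Thus it suffices to prove that $\chi_c h_2$ is $W(\pm\pi)$-equivariant, for then it is an \emph{equivariant} coboundary and represents $0$.

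I would deduce this equivariance directly from Lemma~\ref{lem: 5.8}. A free generator of $\ZZ[W(\pm\pi)]\left<\cY_W\right>$ is $f'[y']$ with $f'\in W(\pm\pi)$ and $y'$ a defining relator of $W(\pm\pi)$. Since $h$ is a homomorphism (Lemma~\ref{lem: 5.6}), the chain map exists and $h_2([y'])=\sum_i f_i[y_i]$ is a well-defined element of $\ZZ[St(\ZZ[\pi])]\left<\cY_{St}\right>$; equivariance of $h_2$ and of $h$ then gives, for $w\in W(\pm\pi)$,
\[
	(\chi_c h_2)(w\cdot f'[y'])=\sum_i\chi_c\bigl(h(w)\,h(f')f_i\,[y_i]\bigr).
\]
Applying Lemma~\ref{lem: 5.8} to each summand, with $h(f')f_i\in St(\ZZ[\pi])$ in the role of $f$, deletes the leading factor $h(w)$, so the right-hand side equals $\sum_i\chi_c(h(f')f_i[y_i])=(\chi_c h_2)(f'[y'])$. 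Hence $\chi_c h_2$ is equivariant and $h^\ast(\chi)=0$ in $H^3(W(\pm\pi);H_0(\pi;\ZZ_2[\pi]))$.

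For the second assertion, recall that under $K_3(\ZZ[\pi])=H_3St(\ZZ[\pi])$ the homomorphism $\chi$ is the Kronecker pairing with the cohomology class of Corollary~\ref{cor: 3.5}. Hence for any $z\in H_3W(\pm\pi)$, naturality of this pairing gives $\chi(h_\ast z)=\langle\chi,h_\ast z\rangle=\langle h^\ast(\chi),z\rangle=0$, so $h_\ast\bigl(H_3W(\pm\pi)\bigr)\subset\ker\chi$. Combining this with Theorem~\ref{thm: 5.7} shows that the image of $H_3T(\pi)$ in $K_3(\ZZ[\pi])$ is contained in $\ker\chi$, which is the stated consequence.

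The entire computational burden is carried by Lemma~\ref{lem: 5.8}, which is already established, so I expect no new calculation. The one point needing care is the passage ``$\chi_c h_2$ equivariant'' $\Rightarrow$ ``$h^\ast(\chi)=0$'': although $\chi_c$ itself is not equivariant, its restriction $\chi_{\overline P(St)}$ to $\ker\partial_2$ \emph{is}, so $\chi_c\circ\partial_3$ is a genuine equivariant $3$-cocycle and $(\chi_c h_2)\circ\partial_3$ a genuine equivariant coboundary; making this precise requires extending the partial free resolutions one step and checking the chain map $h_\bullet$ remains honest there, and this mild bookkeeping is the only obstacle.
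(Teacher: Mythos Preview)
Your proof is correct and follows exactly the approach the paper sketches immediately before Lemma~\ref{lem: 5.8}: show $\chi_c h_2$ is $W(\pm\pi)$-equivariant via Lemma~\ref{lem: 5.8}, whence $\chi_{\overline P(St)}h_3=\chi_c\partial_3 h_3=\chi_c h_2\partial_3$ is an equivariant coboundary. Your explicit unpacking of the equivariance from Lemma~\ref{lem: 5.8} and the Kronecker-pairing justification for the second assertion merely fill in details the paper leaves implicit; there is no new idea needed.
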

}


{
\section{$\chi:K_3(\ZZ)\to \ZZ_2$ is surjective}

Lemma \ref{lem: 2.2} implies that the image of $\chi:K_3(\ZZ)\to \ZZ_2$ is the same as the image of $\chi_{\overline P(St)}:\overline P(St(\ZZ))\to \ZZ_2$. We shall show that $\chi_{\overline P(St)}h_3:\overline P(W(\pm\pi))\to \ZZ_2$ is surjective for an appropriate choice of the chain map $h_\ast$.
}

{
Let $h_0:\ZZ[W(\pm1)]\to \ZZ[St(\ZZ)]$ be the unique $h$-equivariant map which sends 1 to 1. Let $h_1:\ZZ[W(\pm1)]\left<\cX_W\right>\to \ZZ[St(\ZZ)]\left<\cX_{St}\right>$ be the $h$-equivariant map given by
\[
	h_1([w_{ij}(1)]=[1_{ij}]+e_{ij}^1[e_{ji}^{-1}]+ e_{ij}^1e_{ji}^{-1}[e_{ij}^1].
\]	
Then it is clear that $\partial h_1=h_0\partial$.

Let $h_1:\ZZ[W(\pm1)]\left<\cY_W\right>\to \ZZ[St(\ZZ)]\left<\cY_{St}\right>$ be defined on $\cY_W$ by the following equations.
\begin{enumerate}
\item If $y=[w_{ij}(1),w_{k\ell}(1)]$ where $i,j,k,\ell$ are distinct, let
\begin{eqnarray*}
		h_2([y])&=& (1+ab^{-1}+cd^{-1}+ab^{-1}cd^{-1}) [[a,c]]\\
		& +&(ab^{-1}+ab^{-1}cd^{-1}) [[c,b]]\\
		& +&(cd^{-1}+ab^{-1}cd^{-1}) [[d,a]]\\
		& +&ab^{-1}cd^{-1}[[c,b]]
\end{eqnarray*}
where $a=e_{ij}^1, b=e_{ji}^1, c=e_{k\ell}^1, d=e_{\ell k}^1$, This can better be understood by examining the following partial graph.
{ 
\begin{center}
\begin{tikzpicture}[rotate=45,scale=1.5]
\begin{scope}
\draw[very thick,<-] (-2,1)--(2,1);
\draw[very thick,->] (-2,0)--(2,0);
\draw[very thick,<-] (-2,-1)--(2,-1);
\draw (2,1) node[right]{$c$};
\draw (2,0) node[right]{$d$};
\draw (2,-1) node[right]{$c$};
\end{scope}

\begin{scope}
\draw[very thick,->] (-1,-2)--(-1,2);
\draw[very thick,<-] (0,-2)--(0,2);
\draw[very thick,->] (1,-2)--(1,2);
\draw (-1,-2) node[right]{$a$};
\draw (0,-2) node[right]{$b$};
\draw (1,-2) node[right]{$a$};
\end{scope}

\foreach \x in{(-1,1),(1,1),(-1,-1),(1,-1)}
\draw \x node[below]{$\ast$};
\draw (-1,0) node[left]{$\ast$};
\draw (1,0) node[left]{$\ast$};
\draw (0,1) node[right]{$\ast$};
\draw (0,-1) node[right]{$\ast$};
\draw (0,0) node[above]{$\ast$};
\end{tikzpicture}
\end{center}
}
From the partial graph it is obvious that $\partial h_2([y])=h_1\partial([y])$ but this can also be checked algebraically.
\item 
{

If $y=w_{ij}(1) w_{ik}(1) w_{ij}(1)^{-1} w_{jk}(1)$ where $i,j,k$ are distinct, let
{\color{blue}
\begin{eqnarray*}
h_2([y])&=& (1+e^{-1}f)(1+ab^{-1}) [[a,c]]\\
&+& (1+e^{-1}f)ab^{-1} [cebc^{-1}b^{-1}]\\
&+& (1+e^{-1}f)e^{-1} [ecae^{-1}a^{-1}]\\
&+& (1+e^{-1}f)ab^{-1}c [[b,e]]\\
&+& (e^{-1}+ab^{-1}cd^{-1}) [[a,f]]\\
&+&ab^{-1}cd^{-1} ([[b,d]]+[dad^{-1}f^{-1}a^{-1}]+ [fbf^{-1}d^{-1}b^{-1}])
\end{eqnarray*}
}
where $a=e_{ij}^1$, $ b=e_{ji}^1$, $ c=e_{ik}^1$, $d=e_{ki}^1$, $e=e_{jk}^1$, $f=e_{kj}^1$.

The corresponding partial graph is:
}
{
\begin{center}
\begin{tikzpicture}[rotate=45,scale=1.5]

\begin{scope}[xshift=1cm] 
\draw[very thick,<-] (-3,0.5)--(0,0.5); 
\draw (-3,.5) node[below]{$f$};
\draw[very thick,<-] (-.5,0.5)--(0,0.5);
\draw[very thick] (0,0.5)..controls (.5,.5) and (.5,.2)..(0,0);
\draw[very thick,->] (-1,0)--(-.5,0); 
\draw[very thick,->] (-.5,0)--(1,0);
\draw (1,0) node[right]{$d$};
\draw[very thick] (-1,0)..controls (-1.5,0) and (-1.5,.3)..(-1,0.5);
\draw (-2,0.5) node[below]{$\ast$};
\draw (-1,0.5) node[right]{$\ast$};
\draw (-1,0) node[above]{$\ast$};
\draw (0,0.5) node[below]{$\ast$};
\draw (0,0) node[left]{$\ast$};
\end{scope}

\begin{scope}[yshift=1.1cm] 
\draw[very thick] (-2,0.5)--(0,0.5); 
\draw[very thick,->] (-2,0.5)--(-1.5,0.5);
\draw[very thick,->] (-1,0.5)--(-.5,0.5);
\draw (-2,.5) node[below]{$e$};
\draw[very thick] (0,0.5)..controls (.5,.5) and (.5,.2)..(0,0);
\draw[very thick] (-1,0)--(2,0); 
\draw[very thick,<-] (-.5,0)--(0,0);
\draw[very thick,<-] (.5,0)--(1,0);
\draw[very thick,<-] (1.5,0)--(2,0);
\draw (2,0) node[right]{$c$};
\draw[very thick] (-1,0)..controls (-1.5,0) and (-1.5,.3)..(-1,0.5);
\draw (-1,0) node[below]{$\ast$};
\draw (-1,0.5) node[left]{$\ast$};
\draw (0,0.5) node[above]{$\ast$};
\draw (1,0) node[below]{$\ast$};
\draw (0,0) node[right]{$\ast$};
\end{scope}

\begin{scope}[yshift=-1.1cm] 
\draw[very thick] (-2,0.5)--(0,0.5); 
\draw[very thick,->] (-2,0.5)--(-1.5,0.5);
\draw[very thick,->] (-1,0.5)--(-.5,0.5);
\draw (-2,.5) node[below]{$e$};
\draw[very thick] (0,0.5)..controls (.5,.5) and (.5,.2)..(0,0);
\draw[very thick] (-1,0)--(2,0); 
\draw[very thick,<-] (-.5,0)--(0,0);
\draw[very thick,<-] (.5,0)--(1,0);
\draw[very thick,<-] (1.5,0)--(2,0);
\draw (2,0) node[right]{$c$};
\draw[very thick] (-1,0)..controls (-1.5,0) and (-1.5,.3)..(-1,0.5);
\draw (-1,0) node[below]{$\ast$};
\draw (-1,0.5) node[left]{$\ast$};
\draw (0,0.5) node[above]{$\ast$};
\draw (1,0) node[below]{$\ast$};
\draw (0,0) node[right]{$\ast$};
\end{scope}

{
\begin{scope}
\draw[very thick,->] (-1,-2)--(-1,2.5);
\draw[very thick,<-] (0,-2)--(0,2.35);
\draw[very thick,->] (1,-2)--(1,2.2);
\draw (-1,-2) node[right]{$a$};
\draw (0,-2) node[right]{$b$};
\draw (1,-2) node[right]{$a$};
\end{scope}
}

%
\end{tikzpicture}
\end{center}
}
\item 
{

If $y=w_{ij}(1) w_{ki}(1) w_{ij}(1)^{-1} w_{kj}(1)$ where $i,j,k$ are distinct, then
{\color{blue}
\begin{eqnarray*}
h_2([y])&=& (1+f^{-1}e)(f^{-1}+ab^{-1}f^{-1}) [[f,a]]\\
&+& (1+f^{-1}e)ab^{-1}f^{-1} ([bdfb^{-1}f^{-1}]+[[d,b]]+[afda^{-1}d^{-1}])\\
&+&  f^{-1}[aea^{-1}c^{-1}e^{-1}]\\
&+&  (f^{-1}e+af^{-1}b^{-1}c^{-1})[[c,a]]\\
&+&  f^{-1}ab^{-1}[[e,b]]\\
&+&  af^{-1}b^{-1}c^{-1}[bcb^{-1}e^{-1}c^{-1}]
\end{eqnarray*}
}
where $a-f$ are the same as in (2).

The corresponding partial graph is:

}
{
\begin{center}
\begin{tikzpicture}[rotate=45,scale=1.5]

\begin{scope}[yshift=1.1cm] 
\draw[very thick] (-2,0)--(-1,0); 
\draw[very thick,->] (-2,0)--(-1.5,0);
\draw (-1,0) node[left]{$\ast$};
\draw (-2,0) node[below]{$f$};
\end{scope}
\begin{scope}[yshift=-1.1cm] 
\draw[very thick] (-2,0)--(-1,0); 
\draw[very thick,->] (-2,0)--(-1.5,0);
\draw (-1,0) node[left]{$\ast$};
\draw (-2,0) node[below]{$f$};
\end{scope}

\begin{scope}[yshift=1.1cm, xshift=1cm] 
\draw[very thick] (-1,0.5)--(1,0.5); 
\draw[very thick,->] (1,0.5)--(.5,0.5); 
\draw[very thick,->] (0,0.5)--(-.5,0.5); 
\draw (1,.5) node[right]{$d$};
\draw[very thick] (0,0.5)..controls (.5,.3) and (.5,0)..(0,0);
\draw[very thick] (-2,0)--(0,0); 
\draw[very thick,->] (-1,0)--(-0.5,0);
\draw[very thick] (-1,0)..controls (-1.5,.2) and (-1.5,.5)..(-1,0.5);
\draw (-1, 0) node[above]{$\ast$};
\draw (-1,0.5) node[right]{$\ast$};
\draw (0,0) node[left]{$\ast$};
\draw (0,0.5) node[below]{$\ast$};
\end{scope}

\begin{scope}[yshift=-1.1cm, xshift=1cm] 
\draw[very thick] (-1,0.5)--(1,0.5); 
\draw[very thick,->] (1,0.5)--(.5,0.5); 
\draw[very thick,->] (0,0.5)--(-.5,0.5); 
\draw (1,.5) node[right]{$d$};
\draw[very thick] (0,0.5)..controls (.5,.3) and (.5,0)..(0,0);
\draw[very thick] (-2,0)--(0,0); 
\draw[very thick,->] (-1,0)--(-0.5,0);
\draw[very thick] (-1,0)..controls (-1.5,.2) and (-1.5,.5)..(-1,0.5);
\draw (-1, 0) node[above]{$\ast$};
\draw (-1,0.5) node[right]{$\ast$};
\draw (0,0) node[left]{$\ast$};
\draw (0,0.5) node[below]{$\ast$};
\end{scope}

\begin{scope}
\draw[very thick,->] (-1,0.5)--(2,0.5); 
\draw (2,.5) node[right]{$c$};
\draw[very thick,->] (-1,0.5)--(-.5,0.5);
\draw[very thick] (0,0.5)..controls (.5,.3) and (.5,0)..(0,0);
\draw[very thick,<-] (-2,0)--(0,0); 
\draw[very thick] (-1,0)--(-0.5,0);
\draw (-2,0) node[below]{$e$};
\draw[very thick] (-1,0)..controls (-1.5,.2) and (-1.5,.5)..(-1,0.5);
\draw (0,0) node[right]{$\ast$};
\draw (-1,0.5) node[left]{$\ast$};
\draw (-1,0) node[below]{$\ast$};
\draw (1,0.5) node[left]{$\ast$};
\draw (0,0.5) node[above]{$\ast$};
\end{scope}

\begin{scope}
\draw[very thick,->] (-1,-2)--(-1,2.5);
\draw[very thick,<-] (0,-2)--(0,2.5);
\draw[very thick,->] (1,-2)--(1,2.5);
\draw (-1,-2) node[right]{$a$};
\draw (0,-2) node[right]{$b$};
\draw (1,-2) node[right]{$a$};
\end{scope}

%
\end{tikzpicture}
\end{center}
}
\item  

{\color{blue}
If $y=w_{ij}(1) w_{jk}(1) w_{ij}(1)^{-1} w_{ik}(1)^{-1}$ where $i,j,k$ are distinct, then let
\begin{eqnarray*}
h_2([y])&=&  (1+cd^{-1})(1+ab^{-1}) [[a,c]]\\
&+& (1+cd^{-1})ab^{-1} ([cebc^{-1}b^{-1}]+[[c,e]])\\
&+& cd^{-1}[dad^{-1}f^{-1}a^{-1}]\\
&+& cd^{-1}a[[f,d]]\\
&+& cd^{-1}ab^{-1}([bdfb^{-1}f^{-1}]+ [[f,a]])
\end{eqnarray*}
}

where $a-f$ are the same as in (2). 

The corresponding partial diagram is:

{
\begin{center}
\begin{tikzpicture}[rotate=45,scale=1.5]

\begin{scope}[yshift=1cm] 
\draw[very thick] (-2,0)--(-1,0); 
\draw[very thick,->] (-1,0)--(-1.5,0);
\draw (-1,0) node[below]{$\ast$};
\draw (-2,0) node[below]{$c$};
\end{scope}

\begin{scope}[yshift=-1cm] 
\draw[very thick] (-2,0)--(-1,0); 
\draw[very thick,->] (-1,0)--(-1.5,0);
\draw (-1,0) node[below]{$\ast$};
\draw (-2,0) node[below]{$c$};
\end{scope}

\begin{scope}[yshift=1cm, xshift=1cm] 
\draw (0,0) node[below]{$\ast$};
\draw (-1,0) node[right]{$\ast$};
\draw (-.5,0) node{$\ast$};
\draw (1,0) node[right]{$e$};
\draw[very thick,->] (-1,0)--(-1.5,0);
\draw[very thick,->] (1,0)--(.5,0);
\draw[very thick,<-] (.5,0)--(1,0);
\draw[very thick] (-2,0)--(-1,0);
\draw[very thick] (0,0)--(1,0);
\draw[very thick] (-1,0)..controls (-.4,0) and (-.4,.6)..(0,0);
\draw[very thick] (0,0)..controls (-.6,0) and (-.6,.6)..(-1,0);
\end{scope}
\begin{scope}[yshift=-1cm, xshift=1cm] 
\draw (0,0) node[below]{$\ast$};
\draw (-1,0) node[right]{$\ast$};
\draw (-.5,0) node{$\ast$};
\draw (1,0) node[right]{$e$};
\draw[very thick,->] (-1,0)--(-1.5,0);
\draw[very thick,->] (1,0)--(.5,0);
\draw[very thick,<-] (.5,0)--(1,0);
\draw[very thick] (-2,0)--(-1,0);
\draw[very thick] (0,0)--(1,0);
\draw[very thick] (-1,0)..controls (-.4,0) and (-.4,.6)..(0,0);
\draw[very thick] (0,0)..controls (-.6,0) and (-.6,.6)..(-1,0);
\end{scope}

\begin{scope}
\draw (0,0) node[above]{$\ast$};
\draw (-1,0) node[left]{$\ast$};
\draw (-2,0) node[below]{$d$};
\draw[very thick,->] (-2,0)--(-1.5,0);
\draw[very thick,->] (0,0)--(.5,0);
\draw (-.5,.3) node{$\ast$};
\draw[very thick,->] (1,0)--(2,0);
\draw (2,0) node[right]{$f$};
\draw (1,0) node[left]{$\ast$};
\draw[very thick] (-2,0)--(-1,0);
\draw[very thick] (0,0)--(1,0);
\draw[very thick] (-1,0)..controls (-.4,0) and (-.4,.6)..(0,0);
\draw[very thick] (0,0)..controls (-.6,0) and (-.6,.6)..(-1,0);
\end{scope}

\begin{scope}
\draw[very thick,->] (-1,-2)--(-1,2);
\draw[very thick,<-] (0,-2)--(0,2);
\draw[very thick,->] (1,-2)--(1,2);
\draw (-1,-2) node[right]{$a$};
\draw (0,-2) node[right]{$b$};
\draw (1,-2) node[right]{$a$};
\end{scope}

%
\end{tikzpicture}
\end{center}
}
\item 
{
If $y=w_{ij}(1) w_{kj}(1) w_{ij}(1)^{-1} w_{ki}(1)^{-1}$ where $i,j,k$ are distinct, then let
{\color{blue}
\begin{eqnarray*}
h_2([y])&=&  (1+dc^{-1}) [afda^{-1}d^{-1}]\\
&+& (1+dc^{-1})a [[d,f]]\\
&+& (1+dc^{-1})ab^{-1} ([fbf^{-1}d^{-1}b^{-1}]+[[a,f]])
\end{eqnarray*}
}
where $a-f$ are the same as in (2).

The corresponding partial graph is:
}
{
\begin{center}
\begin{tikzpicture}[rotate=45,scale=1.5]

\begin{scope}[yshift=1cm] 
\draw[very thick] (2,0)--(1,0); 
\draw[very thick,->] (1,0)--(.5,0);
\draw (1,0) node[below]{$\ast$};
\draw (-1,0) node[below]{$\ast$};
\draw (0,0) node[right]{$\ast$};
\draw (-.5,-.3) node{$\ast$};
\draw (2,0) node[right]{$f$};
\draw (-2,0) node[below]{$d$};
\end{scope}

\begin{scope}[yshift=-1cm] 
\draw[very thick] (2,0)--(1,0); 
\draw[very thick,->] (1,0)--(.5,0);
\draw (1,0) node[below]{$\ast$};
\draw (-1,0) node[below]{$\ast$};
\draw (0,0) node[right]{$\ast$};
\draw (-.5,-.3) node{$\ast$};
\draw (2,0) node[right]{$f$};
\draw (-2,0) node[below]{$d$};
\end{scope}

\draw[very thick] (-2,0)--(-1,0);
\draw[very thick,->] (-2,0)--(-1.5,0);
\draw (-2,0) node[below]{$c$};
\draw (0,0) node[above]{$\ast$};
\draw (1,0) node[left]{$\ast$};
\draw (-1,0) node[left]{$\ast$};
\draw (.5,0) node{$\ast$};

\begin{scope}[yshift=-1cm, xshift=0cm] 
\draw[very thick,->] (2,0)--(1.5,0);
\draw[very thick,->] (-1,0)--(-1.5,0);
\draw[very thick,->] (1,0)--(.5,0);
\draw[very thick,<-] (.5,0)--(1,0);
\draw[very thick] (-2,0)--(-1,0);
\draw[very thick] (0,0)--(1,0);
\draw[very thick] (-1,0)..controls (-.4,0) and (-.4,-.6)..(0,0);
\draw[very thick] (0,0)..controls (-.6,0) and (-.6,-.6)..(-1,0);
\end{scope}
\begin{scope}[yshift=1cm, xshift=0cm] 
\draw[very thick,->] (2,0)--(1.5,0);
\draw[very thick,->] (-1,0)--(-1.5,0);
\draw[very thick,->] (1,0)--(.5,0);
\draw[very thick,<-] (.5,0)--(1,0);
\draw[very thick] (-2,0)--(-1,0);
\draw[very thick] (0,0)--(1,0);
\draw[very thick] (-1,0)..controls (-.4,0) and (-.4,-.6)..(0,0);
\draw[very thick] (0,0)..controls (-.6,0) and (-.6,-.6)..(-1,0);
\end{scope}

\begin{scope}[yshift=0cm, xshift=1cm] 
\draw[very thick,->] (-2,0)--(-1.5,0);
\draw[very thick,->] (0,0)--(1,0);
\draw (1,0) node[right]{$e$};
\draw[very thick] (-2,0)--(-1,0);
\draw[very thick] (0,0)--(1,0);
\draw[very thick] (-1,0)..controls (-.4,0) and (-.4,-.6)..(0,0);
\draw[very thick] (0,0)..controls (-.6,0) and (-.6,-.6)..(-1,0);
\end{scope}

\begin{scope}
\draw[very thick,->] (-1,-2)--(-1,2);
\draw[very thick,<-] (0,-2)--(0,2);
\draw[very thick,->] (1,-2)--(1,2);
\draw (-1,-2) node[right]{$a$};
\draw (0,-2) node[right]{$b$};
\draw (1,-2) node[right]{$a$};
\end{scope}

%
\end{tikzpicture}
\end{center}
}
\end{enumerate}
We must now compute $\chi_ch_2$.
}

{
\begin{thm}\label{thm: 6.1}
$\chi_ch_2([y])=1$ if $y$ is a relation of type 3 and $\chi_ch_2([y])=0$ if $y$ is a relation of type 1,2,4,5.
\end{thm}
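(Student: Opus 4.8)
The plan is to evaluate $\chi_ch_2([y])$ directly, summand by summand, using the explicit formula for $h_2$. Two facts make this feasible. First, $\chi_c$ is the additive extension of $\chi_Q$, and $\chi_Q(g,y')=0$ unless $y'$ is a ``good commutator'' relation $[e_{mn}^1,e_{mp}^1]$ with a common first index $m$, in which case
\[
	\chi_Q(g,[e_{mn}^1,e_{mp}^1])=\sum_q r_{qm}\langle s_{nq},s_{pq}\rangle=\sum_q r_{qm}\,s_{nq}\,s_{pq}\in\ZZ_2 ,
\]
where $(r_{qm})=(s_{mq})^{-1}$ is the image of $g$ in $GL(\ZZ_2)$. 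Second, since $\pi=1$ every generator $e_{ij}^{\pm1}$ maps to $I+E_{ij}$ over $\ZZ_2$, so all the matrices needed are short products of elementary matrices, and their inverses are the reversed products.

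First I would go through each expansion of $h_2([y])$ in the definition of the chain map and delete every summand $g[y']$ whose relation $y'$ is not a good commutator; this kills all the five-letter Steinberg relations and every commutator of two Steinberg generators whose first indices differ. For $y$ of type $1$ nothing survives and $\chi_ch_2([y])=0$ at once. For types $2$, $4$, $5$ the survivors are, respectively, the $[[a,c]]$ and $[[b,e]]$ summands, the $[[a,c]]$ and $[[f,d]]$ summands, and the $[[d,f]]$ summand; for type $3$ they are the $[[c,a]]$ summand (coefficient $f^{-1}e+af^{-1}b^{-1}c^{-1}$) and the $[[e,b]]$ summand (coefficient $f^{-1}ab^{-1}$), with $a,\dots,f$ as in the statement.

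Next, for each surviving summand $g\,[[e_{mn}^1,e_{mp}^1]]$ --- $g$ ranging over the group elements in the support of its coefficient --- I would write down the images of $g$ and of $g^{-1}$ in $GL(\ZZ_2)$, read off the $m$-th column of $r$ together with the entries of $s$ it meets, and evaluate $\sum_q r_{qm}s_{nq}s_{pq}$. For types $2$, $4$, $5$ every such value is $0$: a short check shows that $s_{nq}s_{pq}=0$ at each index $q$ in the support of the $m$-th column of $r$. For type $3$ the two terms of the $[[c,a]]$ coefficient again give $0$, whereas the single term $f^{-1}ab^{-1}[[e,b]]$ gives
\[
	\chi_Q(f^{-1}ab^{-1},[e_{jk}^1,e_{ji}^1])=\sum_q r_{qj}\,s_{kq}\,s_{iq}=1 ,
\]
so altogether $\chi_ch_2([y])=1$ for type $3$ and $0$ for types $1,2,4,5$.

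The only real work is this last round --- on the order of a dozen small matrix inversions over $\ZZ_2$, each differing from the identity in a block of size at most three, followed by short sums --- and the main hazard is simply bookkeeping error. As a sanity check, the partial graphs drawn alongside the definition of $h_2$ display these elements geometrically, and $\chi$ has the standard interpretation as a count modulo $2$ of weighted intersections along the arcs running from the good-commutator vertices out to infinity; one can read off from the type-$3$ graph that exactly one of these intersection numbers is odd.
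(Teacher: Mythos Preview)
Your proposal is correct and follows essentially the same route as the paper's own proof: both identify that only the commutators $[a,c]$, $[b,e]$, $[d,f]$ (and their inverses) can contribute, then evaluate $\chi_c$ term by term via the formula $\sum_q r_{qm}s_{nq}s_{pq}$, finding that every term vanishes except the single summand $f^{-1}ab^{-1}[[e,b]]$ in type~3. Your index convention $\sum_q r_{qj}s_{kq}s_{iq}$ for that term is in fact the correct specialization of the general formula (the paper's displayed version has a minor index slip, though its matrices and conclusion are right).
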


\begin{proof}
If $y$ is a type 1 relation for $W(\pm1)$ then $\chi_ch_2([y])=0$ because $h_2([y])$ contains no relevant relation. For the other elements of $\cY_W$, the only relevant Steinberg relations that occur are $[a,c]$, $[b,e]$, $[d,f]$ and their inverses.

If $g\in St(\ZZ)$ and $(r_{pq})=(s_{qp})^{-1}$ is the image of $g$ in $GL(\ZZ_2)$ then
\[
	\chi_c(g[[e_{ij}^1,e_{ik}^1]])=\sum_p s_{jp}s_{kp}r_{pi}.
\]
Using this formula one can easily calculate $\chi_c$ for all the relevant terms that occur in all the $h_2([y])$'s. The result is that $\chi_c$ is zero on all the terms except the last term of (3) where it is 1. In fact, if {\color{blue}$g=f^{-1}ab$ then
\[
	(r_{pq})= \begin{matrix} i\\j\\k\\ \ \end{matrix}\begin{matrix}\mat{0 & 1 & 0\\1 & 1 & 0\\ 1 & 1 &1}\\
	i\quad j\quad k\end{matrix}
	\qquad\qquad (s_{qp})= \begin{matrix} i\\j\\k\\ \ \end{matrix}\begin{matrix}\mat{1 & 1 & 0\\1 & 0 & 0\\ 0 & 1 &1}\\
	i\quad j\quad k\end{matrix}
\]
}
and $\chi_c(g[[e,b]])=\sum_p s_{jp}s_{kp}r_{pi}=1$.
\end{proof}
}

{
\begin{thm}\label{thm: 6.2}
There exists a graph $P\in \overline P(W(\pm1))$ with the property that $\chi_c h_2\partial_3(P)=1$.
\end{thm}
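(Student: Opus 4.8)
The plan is to reduce the assertion to a parity count of the vertices of a picture, and then to exhibit that picture.

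First the reduction. Since $h_\bullet$ is an $h$-equivariant chain map and, by Lemma~\ref{lem: 5.8}, $\chi_c(h(w)f[y])=\chi_c(f[y])$ for all $w\in W(\pm1)$, the composite $\chi_c h_2\colon\ZZ[W(\pm1)]\langle\cY_W\rangle\to\ZZ_2$ is $W(\pm1)$-invariant and therefore factors through the augmentation $\ZZ[W(\pm1)]\langle\cY_W\rangle\to\ZZ\langle\cY_W\rangle$ as a map $\bar\chi\colon\ZZ\langle\cY_W\rangle\to\ZZ_2$. Theorem~\ref{thm: 6.1} says precisely that $\bar\chi([y])=1$ when $y$ is of type $3$ and $\bar\chi([y])=0$ otherwise (and $\bar\chi([y^{-1}])=\bar\chi([y])$ over $\ZZ_2$). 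For a picture $P$, the element of $\ker\partial_2\subset\ZZ[W(\pm1)]\langle\cY_W\rangle$ corresponding to it under Theorem~\ref{Peiffer theorem} is, by the construction in the proof of Lemma~\ref{lem: 1.6}, the sum $\partial_3(P)=\sum_v\bar f_v[y_v]$ over the vertices $v$ of $P$, where $y_v$ is the relation read at $v$ and $\bar f_v\in W(\pm1)$ is read along a path from $v$ to $\infty$. Hence
\[
	\chi_c h_2\partial_3(P)=\sum_v\bar\chi([y_v])=\#\{\,v:\ y_v\ \text{is a type-}3\ \text{relation (up to inverse and cyclic permutation)}\,\}\bmod 2,
\]
so Theorem~\ref{thm: 6.2} is equivalent to producing a valid picture for $W(\pm1)$ with an odd number of type-$3$ vertices.

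To do this I would work with only three indices $\{1,2,3\}$ (so that no type-$1$ relation occurs). By Theorem~\ref{Peiffer theorem} and Corollary~\ref{cor: 1.8}(a), a formal sum $\sum_v\pm[y_v]$ of relations comes from a picture as soon as it lies in the image of $\overline P(W(\pm1))\otimes_{W(\pm1)}\ZZ$ in $\ZZ\langle\cY_W\rangle$, and in particular its boundary in $\ZZ\langle\cX_W\rangle$ must vanish. Writing $T_k(i,j,\ell)$ for the type-$k$ relation of Definition~\ref{def: 5.2} on the indicated indices, one checks directly that the six boundaries in
\[
	T_3(2,3,1)+T_3(1,3,2)-T_3(1,2,3)+T_4(1,3,2)-T_2(1,2,3)-T_4(3,2,1)
\]
cancel in $\ZZ\langle\cX_W\rangle$: the three type-$3$ relations are chosen so that the common first index of the two generators occurring in each of their boundaries runs over $1,2,3$, so that their signed boundaries together involve all six generators $w_{12},w_{13},w_{21},w_{23},w_{31},w_{32}$, and this is absorbed by the three type-$2$ and type-$4$ relations, which do not contribute to the type-$3$ count. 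Realizing this formal sum as an actual planar diagram — choosing the paths to $\infty$ (equivalently, the conjugating words $f_v$) so that the loose half-edges join up consistently in the free group on the $w_{ij}$, and if necessary adjoining cancelling pairs of vertices or type-$1$ relations, which change nothing mod $2$ — produces a picture $P$ with exactly three type-$3$ vertices. Combining this with the reduction gives $\chi_c h_2\partial_3(P)=3\equiv 1\pmod 2$, which is Theorem~\ref{thm: 6.2}; it also yields the sought-for class in $K_3(\ZZ)$ with $\chi\neq 0$ used in the next section.

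The main obstacle is this geometric realization, not the short finite boundary bookkeeping. One must verify that the $\ZZ$-combination above, which lies in $\ker\partial$, actually lifts to $\ker\partial_2=\overline P(W(\pm1))$; the obstruction is controlled by $H_2W(\pm1)$ and, concretely, by whether the product $\prod_v f_vy_vf_v^{-1}$ of conjugated relators can be made trivial in the free group $F_W$ for a suitable ordering and choice of the conjugating words. Exhibiting an honest planar Peiffer diagram — with consistent edge orientations and labels and the correct cyclic order at each vertex — is where the work lies, and is cleanest to carry out by drawing the diagram explicitly, as in \cite{I}.
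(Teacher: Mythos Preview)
Your reduction is correct and is exactly how the paper proceeds: by Lemma~\ref{lem: 5.8} and Theorem~\ref{thm: 6.1}, $\chi_c h_2\partial_3(P)$ is the number of type-$3$ vertices of $P$ modulo $2$, so one only needs a picture with an odd number of such vertices. The paper states this in one line.

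The gap is that you do not actually produce such a picture. You write down a six-term formal sum in $\ZZ\langle\cY_W\rangle$ with vanishing image in $\ZZ\langle\cX_W\rangle$, but---as you yourself say in your final paragraph---this is only a necessary condition: the obstruction to lifting to $\overline P(W(\pm1))$ lives in $H_2W(\pm1)$ via Corollary~\ref{cor: 1.8}(b), and you neither compute it nor bypass it by an explicit construction. The sentence ``Realizing this formal sum as an actual planar diagram \ldots\ produces a picture $P$'' is precisely the assertion to be proved, and your closing paragraph concedes that ``exhibiting an honest planar Peiffer diagram \ldots\ is where the work lies.'' Since the entire content of Theorem~\ref{thm: 6.2} is the existence of the picture, this is not yet a proof.

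For comparison, the paper's proof \emph{is} that explicit construction: it draws a concrete twelve-vertex picture for $W(\pm1)$ using only indices $1,2,3$, with exactly three vertices of each of the types $2,3,4,5$. Note in particular that the paper's picture is not your six-term sum (which has three type-$3$, two type-$4$, one type-$2$, and no type-$5$ vertices); there is no indication that your smaller combination can be realized, and the symmetric $3{+}3{+}3{+}3$ shape of the paper's example suggests the minimal realizable picture may be larger than you expect. To complete your argument you must either draw the diagram or prove that the relevant class in $H_2W(\pm1)$ vanishes.
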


\begin{proof}
All we have to do is to find a graph which has an odd number of type 3 relations. Here it is.
{
\begin{center}
\begin{tikzpicture}[scale=.7]

\draw[very thick] (0,0) circle[radius=4.5cm];
\draw[very thick] (-3.758,-1.368)--(2.294,3.276);
\draw[very thick] (2.294,3.276)..controls (4.294,4.803) and (9,5)..(9,0);
\draw[very thick] (2.294,-3.276)..controls (4.294,-4.803) and (9,-5)..(9,0);
\draw[very thick] (3.758,1.368)--(-2.294,-3.276);
\draw[very thick] (3.758,1.368)..controls (4.758,2.135) and (6.5,2)..(6.5,0);
\draw[very thick] (3.758,-1.368)..controls (4.758,-2.135) and (6.5,-2)..(6.5,0);
\draw[very thick] (3.758,-1.368)--(-2.294,3.276);
\draw[very thick] (-2.294,3.276)..controls (-4.294,4.803) and (-9,5)..(-9,0);
\draw[very thick] (-2.294,-3.276)..controls (-4.294,-4.803) and (-9,-5)..(-9,0);

\draw[very thick] (-3.758,1.368)--(2.294,-3.276);
\draw[very thick] (-3.758,1.368)..controls (-4.758,2.135) and (-6.5,2)..(-6.5,0);
\draw[very thick] (-3.758,-1.368)..controls (-4.758,-2.135) and (-6.5,-2)..(-6.5,0);
\draw[very thick,<-] (0,4.5)--(.2,4.5); 
\draw[very thick,<-] (0,-4.5)--(.2,-4.5); 
\draw (0,4.5) node[above]{$w_{13}$};
\draw (0,-4.5) node[below]{$w_{13}$};
\draw[very thick,->] (9,0)--(9,.2); 
\draw[very thick,->] (-9,0)--(-9,.2); 
\draw (9,0) node[right]{$w_{12}$};
\draw (-9,0) node[left]{$w_{32}$};
\draw[very thick,->] (6.5,0)--(6.5,.2); 
\draw[very thick,->] (-6.5,0)--(-6.5,.2); 
\draw (6.5,0) node[right]{$w_{12}$};
\draw (-6.5,0) node[left]{$w_{32}$};
\draw[very thick,->] (4.5,0)--(4.5,.2); 
\draw[very thick,<-] (-4.5,0)--(-4.5,.2); 
\draw (4.5,0) node[right]{$w_{13}$};
\draw (-4.5,0) node[left]{$w_{13}$};
\draw[very thick,<-] (3.37,2.97)--(3.47,2.87); 
\draw[very thick,<-] (-3.57,2.77)--(-3.47,2.87);
\draw (3.47,2.97) node[right]{$w_{13}$};
\draw (-3.47,2.97) node[left]{$w_{13}$};

\draw[very thick,->] (3.37,-2.97)--(3.47,-2.87); 
\draw[very thick,<-] (-3.57,-2.77)--(-3.47,-2.87);
\draw (3.47,-2.97) node[right]{$w_{23}$};
\draw (-3.47,-2.97) node[left]{$w_{12}$};
\draw[very thick,->] (-.93,.8) -- (-1.03,.725); 
\draw (-1.4,.825) node[above]{$w_{31}$};
\draw (1.4,.8) node[above]{$w_{12}$};
\draw[very thick,->] (.93,.8) -- (1.03,.725);
\draw[very thick,<-] (-.93,-.8) -- (-1.03,-.725);
\draw[very thick,<-] (.93,-.8) -- (1.03,-.725);
\draw (-1.4,-.825) node[below]{$w_{12}$};
\draw (1.4,-.8) node[below]{$w_{31}$};
\draw[very thick,<-] (1.3,2.52)--(1.1,2.36); 
\draw[very thick,->] (-2.294,3.276)--(-1.3,2.5);
\draw (1,2.56) node[above]{$w_{32}$};
\draw (-1,2.56) node[above]{$w_{12}$};
\draw[very thick,<-] (1.3,-2.52)--(1.1,-2.36); 
\draw[very thick,<-] (-1.3,-2.52)--(-1.1,-2.36);
\draw (1,-2.56) node[below]{$w_{12}$};
\draw (-1,-2.56) node[below]{$w_{32}$};
\draw[very thick,->] (-3.358,1.06)--(-2.958,0.75); 
\draw (-2.958,0.9)node[above]{$w_{12}$};
\draw[very thick,<-] (-3.358,-1.06)--(-2.958,-0.75);
\draw (-2.8,-1)node[below]{$w_{32}$};
\draw[very thick,<-] (3.358,1.06)--(2.958,0.75); 
\draw (2.958,1)node[above]{$w_{32}$};
\draw[very thick,<-] (3.358,-1.06)--(2.958,-0.75);
\draw (2.8,-.95)node[below]{$w_{12}$};
\draw (2.7,3.6) node[below]{\tiny 2};
\draw (2.7,-3.6) node[above]{\tiny 2};
\draw (-2.6,3.6) node[right]{\tiny 4};
\draw (-2.7,-3.6) node[below]{\tiny 5};
\draw (0,1.5) node[above]{\small 3};
\draw (0,-1.5) node[above]{\tiny 5};

\draw (4.1,1.6) node[below]{\tiny 2};
\draw (4.1,-1.6) node[above]{\tiny 4};
\draw (-3.85,1.4) node[above]{\tiny 4};
\draw (-3.8,-1.4) node[below]{\small 3};
\draw (2,0)node[above]{\small 3};
\draw (-2,0)node[above]{\tiny 5};
\end{tikzpicture}
\end{center}
}
The numbers at the vertices indicate the base point direction and the type of relation that occurs at the vertex. Each type of relation occurs an odd number of times.
\end{proof}
}

{
\begin{cor}\label{cor: 6.3}
$\chi: K_3(\ZZ)\to \ZZ_2$ is surjective and thus $K_3(\ZZ)$ has at least 48 elements.
\end{cor}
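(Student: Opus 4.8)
\emph{Proof proposal.} The plan is in two stages: first deduce surjectivity of $\chi$ from the explicit picture of Theorem~\ref{thm: 6.2} together with the reduction already recorded at the beginning of this section, and then combine this with the classical description of the image of the stable stem $\pi_3^s(S^0)$ in $K_3(\ZZ)$ to force the count $48$.

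For surjectivity I would argue as follows. By Lemma~\ref{lem: 2.2} together with the vanishing $\chi_{\overline P}(H_0)=0$ of Theorem~\ref{thm: 3.2}, the image of $\chi\colon K_3(\ZZ)\to\ZZ_2$ equals the image of $\chi_{\overline P(St)}\colon \overline P(St(\ZZ))\to\ZZ_2$. The chain map $h_\ast$ of Section~5 yields the identity $\chi_{\overline P(St)}\,h_3=\chi_c\,\partial_3\,h_3=\chi_c\,h_2\,\partial_3$ on $\overline P(W(\pm 1))$. Theorem~\ref{thm: 6.2} exhibits a graph $P\in\overline P(W(\pm 1))$ with $\chi_c h_2\partial_3(P)=1$, coming from its odd number of type-$3$ vertices (each contributing $1$, and each of the other types contributing $0$, by Theorem~\ref{thm: 6.1}). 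Hence $\chi_{\overline P(St)}(h_3(P))=1\in\ZZ_2$, so $\chi_{\overline P(St)}$, and therefore $\chi\colon K_3(\ZZ)\to\ZZ_2$, is onto.

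For the lower bound I would invoke Section~5. By Theorems~\ref{thm: 5.7} and~\ref{thm: 5.9} the kernel of $\chi$ contains the image of $H_3W(\pm 1)\to K_3(\ZZ)$, which in turn contains the image of $H_3T(1)\to K_3(\ZZ)$. As explained after Proposition~\ref{prop: 5.1}, $H_3T(1)\cong \pi_3BM(1)^+\cong \pi_3^s(S^0)\cong \ZZ_{24}$, and the induced homomorphism $\ZZ_{24}\cong\pi_3^s(S^0)\to K_3(\ZZ)$ is the classical unit map, which is injective (see, e.g., \cite{M}). Thus $\ker\chi$ has at least $24$ elements, while $K_3(\ZZ)/\ker\chi\cong\im\chi=\ZZ_2$ has two elements; so $K_3(\ZZ)$ has at least $24\cdot 2=48$ elements.

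Essentially all of the difficulty has already been discharged in Theorems~\ref{thm: 6.1} and~\ref{thm: 6.2}: constructing the particular picture with an odd number of type-$3$ vertices and verifying, via the matrix formula $\chi_c(g[[e_{ij}^1,e_{ik}^1]])=\sum_p s_{jp}s_{kp}r_{pi}$, that only the type-$3$ relation contributes. For the corollary itself the one nonformal external ingredient is the classical injectivity of $\pi_3^s(S^0)\to K_3(\ZZ)$ — equivalently, that the familiar $\ZZ_{24}$ sitting in $K_3(\ZZ)$ genuinely has order $24$ — which I would simply quote; everything else is bookkeeping with the exact sequence of Section~2 and the chain map of Section~5.
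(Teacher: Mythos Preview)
Your argument is correct and matches the paper's own proof essentially line for line: surjectivity from Theorem~\ref{thm: 6.2} via the reduction at the start of Section~6, then the $48$ from $\ZZ_{24}\hookrightarrow \ker\chi$ using Theorem~\ref{thm: 5.9}. The one slip is the citation: the injectivity of $\pi_3^s\to K_3(\ZZ)$ is not in Milnor's book \cite{M} but is the content of Quillen's letter \cite{Q}, which is what the paper cites.
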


\begin{proof}
It is well known (see \cite{Q}) that $\pi_3^s\cong H_3T(1)\to H_3St(\ZZ)\cong K_3(\ZZ)$ is injective, any by Theorem \ref{thm: 5.9} its image is contained in the kernel of $\chi$.
\end{proof}

The example given in \eqref{thm: 6.2} was originally discovered by the author by multiplying together the nontrivial elements of $K_1(\ZZ)$ and $K_2(\ZZ)$ using Loday's formula (see \cite{L}), by displaying the element as a graph, and by deforming the graph (adding and subtracting second order Steinberg relations) until it was the sum of 8 equal pieces.
}


{
\section{Application to Pseudoisotopy}

Let $M$ be a compact smooth manifold. A \emph{pseudoisotopy} of $M$ modulo $\partial M$ is a self-diffeomorphism of $M\times I$ which keeps $M\times \{0\}\cup (\partial M)\times I$ pointwise fixed. Let $\cP(M,\partial M)$ denote the space of all pseudoisotopies of $M$ modulo $\partial M$ with the $C^\infty$-topology.

\begin{thm}\label{thm: 7.1}
If $\dim M\ge 5$, $\pi_1M=\pi$ and $\pi_2M=0$, then there is an exact sequence
\[
	Wh_3(\pi)\xrightarrow{\chi_{Wh}} Wh_1(\pi;\ZZ_2)\to \pi_0\cP(M,\partial M)\to Wh_2(\pi)\to 0
\]
where the ``Whitehead groups'' are defined as follows.
\begin{enumerate}
\item[a)] $Wh_2(\pi)$ is the cokernel of the following map induced by inclusion.
\[
	H_2M(\pi)'\to H_2GL(\ZZ[\pi])'\cong K_3(\ZZ[\pi])
\]
\item[b)] $Wh_1(\pi;\ZZ_2)=H_0(\pi;\ZZ_2[\pi])/\ZZ_2$ where $\ZZ_2$ represents the image of $H_0(1;\ZZ_2[1])$.
\item[c)] $Wh_3(\pi)$ is the quotient of $K_3(\ZZ[\pi])\cong H_3St(\ZZ[\pi])$ by the sum of $K_3(\ZZ)$ and the image of $H_3T(\pi)$.
\item[d)] $\chi_{Wh}:Wh_3(\pi)\to Wh_1(\pi;\ZZ_2)$ is the map induced by $\chi:K_3(\ZZ[\pi])\to H_0(\pi;\ZZ_2[\pi])$.
\end{enumerate}
\end{thm}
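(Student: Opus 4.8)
\noindent\emph{Proof (sketch)}:
The plan is to combine the Hatcher-Wagoner obstruction analysis of $\pi_0\cP(M,\partial M)$ with the properties of $\chi$ established in Sections 3--6; the only genuinely new ingredient is the identification of the kernel of the second obstruction with the image of $\chi_{Wh}$.

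First recall the two Hatcher-Wagoner obstructions. A pseudoisotopy $f$ of $(M,\partial M)$ with $\dim M\ge 5$ produces, via Cerf theory applied to functions on $M\times I$ rel the fixed part of the boundary, a generic one-parameter family of functions joining the projection to its $f$-pullback; one normalizes so that only critical points of two consecutive indices $k,k+1$ near the middle dimension occur and all births and deaths cancel. The connecting data of these handles determines the \emph{first obstruction} $\Sigma_1(f)$ in the group $Wh_2(\pi)$ of (a); using $\dim M\ge 5$ and $\pi_2M=0$ one verifies that $\Sigma_1$ is a well-defined homomorphism on $\pi_0\cP(M,\partial M)$, and introducing a cancelling handle pair and sliding it shows every class is realized, so $\Sigma_1$ is onto --- exactness at $Wh_2(\pi)$. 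When $\Sigma_1(f)=0$ the handles can be cancelled along the parameter interval; the choices involved are measured by the \emph{second obstruction} $\Sigma_2(f)\in Wh_1(\pi;\ZZ_2)=H_0(\pi;\ZZ_2[\pi])/\ZZ_2$, where $\pi_2M=0$ kills the $\pi_2$-summand of the general Hatcher-Wagoner group. Every element of $Wh_1(\pi;\ZZ_2)$ is realized as $\Sigma_2$ of some element of $\ker\Sigma_1$, and equal values of $\Sigma_2$ imply isotopic pseudoisotopies, so $\Sigma_2$ induces a surjection $Wh_1(\pi;\ZZ_2)\onto\ker\Sigma_1\subseteq\pi_0\cP(M,\partial M)$ --- exactness at $\pi_0\cP(M,\partial M)$. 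Everything so far is classical.

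It remains to prove $\ker\bigl(Wh_1(\pi;\ZZ_2)\onto\ker\Sigma_1\bigr)=\im(\chi_{Wh})$. For $\supseteq$: given $\kappa\in K_3(\ZZ[\pi])$, represent it by a picture $P\in\overline P(St(\ZZ[\pi]))$ (Theorems \ref{Peiffer theorem} and \ref{thm: 2.4}) and realize $P$ geometrically as a family of handle moves among the $k$- and $(k+1)$-handles of the trivial pseudoisotopy on $M\times I$: edges labelled $e_{ij}^u$ become handle slides, and the vertices become the standard one-parameter relations among slides (the second-order Steinberg moves of Definition \ref{def: 2.3}, with the ``ghost handle slides'' of the Appendix supplying the slides that are geometrically present but missing from the Morse picture). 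This exhibits a self-deformation of the cancelled handle data whose effect on the second obstruction, read off from the $\Sigma_2$-cocycle around the boundary of the deformation, is exactly the intersection-pairing sum defining $\chi_Q$ on $P$ --- this is Formula \ref{eq: 3.4} and the cancellations of Theorem \ref{thm: 3.2}, interpreted geometrically. Hence $\chi(\kappa)$ is realized as $\Sigma_2$ of the trivial pseudoisotopy and so lies in the kernel. For $\subseteq$: if $\lambda=\Sigma_2(f)$ with $f$ isotopic to the identity, the isotopy furnishes a trivialization of the cancelled handle data; closing it up produces a closed picture, i.e.\ a $3$-cycle in $St(\ZZ[\pi])$ representing some $\kappa\in K_3(\ZZ[\pi])$, and the same local computation gives $\chi(\kappa)=\lambda$, while Theorem \ref{thm: 2.4} and the naturality of $\chi$ (Section 4) guarantee that $\kappa$ and its $\chi$-value do not depend on the choices. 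Finally $\chi_{Wh}$ is well defined on $Wh_3(\pi)$: $\chi$ annihilates $\im(H_3T(\pi))$ by Theorem \ref{thm: 5.9}, and $\chi(K_3(\ZZ))$ is precisely the copy of $\ZZ_2\subseteq H_0(\pi;\ZZ_2[\pi])$ divided out to form $Wh_1(\pi;\ZZ_2)$; therefore the image of $\chi_{Wh}$ equals the kernel just described --- exactness at $Wh_1(\pi;\ZZ_2)$.

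The main obstacle is the inclusion $\subseteq$ in the last step --- that \emph{every} $\lambda$ dying in pseudoisotopy is $\chi$ of a $3$-cycle. This demands control of two- and three-parameter families of functions on a manifold of dimension $\ge 5$: transversality, the normal forms of their singularities, and the translation of that geometry into $W(\pm\pi)$- and $St(\ZZ[\pi])$-level pictures. It is exactly the place where the original Hatcher-Wagoner argument needed correction, and the role of Sections 1--6 --- together with the redrawn pictures, ghost handle slides, and ghost modules of the Appendix --- is to arrange the algebra so that, once the geometric classification is granted, the resulting pictures and their $\chi$-values can be bookkept with no residual ambiguity.
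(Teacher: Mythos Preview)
Your outline follows the paper's strategy closely: reduce to the statement that a lens-shaped family with trivial first obstruction becomes trivial if and only if its $Wh_1(\pi;\ZZ_2)$-invariant lies in $\im\chi_{Wh}$ (the paper's Theorem~\ref{thm: 7.5}), and prove the two inclusions by, respectively, realizing a picture $P$ as a thimble-shaped two-parameter family and, conversely, extracting a picture from a given null-deformation. That is the right architecture, and you correctly identify the converse ($\subseteq$) as the hard step requiring control of two-parameter families; the paper likewise defers the details to \cite{I}.

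Two points where your write-up diverges from the paper and should be corrected. First, the Appendix material---ghost handle slides and ghost modules---plays no role in the proof of Theorem~\ref{thm: 7.1}. Those constructions are later additions whose purpose is to make certain Morse pictures match representation-theoretic stability diagrams for $A_3$ quivers; they do not supply any handle slides ``missing'' from the geometric argument. The proof uses only ordinary handle additions and the standard Steinberg relations among them. Second, the identification of the change in the second obstruction with $\chi$ is not obtained by ``interpreting Formula~\ref{eq: 3.4} and Theorem~\ref{thm: 3.2} geometrically.'' The paper carries out an independent geometric computation (Lemma~\ref{lem: 7.2}): one tracks how the framed $1$-manifold $J_w$ inside the lower unstable spheres changes when two handle additions $e_{jk}^u,e_{j\ell}^v$ are commuted, and shows directly that the framed bordism class of $J_w$ flips exactly when $\langle w,\chi_c(f[[e_{jk}^u,e_{j\ell}^v]])\rangle=w$. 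Lemma~\ref{lem: 7.4} then checks that the other deformations (irrelevant commutators, the five-term relation, cancellation of handle additions) leave $k(f_t,v_t)$ unchanged. Your sketch should cite these two lemmas rather than the algebraic results of Section~3, which establish that $\chi$ is a well-defined cocycle but do not by themselves compute the effect on $J_w$.

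A minor wording issue: ``equal values of $\Sigma_2$ imply isotopic pseudoisotopies'' is too strong as stated and would make $\chi_{Wh}$ superfluous. What is actually used (and what the paper invokes in the proof of Theorem~\ref{thm: 7.5}) is that two lens-shaped one-parameter \emph{families} with no handle additions and the same $Wh_1(\pi;\ZZ_2)$ invariant can be deformed into each other; the invariant is attached to the family, not intrinsically to the pseudoisotopy.
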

}
{\color{blue}
See \cite{HW}, \cite{H2}, \cite{Wh1a}, \cite{Wh1c}, \cite{Wh1b} for more about this theorem. Also, see \cite{What happens} for what happens when $\pi_2M\neq0$..
}

{
We shall assume that the reader is familiar with the Hatcher-Wagoner techniques for studying $\pi_0\cP(M,\partial M)$. Suppose that we have a generic one-parameter family of functions $f_t$ on $M\times I$ with critical points of index $i$ and $i+1$ and with only $i+1/i+1$ handle additions. Suppose also that we have a generic gradient-like vector field $v_t$ of $f_t$. Then we associate to the pair $(f_t,v_t)$ an element of $Wh_1(\pi;\ZZ_2)$ in the following way.
\begin{enumerate}
\item Choose a framing for the tangent bundle of the unstable manifold at each Morse point. This framing should vary smoothly with $t$.
\item At each birth-death point and at each $i+1/i+1$ handle addition the framing should be deformed so that they agree up to the sign of the last vector.
\item The framing for the index $i$ Morse points give a diffeomorphism of each lower index unstable sphere with the standard sphere ($\times I$).
\item Choose a path from each birth point to a base point $\ast$ of $M\times I\times I$.
\item Number the birth points $1,2,3$, etc.
\item We can associate to each $i+1/1+1$ handle addition an elementary operation $e_{jk}^{\pm u}$ where $u\in\pi$.
\item Let $J$ be the subspace of the set of lower index unstable spheres $n-i \cup_p S_p^{n-i+1}\times I$ given by $i+1/i$ intersections. ($n=\dim M$) Then $J$ is a framed 1-complex, that is, the generic points of $J$ are equipped with smoothly varying normal framings and these framings agree at singular points except for the last vector.
\item If $x\in J$, then $x$ determines a parameter value $t(x)$. This determines an invertible matrix $(r_{pq}(x))\in GL(\ZZ[\pi])$ which is the product of the elementary operations associated with the handle additions which occur before time $t(x)$. Let $(s_{qp}(x))=(r_{pq}(x))^{-1}$.
\item If $x\in J$ is a generic point, and if $x$ is an intersection between the $p$-th lower index unstable sphere with the $q$-th upper index stable sphere, then $x$ determines an element $\sigma(x)\in \pi$ by $\sigma(x)=[\lambda^{-1}_p\lambda(x)\lambda_q]$, i.e. this is the path given by going from $\ast$ to the $q$-th birth point, following the $q$-th upper index Morse line to time $t(x)$, going down the integral curve determined by $x$ until one reaches the $p$-th lower index Morse point at time $t(x)$, following this Morse line back to the $p$-th birth point and going back to $\ast$.

Let $c(x)=s_{qp}(x)\in \ZZ[\pi]$. Let $\overline c(x)$ be the image of $c(x)$ in $\ZZ_2[\pi]$.
\item If $u\in\pi$, let $J_u$ be the closure in $J$ of the set of generic points $x$ such that $\left< u,\sigma(x)\overline c(x)\right>=u$. Then $J_u$ is a closed 1-manifold with ``corners'' unless $u=1$.
\item The corners can be straightened out in a canonical way and we get a closed framed 1-manifold in $\bigcup_p S_p^{n-i+1}\times I$ for every nontrivial element of $\pi$. This determines an element of $\ZZ_2[\pi]/\ZZ_2[1]$ by adding up the elements $u\in\pi$ for which $J_u$ is nontrivially framed. $J_u$ is empty except for a finite number of $u\in\pi$ so there are only finitely many such elements.
\item Because of the choices made the element of $\ZZ_2[\pi]/\ZZ_2[1]$ is not well defined but its image in $Wh_1(\pi;\ZZ_2)$ is well defined and will be denoted $k(f_t,v_t)$.
\end{enumerate}

Let $c$ denote the choices made in (1), (2), (4) and (5). Then to the triple $(f_t,v_t,c)$ we can associate a sequence of elementary operations $g(f_t,v_t,c)\in F$ where $F$ is the free group generated by symbols $e_{ij}^u, i\neq j, u\in \pi$. Note that the image of $g(f_t,v_t,c)$ in $GL(\ZZ[\pi])$ must be a monomial matrix with entries from $\pm\pi$.
}

{
\begin{lem}\label{lem: 7.2}
Suppose that $(f_t,v_t,c)$ is deformed by commuting two consecutive letters in $g(f_t,v_t,c)$ of the form $e_{jk}^u,e_{j\ell}^v$. That is, we have a new triple $(f_t',v_t',c')$ with $g(f_t',v_t',c')=f[e_{jk}^u,e_{j\ell}^v]f^{-1} g(f_t,v_t,c)$. Then $k(f_t',v_t')=k(f_t,v_t)+\chi (f[[e_{jk}^u,e_{j\ell}^v]])$.
\end{lem}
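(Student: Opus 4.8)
The plan is to reduce the statement to a local geometric computation and match it, term by term, to the algebraic formula for $\chi$. First, recall what the right-hand side means: $\chi(f[[e_{jk}^u,e_{j\ell}^v]])$ is the value of the integral $2$-cochain $\chi_c$ (equivalently $\chi_Q$) on the generator of $\ZZ[St(\ZZ[\pi])]\left<\cY\right>$ indexed by the type~(1) relation $[e_{jk}^u,e_{j\ell}^v]$ together with the word $f$, so by the formula defining $\chi_Q$ it is
\[
\chi_Q\bigl(f,[e_{jk}^u,e_{j\ell}^v]\bigr)=\sum_p r_{pj}\,\langle u s_{kp},\,v s_{\ell p}\rangle ,
\]
where $(r_{pq})$ is the image in $GL(\ZZ_2[\pi])$ of the sequence of elementary operations performed \emph{before} the two handle additions being interchanged, and $(s_{qp})=(r_{pq})^{-1}$. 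So the claim is that commuting $e_{jk}^u$ past $e_{j\ell}^v$ changes the Hatcher--Wagoner invariant $k$ by exactly this ``triple intersection'' term, and $0$-contributions from everything else.

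The deformation carrying $(f_t,v_t,c)$ to $(f_t',v_t',c')$ is supported in a small neighborhood in $M\times I\times I$ (space, family parameter $t$, interpolation parameter) of the two handle-addition events $e_{jk}^u,e_{j\ell}^v$; away from this neighborhood the framed $1$-complex $J$ of $i{+}1/i$ intersections and all its decorations $\sigma,\overline c$ are carried along unchanged. Hence $k(f_t',v_t')-k(f_t,v_t)$ is a local quantity, and the first real step is to compute it in a standard model. In the model, the index-$(i{+}1)$ handle $j$ acquires at the first event a sheet of its descending sphere that is a $u$-translate of a piece of the descending sphere of handle $k$, and at the second event a sheet that is a $v$-translate of the descending sphere of handle $\ell$; interchanging the two events is realized by the standard homotopy of the gradient-like vector field realizing the relation $[e_{jk}^u,e_{j\ell}^v]=1$ in the space of gradient-like fields, which pushes these two sheets past one another. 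During this homotopy $J$ changes by the birth (or death) of a pair of arcs for each instant at which the moving $u$-sheet of $D_k$ and the $v$-sheet of $D_\ell$ both cross an ascending sphere $A_p$ of an index-$i$ handle over the same point of $A_p$.

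The last step is the bookkeeping. At the time of the interchange the group-ring-weighted number of sheets of $D_j$ lying over $A_p$ is the entry $r_{pj}$; the group elements carried by the $k$- and $\ell$-sheets over $A_p$ are $u s_{kp}$ and $v s_{\ell p}$ respectively, exactly as in steps~(8)--(10) preceding the lemma (handle-addition matrix and its inverse, mediated by the based paths $\lambda_p,\lambda_q$); and the new pair of arcs of $J$ appears as a framed circle whose $w$-component contributes to $J_w$ nontrivially framed precisely for $w\in\langle u s_{kp}, v s_{\ell p}\rangle$. Summing over $p$ with multiplicity $r_{pj}$ gives $\sum_p r_{pj}\langle u s_{kp},v s_{\ell p}\rangle=\chi_Q(f,[e_{jk}^u,e_{j\ell}^v])$, and since the remaining strata of $J$ are unchanged, $k$ changes by exactly this class in $Wh_1(\pi;\ZZ_2)$. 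I expect the main obstacle to be precisely this middle claim: verifying that the new piece of $J$ is \emph{nontrivially framed over $J_w$ exactly when $w$ lies in the set intersection $\langle u s_{kp}, v s_{\ell p}\rangle$}, together with pinning down the orientation and inversion conventions so that the entries emerge as $r_{pj}$, $s_{kp}$, $s_{\ell p}$ rather than transposes or inverses of these. This is the delicate normal-framing computation inside the Hatcher--Wagoner package; it is the geometric shadow of the identity $(2)=(1)+(3)$ used in the proof of Theorem~\ref{thm: 3.2}, and I would carry it out by a handle-trace in the model, first treating $\pi=1$ and then reinserting the group elements through the paths $\lambda_p,\lambda_q$.
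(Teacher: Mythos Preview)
Your overall strategy---localize the change to $J$ and match it to the formula for $\chi_c$---is what the paper does, and your final formula $\sum_p r_{pj}\langle u s_{kp}, v s_{\ell p}\rangle$ is correct. But the geometric mechanism you describe is not the right one, and this is not merely a matter of conventions. The change to $J$ is not a birth or death of arcs triggered by sheets of $D_k$ and $D_\ell$ colliding over a point of an ascending sphere. The relevant piece of $J$ sits in the lower-index \emph{unstable} sphere $S_p^{n-i+1}\times I$, and at each geometric $p\backslash j$ intersection point it already contains two branch arcs, labelled $p\backslash k$ and $p\backslash \ell$, attached to the $p\backslash j$ arc---these branches were created by the two handle additions. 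Commuting $e_{jk}^u$ past $e_{j\ell}^v$ swaps the order in which the two branches leave the $p\backslash j$ arc; the paper draws this as a crossing move. The key (and easy) observation is then that such a crossing move changes the framed bordism class of $J_w$ precisely when \emph{both} branches lie in $J_w$, and the paper verifies this in two short cases according to whether the intervening $p\backslash j$ segment also lies in $J_w$. No separate ``nontrivially framed circle'' computation is needed.

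This also straightens out the bookkeeping. Each geometric $p\backslash j$ point carries its own $\sigma=\sigma(p\backslash j)\in\pi$, and the attached branches have $\sigma(p\backslash k)=\sigma u$, $\sigma(p\backslash \ell)=\sigma v$, while $\overline c(p\backslash k)=s_{kp}$ and $\overline c(p\backslash \ell)=s_{\ell p}$. Hence the crossing at that particular $\sigma$ changes $J_w$ iff $w\in\sigma\langle u s_{kp}, v s_{\ell p}\rangle$; summing over the $\sigma$'s that make up $r_{pj}$, and then over $p$, gives $\chi_c(f[[e_{jk}^u,e_{j\ell}^v]])$. Your sentence ``contributes to $J_w$ nontrivially framed precisely for $w\in\langle u s_{kp}, v s_{\ell p}\rangle$'' drops this $\sigma$; the later phrase ``summing with multiplicity $r_{pj}$'' repairs the formula, but only formally. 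The delicate framing verification you flag as the main obstacle is, in the paper's argument, replaced by the elementary crossing-change observation above.
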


\begin{proof}
The above deformation results in the following deformation of $J\cap S_p^{n-i+1}\times I$.
{
\begin{center}
\begin{tikzpicture}
\begin{scope}
	\draw[thick] (-1.5,0)--(1.5,0);
	\draw[thick] (-.5,0)--(-.5,1.3);
	\draw[thick] (.5,0)--(.5,1.3);
	\draw (-.5,1.3) node[above]{\small$p\backslash k$};
	\draw (.5,1.3) node[above]{\small$p\backslash \ell$};
	\draw (1.7,0) node[below]{\small$p\backslash j$};
\end{scope}
\draw[very thick,->] (2.5,.5)--(3.5,.5);
\draw[blue] (3,.5) node[above]{$(\ast)$};
\begin{scope}[xshift=6cm]
	\draw[thick] (-1.5,0)--(1.5,0);
	\draw[thick] (-.5,0)--(-.5,1.3);
	\draw[thick] (.5,0)--(.5,1.3);
	\draw (.5,1.3) node[above]{\small$p\backslash k$};
	\draw (-.5,1.3) node[above]{\small$p\backslash \ell$};
	\draw (1.7,0) node[below]{\small$p\backslash j$};
\end{scope}
\end{tikzpicture}
\end{center}
}
\noindent If the $p\backslash k$ and $p\backslash \ell$ ($\backslash =$ ``under'') segments both belong to $J_w$ then there are two possibilities for $J_w$.

{
%
\begin{tikzpicture}
\begin{scope}
	\draw[thick] (-1,0)--(1,0);
	\draw[thick] (-1,0)--(-1,1.3);
	\draw[thick] (1,0)--(1,1.3);
	\draw (-1,1.3) node[above]{\small$p\backslash k$};
	\draw (1,1.3) node[above]{\small$p\backslash \ell$};
	\draw (0,0) node[below]{\small$p\backslash j$};
\draw (-5,.8) node{$J_w\cap S_p^{n-i+1}\times I=$};
\draw (-8,1.3) node{a)};
\end{scope}
\begin{scope}[yshift=-4cm]
	\draw[thick] (-1,0)--(1,0);
	\draw[thick] (-1,0)..controls (-1,.7) and (1,1.5)..(1,2);
	\draw[thick] (1,0)..controls (1,.7) and (-1,1.5)..(-1,2);
	\draw (1,2) node[above]{\small$p\backslash k$};
	\draw (-1,2) node[above]{\small$p\backslash \ell$};
	\draw (0,0) node[below]{\small$p\backslash j$};
\draw (-5,.8) node{$J'_w\cap S_p^{n-i+1}\times I=$};
\end{scope}
\end{tikzpicture}
%
}

{
%
\begin{tikzpicture}
\begin{scope}
	\draw[thick] (-2.5,0)--(-1,0);
	\draw[thick] (2.5,0)--(1,0);
	\draw[thick] (-1,0)--(-1,1.3);
	\draw[thick] (1,0)--(1,1.3);
	\draw (-1,1.3) node[above]{\small$p\backslash k$};
	\draw (1,1.3) node[above]{\small$p\backslash \ell$};
	\draw (-2.5,0) node[above]{\small$p\backslash j$};
	\draw (2.5,0) node[above]{\small$p\backslash j$};
\draw (-5,.8) node{$J_w\cap S_p^{n-i+1}\times I=$};
\draw (-8,1.3) node{b)};
\end{scope}
\begin{scope}[yshift=-4cm]
	\draw[thick] (-2.5,0)--(-1,0);
	\draw[thick] (2.5,0)--(1,0);
	\draw[thick] (-1,0)..controls (-1,.7) and (1,1.5)..(1,2);
	\draw[thick] (1,0)..controls (1,.7) and (-1,1.5)..(-1,2);
	\draw (1,2) node[above]{\small$p\backslash k$};
	\draw (-1,2) node[above]{\small$p\backslash \ell$};
	\draw (-2.5,0) node[above]{\small$p\backslash j$};
	\draw (2.5,0) node[above]{\small$p\backslash j$};
\draw (-5,.8) node{$J'_w\cap S_p^{n-i+1}\times I=$};
\end{scope}
\end{tikzpicture}
%
}\vs2

\noindent In both cases the deformation changes the framed bordism class of $J_w\cap S_p^{n-i+1}\times I$. The condition that $p\backslash k$ and $p\backslash \ell$ both belong to $J_w$ is expressed algebraically (see (10) above) by $\left<w,\sigma(p\backslash k)\overline c(p\backslash k)\right>=\left<w,\sigma(p\backslash \ell)\overline c(p\backslash \ell)\right>=w$, where

	$\overline c(p\backslash k) = s_{kp}$
	
	$\overline c(p\backslash \ell) = s_{\ell p}$
	
	$ \sigma(p\backslash k) = \sigma(p\backslash j) u$

	$ \sigma(p\backslash \ell) = \sigma(p\backslash j) v$

\noindent and $(r_{pq})=(s_{qp})^{-1}$ is the image of $f$ in $GL(\ZZ_2[\pi])$. However the deformation $(\ast)$ occurs in many places in $J\cap S_p^{n-i+1}\times I$ depending on the number of geometric $p\backslash j$ intersections which occur. If we add these up we see that the framed bordism class of $J_w\cap S_p^{n-i+1}\times I$ changes if and only if $\left<w,\sigma u s_{kp}\right>=\left<w,\sigma v s_{\ell p}\right>=w$ is true for an odd number of $\sigma$ in $r_{pj}\in \ZZ_2[\pi]$.

This can also be expressed by the formula
\[
	\left< w, r_{pj} \left<us_{kp},vs_{\ell p}\right>\right>=w.
\]
Adding these up for all $p$ we get that the framed bordism class of $J_w$ changes if and only if $\left<w,\chi_c(f[[e_{jk}^u, e_{j\ell}^v]])\right>=w$. Thus $\chi_c(f[[e_{jk}^u, e_{j\ell}^v]])$ is the set of all $w$'s for which $J_w$ changes.
\end{proof}
}

{
\begin{rem}\label{rem: 7.3}
One consequence of \eqref{lem: 7.2} is that even if $\chi_{Wh}$ is trivial and we get an exact sequence
\[
	0\to Wh_1(\pi;\ZZ_2)\to \pi_0\cP(M,\partial M)\to Wh_2(\pi)\to 0
\]
the splitting result for this sequence fails because $\chi_c$ is surjective.
\end{rem}

\begin{lem}\label{lem: 7.4}
If $(f_t,v_t,c)$ is deformed by changing $g(f_t,v_t,c)$ by an ``irrelevant'' Steinberg relation or by cancelling two handle additions (by definition this doesn't change $g$) $k(f_t,v_t)$ is unchanged.
\end{lem}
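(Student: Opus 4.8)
The plan is to reuse the framework of the proof of Lemma~\ref{lem: 7.2}: a deformation of $(f_t,v_t,c)$ affects $k(f_t,v_t)$ only through the framed bordism classes of the $1$-manifolds $J_w\subset\bigcup_p S_p^{n-i+1}\times I$, so it suffices to show that neither of the two moves changes any of these classes.

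First I would treat the commutation of two handle additions along an irrelevant Steinberg relation. After relabeling, the relation is either a commutator $[e_{jk}^{u},e_{m\ell}^{v}]$ of Definition~\ref{def: 2.1}(1) with $j\ne m$ --- the case $j=m$ being precisely the relevant one handled in Lemma~\ref{lem: 7.2} --- or one of the non-commutator relations of Definition~\ref{def: 2.1}, on each of which $\chi_Q$ vanishes. In the first case the hypotheses of Definition~\ref{def: 2.1}(1) force $j\notin\{m,\ell\}$ and $m\notin\{j,k\}$, so the two handle additions involve disjoint sets of handles and may be realized by deformations of the family with disjoint support; hence each $1$-complex $J\cap(S_p^{n-i+1}\times I)$ together with all of its data $\sigma(x)\overline c(x)$ is literally unchanged, and so is every $J_w$. (This matches the formula of Lemma~\ref{lem: 7.2}, as $\chi_c(f[[e_{jk}^u,e_{m\ell}^v]])=0$ for $j\ne m$.) For the non-commutator relations I would draw the induced modification of $J\cap(S_p^{n-i+1}\times I)$ exactly as in the proof of Lemma~\ref{lem: 7.2} and verify, case by case, that the strands entering the modification can never simultaneously satisfy the membership condition defining one $J_w$; this is the geometric shadow of $\chi_Q(\,\cdot\,,y)=0$, and the verification is the same bookkeeping as in the proof of Theorem~\ref{thm: 3.2}.

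Next I would treat the cancellation of an adjacent pair $e_{ab}^{u},e_{ab}^{-u}$, which leaves $g$ unchanged. This pair realizes a handle slide among the index $i+1$ handles and its inverse; I would arrange the supporting deformation of the family so that it creates and destroys no $i+1/i$ intersection and so that no point of $J$ has parameter value in the (short) cancellation interval. Then $J$, its normal framing, and hence every $J_w$ are unchanged. Should this not be directly achievable, one falls back on the observation that every $i+1/i$ intersection the deformation does sweep over is both created and destroyed, and the two nearby points of $J$ are parallel, carrying equal $\sigma(x)\overline c(x)$ and opposite incidence, hence contributing only a null-bordant arc to each $J_w$; the conclusion is the same.

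The step I expect to be the main obstacle is the cancellation case: over the cancellation interval the matrices $(r_{pq}(x))$ do pick up a transient elementary factor $E_{ab}^{\pm u}$, so one must argue carefully that this transient can be confined to a part of the family disjoint from $J$, or else absorbed by the parallel-pair argument above, after which the claim reduces to additivity of framed bordism. By contrast the commutation case is routine once the disjoint-support observation is in place, and the non-commutator sub-cases are bookkeeping of exactly the kind already done for Theorem~\ref{thm: 3.2}.
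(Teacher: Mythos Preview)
Your overall plan --- track the effect on each $J_w$ --- is right, but two of the three cases are argued incorrectly, and in the third you are making the cancellation case much harder than it is.

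\textbf{Commutator case.} The claim that ``the two handle additions involve disjoint sets of handles'' is false: the irrelevant commutator $[e_{jk}^u,e_{\ell m}^v]$ only requires $j\neq \ell,m$ and $k\neq \ell$, so for instance $e_{12}^u$ and $e_{32}^v$ share the handle $2$. The paper does not use disjointness; it simply observes that swapping the order of two commuting handle additions moves $J$ by an ambient isotopy, which certainly preserves every framed bordism class. Your conclusion is correct, but the reason you give does not cover all irrelevant commutators.

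\textbf{Non-commutator case.} Here there is a genuine gap. Your proposed mechanism --- that the strands entering the modification ``can never simultaneously satisfy the membership condition defining one $J_w$'' --- is not true. For the relation $e_{jk}^u e_{k\ell}^v e_{jk}^{-u} e_{j\ell}^{-uv} e_{k\ell}^{-v}$ the three incoming strands $p\backslash j$, $p\backslash k$, $p\backslash \ell$ carry independent data $\sigma(x)\overline c(x)$, and there is no obstruction to two or all three of them lying in the same $J_w$. The vanishing of $\chi_Q$ on this relation is by definition, not a geometric shadow of any disjointness, so it cannot be leveraged this way. What the paper actually does is draw the before/after picture of $J\cap S_p^{n-i+1}\times I$ (one branch point slides from the $p\backslash k$ arc down to the $p\backslash j$ arc) and then check all eight possibilities for which subset of $\{p\backslash j,\,p\backslash k,\,p\backslash \ell\}$ lies in $J_w$; in every case the before and after are framed bordant. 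That case check is the content, and your proposal skips it with an assertion that fails.

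\textbf{Cancellation case.} You identify this as the main obstacle, but in the paper it is the easiest step: cancelling two adjacent inverse handle additions changes $J$ by a concordance (a small arc is born and dies, or a branch point pair annihilates), and a concordance is already a framed bordism of each $J_w$. There is no need to keep $J$ disjoint from the cancellation interval or to invoke the transient elementary factor in $(r_{pq}(x))$; the picture itself is the null-bordism.
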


\begin{proof}
If $g(f_t,v_t,c)$ is changed be a relation $[e_{jk}^u,e_{\ell m}^v]$ where $j\neq \ell,m$ and $k\neq \ell$ then nothing happens because $J$ is changed by an isotopy. If two handle additions are cancelled or created, $J$ changes by a concordance:

{
\begin{center}
\begin{tikzpicture}
\begin{scope}
	\draw[thick] (-1.7,0)--(1.7,0);
	\draw[thick] (-.7,0)--(-.7,.7);
	\draw[thick] (.7,0)--(.7,.7);
\end{scope}
\draw[very thick,->] (2.5,.5)--(3.5,.5);
\begin{scope}[xshift=6cm]
	\draw[thick] (-1.7,0)--(1.7,0);
	\draw[thick] (-.7,1)..controls (-.7,.3) and (.7,.3)..(.7,1);
\end{scope}
\end{tikzpicture}
\end{center}
}

\noindent or

{
\begin{center}
\begin{tikzpicture}
\begin{scope}
	\draw[thick] (-1.7,0)--(1.7,0);
	\draw[thick] (-.7,0)..controls (-.7,.8) and (.7,.8)..(.7,0);
\end{scope}
\draw[very thick,->] (2.5,.5)--(3.5,.5);
\begin{scope}[xshift=6cm]
	\draw[thick] (-1.7,0)--(1.7,0);
\end{scope}
\end{tikzpicture}
\end{center}
}
\noindent If $g(f_t,v_t,c)$ is changed by a relation of the form {\color{blue}$e_{jk}^ue_{k\ell}^v e_{jk}^{-u} e_{j\ell}^{-uv} e_{k\ell}^{-v} $} then $J\cap S_p^{n-i+1}\times I$ changes as follows.
{
\begin{center}
\begin{tikzpicture}
\begin{scope}
	\draw[thick] (-1.7,0)--(1.7,0);
	\draw[thick] (-.7,0)..controls (-.7,.8) and (.7,.8)..(1.7,.8);
	\draw[thick] (.2,.73)..controls (.2,1.6) and (1.2,1.6)..(1.7,1.6);
	\draw (1.7,.8) node[right]{\small$p\backslash k$};
	\draw (1.7,1.6) node[right]{\small$p\backslash \ell$};
	\draw (-1.7,0) node[below]{\small$p\backslash j$};
\end{scope}
\draw[very thick,->] (3.5,.5)--(4.5,.5);
\begin{scope}[xshift=7cm]
	\draw[thick] (-1.7,0)--(1.7,0);
	\draw[thick] (.4,0)..controls (.4,.8) and (.7,.8)..(1.7,.8);
	\draw[thick] (-.7,0)..controls (-.7,1.6) and (1.2,1.6)..(1.7,1.6);
	\draw (1.7,.8) node[right]{\small$p\backslash k$};
	\draw (1.7,1.6) node[right]{\small$p\backslash \ell$};
	\draw (-1.7,0) node[below]{\small$p\backslash j$};
\end{scope}
\end{tikzpicture}
\end{center}
}
\noindent If we examine all eight possibilities for $J_w\cap S_p^{n-i+1}\times I$ we see that nothing happens. 
\end{proof}
}

{
To prove Theorem \ref{thm: 7.1} it is sufficient to show

\begin{thm}\label{thm: 7.5}
If $(f_t,v_t)$ is a lens-shaped one-parameter family of function on $M\times I$ with no handle additions then $(f_t,v_t)$ can be deformed to a one-parameter family with no singularities if and only if $k(f_t,v_t)$ is in the image of $\chi_{Wh}$.
\end{thm}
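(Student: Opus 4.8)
The plan for both implications is to follow the invariant $k$ along a generic deformation of $(f_t,v_t)$ and to identify the cumulative change with a value of $\chi$. A generic deformation is a finite concatenation of elementary moves: isotopies of the gradient-like field; (anti-)commutations of two consecutive handle additions; applications of an ``irrelevant'' Steinberg relation or of one of the relations \ref{def: 2.1}(2)--(3) (and their evident index variants); creations and cancellations of pairs of handle additions; and births and deaths of index $i$/$i+1$ critical pairs. Lemmas \ref{lem: 7.2} and \ref{lem: 7.4} already record that a commutation move changes $k$ by $\chi_c$ of the corresponding generator $g[[e_{jk}^u,e_{j\ell}^v]]$, that an irrelevant or type \ref{def: 2.1}(2)--(3) relation and the creation or cancellation of handle additions leave $k$ fixed, and (consistently) that $\chi_c$ vanishes on those relations; one short further check, which I would include, shows that births and deaths of critical pairs change $k$ only by an element of $\ZZ_2[1]$, hence trivially in $Wh_1(\pi;\ZZ_2)$. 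Now suppose two families $(f_t,v_t)$ and $(f_t',v_t')$, each with no handle additions, are joined by a deformation. Since the word of handle additions is trivial at both ends, the relations invoked along the way multiply, after the appropriate conjugations, into an element $Q\in\ker\varphi\cong\overline P(St(\ZZ[\pi]))=\ker\partial_2$ — this is precisely the picture (Peiffer diagram) calculus of the earlier sections. By Theorems \ref{thm: 3.2} and \ref{thm: 3.3} the map $\chi_{\overline P}$ is $St$-equivariant with trivial action and kills $H$, hence factors through $\overline P/H\otimes_{St}\ZZ\cong K_3(\ZZ[\pi])$ (Theorem \ref{thm: 2.4}); and since $\chi_c$ annihilates every non-commutator relation, the total change of $k$ in $Wh_1(\pi;\ZZ_2)$ along the deformation is $\chi_{\overline P}(Q)=\chi(\kappa)$, where $\kappa\in K_3(\ZZ[\pi])$ is the class of $Q$. \emph{This is the single fact on which both directions turn.}

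($\Rightarrow$) If $(f_t,v_t)$ deforms to a family with no singularities, the end of the deformation has $J=\emptyset$ and hence $k=0$, so $k(f_t,v_t)=\chi(\kappa)$ in $Wh_1(\pi;\ZZ_2)$ for some $\kappa\in K_3(\ZZ[\pi])$. Passing to $Wh_3(\pi)=K_3(\ZZ[\pi])/(K_3(\ZZ)+\im H_3T(\pi))$ and to $Wh_1(\pi;\ZZ_2)=H_0(\pi;\ZZ_2[\pi])/\ZZ_2$ exactly absorbs the remaining indeterminacy — the $\ZZ_2[1]$ ambiguity of $k$, the contribution $\chi(K_3(\ZZ))=\ZZ_2$, and the vanishing $\chi(\im H_3T(\pi))=0$ of Theorem \ref{thm: 5.9} — so the class of $k(f_t,v_t)$ is $\chi_{Wh}$ of the class of $\kappa$, and in particular lies in $\im\chi_{Wh}$.

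($\Leftarrow$) Suppose $k(f_t,v_t)=\chi_{Wh}(\xi)$ with $\xi\in Wh_3(\pi)$; lift $\xi$ to $\kappa\in K_3(\ZZ[\pi])$ and pick a picture $P$ representing $\kappa$ (Theorem \ref{thm: 2.4}). Using the Hatcher--Wagoner realization technique \cite{HW, I} I would realize $P$ as an honest deformation of $(f_t,v_t)$: introduce into the family a cancelling system of births/deaths and $i+1/i+1$ handle additions whose combinatorics is read from the vertices and edges of $P$, carry out the commutations and Steinberg moves prescribed by $P$, and then remove the auxiliary data. By the first paragraph this deformation changes $k$ by $\chi_{\overline P}(P)=\chi(\kappa)=k(f_t,v_t)$ in $Wh_1(\pi;\ZZ_2)$, so the resulting family — still lens-shaped with no handle additions — has $k=0$. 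Choosing the framings of this section so that the naive invariant is literally $0\in\ZZ_2[\pi]/\ZZ_2[1]$ (possible since $k=0$ in $Wh_1(\pi;\ZZ_2)$, with the usual local modification near a single Morse point taking care of the $\ZZ_2[1]$ part), every $J_u$ with $u\neq1$ becomes a null framed-bordant closed $1$-manifold; the framed $1$-complex $J$ can then be cleared away by framed surgery, and with no handle additions present there is no $Wh_2$-obstruction, so the Hatcher--Wagoner cancellation procedure deforms the family — hence $(f_t,v_t)$ itself — to one with no singularities \cite{HW}.

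The step I expect to be hardest is the realization in $(\Leftarrow)$: producing the explicit geometric deformation that realizes a prescribed picture $P$ and verifying that its cumulative effect on $k$ is exactly $\chi_{\overline P}(P)$, which requires interlocking the elementary handle-manipulation calculus with Lemmas \ref{lem: 7.2} and \ref{lem: 7.4} and checking that the auxiliary births and deaths contribute nothing. The other places where genuine input from \cite{HW} (rather than the algebra above) is needed are the assembly of the invoked relations into a bona fide element of $\ker\partial_2$ in the first paragraph, and the final cancellation lemma.
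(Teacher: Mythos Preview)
Your approach is essentially the same as the paper's: realize a picture as a deformation for ($\Leftarrow$), extract a picture from a deformation for ($\Rightarrow$), and use Lemmas \ref{lem: 7.2}--\ref{lem: 7.4} to identify the change in $k$ with $\chi$ of that picture. The paper's ($\Leftarrow$) is phrased slightly differently---it builds a thimble-shaped two-parameter family with handle-addition pattern $P$, observes that its boundary is a lens-shaped family with the prescribed $k$, and then invokes the known fact that two lens-shaped families with the same $Wh_1(\pi;\ZZ_2)$ invariant are deformation equivalent---but this is equivalent to your realization-then-cancel.

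Where your assessment diverges from the paper is in locating the difficulty. You expect ($\Leftarrow$) to be hardest; the paper says explicitly that ``the converse is not so easy'' and treats ($\Leftarrow$) as routine. The issue is your opening sentence: a null-deformation of $(f_t,v_t)$ is a generic two-parameter family, and such a family is \emph{not} a priori a finite concatenation of the elementary one-parameter moves you list. In general it will have critical points of other indices, exchange points between indices, and a non-thimble global shape. The ``complicated procedure'' the paper defers to \cite{I} is precisely the work of deforming this two-parameter family, rel boundary, until it is thimble-shaped in the two given indices and the exchange points are eliminated; only then can one read off a handle-addition pattern that is a genuine element of $\overline P(St(\ZZ[\pi]))=\ker\partial_2$. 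You flag ``assembly into $\ker\partial_2$'' as needing input from \cite{HW}, which is right, but this is the heart of ($\Rightarrow$), not a side remark---your concatenation-of-moves framing hides it.
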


\begin{proof}
Suppose that $k(f_t,v_t)$ is in the image of $\chi_{Wh}$. Then there is an element $P$ of $\overline P(St(\ZZ[\pi]))$ which maps to $k(f_t,v_t)$. We can construct a thimble-shaped two parameter family of functions on $M\times I$ whose handle addition pattern is given by $P$. By Lemmas \ref{lem: 7.2}, \ref{lem: 7.4} this family is a null-deformation of a lens-shaped family with no handle additions with $Wh_1(\pi;\ZZ_2)$ invariant equal to $k(f_t,v_t)$. If is well known that two such families with the same $Wh_1(\pi;\ZZ_2)$ can be deformed into each other. 

The converse is not so easy. Suppose that there is a null-deformation of $(f_t,v_t)$. This produces a two-parameter family with ``boundary'' $(f_t,v_v)$. By a complicated procedure one can deform this two-parameter family fixing the boundary so that it is thimble-shaped in the same two indices of $f_t$. The exchange points can then be eliminated and we can look at the handle additions and we can read off a graph $P$ in $\overline P(St(\ZZ[\pi]))$ whose image in $Wh_1(\pi;\ZZ_2)$ is $k(f_t,v_t)$. The details can be found in \cite{I}. {\color{blue}See also \cite{Wh1b}.}
\end{proof}
}

{
\section{Appendix: Pictures as stability diagrams}\color{blue}

The ``pictures'' in this paper have been redrawn to match the ``stability diagrams'' or ``scattering diagrams'' which are recent developments in representation theory \cite{BST}, \cite{IT14}, \cite{ITW}. In two examples, the pictures from Morse theory and representation theory do not match: There are arcs missing from the Morse theory pictures. Our attempt to fix this purely esthetic problem has lead us to two new ideas. One is to introduce ``ghost handle slides'' to fill in the missing pieces on the Morse theory side.

The second uses the idea that pictures are intrinsically embedded in the Cartan subalgebra $H$ of the corresponding real Lie algebra and this interpretation works very nicely with Morse pictures. We insert missing pieces on the module theoretic side giving what we call ``ghost modules''. On the Morse theoretic side, in the first picture we get another interpretation of the generalized Grassmann invariant! The second picture shows the dual of the generalized Grassmann invariant. We will see that the dual has a better algebraic description than the original generalized Grassmann invariant.

\subsection{Picture group}

\begin{defn}\label{def: 8.1}
Given a picture $P$ for a group $G=\left<\cX\,|\,\cY\right>$ we define the \emph{picture group} $G(P)$ of $P$ to be the group $\left<\cX_0\,\cY_0\right>$ where $\cX_0$ is the subset of $\cX$ consisting of all labels $x\in\cX$ of the edges in $P$ and $\cY_0$ is the set of all $y\in\cY$ labeling the vertices of $P$. In particular $P$ will be a picture for $G(P)$.
\end{defn}

For example, let $P$ be a picture for the Steinberg group given by three circles as in Figure (1) in Definition \ref{def: 2.3}:
\begin{center}
\begin{tikzpicture}[scale=.7,black] 
%
\begin{scope}[xshift=-.7cm]
	\draw[thick] (0,0) circle[radius=1.4cm];
		\draw[thick,<-] (-.985,.99)--(-.88,1.09); 
		\draw (-1,1.1) node[left]{\tiny$e_{ij}^u$};
\end{scope}
\begin{scope}[xshift=.7cm]
	\draw[thick] (0,0) circle[radius=1.4cm];
	\draw[thick,->] (.99,.99)--(.89,1.09); 
		\draw (1,1.1) node[right]{\tiny$e_{k\ell}^v$};
\end{scope}
\begin{scope}[yshift=-1.3cm]
	\draw[thick] (0,0) circle[radius=1.4cm];
	\draw[thick,->] (1.33,-.43)--(1.37,-.3); 
	\draw (1.37,-.36) node[right]{\tiny$e_{mn}^w$};
\end{scope}
\end{tikzpicture}
\end{center}
This picture has edges with labels $a=e_{ij}^u, b=e_{k\ell}^v,c=e_{mn}^w$ with 6 vertices giving the relations that $a,b,c$ commute. Thus the picture group is $\ZZ^3$, the free abelian group on 3 generators.

Any relabeling of a picture gives a representation of the abstractly defined picture group. For example, the labels on the above picture give a linear representation $\rho$ of the picture group $\ZZ^3$ sending the generators $a,b,c$ to the commuting matrices $e_{ij}^u, e_{k\ell}^v,e_{mn}^w$.
}

{\color{blue}
\begin{eg}\label{eg: 7.6}\label{eg: 8.2}
Another example is the following picture which, as an unoriented picture is equivalent to the picture in Lemma \ref{lem: 2.2} (a). We use the convention (opposite to the one we have been using so far) that all edges are oriented clockwise instead of counterclockwise. They are oriented so that they are always curving to the right.
\begin{center}
\begin{tikzpicture}[scale=1.2,black]
\begin{scope}
\clip rectangle (-2,-1.3) rectangle (0,1.3);
\draw[thick] (0,0) ellipse [x radius=1.4cm,y radius=1.21cm];
\end{scope}
		\draw[thick] (-1.15,.7) node[left]{\small$c$};
\begin{scope}[xshift=-.7cm]
	\draw[thick] (0,0) circle[radius=1.4cm];
		\draw (-1,1.1) node[left]{\small$b$};
\end{scope}
\begin{scope}[xshift=.7cm]
	\draw[thick] (0,0) circle[radius=1.4cm];
		\draw (1,1.1) node[right]{\small$a$};
\end{scope}
\begin{scope}[yshift=-1.3cm]
	\draw[thick] (0,0) ellipse[x radius=2cm, y radius=1.3cm];
	\draw (2,0) node[right]{\small$d$};
\end{scope}
\end{tikzpicture}
\end{center}
We see the relation $c=[a,b]$ and its inverse at the two 5-valent vertices and the five 4-valent vertices give us the relation that $d$ commutes with $a,b,c$. Thus the picture group is $G(P)\cong F_2\times\ZZ$, the product of $\ZZ$ with the free group on 2 generators. The figure in Lemma \ref{lem: 2.2} (a) gives a linear representation of this group by sending $a,b,c,d$ to $e_{jk}^{-1},e_{ij}^{-1},e_{ik}^{-uv},e_{\ell m}^{-w}$, respectively. Generators are inverted since we reversed the orientations.
\end{eg}

The figure in Example \ref{eg: 7.6} and the other figures in \ref{lem: 2.2} and \ref{def: 2.3} have been redrawn to match the ``stability diagrams'' or ``wall-and-chamber'' structures for Dynkin quivers given by ``stability conditions'' which we now review.
}

{
\color{blue}
\subsection{Stability conditions}

There are several slightly different notions of stability and semi-stability for representations of a finite dimensional algebra $\Lambda$. Some refer to single modules and others refer to sequences of modules. To avoid confusion, we define and discuss stability of single $\Lambda$-modules in the sense of King \cite{King} and refer to the other notions of Reineke \cite{Reineke}, Bridgeland \cite{Bridgeland}, Keller \cite{Keller} in terms of the resulting sequences of modules which are called ``maximal green sequences'' or, equivalently, Harder-Narasimhan (HN) stratifications of $mod\text-\Lambda$. We take right $\Lambda$-modules.

Given a finite dimensional algebra $\Lambda$, its \emph{rank} $n$ is defined to be the number of nonisomorphic simple right $\Lambda$-modules. For any finitely generated $\Lambda$-module $M$, we define its \emph{dimension vector} $\undim M$ to be the vector in $\ZZ^n$ whose $i$-th coordinate is the number of times that the $i$th simple $S_i$ occurs in the composition series of $M$. One of the basic examples is $\Lambda=KQ$ the \emph{path algebra} of a quiver $Q$ without oriented cycles. This is the algebra over a field $K$ generated by all paths in the quiver including constant paths. Multiplication is by composition of paths from left to right. For example, if $Q$ is the quiver of type $A_2$:
\[
	1\xxrarrow\alpha 2
\]
there are 3 paths: the two constant paths $e_1,e_2$ and one path $\alpha$ of length 1. The path algebra $\Lambda=KQ$ is 3-dimensional with basis $e_1,e_2,\alpha$ and there are three indecomposable modules $S_1$, $S_2$, $P_1$ with dimension vectors $(1,0), (0,1), (1,1)$.

More generally, we consider \emph{Dynkin quivers} $Q$ which are oriented Dynkin diagrams. It is a classical result of Gabriel \cite{Gabriel} that there are only finitely many isomorphism classes of indecomposable $KQ$ modules and the dimension vectors of the indecomposable modules are the positive roots of the root system of the Dynkin diagram. For example, for quivers of type $A_3$ with any orientation, we expect to get 6 indecomposable modules with dimension vectors $(1,0,0), (0,1,0), (0,0,1), (1,1,0), (0,1,1), (1,1,1)$. For any positive root $\alpha$, we denote by $M_\alpha$ the corresponding indecomposable module. Thus $\undim M_\alpha=\alpha$.
}

{\color{blue}
We recall King's notion of semi-stability.
\begin{defn}\label{def: 6.8}\label{def: 8.3}\cite{King}
Let $\theta$ be a linear map $\RR^n\to \RR$, let $\Lambda$ be a finite dimensional algebra of rank $n$, e.g., $\Lambda=KQ$ where $Q$ is an acyclic quiver with $n$ vertices. Then a finitely generated $\Lambda$-module $M$ is said to be \emph{$\theta$-semistable} if $\theta(\undim M)=0$ and $\theta(\undim M')\le0$ for all $M'\subset M$. We call sometimes call $\theta$ a \emph{stability condition}.
\end{defn}

For Dynkin quivers $Q$, the indecomposable modules are $M_\alpha$, $\alpha\in\Phi^+$ with $\undim M_\alpha=\alpha$. We say that a positive root $\beta$ is a \emph{subroot} of $\alpha$ and we write $\beta\subset \alpha$ if $M_\beta\subset M_\alpha$. We say that $\alpha\in\Phi^+$ is $\theta$-semistable if $M_\alpha$ is $\theta$-semistable. I.e., 
\[
\theta(\alpha)=0\text{ and } \theta(\alpha')\le0\  \forall \alpha'\subset\alpha
\]
Recall that roots are linear functions on the Cartan subalgebra $H$ of the corresponding Lie algebra $L$. If $L$ is a real Lie algebra, roots are linear functions $\alpha:H\to \RR$, $\alpha\in H^\ast$. Since $(H^\ast)^\ast=H$, there is a 1-1 correspondence between stability conditions $\theta\in (H^\ast)^\ast$ and elements $h\in H$ so that, for all $\alpha\in\Phi^+$,
\[
	\theta(M_\alpha)=\alpha(h).
\]
In other words, $\theta$ is evaluation at $h$ and we sometimes write $\theta=\theta_h$.
\begin{eg}\label{eg: 7.8}\label{eg: 8.4}
Take $Q$ to be any Dynkin quiver of type $A_n$. The corresponding real Lie algebra is $L=sl(n+1,\RR)$, the algebra of real $(n+1)\times(n+1)$ matrices with trace $0$. The Cartan subalgebra is the set of diagonal matrices $h$ with diagonal entries $h_1,\cdots,h_{n+1}$ adding up to 0. The simple roots are the linear functions $\alpha_i:H\to \RR$ given by
\[
	\alpha_i(h)=h_i-h_{i+1}.
\]
Thus, the simple module $S_i$ with $\undim S_i=\alpha_i$ is $\theta_h$-semistable if and only if 
\[
	h_i=h_{i+1}.
\]
The other positive roots, which we denote by $\alpha_{ij}$ for $1\le i<j\le n+1$, are given by
\[
	\alpha_{ij}(h)=h_i-h_j.
\]
The corresponding module $M_{ij}$ with $\undim M_{ij}=\alpha_{ij}$ will be $\theta_h$-semistable if $h_i=h_j$ plus some other conditions depending on the orientation of the quiver $Q$.
\end{eg}
}

{\color{blue}
\subsection{Walls $D(M)$}

For a finite dimensional $\Lambda$-module $M$ we denote by $D(M)$ to be the set of all linear maps $\theta:\RR^n\to \RR$ so that $M$ is $\theta$-semistable. Thus
\[
	D(M)=\{\theta\in (\RR^n)^\ast\,|\, \theta(\undim M)=0 \text{ and } \theta(\undim M')\le0 \ \forall M'\subset M\}.
\]
We sometimes call $D(M)$ the \emph{semi-invariant domain} of $M$ since it is the set of ``weights'' of the nonzero semi-invariants which are defined on $M$. This is the original concept introduced by King \cite{King}.

When the sets $D(M)$ are drawn in the dual of $\RR^n$, we get what is called a ``stability diagram'' (also called ``scattering diagram''). It is only necessary to draw $D(M)$ for indecomposable $M$ since
\[
	D(M_1\oplus M_2)=D(M_1)\cap D(M_2).
\]
We will restrict to the case when $Q$ is a Dynkin quiver. Then $D(M)$ is embedded as a closed subspace of the Cartan subalgebra $H\cong\RR^n$. The Cartan subalgebra $H$ has an intrinsic metric given by the Killing form. But we prefer the Euclidean metric given by taking, as orthonormal basis, the dual of the basis of $H^\ast$ given by the simple roots.

\begin{eg}\label{eg: 7.9}\label{eg: 8.5}
The easiest example to draw is for the quiver $Q:1\to 2$. In this case there are only three indecomposable modules: the simple modules $S_1,S_2$ with dimension vectors $\alpha_1=(1,0)$ and $\alpha_2=(0,1)$ and the projective module $P_1$ with $\undim P_1=\beta=(1,1)$. ($P_2=S_2$ is also projective.) See Figure \ref{Figure00}. Since $S_1,S_2$ are simple, we have: 
\[
D(S_1)=\ker \alpha_1=\{h\in H\,|\, h_1=h_2\}
\]
\[D(S_2)=\ker \alpha_2=\{h\in H\,|\, h_2=h_3\}
\]
For $P_1$, with $\undim P_1=\alpha_1+\alpha_2=\alpha_{13}$, the only nontrivial submodule is $S_2$. So, the condition is that $\alpha_{13}(h)=0$ (or $h_1=h_3$) and $\alpha_{23}(h)\le 0$ or $h_2\le h_3$. (Equivalently, $\alpha_{12}(h)\ge0$.) So,
\[
D(P_1)=\{h\in H\,|\, h_1=h_3\ge h_2\}.
\]
The picture group $G(A_2)$ has generators $x(S_1),x(S_2),x(P_1)$ with the single relation 
\[
x(S_1)x(P_1)x(S_2)=x(S_2)x(S_1).
\]
Thus $G(A_2)\cong F_2$ is the free group on two generators.
Geometrically, $D(P_1)$ is in the portion of the hyperplane $\ker \beta=\{h\,|\,h_1=h_3\}$ on the negative side of $D(S_2)$ (since $S_2\subset P_1$) and on the positive side of $D(S_1)$ (since $S_1$ is a quotient of $P_1$). See Figure \ref{Figure00}.
\end{eg}
}

{%
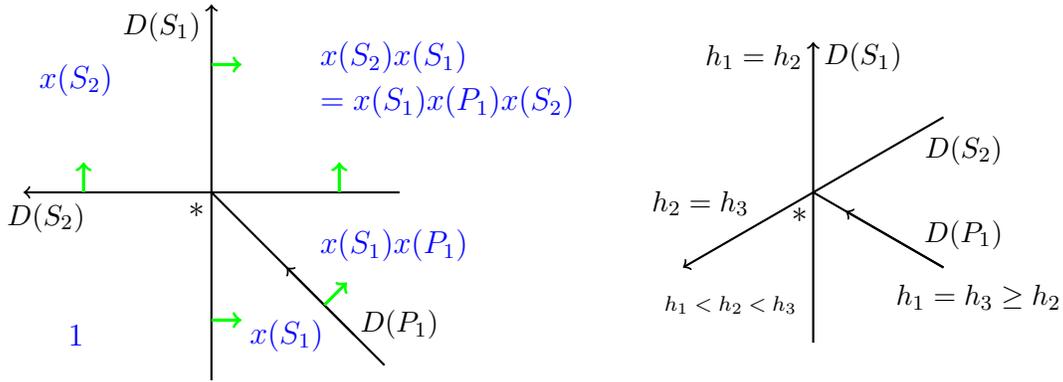
\begin{figure}[htbp]
\begin{center}
\begin{tikzpicture}[black]
{ 
\begin{scope}
\draw[thick,->] (0,-2.5)--(0,2.5);
\draw[thick,->] (2.5,0)--(-2.5,0);
\draw[thick] (2.3,-2.3)--(0,0);
\draw[thick,->] (1.9,-1.9)--(1,-1);
\draw (0,2.2) node[left]{\small$D(S_1)$};
\draw (-2.2,0) node[below]{\small$D(S_2)$};
\draw (2.5,-2.1) node[above]{\small$D(P_1)$};
\draw (-.2,-.23) node{$\ast$};
\draw[very thick,green,->] (0,1.7)--(.4,1.7);
\draw[very thick,green,->] (0,-1.7)--(.4,-1.7);
\draw[very thick,green,->] (-1.7,0)--(-1.7,.4);
\draw[very thick,green,->] (1.7,0)--(1.7,.4);
\draw[very thick,green,->] (1.5,-1.5)--(1.8,-1.2);
\draw[blue] (-1.8,-1.9) node{$1$};
\draw[blue] (-1.8,1.5) node{$x(S_2)$};
\draw[blue] (1,-1.9) node{$x(S_1)$};
\draw[blue] (1.3,-.7) node[right]{$x(S_1)x(P_1)$};
\draw[blue] (1.3,1.8) node[right]{$x(S_2)x(S_1)$};
\draw[blue] (1.3,1.2) node[right]{$=x(S_1)x(P_1)x(S_2)$};
\end{scope}
}
{
\begin{scope}[xshift=8cm]
\draw[thick,->] (0,-2)--(0,2);
\draw[thick,<-] (-1.732,-1)--(1.732,1);
\draw[thick] (0,0)--(1.732,-1);
\draw[thick,<-] (.433,-.25)--(1.732,-1);
\draw (0,1.8) node[left]{\small$h_1=h_2$};
\draw (0,1.8) node[right]{\small$D(S_1)$};
\draw (-1.5,-.45) node[above]{\small$h_2=h_3$};
\draw (2,.9) node[below]{\small$D(S_2)$};
\draw (2,-.9) node[above]{\small$D(P_1)$};
\draw (2.2,-1.7) node[above]{\small$h_1=h_3\ge h_2$};
\draw (-.18,-.3) node{$\ast$};
\draw (-1.1,-1.5) node{\tiny$h_1<h_2<h_3$};
\end{scope}
}
\end{tikzpicture}
\color{blue}
\caption{Picture for $1\to 2$ has 3 indecomposable representations $S_1,S_2,P_1$. The walls $D(S_1)$, $D(S_2)$ are the $y$ and $x$ axes respectively since they are $(1,0)^\perp$ and $(0,1)^\perp$. The wall $D(P_1)$ is the set of all $x$ in $(1,1)^\perp$ so that $x\cdot \undim S_2\le0$ since $S_2\subset P_1$. The picture group has generators $x(M)$ corresponding to each indecomposable module $M$.
On the right is the geometrically correct picture in the Cartan subalgebra $H$. The open region where $h_1<h_2<h_3$ is the subset of $H$ on which $\alpha_1,\alpha_2$ and $\beta$ are negative.}
\label{Figure00}
\end{center}
\end{figure}
}

{\color{blue}
In the special case $n=3$, the sets $D(M)$ will be drawn in the plane by taking their intersections with the unit sphere $S^2$ in $(\RR^3)^\ast$ and taking stereographic projection away from any fixed point on the negative side of all $D(S_i)$. In Euclidean coordinates, this region is the negative octant. For type $A_3$, this subset of $H$ is given by the condition $h_1<h_2<h_3<h_4$. We call the resulting planar diagram the ``picture'' for the algebra $\Lambda$. We give two examples, one of type $A_3$ and the second of type $A_2\times A_1$.
}

{\color{blue} 
{
\begin{eg}\label{eg: 7.10}\label{eg: 8.6}
Let $Q$ be the quiver of type $A_3$ given by
\[
	Q:\qquad1\to 2\to 3
\]
This has six indecomposable representations: 

Three simples: $S_1,S_2,S_3$ with dimension vectors $\alpha_{12},\alpha_{23},\alpha_{34}$.

Two of length 2: $I_2,P_2$ with $\undim I_2=(1,1,0)=\alpha_{13}$ and $\undim P_2=(0,1,1)=\alpha_{24}$. 

One of length 3: $P_1$ with $\undim P_1=(1,1,1)=\alpha_{14}$.

\noindent Since the arrows of $Q$ all go to the right, the subroots of $\alpha_{ij}$ are $\alpha_{kj}$ for $i<k<j$. So, the corresponding wall $D(M_{ij})$ is the set of all $h\in H$ where $\alpha_{ij}(h)=0$ and $\alpha_{kj}(h)\le0$ for all $i<k<j$.
\[
	D(M_{ij})=\{h\in H\,|\, h_i=h_j\text{ and } h_k\le h_j\ \forall i<k<j\}
\]
More precisely,
\[
	D(S_1)=\{h\in H\,|\, h_1=h_2\}
\]
\[
	D(S_2)=\{h\in H\,|\, h_2=h_3\}
\]
\[
	D(S_3)=\{h\in H\,|\, h_3=h_4\}
\]
\[
	D(I_2)=\{h\in H\,|\, h_1=h_3\ge h_2\}
\]
\[
	D(P_2)=\{h\in H\,|\, h_2=h_4\ge h_3\}
\]
\[
	D(P_1)=\{h\in H\,|\, h_1=h_4\ge h_2,h_3\}
\]
The picture group $G(A_3)$ has 6 generators $x(M_{ij})$ for $1\le i<j\le 4$ with 6 relations:
\begin{enumerate}
    \item $[x(M_{ij}),x(M_{k\ell})]=1$ if $i,j,k,\ell$ are distinct and ``noncrossing'' which means an even permutation will put these letter in increasing order.
    \item $[x(M_{ij}),x(M_{jk}]=x(M_{ik})$ when $i<j<k$.
\end{enumerate}
This group was first considered by Loday \cite{Loday} who called it the ``Stasheff group.'' Figure \ref{lem: 2.2}(a) indicates a linear representation $\rho$ of this group into $T_n(\ZZ[\pi])$ given by
\[
    \rho(x(S_1))=e_{ij}^{-u}, \rho(x(S_2))=e_{jk}^{-v}, \rho(x(S_3))=e_{k\ell}^{-w},\]
    \[\rho(x(I_2))=e_{ik}^{-uv}, \rho(x(P_2))=e_{j\ell}^{-vw}, \rho(x(P_1))=e_{i\ell}^{-uvw}.
\] This is compatible with the embedding into $H$, the Cartan subalgebra, if we replace indices $i,j,k,\ell$ with 1,2,3,4.
See Figure \ref{Figure01}
\end{eg}
}

{
\color{blue}
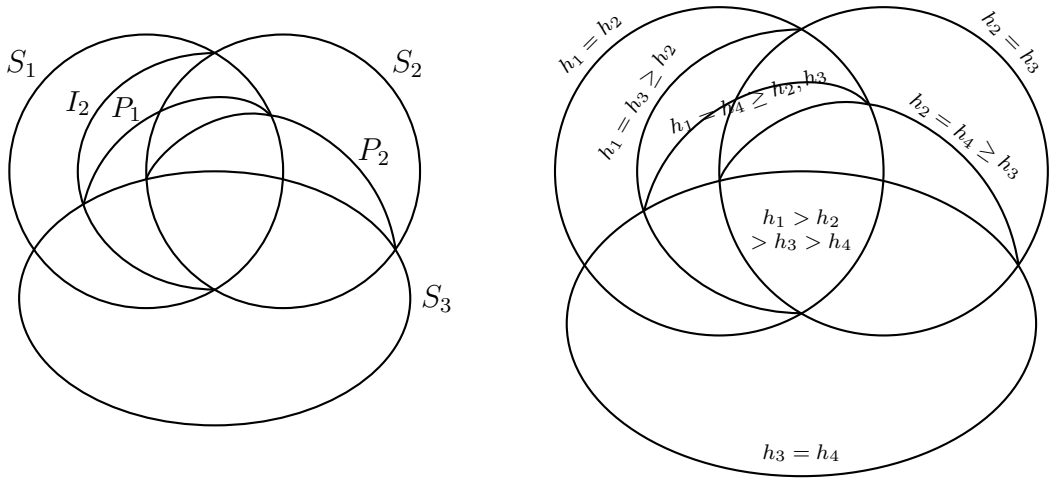
\begin{figure}[htbp]
\begin{center}
\begin{tikzpicture}[scale=1.3,black]
{
\begin{scope}
\clip rectangle (-2,-1.3) rectangle (0,1.3);
\draw[thick] (0,0) ellipse [x radius=1.4cm,y radius=1.21cm];
\end{scope}
%
				\draw[thick] (-1.15,.7) node[left]{\small$I_2$};
\begin{scope}[xshift=-.7cm]
	\draw[thick] (0,0) circle[radius=1.4cm];
		\draw (-1,1.1) node[left]{$S_1$};
\end{scope}
\begin{scope}[xshift=.7cm]
	\draw[thick] (0,0) circle[radius=1.4cm];
		\draw (1,1.1) node[right]{$S_2$};
\end{scope}
\begin{scope}[yshift=-1.3cm]
	\draw[thick] (0,0) ellipse[x radius=2cm, y radius=1.3cm];
	\draw (2,0) node[right]{$S_3$};
\coordinate (A) at (-.7,1.22);
\coordinate (A1) at (-.6,1.5);
\coordinate (B1) at (0,2);
\coordinate (B) at (.57,1.87);
\coordinate (B2) at (.77,1.87);
\coordinate (C) at (1.85,.5);
\coordinate (C1) at (1.7,1.5);
\coordinate (D) at (-1.34,.96);
\coordinate (E) at (1.4,1.405);
\coordinate (E2) at (1.35,1.5);
\coordinate (F1) at (-1.08,1.5);
\coordinate (F2) at (-0.9,1.7);
\coordinate (F) at (-1,1.595);
\draw[thick] (A)..controls (A1) and (B1)..(B);
\draw[thick] (B)..controls (B2) and (C1)..(C);
\draw[thick](B)..controls (.2,2.3) and (-1.1,2)..(D);
\draw (E2) node[right]{$P_2$};
\draw (F2) node[above]{\small$P_1$};
\end{scope}
}

{
\begin{scope}[xshift=6cm,scale=1.2]
\begin{scope}
\clip rectangle (-2,-1.3) rectangle (0,1.3);
\draw[thick] (0,0) ellipse [x radius=1.4cm,y radius=1.21cm];
\end{scope}
%
\begin{scope}
	\draw[thick] (-1.4,.6) node[rotate=60]{\tiny$h_1=h_3\ge h_2$};
\end{scope}
\begin{scope}[xshift=-.7cm]
	\draw[thick] (0,0) circle[radius=1.4cm];
		\draw (-1.1,1.1) node[rotate=45]{\tiny$h_1=h_2$};
\end{scope}
\begin{scope}[xshift=.7cm]
	\draw[thick] (0,0) circle[radius=1.4cm];
		\draw (1.1,1.1) node[rotate=-45]{\tiny$h_2=h_3$};
\end{scope}
\begin{scope}[yshift=-1.3cm]
	\draw[thick] (0,0) ellipse[x radius=2cm, y radius=1.3cm];
	\draw (0,-1.1) node{\tiny$h_3=h_4$};
\draw[thick] (-.7,1.22)..controls (-.6,1.5) and (0,2)..(.57,1.87);
\draw[thick] (.57,1.87)..controls (.77,1.87) and (1.7,1.5)..(1.85,.5);
\draw[thick] (.57,1.87)..controls (.2,2.3) and (-1.1,2)..(-1.34,.96);
\draw (1.4,1.6) node[rotate=-35]{\tiny$h_2=h_4\ge h_3$};
\draw (-0.45,1.9) node[rotate=20]{\tiny$h_1=h_4\ge h_2,h_3$};
\end{scope}
\draw (0,-.4) node{\tiny$h_1>h_2$};
\draw (0,-.6) node{\tiny$>h_3>h_4$};
\end{scope}
}
\end{tikzpicture}
\color{blue}
\caption{Picture for $1\to 2\to  3$ with labels $D(M)$, written simply as $M$, on the left and with corresponding subsets of $H$ on the right. Coordinates $h_i$ are decreasing in the central region and $h_i<h_{i+1}$ at points outside the $h_i=h_{i+1}$ circle. Compare this with Figure \ref{lem: 2.2}(b) where $e_{ij}$ labels are on the sets where $h_i=h_j$ using $i,j,k,\ell=1,2,3,4$.}
\label{Figure01}
\end{center}
\end{figure}
}
}

{ 
{ \color{blue}
\begin{eg}\label{eg: 7.11}\label{eg: 8.7}
Another example is given by the quiver
\[
	Q':\qquad 1\to 2\quad\quad  3
\]
This is a quiver of type $A_2\times A_1$ with corresponding real Lie algebra $sl(3;\RR)\times \sl(2;\RR)$. In this case, the Cartan subalgebra consists of $5\times 5$ diagonal matrices whose entries satisfy $h_1+h_2+h_3=0$ and $h_4+h_5=0$. There are only 4 positive roots: the three simple roots $\alpha_{12}, \alpha_{23},\alpha_{45}$ which are the dimension vectors of the three simple modules $S_1,S_2,S_3$ respectively, and the longer root $\alpha_{13}=\undim P_1$. The domains $D(S_i)$ will be the three ovals given by $h_1=h_2$, $h_2=h_3$ and $h_4=h_5$, respectively. See Figure \ref{Figure02}.
\end{eg}
}

{
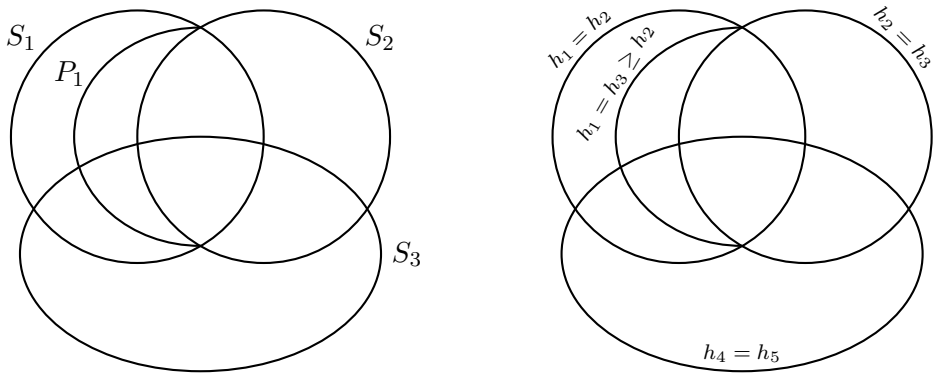
\begin{figure}[htbp]
\begin{center}
\begin{tikzpicture}[scale=1.2,black]
{\begin{scope} 
\begin{scope}
\clip rectangle (-2,-1.3) rectangle (0,1.3);
\draw[thick] (0,0) ellipse [x radius=1.4cm,y radius=1.21cm];
\end{scope}
		\draw[thick] (-1.15,.7) node[left]{\small$P_1$};
\begin{scope}[xshift=-.7cm]
	\draw[thick] (0,0) circle[radius=1.4cm];
		\draw (-1,1.1) node[left]{\small$S_1$};
\end{scope}
\begin{scope}[xshift=.7cm]
	\draw[thick] (0,0) circle[radius=1.4cm];
		\draw (1,1.1) node[right]{\small$S_2$};
\end{scope}
\begin{scope}[yshift=-1.3cm]
	\draw[thick] (0,0) ellipse[x radius=2cm, y radius=1.3cm];
	\draw (2,0) node[right]{\small$S_3$};
\end{scope}
\end{scope}
}
{\begin{scope}[xshift=6cm] 
\begin{scope}
\clip rectangle (-2,-1.3) rectangle (0,1.3);
\draw[thick] (0,0) ellipse [x radius=1.4cm,y radius=1.21cm];
\end{scope}
	\draw[thick] (-1.4,.6) node[rotate=60]{\tiny$h_1=h_3\ge h_2$};
\begin{scope}[xshift=-.7cm]
	\draw[thick] (0,0) circle[radius=1.4cm];
		\draw (-1.1,1.1) node[rotate=45]{\tiny$h_1=h_2$};
\end{scope}
\begin{scope}[xshift=.7cm]
	\draw[thick] (0,0) circle[radius=1.4cm];
		\draw (1.1,1.1) node[rotate=-45]{\tiny$h_2=h_3$};
\end{scope}
\begin{scope}[yshift=-1.3cm]
	\draw[thick] (0,0) ellipse[x radius=2cm, y radius=1.3cm];
	\draw (0,-1.1) node{\tiny$h_4=h_5$};
\end{scope}
\end{scope}
}
\end{tikzpicture}
\color{blue}
\caption{Picture for $1\to 2\quad  3$ which has only 4 indecomposable modules: $S_1,S_2,S_3$ and $P_1$ with $\undim P_1=(1,1,0)=\alpha_{13}$. The walls $D(S_1),D(S_2),D(S_3)$, shown on the left, are given by the equations $h_1=h_2$, $h_2=h_3$, $h_4=h_5$ and $D(P_1)$ is given, as before, by $h_1=h_3\ge h_2$ which means $D(P_1)$ is outside the $S_2$ circle and inside the $S_1$ circle. Coordinates in the Cartan subalgebra are shown on the right. This should be compared with Figure \ref{lem: 2.2}(a) with indices $i,j,k,\ell,m$ replaced with $1,2,3,4,5$.
}
\label{Figure02}
\end{center}
\end{figure}
}
}

{\color{blue}
Figures \ref{Figure01}, \ref{Figure02} are called  ``pictures'' for $\Lambda=KQ$. They are also called ``scattering diagrams'' or ``stability diagrams''. Stability diagrams are also defined in higher and lower dimensions. For example, Figure \ref{Figure00} is a stability diagram for $n=2$.
}

{
\color{blue}
\subsection{Ghost handle slides}
We have seen a nice correspondence between certain pictures of type $A_3$, embeddings of these into the Cartan subalgebra $H$ and Morse pictures which we saw earlier, especially if we replace indices $i,j,k,\ell$ with 1,2,3,4. However, this correspondence seems to break down in the cases of Figures \ref{def: 2.3}(3) and \ref{def: 2.3}(4) even after replacing indices with 1,2,3,4. The stability diagrams for the corresponding Dynkin quivers $A_3^+: 1\ot 2\to 3$ and $A_3^-:1\to 2\ot 3$ do not quite match these figures. But they are close and we will attempt to fix the discrepancies.
}

{
\color{blue}
\begin{eg}\label{eg: 8.8} Consider the quiver $1\ot 2\to 3$ which we refer to as $A_3^+$. This quiver has 6 indecomposable modules: the three simple modules $S_1=M_{12}$, $S_2=M_{23}$, $S_3=M_{34}$ and the longer modules $I_1=M_{13}$, $I_3=M_{24}$ and $P_2=M_{14}$. Containment relations are:
\[
	S_1\subset I_1,\quad S_3\subset I_3, \quad S_1,S_3\subset P_2.
\]
And these can be used to obtain the picture for this quiver which is shown on the left side of Figure \ref{Figure03}. On the right side of Figure \ref{Figure03} we have reproduced Figure \ref{def: 2.3}(3) with indices replaced with 1,2,3,4. There is a missing edge on the right hand side that we have indicated in blue.

Comparison of these two pictures implies that there is a homomorphism from the picture group $G(A_3^+)$ to $T_4(\ZZ[\pi])$, the $4\times 4$ upper triangular matrix group with coefficients in $\ZZ[\pi]$. More precisely, let $T_{34}$ denote the subgroup of $T_4(\ZZ[\pi])$ of all matrices with $(3,4)$ entry equal to $0$. We have a homomorphism $\rho_{uvw}:G(A_3^+)\to T_{34}$ given by
\[
\rho(x(S_2))=e_{12}^{-u}
\]
\[
\rho(x(S_1))=e_{23}^{-v}
\]
\[
\rho(x(S_3))=e_{24}^{-w}
\]
This implies $\rho(x(I_1))=e_{13}^{-uv}$, $\rho(x(I_3))=e_{14}^{-uw}$. Also, $\rho(x(P_2))=1$ since $x(P_2)=[x(S_1),x(I_3)]=[e_{23}^{-v},e_{14}^{-uw}]=1$. So, $x(P_2)$ is in the kernel of $\rho_{uvw}$ which is why that edge disappears in the Morse theory picture. We will fix this by lifting the homomorphism $\rho_{uvw}$ to a central extension of the subgroup $T_{34}$ of  $T_4(\ZZ[\pi])$.
\end{eg}
}

{
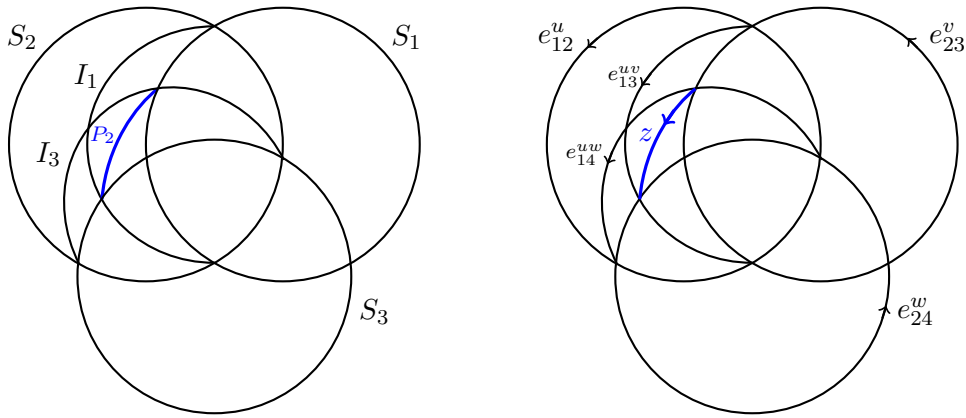
\begin{figure}[htbp]
\begin{center}

\begin{tikzpicture}[scale=1.3] 
%
{
\begin{scope} 
\begin{scope}
\clip rectangle (-2,-1.3) rectangle (0,1.3);
\draw[thick] (0,0) ellipse [x radius=1.3cm,y radius=1.21cm];
\end{scope}
\begin{scope}[xshift=.87mm]
		\draw[thick] (-1.15,.7) node[left]{\small$I_1$};
\end{scope}
\draw[very thick, blue] (-1.15,-.56)..controls (-1.1,0) and (-.8, .4)..(-.58,.57);
\draw[blue] (-.9,0.1) node[left]{\tiny$P_2$};
\begin{scope}[xshift=-3mm,yshift=-8mm]
		\draw[thick] (-1.15,.7) node[left]{\small$I_3$};
\end{scope}
\begin{scope}[xshift=-3.35mm,yshift=-6.9mm]
\begin{scope}[rotate=297]
\clip rectangle (-2,-1.3) rectangle (0,1.3);
\draw[thick] (0,0) ellipse [x radius=1.3cm,y radius=1.175cm];
\end{scope}
\end{scope}
\begin{scope}[xshift=-.7cm]
	\draw[thick] (0,0) circle[radius=1.4cm];
		\draw (-1,1.1) node[left]{\small$S_2$};
\end{scope}
\begin{scope}[xshift=.7cm]
	\draw[thick] (0,0) circle[radius=1.4cm];
		\draw (1,1.1) node[right]{\small$S_1$};
\end{scope}
\begin{scope}[yshift=-1.35cm]
	\draw[thick] (0,0) circle[radius=1.4cm];
	\draw (1.37,-.36) node[right]{\small$S_3$};
\end{scope}
\end{scope} 
}
{
\begin{scope}[xshift=5.5cm]
%
\begin{scope}
\clip rectangle (-2,-1.3) rectangle (0,1.3);
\draw[thick] (0,0) ellipse [x radius=1.3cm,y radius=1.21cm];
\end{scope}
\begin{scope}[xshift=.87mm]
		\draw[thick,<-] (-1.22,.6)--(-1.12,.73); 
		\draw[thick] (-1.1,.7) node[left]{\tiny$e_{13}^{uv}$};
\end{scope}
\begin{scope}[xshift=-3mm,yshift=-8mm]
		\draw[thick,<-] (-1.18,.6)--(-1.13,.73); 
		\draw[thick] (-1.1,.7) node[left]{\tiny$e_{14}^{uw}$};
\end{scope}
\begin{scope}[xshift=-3.35mm,yshift=-6.9mm]
\begin{scope}[rotate=297]
\clip rectangle (-2,-1.3) rectangle (0,1.3);
\draw[thick] (0,0) ellipse [x radius=1.3cm,y radius=1.175cm];
\end{scope}
\end{scope}
\begin{scope}[xshift=-.7cm]
	\draw[thick] (0,0) circle[radius=1.4cm];
		\draw[thick,<-] (-.985,.99)--(-.88,1.09); 
		\draw (-1,1.1) node[left]{\small$e_{12}^u$};
\end{scope}
\begin{scope}[xshift=.7cm]
	\draw[thick] (0,0) circle[radius=1.4cm];
	\draw[thick,->] (.99,.99)--(.89,1.09); 
		\draw (1,1.1) node[right]{\small$e_{23}^v$};
\end{scope}
\begin{scope}[yshift=-1.35cm]
	\draw[thick] (0,0) circle[radius=1.4cm];
	\draw[thick,->] (1.33,-.43)--(1.37,-.3); 
	\draw (1.37,-.36) node[right]{\small$e_{24}^w$};
\end{scope}
\draw[very thick, blue] (-1.15,-.56)..controls (-1.1,0) and (-.8, .4)..(-.58,.57);
\draw[very thick, blue,->]  (-.8, .35)--(-.9,.2) ;
\draw[blue] (-.9,0.1) node[left]{\small$z$};

\end{scope}
} 
\end{tikzpicture}
\color{blue}
\caption{Picture for $1\ot 2\to  3$ which has 6 walls, but the wall $D(P_2)$ vanishes in the Morse theory diagram since $x(P_2)$ is in the kernel of the representation $\rho_{uvw}:G(A_3^+)\to T_{34}$. We will fix this by introducing the ``ghost handle slide'' $z=z_{34}^{\overline{uvw}}$.}
\label{Figure03}
\end{center}
\end{figure}
}

{
\color{blue}
Continuing with our effort to replace the missing are on the right side of Figure \ref{Figure03} we construct a central extension $\widetilde T_{34}$ of $T_{34}$ with kernel the commutative ring $\ZZ[\pi/\pi']$ where $\pi/\pi'$ is the abelianization of $\pi$. This will be given by a cohomology class in $H^2(T_{34}(\ZZ[\pi]);\ZZ[\pi/\pi'])$ given by the factor set
\[
	f(X,Y)=-x_{23}y_{14}-x_{23}x_{12}y_{24}+x_{13}y_{24}\in \ZZ[\pi/\pi']
\]
where $x_{ij}\in\ZZ[\pi]$ is the $ij$ entry of $X$ and similarly for $Y$. It is an easy exercise to show that this satisfies the cocycle condition
\[
	f(Y,Z)-f(XY,Z)+f(XY,Z)-f(X,Y)=0
\]
assuming the variables $x_{ij},y_{k\ell}$ commute which is why we pass to $\ZZ[\pi/\pi']$. For consistency of notation we denote by $z_{34}^{\overline a}$ the central element of $\widetilde T_{34}$ corresponding to the image $\overline a\in \ZZ[\pi/\pi']$ of $a\in\ZZ[\pi]$. Thus
\[
	[e_{14}^{uw},e_{23}^v]=z_{34}^{\overline{vuw}}=z_{34}^{\overline{uvw}}=[e_{13}^{uv},e_{24}^w].
\]
Here we take any lifting of the generators $e_{12}^u, e_{23}^v$ and $e_{24}^w$ of $T_{34}$ to $\widetilde T_{34}$ and get induced liftings of $e_{13}^{uv}$ and $e_{14}^{uw}$. The commutators are given by the factor set:
\[
	f(e_{14}^{uw},e_{23}^v)-f(e_{23}^v,e_{14}^{uw})=\overline{vuw}
\]
\[
	f(e_{13}^{uv},e_{24}^w)-f(e_{24}^w,e_{13}^{uv})=\overline{uvw}
\]
This calculation shows the following.

\begin{prop}\label{prop: 8.9}
    The homomorphism $\rho_{uvw}:G(A_3^+)\to T_{34}(\ZZ[\pi])$ lifts to a homomorphism $\widetilde \rho_{uvw}:G(A_3^+)\to \widetilde T_{34}$ and any such lifting will send $x(P_2)$ to $z_{34}^{-\overline{vuw}}=z_{34}^{-\overline{uvw}}$ where $\overline{uvw}$ is the image of $uvw$ in $\pi/\pi'$.
\end{prop}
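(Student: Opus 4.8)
The plan is to work from the presentation of the picture group $G(A_3^+)$ that one reads off the left‑hand diagram in Figure~\ref{Figure03}: six generators $x(S_1),x(S_2),x(S_3),x(I_1),x(I_3),x(P_2)$, one relation for each of the six vertices. The three simple generators $x(S_1),x(S_2),x(S_3)$ generate the whole group; three of the six relations exhibit the remaining generators as the commutators dictated by the picture, namely (with the base‑point and orientation conventions of that diagram) $x(I_1)=[x(S_1),x(S_2)]$, $x(I_3)=[x(S_3),x(S_2)]$ and $x(P_2)=[x(S_1),x(I_3)]$; and, once these are used to eliminate $x(I_1),x(I_3),x(P_2)$, one is left with a presentation of $G(A_3^+)$ on $x(S_1),x(S_2),x(S_3)$ whose three surviving relations are commutator relations --- among them $[x(S_1),x(S_3)]=1$ and the ``diamond'' relation $[x(S_1),x(I_3)]=[x(S_3),x(I_1)]$ expressing the compatibility of the two ways the double vertex carrying $D(P_2)$ reads off $x(P_2)$.

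First I would define $\widetilde\rho_{uvw}$ on generators: choose any lifts to $\widetilde T_{34}$ of $\rho_{uvw}(x(S_1))=e_{23}^{-v}$, $\rho_{uvw}(x(S_2))=e_{12}^{-u}$, $\rho_{uvw}(x(S_3))=e_{24}^{-w}$, take these as the values of $\widetilde\rho_{uvw}$ on the simple generators, and define $\widetilde\rho_{uvw}$ on $x(I_1),x(I_3),x(P_2)$ by the same commutator words. Since the kernel $\ZZ[\pi/\pi']$ of $\widetilde T_{34}\to T_{34}$ is central, the commutator of two lifts depends only on their images in $T_{34}$, so these values are unambiguous. That $\widetilde\rho_{uvw}$ descends to $G(A_3^+)$ is the assertion that the pulled‑back extension class $\rho_{uvw}^{*}[\widetilde T_{34}]\in H^{2}(G(A_3^+);\ZZ[\pi/\pi'])$ vanishes; concretely, the three ``defining'' relations hold by construction, and for each of the three remaining commutator relations $r$ the element $\widetilde\rho_{uvw}(r)$ lies in the central kernel (because $\rho_{uvw}(r)=1$ already) and is computed from the identity $[\tilde a,\tilde b]=z_{34}^{\,f(a,b)-f(b,a)}$, valid whenever $a$ and $b$ commute in $T_{34}$, using the explicit cocycle $f(X,Y)=-x_{23}y_{14}-x_{23}x_{12}y_{24}+x_{13}y_{24}$. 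Each such evaluation collapses to a one‑line substitution, because the elementary matrices in play have almost all of their off‑diagonal entries equal to zero: e.g. $[x(S_1),x(S_3)]=1$ maps to $z_{34}^{0}$, and the diamond relation maps to $[\widetilde{e_{23}^{-v}},\widetilde{e_{14}^{-uw}}]=[\widetilde{e_{24}^{-w}},\widetilde{e_{13}^{-uv}}]$, both sides equalling $z_{34}^{-\overline{uvw}}$ by the computation recorded just above the statement.

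Finally I would evaluate $\widetilde\rho_{uvw}(x(P_2))$. Being a commutator, $x(P_2)=[x(S_1),x(I_3)]$ has image independent of the chosen lift, and since $\widetilde\rho_{uvw}(x(I_3))$ is a lift of $e_{14}^{-uw}$ we get $\widetilde\rho_{uvw}(x(P_2))=[\widetilde{e_{23}^{-v}},\widetilde{e_{14}^{-uw}}]$. As $e_{23}^{-v}$ and $e_{14}^{-uw}$ commute in $T_{34}$, this equals $z_{34}^{\,f(e_{23}^{-v},e_{14}^{-uw})-f(e_{14}^{-uw},e_{23}^{-v})}$; plugging in gives $f(e_{23}^{-v},e_{14}^{-uw})=-(-v)(-uw)=-vuw$ and $f(e_{14}^{-uw},e_{23}^{-v})=0$, whence $\widetilde\rho_{uvw}(x(P_2))=z_{34}^{-\overline{vuw}}$, which equals $z_{34}^{-\overline{uvw}}$ since $\pi/\pi'$ is abelian.

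The step I expect to be the real work is the first one: reading off the presentation of $G(A_3^+)$ from Figure~\ref{Figure03} with all edge orientations and base‑point directions pinned down, so that the two commutator expressions for $x(P_2)$ carried by the double vertex of $D(P_2)$ are genuinely compatible --- the naive orderings produce central corrections that a priori look like they differ by a sign, and one must be careful to see that in $\ZZ[\pi/\pi']$ they in fact coincide. Everything after that is forced by the centrality of the kernel or is a short substitution into the cubic cocycle $f$.
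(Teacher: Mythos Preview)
Your approach is essentially the same as the paper's: the paper's entire proof is the two commutator computations $[e_{14}^{uw},e_{23}^{v}]=z_{34}^{\overline{vuw}}$ and $[e_{13}^{uv},e_{24}^{w}]=z_{34}^{\overline{uvw}}$ displayed just before the proposition, followed by the sentence ``This calculation shows the following.'' You have simply made explicit what the paper leaves implicit---that these two computations are exactly the check that the two vertices at the ends of $D(P_2)$ give consistent central values, and that the remaining relations are trivially satisfied because the relevant cocycle entries vanish. Your final evaluation of $\widetilde\rho_{uvw}(x(P_2))$ is the same computation with inverted generators, and agrees with the paper since $[a^{-1},b^{-1}]=[a,b]$ when the commutator is central.
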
 

The result is that the picture on the right hand side of Figure \ref{Figure03}, with the blue edge added and labeled $z$, is a picture for $\widetilde T_{34}(\ZZ[\pi])$. The existence of this picture is equivalent to a homomorphism $G(A_3^+)\to \widetilde T_{34}(\ZZ[\pi])$. This homomorphism sends $x(P_2)$ to $z^{-1}$ since the blue edge on the right is oriented counterclockwise, while all edges on the left are oriented clockwise.
}

{ 
\color{blue}
\begin{eg}\label{eg: 8.10}
Consider the quiver $1\to 2\ot 3$. We call this $A_3^-$. This has three simple modules and 3 more indecomposable modules $M_{13}=P_1$, $M_{24}=P_3$ and $M_{14}=I_2$. The picture for $A_3^-$ is shown on the left side of Figure \ref{Figure04}. The figure on the right side of Figure \ref{Figure04}, without the blue edge, is Figure \ref{def: 2.3}(4) with generic indices $i,j,k,\ell$ specialized to $1,3,4,2$. The existence of this figure is equivalent to the existence of a homomorphism $\rho:G(A_3^-)\to T_4(\ZZ[\pi])$ sending $x(S_1),x(S_2),x(S_3),x(P_1),x(P_3)$ to the inverses of $e_{13}^u,e_{34}^v, e_{23}^w,e_{14}^{uv},e_{24}^{wv}$ respectively and furthermore implies that this homomorphism has $x(I_2)$ in its kernel. As in the previous example, we will lift this homomorphism to a central extension of a subgroup of $T_4(\ZZ[\pi])$ so that $x(I_2)$ is sent to the central element, thus completing the picture.
\end{eg}
}

{

\begin{figure}[htbp]
\begin{center}

\begin{tikzpicture}[scale=1.3,black] 
%
{
\begin{scope}
\begin{scope}
\clip rectangle (-2,-1.3) rectangle (0,1.3);
\draw[thick] (0,0) ellipse [x radius=1.3cm,y radius=1.21cm];
\end{scope}
\begin{scope}[xshift=.87mm]
		\draw[thick] (-1.1,.7) node[left]{\small$P_1$};
\end{scope}
\begin{scope}[xshift=-3mm,yshift=-8mm]
		\draw[thick] (.5,-1.13) node[below]{\small$P_3$};
\end{scope}
\begin{scope}[xshift=3.5mm,yshift=-6.9mm]
\begin{scope}[rotate=62]
\clip rectangle (-2,-1.3) rectangle (0,1.3);
\draw[thick] (0,0) ellipse [x radius=1.3cm,y radius=1.175cm];
\end{scope}
\end{scope}
\begin{scope}[xshift=-.7cm]
	\draw[thick] (0,0) circle[radius=1.4cm];
		\draw (-1,1.1) node[left]{\small$S_1$};
\end{scope}
\begin{scope}[xshift=.7cm]
	\draw[thick] (0,0) circle[radius=1.4cm];
		\draw (1,1.1) node[right]{\small$S_2$};
\end{scope}
\begin{scope}[yshift=-1.35cm]
	\draw[thick] (0,0) circle[radius=1.4cm];
	\draw (1.37,-.36) node[right]{\small$S_3$};
\end{scope}
\begin{scope}[yshift=-1cm]
\draw[very thick, blue] (-1.15,.45)..controls (-1.1,0) and (-.9, -.3)..(-.7,-.4);
\draw[blue] (-.95,-0.1) node[left]{\tiny$I_2$};
\end{scope}
\end{scope} 
}

{
\begin{scope}[xshift=5.5cm] 
\begin{scope}
\clip rectangle (-2,-1.3) rectangle (0,1.3);
\draw[thick] (0,0) ellipse [x radius=1.3cm,y radius=1.21cm];
\end{scope}
\begin{scope}[xshift=.87mm]
		\draw[thick,<-] (-1.22,.6)--(-1.12,.73); 
		\draw[thick] (-1.1,.7) node[left]{\tiny$e_{14}^{uv}$};
\end{scope}
\begin{scope}[xshift=-3mm,yshift=-8mm]
		\draw[thick,->] (0.5,-1.164)--(0.6,-1.164); 
		\draw[thick] (.5,-1.2) node[below]{\tiny$e_{24}^{wv}$};
\end{scope}
\begin{scope}[xshift=3.5mm,yshift=-6.9mm]
\begin{scope}[rotate=62]
\clip rectangle (-2,-1.3) rectangle (0,1.3);
\draw[thick] (0,0) ellipse [x radius=1.3cm,y radius=1.175cm];
\end{scope}
\end{scope}
\begin{scope}[xshift=-.7cm]
	\draw[thick] (0,0) circle[radius=1.4cm];
		\draw[thick,<-] (-.985,.99)--(-.88,1.09); 
		\draw (-1,1.1) node[left]{\small$e_{13}^u$};
\end{scope}
\begin{scope}[xshift=.7cm]
	\draw[thick] (0,0) circle[radius=1.4cm];
	\draw[thick,->] (.99,.99)--(.89,1.09); 
		\draw (1,1.1) node[right]{\small$e_{34}^v$};
\end{scope}
\begin{scope}[yshift=-1.35cm]
	\draw[thick] (0,0) circle[radius=1.4cm];
	\draw[thick,->] (1.33,-.43)--(1.37,-.3); 
	\draw (1.37,-.36) node[right]{\small$e_{23}^w$};
\end{scope}
\begin{scope}[yshift=-1cm]
\draw[very thick, blue] (-1.15,.45)..controls (-1.1,0) and (-.9, -.3)..(-.7,-.4);
\draw[blue] (-.95,-0.1) node[left]{\small$z$};
\draw[very thick, blue,->] (-1,-.07)--(-.94,-.17) ;
\end{scope}
\end{scope} 
}
\end{tikzpicture}
\color{blue}
\caption{Picture for $1\to 2\ot  3$ has 6 walls, but the wall $D(I_2)$ vanishes in the Morse theory diagram since $x(I_2)$ is in the kernel of the corresponding representation $\rho_{uvw}:G(A_3^-)\to T_{12}$ where $T_{12}\subset T_4(\ZZ[\pi])$. We will insert a ``ghost'' element $z=z_{12}^{\overline{uvw}}\in \widetilde T_{12}$ to fill in this gap.}
\label{Figure04}
\end{center}
\end{figure}
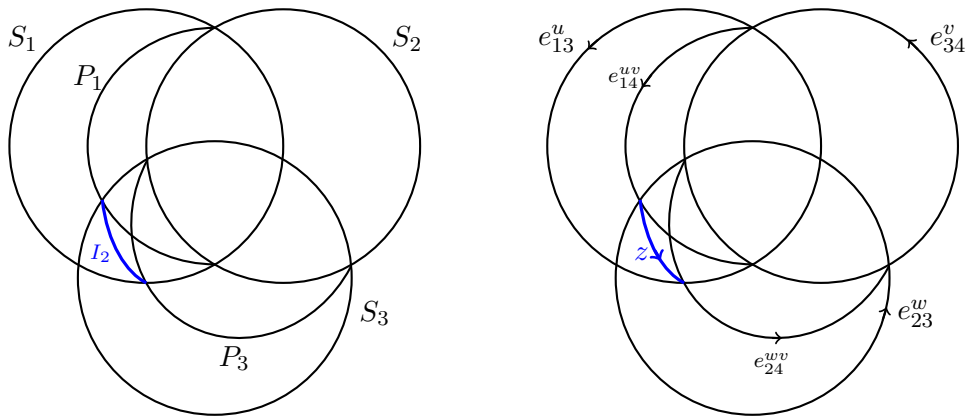
}

{
\color{blue}
Let $T_{12}(\ZZ[\pi])$ be the subgroup of $T_4(\ZZ[\pi])$ of upper triangular matrices with $x_{12}=0$. As in Example \ref{eg: 8.8}, we have a homomorphism
\[
\rho'_{uvw}:G(A_3^-)\to T_{12}
\]
given by $\rho'(x(S_1))=e_{13}^{-u}$, $\rho'(x(S_2))=e_{34}^{-v}$, $\rho'(x(S_3))=e_{23}^{-w}$. $x(I_2)$ is in the kernel of $\rho'_{uvw}$, so the corresponding arc is missing in the right hand figure of \ref{Figure04}. As before, take the central extension
\[
	0\to \ZZ[\pi/\pi']\to \widetilde T_{12}\to T_{12}(\ZZ[\pi])\to 0
\]
given by the factor set
\[
	f(X,Y)=x_{23}y_{14}+x_{13}y_{24}+x_{13}x_{23}y_{34}\in\ZZ[\pi/\pi']
\]
Then, for suitable liftings of the generators of $T_{12}$ to $\widetilde T_{12}$ we get
\[
	[e_{23}^{w},e_{14}^{uv}]=z_{12}^{\overline{wuv}}=z_{12}^{\overline{uwv}}=[e_{13}^{u},e_{24}^{wv}]
\]
where $z_{12}^{\overline s}$ is multiplicative notation for the element $\overline s\in \ZZ[\pi/\pi']$ where $\overline s$ denotes the image of $s\in\pi$ in $\pi/\pi'$.
\begin{prop}\label{prop: 8.11}
 The homomorphism $\rho'_{uvw}:G(A_3^-)\to T_{12}(\ZZ[\pi])$ lifts to a homomorphism $\widetilde \rho'_{uvw}:G(A_3^-)\to \widetilde T_{12}$ and any such lifting will send $x(I_2)$ to $z_{12}^{-\overline{wuv}}=z_{12}^{-\overline{uwv}}$.
 \end{prop}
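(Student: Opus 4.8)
\noindent\emph{Proof proposal.}
The plan is to argue in close parallel with Proposition \ref{prop: 8.9}, reusing the central extension
\[
0\to \ZZ[\pi/\pi']\to \widetilde T_{12}\to T_{12}(\ZZ[\pi])\to 0
\]
defined above by the factor set $f(X,Y)=x_{23}y_{14}+x_{13}y_{24}+x_{13}x_{23}y_{34}$. First I would verify that $f$ is a $2$-cocycle: expanding $f(Y,Z)-f(XY,Z)+f(X,YZ)-f(X,Y)$ in the entries of three matrices $X,Y,Z\in T_{12}(\ZZ[\pi])$ and using that these entries commute in $\ZZ[\pi/\pi']$, all terms cancel, so the $f$-twisted multiplication on $T_{12}(\ZZ[\pi])\times\ZZ[\pi/\pi']$ is associative and defines $\widetilde T_{12}$. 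The vanishing of the $(1,2)$-entries on $T_{12}$ is precisely what removes the terms that would otherwise obstruct this identity, which is why one restricts to $T_{12}$ rather than all of $T_4(\ZZ[\pi])$. The kernel $\ZZ[\pi/\pi']$ is central, written multiplicatively as $\{z_{12}^{\overline s}\}$.

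Next I would record the commutator formula in $\widetilde T_{12}$: for any lifts $\tilde a,\tilde b$ of matrices $a,b$ whose images commute in $T_{12}(\ZZ[\pi])$, one has $[\tilde a,\tilde b]=z_{12}^{\,f(a,b)-f(b,a)}$, independently of the chosen lifts (this is the standard commutator pairing of a central extension). Applying this to the pairs $(e_{23}^w,e_{14}^{uv})$ and $(e_{13}^u,e_{24}^{wv})$ — both of which have commuting images in $T_{12}(\ZZ[\pi])$ — and reading off $f$, I get
\[
[e_{23}^w,e_{14}^{uv}]=z_{12}^{\overline{wuv}},\qquad [e_{13}^u,e_{24}^{wv}]=z_{12}^{\overline{uwv}},
\]
and these agree because $\overline{wuv}=\overline{uwv}$ in the abelianization $\pi/\pi'$; this is the displayed identity preceding the proposition.

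The remaining, and main, step is to deduce that $\rho'_{uvw}\colon G(A_3^-)\to T_{12}(\ZZ[\pi])$ lifts to $\widetilde T_{12}$. The key structural fact is that $G(A_3^-)$ is generated by the three simple generators $x(S_1),x(S_2),x(S_3)$: the vertex relations of the $A_3^-$ picture in Figure \ref{Figure04} (in the sense of Definition \ref{def: 8.1}) express $x(P_1)$ and $x(P_3)$ as good commutators of the $x(S_i)$, and then express $x(I_2)$ — at the two vertices where the wall $D(I_2)$ terminates, namely on $D(S_3)\cap D(P_1)$ and on $D(S_1)\cap D(P_3)$ — as a commutator built from these, in two a priori different ways. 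So lifting $\rho'_{uvw}$ amounts to choosing arbitrary lifts $\widehat s_i\in\widetilde T_{12}$ of $\rho'_{uvw}(x(S_i))=e_{13}^{-u},e_{34}^{-v},e_{23}^{-w}$, letting the good-commutator words define the lifts of $x(P_1),x(P_3),x(I_2)$, and checking that every vertex relation of the picture then holds. Relations among the $x(S_i)$ alone (e.g.\ $[x(S_1),x(S_3)]=1$) lift because the corresponding matrices commute with $f(a,b)=f(b,a)=0$; the relations defining $x(P_1),x(P_3)$ hold by construction; and the only genuine compatibility to check is at the wall $D(I_2)$, where the two expressions for $x(I_2)$ must give the same element of $\widetilde T_{12}$ — which is exactly the equality $z_{12}^{-\overline{wuv}}=z_{12}^{-\overline{uwv}}$ from the previous step. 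Hence a lift $\widetilde\rho'_{uvw}$ exists, and tracing the relation at either $D(I_2)$-vertex (with the orientation and base-point conventions of Definition \ref{def: 1.1} and Figure \ref{Figure04}, so that $x(I_2)$ contributes the inverse commutator) gives $\widetilde\rho'_{uvw}(x(I_2))=z_{12}^{-\overline{wuv}}=z_{12}^{-\overline{uwv}}$. Since $x(I_2)$ is a commutator of the generator-lifts and commutators are unaffected by multiplying the generators by central elements, the value is the same for every lift, which is the ``any such lifting'' clause.

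I expect the main obstacle to be the bookkeeping in this last step: writing the presentation of $G(A_3^-)$ off the picture precisely enough to confirm that $\overline{wuv}=\overline{uwv}$ in $\pi/\pi'$ is the \emph{only} consistency condition, and matching orientations so that the conjugation-and-inversion conventions produce $z_{12}^{-\overline{wuv}}$ rather than its inverse or a conjugate. A slicker alternative that avoids the presentation is cohomological: show that the pulled-back extension class $(\rho'_{uvw})^{*}[\widetilde T_{12}]\in H^2(G(A_3^-);\ZZ[\pi/\pi'])$ vanishes by exhibiting the lift on generators and invoking that $H^2$ is computed from any presentation — but this is the same computation in different language. Either way, as in Proposition \ref{prop: 8.9}, everything reduces to the factor-set computations already carried out above.
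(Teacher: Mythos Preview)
Your proposal is correct and follows the same line as the paper, which in fact gives no formal proof at all: the paper simply computes $[e_{23}^{w},e_{14}^{uv}]=z_{12}^{\overline{wuv}}=z_{12}^{\overline{uwv}}=[e_{13}^{u},e_{24}^{wv}]$ from the factor set and then states the proposition as a consequence, exactly as it did for Proposition~\ref{prop: 8.9}. Your write-up is a fleshed-out version of this, adding the verification that $f$ is a cocycle, that the simple generators can be lifted arbitrarily with all picture relations then holding in $\widetilde T_{12}$, and that the value on $x(I_2)$ is lift-independent because it is a commutator; none of this is spelled out in the paper.
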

 Thus the picture on the right hand side of Figure \ref{Figure04}, with the blue edge added and labeled $z$, is a picture for $\widetilde T_{12}(\ZZ[\pi])$. The homomorphism $G(A_3^-)\to \widetilde T_{12}(\ZZ[\pi])$ sends $x(I_2)$ to $z^{-1}$ since the blue edges on the two sides of Figure \ref{Figure04} have opposite orientation.
}

{
\color{blue}
In both examples \ref{eg: 8.8} and \ref{eg: 8.10}, we obtain ``ghost handle slides'' $z^u$ which will appear in the handle slide pattern. We believe that this generalized to larger numbers of handles sliding over each other. Basically, we believe that when two or more critical points are at the same critical level, there will be ghost handle slides over and under them. I will explain more about this in another paper.}

{ 
\subsection{Cartan subalgebra and generalized Grassmann invariant}\color{blue}
By considering an accurate embedding into the Cartan subalgebra $H$ of the Morse pictures for $A_3^+$ we are lead to a graphical method to keep track of the generalized Grassmann invariant. In the next subsection, we consider a similar construction for the Morse picture for $A_3^-$ which gives a graphical interpretation of the dual of the generalized Grassmann invariant which we expect to be equivalent to the original generalized Grassmann invariant: They probably differ by an involution of $K_3(\ZZ[\pi])$ and thus should have the same image.

In the previous section, we modified the group of upper triangular matrices to match the representation theory of certain quivers of type $A_3$ which we called $A_3^+$ and $A_3^-$. Here we will do the opposite: We modify the representation theory to match the pictures that come from Morse theory. The representation theory is rather involved, using Bridgeland stability conditions, Harder-Narasimhan stratifications of torsion classes and torsion-free classes and the introduction of ``ghost modules'' which we explain only in the two examples that we have. Since this is a topology paper and this section is mainly heuristic, we will skim over the details and hopefully come back to this in another paper.

Let $\Lambda$ be the path algebra of the $A_3$ quiver with straight orientation: $1\to 2\to3$. Recall that a \emph{torsion class} is a full subcategory of $mod\text-\Lambda$ which is closed under extensions and quotient modules \cite{Apostolos-Idun}. Let $\cG$ be the torsion class generated by $P_1$ and $P_2$. Equivalently, $\cG$ consists of all $\Lambda$-modules $M$ so that $\Hom(M,S_3)=0$.
}

{\color{blue}
\begin{defn}\label{def: 8.12}
    We define the \emph{relative stability diagram} or \emph{relative picture} of a torsion class $\cG$ as the union of \emph{relative walls} $D_\cG(M)\subset H$ for any $M\in\cG$ defined as follows. $D_\cG(M)$ is the set of all stability conditions $\theta$ so that $\theta(\undim M)=0$ and $\theta(\undim M')\le 0$ for all $M'\subset M$ which also lie in $\cG$. For example, $D_\cG(P_2)$ is the entire hyperplane in $H$ given by $h_2=h_4$ since $P_2$ has no subobjects which lie in the torsion class $\cG$.
\end{defn}
}

{\color{blue}
Figure \ref{fig: relative picture} shows what this construction does to the example at hand. This algebraic construction has the desired geometric property that the locus of the elementary matrices $e_{ij}^u$ is equal to a relative wall which is a subset of the hyperplane in $H$ given by $h_i=h_j$. This property was missing in Figures \ref{Figure03} and \ref{Figure04}.
}

{\color{blue}
The theory of maximal green sequences and Bridgeland stability are well-known. See for example \cite{Linearity}, \cite{Keller}. We use some simple examples to show how these concepts should be modified for torsion classes (but we only consider one torsion class: $\cG=\,^\perp S_3$).
}

{\color{blue}
\begin{eg}\label{eg: MGS1}
The following are maximal green sequences for the torsion class $\cG$.
\begin{enumerate}
\item $P_2,S_2,S_1$
\item $S_2,P_2,S_1$
\item $S_1,P_1,I_2,S_2,P_2.$
\end{enumerate}
These are indicated by the dashed green paths in Figure \ref{fig: relative picture b}. The first two are ``linear'' since the paths are straight lines. In higher dimensions, the pictures are impossible to draw. So, we use algebraic criteria: the Harder-Narasimhan stratification, a Hom-orthogonality condition, Bridgeland stability and the picture monoid which we review in these examples.
\end{eg}
}

{\color{blue}
\begin{defn}\label{def: Hom-orthogonal}
    A sequence of indecomposable objects $M_1,M_2,\cdots$ in a torsion class $\cG$ is said to have the \emph{relative forward Hom-orthogonality condition} if it satisfies:
\begin{enumerate}
    \item[(a)] For any $i\le j$ there are no forbidden morphisms $M_i\to M_j$. (A morphism is \emph{forbidden} if it is nonzero with kernel in the torsion class $\cG$.)
    \item[(b)] The sequence is maximal in the sense that it is not a subsequence of a longer sequence satisfying the same condition.
\end{enumerate}
\end{defn}

We verify that our three examples satisfy these conditions.
\begin{enumerate}
\item[(a)] The only nonzero forward morphisms are $P_2\to S_2$ in (1) and $P_1\to I_2$ in (3). But these are both allowed since both have kernel $S_3\notin\cG$.
\item[(b)] Example (3) is certainly maximal since it includes all 5 modules. The first two examples are missing $P_1$ and $I_2$. But $P_1$ must come before $P_2$ and after $S_1$. So, $P_1$ cannot be added to either sequence. Similarly, $I_2$ must come before $S_2$ and after $S_1$. So, $I_2$ cannot be inserted. So, all examples satisfy (b).
\end{enumerate}
}

{\color{blue}
\begin{defn}\label{def: HN for torsion classes}
A sequence of indecomposable objects $M_1,M_2,\cdots$ in a torsion class $\cG$ forms a \emph{Harder-Narasimhan stratification} of $\cG$ if any other object $X$ in $\cG$ has a filtration
\[
	0=X_0\subset X_1\subset X_2\subset\cdots
\]
so that each $X_i/X_{i-1}$ is a direct sum of copies of $M_i$. Furthermore, the sequence $\{M_i\}$ should be minimal, i.e., no $M_i$ has a filtration as above using the other terms in the sequence.
\end{defn}

We verify that our three examples satisfy these conditions. We note that the forward Hom-orthogonality condition implies minimality since a filtration of any $M_i$ using the others would give either a subobject of $M_i$ which comes before it or a quotient object which comes after it and this would violate Hom-orthogonality which we have already verified in the three examples. So, we only need to verify the first condition. For example (3) this is obvious since all 5 objects are there. For (1) there are two objects missing: $P_1,I_2$. For $P_1$ we have the extension: $0\to P_2\to P_1\to S_1\to 0$. Since $P_2$ comes before $S_1$ in (1), we have the required filtration of $P_1$. Similarly, the extension $0\to S_2\to I_2\to S_1\to 0$ gives the required filtration for $I_2$. Case (2) is similar.
}

{\color{blue}
We need one more concept: the ``picture monoid''. When we write down the relations defining the picture group, we see that they define a monoid which we call the \emph{picture monoid}. For the picture for the torsion class $\cG$ shown on the left side of Figure \ref{fig: relative picture}, we get the following.

\begin{defn}\label{def: picture monoid of G}
The \emph{picture monoid} $M(\cG)$ for $\cG$ has generators $S_1,S_2,P_1,P_2,I_2$ (avoiding the correct notation $x(S_1),x(S_2),\cdots$ for clarity and for applications) and the following relations (which come from the vertices of the picture).
\begin{enumerate}
\item $P_1I_2=I_2P_1$
\item $P_2S_2=S_2P_2$
\item $S_2S_1=S_1I_2S_2$
\item $P_2S_1=S_1P_1P_2$
\item $P_1S_2=S_2P_1$
\item $I_2P_2=P_2I_2$
\end{enumerate}
There is a special element $c=P_2S_2S_1$ which we call the \emph{Coxeter element} for $\cG$.
\end{defn}
}

{\color{blue}
\begin{rem}
It is clear that any MGS gives a word in the generators of the picture monoid whose product is equal to the Coxeter element. This is because a MGS is given by the wall-crossings of a path going from the unbounded region of the picture to the central triangle, but only crossing the walls in the inward direction. We call this a \emph{green path}. (For example, the path $\gamma_3$ in Figure \ref{fig: relative picture b} crosses the ghost wall $A$ in the wrong direction. So, $\gamma_3$ is green for $\cG$ but not green for the augmented picture with ghosts added.) Any two green paths can be deformed into each other and when the path crosses a vertex, the word will change by a relation in the picture monoid. We believe the converse also holds: any word in the generators whose product is the Coxeter element should be a MGS. This holds for picture groups of Dynkin quivers by \cite{IT14}.
\end{rem}
}

{\color{blue}
For our three examples, the picture monoid statement holds. (1) is the Coxeter element. By relation (2), $c=P_2S_2S_1=S_2P_2S_1$ giving (2). For (3), we compute:
\[
    S_1P_1I_2S_2P_2=_{(2),(6)} S_1P_1P_2I_2S_2=_{(4)} P_2S_1I_2S_2=_{(3)} P_2S_2S_1=c.
\]

}

{\color{blue}
We now come to the ghosts. There is only one object missing from $\cG$. It is $S_3$. This object can return as a ``ghost''. But it can have two ghosts! We call them $A$ and $B$. But if both ghosts appear, they must be next to each other in the order $AB$.

\begin{defn}\label{def: ghost}
Given a $M_1,M_2,\cdots$ a MGS for the torsion class $\cG$, the \emph{ghost} $B$ can appear between $M_i$ and $M_{i+1}$ if the sequence
\[
	M_1,M_2,\cdots,M_i,S_3,M_{i+1},\cdots
\]
is a MGS for $mod\text-\Lambda$.
\end{defn}

Example (1) cannot have any ghosts since $P_2$ cannot come before $S_2$ in any MGS for $mod\text-\Lambda$: That would violate the Hom-orthogonality condition. We can also see this from Figure \ref{fig: relative picture b} the path $\gamma_0$ representing example (1) does not cross either of the ``ghost walls'' shown in red and blue. Example (2) can have ghosts in two places and we get two sequences with ghosts:
\begin{enumerate}
\item $S_2,P_2,S_1,A,B$
\item $S_2,P_2,B,S_1$.
\end{enumerate}
These are given by the green paths $\gamma_1$ and $\gamma_2$ in Figure \ref{fig: relative picture b}. Ghost $B$ is explained by Definition \ref{def: ghost}. To explain ghost $A$ we will use the ``augmented picture monoid'' $M(\widetilde\cG)$, the monoid of the augmented picture $\widetilde\cG$ shown in Figure \ref{fig: relative picture b}.

\begin{rem}\label{rem:ghost A in next paper}
An explanation for ghost $A$ using Bridgeland stability conditions will be given in our next paper \cite{MoreGhosts}. The statement is: Every $X\notin\cG$ has one ghost for every object of $\cG$ which contains it. $B$ is the ghost of $S_3$ corresponding to $P_2\supset S_3$ and $A$ is the ghost of $S_3$ corresponding to $P_1$.
\end{rem}
}

{\color{blue}
\begin{defn}\label{def: augmented picture monoid}
The \emph{augmented picture monoid} $M(\widetilde\cG)$ is the monoid with generators $S_1,S_2,P_1,P_2,I_2,A,B$ and the following relations which come from the vertices of the left hand picture in Figure \ref{fig: relative picture b}.
\begin{enumerate}
\item $P_1I_2=I_2P_1A$
\item $P_2S_2=S_2P_2B$
\item $S_2S_1=S_1I_2S_2$
\item $P_2S_1=S_1P_1P_2$
\item $P_1S_2=S_2P_1$
\item $I_2P_2=P_2I_2$
\item $AP_2S_2=S_2P_2AB$
\item $BS_1=S_1AB$
\end{enumerate}
The special element $c=P_2S_2S_1$ is call the \emph{Coxeter element} for $\widetilde\cG$.
\end{defn}

{
We interpret these relations as indicating exact sequence involving ghost modules. E.g., we interpret (1) as a short exact sequence $0\to A\to P_1\to I_2\to 0$ which indicates that $A$ is a ``ghost'' of $S_3$. Similarly we imagine that (8) indicates an extension:
\[
	0\to B\to A\to S_1\to 0.
\]
Combining relations (2) and (7) we obtain the relation $AS_2P_2B=S_2P_2AB$.
In the augmented picture group $G(\widetilde\cG)$ we can cancel the $B$ to get $AS_2P_2=S_2P_2A$. Thus $A$ commutes with $S_2P_2$ in the augmented picture group.
}

If we set the ghosts $A,B$ equal to 1, we get $M(\cG)$. Therefore, we have an epimorphism of monoids $M(\widetilde\cG)\to M(\cG)$. We also note that, in the corresponding picture group $G(\widetilde \cG)$, relation (8) is redundant: it follows from the other relations. Also, if we mod out $A$, the resulting group $G(\widetilde\cG)/(A)$ is the pull back in the following diagram.
\[
\xymatrix{
G(\widetilde\cG)/(A)\ar[d]\ar[r] &
	F_2\ar[d]\\
G(\cG) \ar[r]^\varphi& 
	Z^2
	}
\]
where $F_2=\left<x,y\right>$ and $\varphi$ is given by $\varphi(P_2)=x,\varphi(S_2)=y$ and $\varphi(S_1)=\varphi(I_2)=\varphi(P_1)=1$. The element $B\in G(\widetilde\cG)/(A)$ comes from the commutator $x^{-1}y^{-1}xy\in F_2$. We looked for a similar description of $G(\widetilde\cG)$ but we couldn't find it.
}

{\color{blue}
We recall the standard definition of Bridgeland stability, reinterpret it in terms of the Cartan subalgebra and consider what happens when we restrict to a torsion class. 

A \emph{Bridgeland stability condition} is a linear map
\[
	\sigma:K_0(mod\text-\Lambda)\to \CC
\]
having the property that, for any module $M$, $\sigma(M)\in \CC$ is above the real axis, i.e., its imaginary part is positive. Each module has a \emph{slope} $\phi(M)$ given by $\cot \phi(M)=a/b$ if $\sigma(M)=a+bi$. $M$ is \emph{$\sigma$-semistable} if $\phi(M')\le \phi(M)$ for all $M'\subset M$ or, equivalently, $\cot \phi(M')\ge \cot\phi(M)$. The corresponding HN-stratification of $mod\text-\Lambda$ is given by taking $\sigma$-semistable modules in decreasing order of slope, or increasing order of $\cot\phi(M)$.
}

{\color{blue}
We can reinterpret this in terms of the Cartan subalgebra $H_\CC$ of the complex semisimple Lie algebra. We view elements of $H_\CC$ as given by $a+bi$ where $a,b\in H$. Since roots are elements of the dual of $H_\CC$, $\alpha:H_\CC\to \CC$, we can view Bridgeland's stability $\sigma$ as an element of $H_\CC$:
\[
	\sigma=h+ik,\quad h,k\in H
\]
Following the idea of \cite{Linearity}, we view this as a linear path in the real Cartan subalgebra $\gamma_\sigma:\RR\to H$ given by
\[
	\gamma_\sigma(t)=-h+tk.
\]
In the case of $A_n$, $H_\CC$ is the set of diagonal $(n+1)\times (n+1)$ complex matrices $h+ik$. The condition that $\alpha(h+ik)$ is above the real axis is equivalent to the condition that the entries of $k$ are decreasing
\[
	k_1>k_2>\cdots>k_{n+1}
\]
Therefore the path $\gamma(t)=-h+tk$ converges to the region in the middle triangle of the picture where $h_1>h_2>\cdots>h_{n+1}$. See Figure \ref{Figure01}. In Figure \ref{fig: relative picture b}, we see that $\gamma_0,\gamma_1,\gamma_2$ are such linear paths. $\gamma_3$ is ``nonlinear'' but it is given by a ``relative Bridgeland stability condition''.
}

{\color{blue}
\begin{thm}\label{thm: gamma passes through D(M)}
$M$ is $\sigma$-semistable if and only if the path $\gamma_\sigma$ passes through $D(M)$. The corresponding HN-stratification is given by taking these semi-stable modules in the order that $\gamma_\sigma$ passes the walls $D(M)$.
\end{thm}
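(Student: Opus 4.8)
The plan is to prove this by a direct dictionary between Bridgeland's slope formulation in $\CC$ and King's wall formulation in $H$, once the Bridgeland datum has been transported into the real Cartan subalgebra. First I would make precise the identification indicated just before the statement: a Bridgeland stability $\sigma\colon K_0(mod\text-\Lambda)\to\CC$ is the same as its real and imaginary parts, two linear functionals on $K_0\otimes\RR\cong\RR^n$, and under the identification of linear functionals on the root lattice with elements of $H$ already used to define the walls $D(M)$ (i.e.\ $(H^\ast)^\ast=H$), this is a pair $h,k\in H$ with $\sigma=\theta_h+i\theta_k$. Bridgeland's positivity axiom, that the imaginary part of $\sigma(M)$ is positive for every nonzero module $M$, says exactly that $\theta_k$ is strictly positive on every nonzero dimension vector; for type $A_n$ this is equivalent to $k_1>k_2>\cdots>k_{n+1}$ (check on the simples, then use additivity), which is precisely the condition making the ray $\gamma_\sigma(t)=-h+tk$ limit into the chamber $h_1>\cdots>h_{n+1}$. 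Under this dictionary $\sigma(\undim M)=\theta_h(\undim M)+i\,\theta_k(\undim M)$, so $\cot\phi(M)=\theta_h(\undim M)/\theta_k(\undim M)$.

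The core computation is then short. Fix a nonzero module $M$ and consider the affine function $f_M(t)=\theta_{\gamma_\sigma(t)}(\undim M)=-\theta_h(\undim M)+t\,\theta_k(\undim M)$. Since $\theta_k(\undim M)>0$ this is strictly increasing, so it has the unique zero $t_M=\theta_h(\undim M)/\theta_k(\undim M)=\cot\phi(M)$; hence $\gamma_\sigma$ meets the hyperplane $\{\theta:\theta(\undim M)=0\}\supseteq D(M)$ transversally, in the single point $\gamma_\sigma(t_M)$, so ``$\gamma_\sigma$ passes through $D(M)$'' unambiguously means $\gamma_\sigma(t_M)\in D(M)$. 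Now $\gamma_\sigma(t_M)\in D(M)$ iff $\theta_{\gamma_\sigma(t_M)}(\undim M')\le0$ for every submodule $M'\subseteq M$, i.e.\ iff $-\theta_h(\undim M')+t_M\,\theta_k(\undim M')\le0$; dividing by $\theta_k(\undim M')>0$ this is $t_M\le\cot\phi(M')$, i.e.\ $\phi(M')\le\phi(M)$ for all $M'\subseteq M$ --- which is exactly King's criterion for $M$ to be $\sigma$-semistable. (For $M=M_1\oplus M_2$ one invokes $D(M)=D(M_1)\cap D(M_2)$; the inequalities on both sides are literally the same.) The same argument with ``submodule'' replaced by ``submodule lying in $\cG$'' proves the corresponding statement for the relative stability conditions used for $\gamma_3$. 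This establishes the first sentence.

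For the ordering: the parameter $t$ increases along $\gamma_\sigma$, and by the previous paragraph the wall $D(M)$ of a $\sigma$-semistable $M$ is met at $t_M=\cot\phi(M)$, so the walls are traversed in order of increasing $\cot\phi(M)$, i.e.\ decreasing slope $\phi(M)$. By the standard existence and uniqueness of Harder--Narasimhan filtrations for a Bridgeland stability (see e.g.\ \cite{Linearity}, \cite{Keller}), the HN stratification of $mod\text-\Lambda$ is precisely the list of $\sigma$-semistable indecomposables taken in decreasing order of slope, each HN subquotient of an arbitrary object being a direct sum of copies of the module at that slope; when $\sigma$ is generic the slopes are pairwise distinct and this list is a maximal green sequence. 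Since decreasing-slope order is exactly the order in which $\gamma_\sigma$ crosses the walls $D(M)$, the stratification is read off along the path, as claimed.

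I do not expect a serious obstacle here: the theorem is a translation between two equivalent formalisms, and the substance is entirely in the first step --- pinning down the correspondence $\sigma\leftrightarrow(h,k)\in H\times H$ and checking that Bridgeland's positivity axiom is exactly the condition on $k$ that sends the ray $\gamma_\sigma$ into the chamber $h_1>\cdots>h_{n+1}$. Once that is in place, the equivalence of the two semistability conditions is forced by clearing the positive denominators $\theta_k(\undim M')$ in the slope inequalities, and the ordering is immediate. The one genuine (if mild) hypothesis to keep track of is genericity of $\sigma$: without it several semistable modules can share a wall, so the HN stratification would have multi-module strata rather than being a single sequence of indecomposables.
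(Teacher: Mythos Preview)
Your proposal is correct and follows essentially the same approach as the paper: both compute that the path meets the hyperplane $\{\theta(\undim M)=0\}$ at $t_M=\cot\phi(M)$, then observe that the remaining inequalities defining $D(M)$ at that point, after dividing through by the positive imaginary parts, become exactly the slope inequalities $\phi(M')\le\phi(M)$ for submodules. Your write-up is somewhat more careful about the dictionary $\sigma\leftrightarrow(h,k)$ and about genericity, but the mathematical content is the same.
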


\begin{proof} Although this holds in general \cite{Linearity}, the notation is easier if we take $M=M_\alpha$ for $\alpha:H_\CC\to \CC$ a positive root.

By definition, $\gamma_\sigma(t)\in D(M_\alpha)$ if the following two conditions are satisfied.

(1) $\alpha(\gamma_\sigma(t))=-\alpha(h)+t\alpha(k)=0$, i.e.,
\[
	t=\frac{\alpha(h)}{\alpha(k)}=\cot \phi(M_\alpha).
\]

(2) $\beta(\gamma_\sigma(t))=-\beta(h)+t\beta(k)\le 0$ for all $M_\beta\subset M_\alpha$, i.e.,
\[
	-\cot\phi(M_\beta)=\frac{-\beta(h)}{\beta(k)}\le -t=-\cot \phi(M_\alpha)
\]
or, equivalently, $\phi(M_\beta)\le \phi(M_\alpha)$. 

Thus $\gamma_\sigma(t)\in D(M_\alpha)$ for some $t$ implies $M_\alpha$ is $\sigma$-semistable. The converse holds by the same calculations. Since $t=\cot\phi(M_\alpha)$, the path $\gamma_\sigma$ passed through the semi-stable walls in the order of the HN-stratification.
\end{proof}
}

{\color{blue}
For torsion classes, there is a relative version of Bridgeland stability: 

\begin{defn}\label{def: relative Bridgeland stability}
A \emph{relative Bridgeland stability condition} for a torsion class $\cG$ is a linear map
\[
	\sigma:K_0(\cG)\to \CC
\] 
so that $\sigma(M)$ is above the real line for $M\in\cG$.
\end{defn}

In our example, such a relative stability condition is give by $h+ik$ where $k_1>k_2>k_3$ and $k_2>k_4$. We allow $k_3-k_4$, the imaginary part of $\alpha_3(h+ik)=\sigma(S_3)$ to be negative. This is what happens for the ``nonlinear'' MGS $S_1,P_1,I_2,S_2,P_2$. This is given by the relative Bridgeland stability condition shown in Figure \ref{fig: relative Bridgeland}. We see that $\sigma(S_3)$ must be below the real axis and $-\sigma(S_3)$ must be between $\sigma(I_2)$ and $\sigma(S_2)$. This can also be seen in Figure \ref{fig: relative picture b} where the corresponding path goes through the ghost $A$ in the wrong direction.
}

{
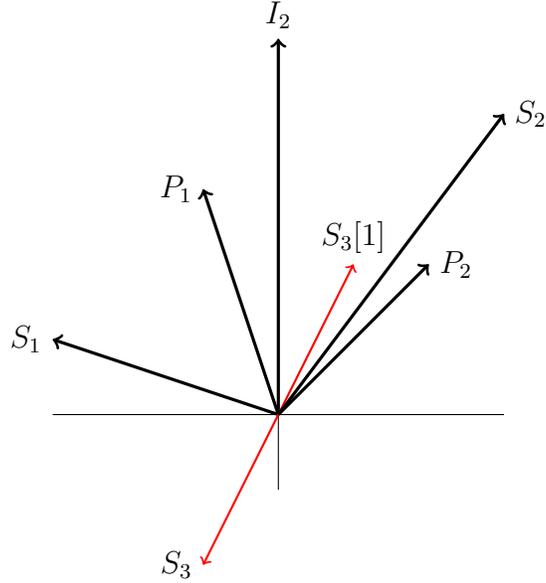
\begin{figure}[htbp]
\begin{center}
\begin{tikzpicture}[black]
\draw (-3,0)--(3,0);
\draw (0,-1)--(0,5);
\draw[very thick,->] (0,0)--(-3,1);
\draw[very thick,->] (0,0)--(-1,3);
\draw[very thick,->] (0,0)--(0,5);
\draw (-3,1)node[left]{$S_1$};
\draw (-1,3)node[left]{$P_1$};
\draw (0,5)node[above]{$I_2$};
\draw[red, thick,->] (0,0)--(-1,-2);
\draw (-1,-2)node[left]{$S_3$};
\draw[thick, red,->] (0,0)--(1,2);
\draw (1,2)node[above]{$S_3[1]$};
\draw[very thick,->] (0,0)--(3,4);
\draw (3,4)node[right]{$S_2$};
\draw[very thick,->] (0,0)--(2,2);
\draw (2,2)node[right]{$P_2$};
\end{tikzpicture}
\color{blue}
\caption{This shows a relative Bridgeland stability condition which gives the nonlinear MGS $S_1,P_1,I_2,S_2,P_2$. The stability condition puts $S_3$ below the real axis. In the augmented picture group this gives $S_1P_1I_2A^{-1}S_2P_2AB$ although the final $AB$ are not given by Bridgeland stability. This is best illustrated by the green path $\gamma_3$ in Figure \ref{fig: relative picture b}.}
\label{fig: relative Bridgeland}
\end{center}
\end{figure}
}

{\color{blue}
Turning to the ghosts $b=z_{2,34}^{v,w}$ and $a=z_{1,34}^{uv,uw}$ in the right hand figure in Figure \ref{fig: relative picture b}, we will see that these can be interpreted as the generalized Grassmann invariant. Recaall that each region in a picture for the Steinberg group can be labeled with an element of the Steinberg group We project to the general linear group $GL(\ZZ[\pi])$ to get matrices $r$ in each region.

Figure \ref{fig: relative picture b} gives the following three terms where the first term (1) can be taken as the definition of the operation $z_{i,jk}^{v,w}$. Recall that $\left<x,y\right>\in \ZZ_2[\pi]$ is the intersection of $x$ and $y$ in $\ZZ_2[\pi]$ considered as the set of finite subsets of $\pi$.
\begin{enumerate}
\item $z_{2,34}^{v,w}(r)=\sum_p r_{p2}\left< vs_{3p},ws_{4p}\right>$
\item $z_{2,34}^{v,w}(r')=\sum_p r'_{p2}\left< vs'_{3p},ws'_{4p}\right>$ where $r'=re_{12}^u$, $s'=e_{12}^{-u}s$.
\item $z_{1,34}^{uv,uw}(r'')=\sum_p r''_{p1}\left< uvs''_{3p},uws''_{4p}\right>$ where $r''=r e_{24}^w e_{23}^v$, $s''=e_{23}^{-v}e_{24}^{-w}s$.
\end{enumerate}
We see that $s_{3p}=s'_{3p}=s''_{3p}$ and $s_{4p}=s'_{4p}=s''_{4p}$. Also $r''_{p1}=r_{p1}$ and 
\[
	r'_{p2}=r_{p2}+r_{p1}u.
\]
This implies $(2)-(1)=(3)$ by distributivity since
\[
	r_{p1}u\left< vs'_{3p},ws'_{4p}\right>=r_{p1}\left< uvs'_{3p},uws'_{4p}\right>.
\]
Thus $(1),(2),(3)$ add up to zero in $\ZZ_2[\pi]$.  The identity $AS_2P_2=S_2P_2A$, which translates into $e_{24}^we_{23}^va=ae_{24}^we_{23}^v$ is reflected in the fact that
\[
	z_{1,34}^{uv,uw}(r'')=z_{1,34}^{uv,uw}(r)
\]
which is used in the above proof.

{
The identity $(1)+(2)+(3)=0$ proved above is the handle addition side of the last relation of Definition \ref{def: augmented picture monoid}: $(8): BS_1=S_1AB$. Converting to row operations: $re_{12}^{u}$ becomes $e_{12}^{-u}s$ and we get:
\[
	e_{12}^{-u}b(s)=bae_{12}^{-u}(s).
\]
This is an equation between two operations on the set $St(\ZZ[\pi])\times \ZZ_2[\pi]$ indicated as follows.
\[
	b(s,0)=(s,z_{2,34}^{v,w}(r))=\left(
	s, \sum_p r_{p2}\left< vs_{3p},ws_{4p}\right>
	\right)
\]
\[
	e_{12}^{-u}b(s,0)=(e_{12}^{-u}s,z_{2,34}^{v,w}(r))=\left(
	s', \sum_p r_{p2}\left< vs_{3p},ws_{4p}\right>
	\right)\tag{$\ast$}
\]
On the other side we have:
\[
	e_{12}^{-u}(s,0)=(e_{12}^{-u}s,0)=(s',0)
\]
\[
	ae_{12}^{-u}(s,0)=(s',z_{1,34}^{uv,uw}(r'))=\left(
	s', \sum_p r'_{p1}\left< uvs'_{3p},uws'_{4p}\right>
	\right)
\]
\[
	bae_{12}^{-u}(s,0)=(s',z_{1,34}^{uv,uw}(r')+z_{2,34}^{v,w}(r'))=\left(
	s', \sum_p r'_{p1}\left< uvs'_{3p},uws'_{4p}\right>
	+
	\sum_p r'_{p2}\left< vs'_{3p},ws'_{4p}\right>
	\right)
\]
which is equal to $(\ast)$ by the previous calculation.
}

This is a graphical interpretation of the calculation that the generalized Grassmann invariant $\chi$ is zero on the second order Steinberg relation given in Figure \ref{def: 2.3}(3). In the sequel \cite{MoreGhosts} we will show how ghosts appear and can be used to show that the generalized Grassmann invariant $\chi$ is zero on all second order Steinberg relation.
}

{
\begin{figure}[htbp]
\begin{center}

\begin{tikzpicture}[scale=1.3,black] 
%
{
\begin{scope} 
\begin{scope}
\clip rectangle (-2,-1.3) rectangle (0,1.3);
\draw[thick] (0,0) ellipse [x radius=1.3cm,y radius=1.21cm];
\end{scope}
\begin{scope}[xshift=.87mm]
		\draw[thick] (-1.15,.7) node[left]{\small$I_2$};
\end{scope}
\begin{scope}[xshift=-3mm,yshift=-8mm]
		\draw[thick] (-1.15,.7) node[left]{\small$P_1$};
\end{scope}
\begin{scope}[xshift=-3.35mm,yshift=-6.9mm]
\begin{scope}[rotate=297]
\clip rectangle (-2,-1.3) rectangle (0,1.3);
\draw[thick] (0,0) ellipse [x radius=1.3cm,y radius=1.175cm];
\end{scope}
\end{scope}
\begin{scope}[xshift=-.7cm]
	\draw[thick] (0,0) circle[radius=1.4cm];
		\draw (-1,1.1) node[left]{\small$S_1$};
\end{scope}
\begin{scope}[xshift=.7cm]
	\draw[thick] (0,0) circle[radius=1.4cm];
		\draw (1,1.1) node[right]{\small$S_2$};
\end{scope}
\begin{scope}[yshift=-1.35cm]
	\draw[thick] (0,0) circle[radius=1.4cm];
	\draw (1.37,-.36) node[right]{\small$P_2$};
\end{scope}
\draw[red,dashed] (-1.28,.18)--(1.39,-1.22);
\draw[red] (0.02,-.33) node[rotate=-25]{\small$S_3$};
\end{scope} 
}
{
\begin{scope}[xshift=5.5cm,scale=1.2]
%
\begin{scope}
\clip rectangle (-2,-1.3) rectangle (0,1.3);
\draw[thick] (0,0) ellipse [x radius=1.3cm,y radius=1.21cm];
\end{scope}
\begin{scope}[xshift=-3mm,yshift=-8mm]
	\draw[thick] (-1.34,.5) node[rotate=90]{\tiny$h_1=h_4\ge h_2$};
\end{scope}
\begin{scope}[xshift=-3.35mm,yshift=-6.9mm]
\begin{scope}[rotate=297]
\clip rectangle (-2,-1.3) rectangle (0,1.3);
\draw[thick] (0,0) ellipse [x radius=1.3cm,y radius=1.175cm];
\end{scope}
\end{scope}
	\draw[thick] (-1.05,.9) node[rotate=50]{\tiny$h_1=h_3\ge h_2$};
\begin{scope}[xshift=-.7cm]
	\draw[thick] (0,0) circle[radius=1.4cm];
		\draw (-1.1,1.1) node[rotate=45]{\tiny$h_1=h_2$};
\end{scope}
\begin{scope}[xshift=.7cm]
	\draw[thick] (0,0) circle[radius=1.4cm];
		\draw (1.1,1.1) node[rotate=-45]{\tiny$h_2=h_3$};
\end{scope}
\begin{scope}[yshift=-1.35cm]
	\draw[thick] (0,0) circle[radius=1.4cm];
	\draw (0,-1.2) node{\tiny$h_2=h_4$};
\end{scope}
\draw[red,dashed] (-1.28,.18)--(1.39,-1.22);
\draw[red] (0,-.35) node[rotate=-25]{\tiny$h_3=h_4$};
\end{scope}
} 
\end{tikzpicture}
\color{blue}
\caption{Relative picture for the torsion class $\cG=Gen(P_1\oplus P_2)$ in the module category of $A_3:1\to 2\to 3$.  This has 5 walls corresponding to the 5 indecomposable object of the torsion class: $S_1,S_2,P_1,P_2,I_2$. The circles are the domains of the minimal objects $S_1,S_2,P_2$. On the right are the coordinates of these sets in the Cartan subalgebra $H$. The ``ghost'' of the missing module $S_3$ is shown on the left, and is shown in $H$ on the right.}
\label{fig: relative picture}
\end{center}
\end{figure}
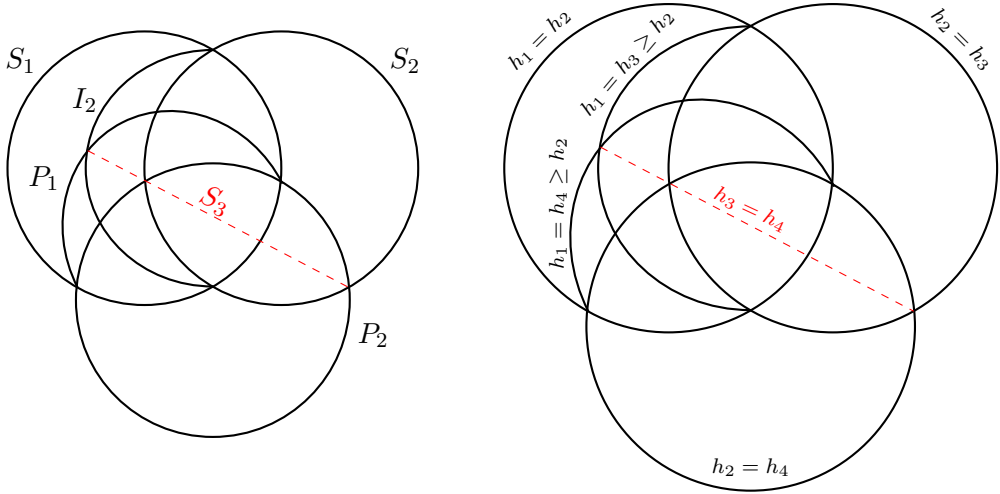
}

{
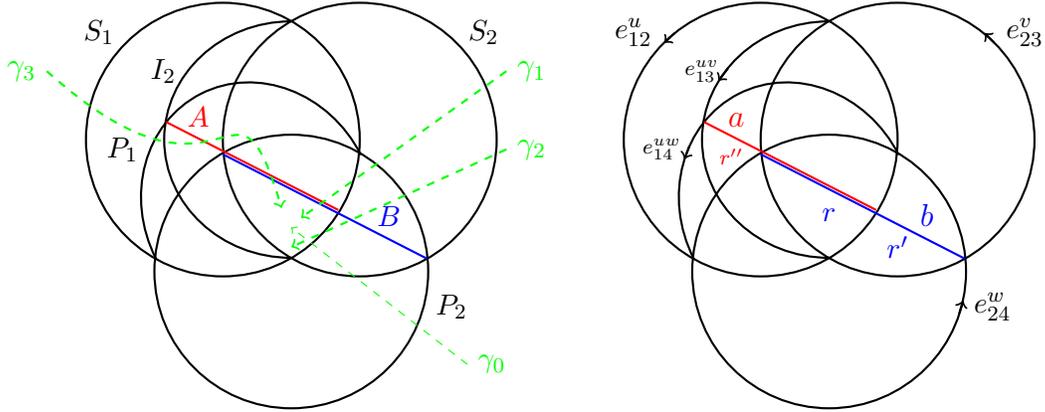
\begin{figure}[htbp]
\begin{center}

\begin{tikzpicture}[scale=1.3,black] 
%
{
\begin{scope} 
\begin{scope}
\clip rectangle (-2,-1.3) rectangle (0,1.3);
\draw[thick] (0,0) ellipse [x radius=1.3cm,y radius=1.21cm];
\end{scope}
\begin{scope}[xshift=.87mm]
		\draw[thick] (-1.15,.7) node[left]{\small$I_2$};
\end{scope}
\begin{scope}[xshift=-3mm,yshift=-8mm]
		\draw[thick] (-1.15,.7) node[left]{\small$P_1$};
\end{scope}
\begin{scope}[xshift=-3.35mm,yshift=-6.9mm]
\begin{scope}[rotate=297]
\clip rectangle (-2,-1.3) rectangle (0,1.3);
\draw[thick] (0,0) ellipse [x radius=1.3cm,y radius=1.175cm];
\end{scope}
\end{scope}
\begin{scope}[xshift=-.7cm]
	\draw[thick] (0,0) circle[radius=1.4cm];
		\draw (-1,1.1) node[left]{\small$S_1$};
\end{scope}
\begin{scope}[xshift=.7cm]
	\draw[thick] (0,0) circle[radius=1.4cm];
		\draw (1,1.1) node[right]{\small$S_2$};
\end{scope}
\begin{scope}[yshift=-1.35cm]
	\draw[thick] (0,0) circle[radius=1.4cm];
	\draw (1.37,-.36) node[right]{\small$P_2$};
\end{scope}
\draw[blue] (1,-1) node[above]{\small$B$};
\draw[red] (-.95,0.02) node[above]{\small$A$};
\draw[red,thick] (-1.28,.18)--(.48,-.72);
\draw[thick,blue] (-.7,-.15)--(1.39,-1.22);
\draw[thick,dashed,green,->] (-2.5,0.7)..controls (-.6,-1) and (-.6,1)..(-.1,-.7);
\draw[thick,dashed,green,->] (2.2,.7)--(0.1,-.8);
\draw[thick,dashed,green,->] (2.2,-.1)--(0,-1.1);

\draw[dashed,green,->] (1.8,-2.3)--(0,-.9);

\draw[green] (2.2,-.1) node[right]{$\small\gamma_2$};

\draw[green] (1.8,-2.3) node[right]{\small$\gamma_0$};
\draw[green] (2.2,.7) node[right]{$\small\gamma_1$};
\draw[green] (-2.5,0.7) node[left]{$\small\gamma_3$};

\end{scope} 
}
{
\begin{scope}[xshift=5.5cm]
%
\begin{scope}
\clip rectangle (-2,-1.3) rectangle (0,1.3);
\draw[thick] (0,0) ellipse [x radius=1.3cm,y radius=1.21cm];
\end{scope}
\begin{scope}[xshift=.87mm]
		\draw[thick,<-] (-1.22,.6)--(-1.12,.73); 
		\draw[thick] (-1.1,.7) node[left]{\tiny$e_{13}^{uv}$};
\end{scope}
\begin{scope}[xshift=-3mm,yshift=-8mm]
		\draw[thick,<-] (-1.18,.6)--(-1.13,.73); 
		\draw[thick] (-1.1,.7) node[left]{\tiny$e_{14}^{uw}$};
\end{scope}
\begin{scope}[xshift=-3.35mm,yshift=-6.9mm]
\begin{scope}[rotate=297]
\clip rectangle (-2,-1.3) rectangle (0,1.3);
\draw[thick] (0,0) ellipse [x radius=1.3cm,y radius=1.175cm];
\end{scope}
\end{scope}
\begin{scope}[xshift=-.7cm]
	\draw[thick] (0,0) circle[radius=1.4cm];
		\draw[thick,<-] (-.985,.99)--(-.88,1.09); 
		\draw (-1,1.1) node[left]{\small$e_{12}^u$};
\end{scope}
\begin{scope}[xshift=.7cm]
	\draw[thick] (0,0) circle[radius=1.4cm];
	\draw[thick,->] (.99,.99)--(.89,1.09); 
		\draw (1,1.1) node[right]{\small$e_{23}^v$};
\end{scope}
\begin{scope}[yshift=-1.35cm]
	\draw[thick] (0,0) circle[radius=1.4cm];
	\draw[thick,->] (1.33,-.43)--(1.37,-.3); 
	\draw (1.37,-.36) node[right]{\small$e_{24}^w$};
\end{scope}
\draw[blue] (1,-1) node[above]{$b$};
\draw[red] (-.95,0.02) node[above]{$a$};
\draw[red,thick] (-1.28,.18)--(.48,-.72);
\draw[thick,blue] (-.7,-.15)--(1.39,-1.22);
\draw[blue] (0,-.6) node[below]{\small$r$};
\draw[blue] (.7,-1.1) node{\small$r'$};
\draw[red] (-1,0.02) node[below]{\tiny$r''$};

\end{scope}
} 
\end{tikzpicture}
\color{blue}
\caption{Relative picture for the torsion class $\cG=Gen(P_1\oplus P_2)$ with two ``ghost modules'' added on the left, $A$ in red and $B$ in blue. The corresponding edges in the handle slide diagram are shown on the right. The labels are $a=z_{1,34}^{uv,uw}$ and $b=z_{2,34}^{v,w}$. The dashed green paths are maximal green sequence or, equivalently, Harder-Narasimhan stratifications of the torsion class.}
\label{fig: relative picture b}
\end{center}
\end{figure}
}

{\color{blue}
\subsection{Torsion-free classes and duality}

\begin{defn}\label{def: 8.14}
    Dually to Definition \ref{def: 8.12}, we define the \emph{corelative picture} of a torsion-free class $\cF$ as the union of \emph{corelative walls} $D^\cF(M)$ consisting of all stability conditions $\theta$ so that $\theta(\undim M)=0$ and $\theta(\undim M'')\ge 0$ for all quotients $M''$ of $M$ which lie in $\cF$. For example, $D^\cF(P_2)$ in Figure \ref{fig: corelative picture} is given by $h_2=h_4$ ($\theta_h(\undim P_2)=0$) and $h_2\ge h_3$ since $S_2\in\cF$ is a quotient of $P_2$.
\end{defn}
}

{
\begin{figure}[htbp]
\begin{center}

\begin{tikzpicture}[scale=1.3,black] 
%
{
\begin{scope}
\begin{scope}
\clip rectangle (-2,-1.3) rectangle (0,1.3);
\draw[thick] (0,0) ellipse [x radius=1.3cm,y radius=1.21cm];
\end{scope}
\begin{scope}[xshift=.87mm]
		\draw[thick] (-1.1,.7) node[left]{\small$P_1$};
\end{scope}
\begin{scope}[xshift=-3mm,yshift=-8mm]
		\draw[thick] (.5,-1.13) node[below]{\small$P_2$};
\end{scope}
\begin{scope}[xshift=3.5mm,yshift=-6.9mm]
\begin{scope}[rotate=62]
\clip rectangle (-2,-1.3) rectangle (0,1.3);
\draw[thick] (0,0) ellipse [x radius=1.3cm,y radius=1.175cm];
\end{scope}
\end{scope}
\begin{scope}[xshift=-.7cm]
	\draw[thick] (0,0) circle[radius=1.4cm];
		\draw (-1,1.1) node[left]{\small$I_2$};
\end{scope}
\begin{scope}[xshift=.7cm]
	\draw[thick] (0,0) circle[radius=1.4cm];
		\draw (1,1.1) node[right]{\small$S_3$};
\end{scope}
\begin{scope}[yshift=-1.35cm]
	\draw[thick] (0,0) circle[radius=1.4cm];
	\draw (1.15,-.8) node[right]{\small$S_2$};
\end{scope}
\begin{scope}[yshift=-1cm]
\end{scope}
\begin{scope}[xshift=-18.6mm,yshift=-19.3mm]
\draw[blue] (0,.2) node{\small$B$};
\draw[blue,thick] (-.6,0.17)--(.48,.72);
\draw[red,thick] (-.6,0.2)--(1,1);
\draw[red] (0,.5) node[above]{\small$A$};
\end{scope}
\begin{scope}[xshift=14mm,yshift=-2.8mm]
\draw[thick,blue] (-.7,.15)--(1.39,1.22);
\draw[blue] (1,1) node[below]{\small$B$};
\draw[thick,red] (.58,.835)--(1.39,1.25);
\draw[red] (1,1.05) node[above]{\small$A$}; 
\end{scope}

\end{scope} 
}

{
\begin{scope}[xshift=5.5cm] 
\begin{scope}
\clip rectangle (-2,-1.3) rectangle (0,1.3);
\draw[thick] (0,0) ellipse [x radius=1.3cm,y radius=1.21cm];
\end{scope}
\begin{scope}[xshift=.87mm]
		\draw[thick,<-] (-1.22,.6)--(-1.12,.73); 
		\draw[thick] (-1.1,.7) node[left]{\tiny$e_{14}^{uv}$};
\end{scope}
\begin{scope}[xshift=-3mm,yshift=-8mm]
		\draw[thick,->] (0.5,-1.164)--(0.6,-1.164); 
		\draw[thick] (.5,-1.2) node[below]{\tiny$e_{24}^{wv}$};
\end{scope}
\begin{scope}[xshift=3.5mm,yshift=-6.9mm]
\begin{scope}[rotate=62]
\clip rectangle (-2,-1.3) rectangle (0,1.3);
\draw[thick] (0,0) ellipse [x radius=1.3cm,y radius=1.175cm];
\end{scope}
\end{scope}
\begin{scope}[xshift=-.7cm]
	\draw[thick] (0,0) circle[radius=1.4cm];
		\draw[thick,<-] (-.985,.99)--(-.88,1.09); 
		\draw (-1,1.1) node[left]{\small$e_{13}^u$};
\end{scope}
\begin{scope}[xshift=.7cm]
	\draw[thick] (0,0) circle[radius=1.4cm];
	\draw[thick,->] (.99,.99)--(.89,1.09); 
		\draw (1,1.1) node[right]{\small$e_{34}^v$};
\end{scope}
\begin{scope}[yshift=-1.35cm]
	\draw[thick] (0,0) circle[radius=1.4cm];
	\draw[thick,->] (1.33,-.43)--(1.37,-.3); 
	\draw (1.37,-.36) node[right]{\small$e_{23}^w$};
\end{scope}
%
\begin{scope}[xshift=-18.6mm,yshift=-19.3mm]
\draw[blue] (0,.2) node{\small$b$}; 
\draw[blue,thick] (-.6,0.17)--(.48,.72);
\draw[red,thick] (-.6,0.2)--(1,1);
\draw[red] (0,.5) node[above]{\small$a$};
\draw[red] (.7,1) node{\small$r$};
\draw[blue] (-.3,.3) node[above]{\small$r'$}; 

\end{scope}
\begin{scope}[xshift=14mm,yshift=-2.8mm]
\draw[thick,blue] (-.7,.15)--(1.39,1.22);
\draw[blue] (1,1) node[below]{\small$b$};
\draw[thick,red] (.58,.835)--(1.39,1.25);
\draw[red] (1,1.05) node[above]{\small$a$}; 
\draw[blue] (.1,.5) node[above]{\small$r''$}; 

\end{scope}
\end{scope} 
} 
\end{tikzpicture}
\color{blue}
\caption{Co-relative picture for the torsion-free class $\cF=S_1^\perp$ in the module category of $A_3:1\to2\to3$. This has 5 walls for the indecomposable objects $S_2,S_3,P_1,P_2,I_2$ plus two ``ghost walls'' for $A,B$ which are the ghosts of $S_1$. The objects without quotients: $S_2,S_3,I_2$ form the three circles. On the right is the Morse picture from Figure \ref{def: 2.3}(3) plus the two ``ghosts'' $b=z_{12,3}^{u,w}$ and $a=z_{12,4}^{uv,wv}$ which we interpret as giving the ``dual generalized Grassmann invariant.}
\label{fig: corelative picture}
\end{center}
\end{figure}
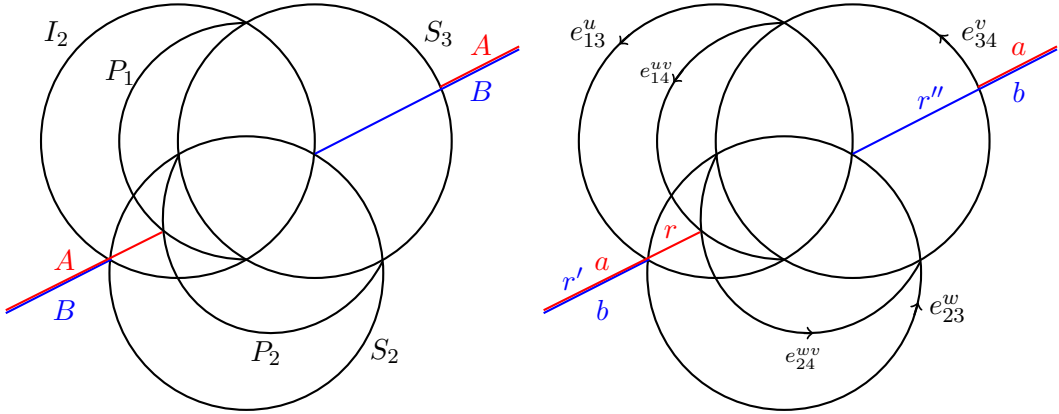
}

{\color{blue}
For the torsion-free case: We give a streamlined explanation for the augmented picture monoid and MGS's for both $\cF$ and $\widetilde\cF$.

\begin{defn}\label{def: augmented picture monoid}
The \emph{augmented picture monoid} $M(\widetilde \cF)$ is the cancellation monoid with generators $P_1,P_2,S_2,S_3,I_2,A,B$ with the relation that all of the MGS's are equal. These are
\begin{enumerate}
\item $S_3I_2S_2$
\item $I_2P_1S_3S_2$
\item $I_2P_1S_2P_2S_3$
\item $I_2S_2P_1P_2S_3$
\item $I_2S_2 AP_2P_1 S_3$
\item $ABS_2I_2 P_2P_1 S_3$
\item $ABS_2P_2I_2 P_1 S_3$
\item $ABS_2P_2 S_3I_2$
\item $AB S_3 S_2I_2$
\item $S_3B S_2I_2$
\end{enumerate}
The minimal relations can be seen by comparing two consecutive MGS's. For example, $(6)=(7)$ means $I_2P_2=P_2I_2$ and $(9)=(10)$ implies $ABS_3=S_3B$.
\end{defn}
}

{\color{blue}
The picture monoid $M(\cF)$ is the quotient of $M(\widetilde\cF)$ given by setting $A=B=1$. This gives 9 MGS's for $\cF$ since (9) and (10) give the same MGS for $\cF$: $S_3S_2I_2$. There is one more (nonlinear) MGS for $\cF$:

$(10')\ S_2I_2P_1P_2S_3$.

This is equivalent to the reduced (6): $S_2I_2 P_2P_1 S_3$ since $P_1$, $P_2$ commute in $M(\cF)$ by the relation $(4)=(5)$. If we attempt to lift the MGS $(10')$ up to $M(\widetilde\cF)$ we would get
\[
	AB S_2I_2A^{-1}P_1P_2S_3.
\]
Also, the relative Bridgeland stability condition for $\cF$ which makes $(10')$ stable puts $S^1$ below the real axis, resulting in the insertion of $A^{-1}$ for $M(\widetilde\cF)$ (similar to Figure \ref{fig: relative Bridgeland}).

The relative forward Hom-orthogonality condition is slightly different for torsion-free classes. The forbidden morphisms are those whose quotients lie in $\cF$. Ghost modules are also defined differently. A module $X\notin \cF$ has one ghost for every object of $\cF$ which maps onto $X$ \cite{MoreGhosts}. For example, $A$ is the ghost of $S_1$ corresponding to $P_1$ and $B$ corresponds to $I_2\onto S_1$. 
}

{\color{blue}
Finally, we come to the dual generalized Grassmann invariant. This comes from the right part of Figure \ref{fig: corelative picture}. The two ghost arcs $b=z_{12,3}^{u,w}$ and $a=z_{12,4}^{uv,wv}$ indicate the operations on the incidence matrices $r$ with $s=r^{-1}$ and 
\[
	r'= r e_{13}^u e_{23}^w,\quad s'= (r')^{-1}= e_{23}^{-w} e_{13}^{-u} s
\]
\[
	r''=r'e_{34}^{-v},\quad s''=e_{34}^vs'
\]
given by the following formulas.
\begin{enumerate}
\item $z_{12,3}^{u,w}(r')=\sum_p \left< r'_{p1}u,r'_{p2}w\right> s'_{3p}$
\item $z_{12,3}^{u,w}(r'')=\sum_p \left< r''_{p1}u,r''_{p2}w\right> s''_{3p}$
\item $z_{12,4}^{uv,wv}(r)=\sum_p \left< r_{p1}uv,r_{p2}wv\right> s_{4p}$
\end{enumerate}
We see that $r_{p1}=r'_{p1}=r''_{p1}$ and $r_{p2}=r'_{p2}=r''_{p2}$. The other terms are related by
\[
	s''_{3p}=s'_{3p}+vs'_{4p}=s'_{3p}+vs_{4p}.
\]
Therefore (2) is equal to (1) plus
\[
	\sum_p \left< r_{p1}u,r_{p2}w\right> vs_{4p}=(3).
\]
This can also be seen as an equation of right operations:
\[
    (r)e_{34}^{-v}ba=(r)be_{34}^{-v}
\]
coming from the relation $ABS_3=S_3B$ in Definition \ref{def: augmented picture monoid} applied to $r'$ where
\[
    (r')be_{34}^{-v}=(r',z_{12,3}^{u,w}(r'))e_{34}^{-v}=(r'e_{34}^{-u},z_{12,3}^{u,w}(r'))=(r'',z_{12,3}^{u,w}(r'))
\]
\[
    (r')e_{34}^{-v}ba)=(r'')ba=(r'',z_{12,3}^{u,w}(r'')+ z_{12,4}^{uv,wv}(r''))
\]
Thus the dual generalized Grassmann invariant given by summing up the expressions
\[
	z_{ij,k}^{u,v}(r)\in H_0(\pi;\ZZ_2[\pi])
\]
whenever curves $e_{ik}^u$ and $e_{jk}^v$ cross with incidence matrix $r$ at the inner corner of the crossing, is zero on this second order Steinberg relation and clearly vanishes on the other second order Steinberg relation. Therefore it induces a well-defined homomorphism
\[
	\chi':K_3(\ZZ[\pi])\to H_0(\pi;\ZZ_2[\pi]).
\]

The dual generalized Grassmann invariant has a nice algebraic interpretation: In the equation 
\[
	re_{ik}^ue_{jk}^w=re_{jk}^we_{ik}^u,
\]
we are using commutativity of addition: We are adding $u$ times the $i$th column of $r$ and $w$ times the $j$th column of $r$ to the $k$th column of $r$. When we do these in the other order, we are using commutativity of addition. The expression 
\[
	\left< r_{pi}u,r_{pj}v\right>
\]
counts the number of times that the same element of $\pi$ is commuted with itself in the $pk$ entry of $re_{ik}^ue_{jk}^w$. We multiply by $s_{kp}$ and sum over all $p$ to get an invariant. See \cite{A-infty} for more details about this interpretation of the generalized Grassmann invariant.

The hope is that ``ghost modules'' and ``augmented picture monoids'' can be defined more generally (see \cite{MoreGhosts} for one generalization) and that this will result in higher degree version of the generalized Grassmann invariant, giving formulas for the transgression (in the spectral sequence for $A(B\pi)$, Waldhausen's $A$-theory \cite{W} of the classifying space of $\pi$):
\[
	K_{n+2}(\ZZ[\pi])\to H_0(\pi;S[\pi])
\]
where $S=\pi_n^s(S^0)$, the $n$-th higher homotopy group of spheres.
}

{\color{blue}
\section*{Acknowledgements} The author thanks Gordana Todorov for many good years of working together on picture groups, picture spaces, maximal green sequences and, more recently, on ``ghost modules'' and what they might mean. The author is also grateful for the current support of the Simons Foundation, Grant \#686616. Work for the original paper ``The generalized Grassmann invariant'' was supported by NSF Grant No. MSC 76-08804.
}

\end{document}